 \newtheorem{thm}{Theorem}[section]
 \newtheorem{cor}[thm]{Corollary}
 \newtheorem{lem}[thm]{Lemma}
 \newtheorem{prop}[thm]{Proposition}
 \theoremstyle{definition}
 \newtheorem{defn}[thm]{Definition}
 \newtheorem{rem}{Remark}[section]
 \newtheorem*{ack}{Acknowledgments}
\numberwithin{equation}{section}
\def\R{\mathbb R}
\def\SS{\mathbb S}
\def\pt{\partial}
\begin{document}
\title{Anisotropic curvature measures and volume preserving flows}
\author[B. Andrews]{Ben~Andrews}
\author[Y. Lei]{Yitao~Lei}
\address{Mathematical Sciences Institute, Australian National University, Canberra, ACT 2601, Australia}
\email{\href{mailto:Ben.Andrews@anu.edu.au}{Ben.Andrews@anu.edu.au}}
\email{\href{mailto:yitao.lei@anu.edu.au}{yitao.lei@anu.edu.au}}
\author[Y. Wei]{Yong Wei}
\address{School of Mathematical Sciences, University of Science and Technology of China, Hefei 230026, P.R. China}
\email{\href{mailto:yongwei@ustc.edu.cn}{yongwei@ustc.edu.cn}}
\author[C. Xiong]{Changwei~Xiong}
\address{College of Mathematics, Sichuan University, Chengdu 610065,  P.R. China}
\email{\href{mailto:changwei.xiong@scu.edu.cn}{changwei.xiong@scu.edu.cn}}
%
\keywords {Convex body; Anisotropic curvature measure; Volume preserving flow.}
\subjclass[2010]{53C44; 52A39}

%
\begin{abstract}
In the first part of this paper, we develop the theory of anisotropic curvature measures for convex bodies in the Euclidean space. It is proved that any convex body whose boundary anisotropic curvature measure equals a linear combination of other lower order anisotropic curvature measures with nonnegative coefficients is a scaled Wulff shape. This generalizes the classical results by Schneider [Comment. Math. Helv. \textbf{54} (1979), 42--60] and by Kohlmann [Arch. Math. (Basel) \textbf{70} (1998), 250--256] to the anisotropic setting. The main ingredients in the proof are the generalized anisotropic Minkowski formulas and an inequality of Heintze--Karcher type for convex bodies.

In the second part, we consider the volume preserving flow of smooth closed convex hypersurfaces in the Euclidean space with speed given by a  positive power $\alpha $ of the $k$th anisotropic mean curvature plus a global term chosen to preserve the enclosed volume of the evolving hypersurfaces. We prove that if the initial hypersurface is strictly convex, then the solution of the flow exists for all time and converges to the Wulff shape in the Hausdorff sense. The characterization theorem for Wulff shapes via the anisotropic curvature measures will be used crucially in the proof of the convergence result. Moreover, in the cases $k=1$, $n$ or $\alpha\geq k$, we can further improve the Hausdorff convergence to the smooth and exponential convergence.
\end{abstract}

\maketitle
\tableofcontents

\section{Introduction}\label{sec:1}

The curvature measures for sets of positive reach (i.e., those sets admitting a locally unique metric projection; see, e.g., \cite[Def.~1.1]{Koh91}) in the Euclidean space have been extensively studied since they were introduced by Federer \cite{Fed59} in 1959. Schneider \cite{Sch78} in 1978 developed the theory of curvature measures for convex bodies, an important subclass of sets with positive reach. Here by convex body we mean compact subset of $\mathbb{R}^{n+1}$ with nonempty interior. Given a convex body $K$ in $\mathbb{R}^{n+1}$, $\varepsilon>0$ and any Borel set $\beta\in \mathcal{B}(\mathbb{R}^{n+1})$, the local $\varepsilon$-parallel set of $K$ is defined by
 \begin{equation*}
   A_{\varepsilon}(K,\beta):=\{x\in \mathbb{R}^{n+1}:~0<d(K,x)\leq\varepsilon,~p(K,x)\in\beta\}
 \end{equation*}
which is the set of all points $x\in \mathbb{R}^{n+1}$ such that the distance $0<d(K,x)\leq \varepsilon$ and the nearest point $p(K,x)$ to $x$ on $K$ belongs to $\beta$. The volume of $A_\varepsilon(K,\beta)$ is a polynomial of degree $n+1$ in the parameter $\varepsilon$ (see \cite[\S 4.2]{Schn}):
 \begin{align}\label{s1.Ste}
   \mathcal{H}^{n+1}( A_{\varepsilon}(K,\beta))=&\frac 1{n+1}\sum_{k=0}^n\varepsilon^{n+1-k}{\binom{n+1}{k}}\mathcal{C}_k(K,\beta)
 \end{align}
for $\beta\in \mathcal{B}(\mathbb{R}^{n+1})$ and $\varepsilon>0$. The equation \eqref{s1.Ste} can be viewed as a localized version of the classical Steiner formula. The coefficients $\mathcal{C}_0(K,\cdot), \dots, \mathcal{C}_n(K,\cdot)$ of \eqref{s1.Ste} are called the \emph{curvature measures} of the convex body $K$, which are Borel measures on $\mathbb{R}^{n+1}$. A special one is $\mathcal{C}_n$ which is the boundary area measure given by $ \mathcal{C}_n(K,\beta)=~\mathcal{H}^n(\beta\cap\partial K)$ for $\beta\in \mathcal{B}(\mathbb{R}^{n+1})$.

The curvature measures, as a measure theoretical replacement of the mean curvatures $E_k$ ($1\leq k\leq n$) in the smooth differential geometry, have many applications in the non-smooth case. In 1979, Schneider \cite{Sch79a} applied the curvature measures to characterize the Euclidean ball, and proved the following generalization of the classical Alexandrov Theorem in the differential geometry.
\begin{thm}[Theorem 8.5.7 in \cite{Schn}]\label{s1.thm-Sch}
Let $m\in\{0,\dots,n-1\}$. If $K$ is a convex body in $\mathbb{R}^{n+1}$ and its curvature measure $\mathcal{C}_{m}(K,\cdot)$ satisfies
\begin{equation}\label{s1.cur-meas1}
  \mathcal{C}_{m}(K,\beta)=~c~ \mathcal{C}_{n}(K,\beta)
\end{equation}
for any Borel set $\beta\in \mathcal{B}(\mathbb{R}^{n+1})$ and a constant $c>0$, then $K$ is a ball.
\end{thm}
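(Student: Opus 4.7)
The plan is to adapt the classical Ros-type argument, which combines a Minkowski identity with a Heintze--Karcher inequality, to the measure-theoretic setting of curvature measures.

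Assume first that $K$ has smooth, strictly convex boundary. The curvature measures then admit the representation $\mathcal{C}_k(K,\beta)=\binom{n}{k}\int_{\beta\cap\partial K}E_{n-k}\,d\mathcal{H}^n$, where $E_j$ is the $j$-th normalized elementary symmetric function of the principal curvatures. Hypothesis \eqref{s1.cur-meas1} then forces $E_{n-m}$ to equal a positive constant $c'$ (after absorbing binomial factors into $c$) almost everywhere on $\partial K$.

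After translating so that the origin lies in the interior of $K$, I would apply the classical Minkowski identity $\int_{\partial K}E_{k-1}\,d\mathcal{H}^n=\int_{\partial K}h E_k\,d\mathcal{H}^n$ with $k=n-m$, where $h$ is the support function. Combined with $E_{n-m}=c'$ and $\int_{\partial K}h\,d\mathcal{H}^n=(n+1)\mathcal{H}^{n+1}(K)$, this yields
$$\int_{\partial K}E_{n-m-1}\,d\mathcal{H}^n=c'(n+1)\,\mathcal{H}^{n+1}(K).$$
Next, the generalized Heintze--Karcher inequality $\int_{\partial K}\frac{E_{k-1}}{E_k}\,d\mathcal{H}^n\ge(n+1)\mathcal{H}^{n+1}(K)$, with equality iff $K$ is a ball, applied with $k=n-m$ and $E_{n-m}=c'$, becomes $\frac{1}{c'}\int_{\partial K}E_{n-m-1}\,d\mathcal{H}^n\ge(n+1)\mathcal{H}^{n+1}(K)$. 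Together with the displayed identity above, this forces equality in Heintze--Karcher, and the rigidity statement yields that $K$ is a ball.

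The main obstacle is that the theorem is stated for a general convex body, which need not be smooth, so both the Minkowski identity and the Heintze--Karcher inequality — together with its rigidity statement — must be developed directly in the framework of curvature and support measures on the unit normal bundle of $K$. The equality case of Heintze--Karcher for non-smooth convex bodies is the most delicate point: either one approximates $K$ by smooth strictly convex bodies and argues by continuity combined with a compactness/uniqueness step, or one works intrinsically via the normal cycle of $K$, in each case ensuring that the rigidity statement survives the limiting process.
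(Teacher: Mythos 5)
Your strategy — a Ros-type argument combining the Minkowski identity with the Heintze--Karcher inequality, plus its rigidity — is exactly the one that underlies this theorem; the smooth case you give is correct. The paper itself quotes Theorem~\ref{s1.thm-Sch} from Schneider's book rather than reproving it, but the proof it constructs for the anisotropic generalization Theorem~\ref{thm-main} in \S\ref{sec:proof_thm1} is precisely this argument: the anisotropic Minkowski formula of \S\ref{sec:Mink} supplies the identity $\int E_{r-1}\gamma(\nu)\,d\mu = \int S\,E_r\gamma(\nu)\,d\mu$ (plus singular correction terms), the inequality \eqref{s6:HK} supplies the Heintze--Karcher bound, and equality is forced, giving the Wulff shape (the ball, in the isotropic case).

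The genuine gap is in your treatment of the non-smooth case, and it is more than a technicality. You propose approximating $K$ by smooth strictly convex $K_i$ and passing to the limit; this does not work here, because the hypothesis $\mathcal{C}_m(K,\cdot)=c\,\mathcal{C}_n(K,\cdot)$ is an identity between measures that the approximants $K_i$ will not satisfy, even approximately in any useful sense, so there is nothing to pass to the limit \emph{with}. The missing idea — which is the crux of the paper's argument (Lemma~\ref{s6:lem3}) and, in the isotropic case, of Kohlmann's \cite{Koh98} — is that the hypothesis itself must first be used to kill the singular part of the curvature measure. Concretely, $\Phi_r(K;\cdot)$ decomposes as an absolutely continuous part $\int_{\cdot\cap\partial K}E_{n-r}\gamma(\nu)\,d\mu$ plus a singular part $a_r(K;\cdot)$ carried by the set of normal directions over edge and corner points where some generalized principal curvature is $+\infty$; meanwhile $\Phi_n(K;\cdot)$ is purely the boundary area measure and has no singular part. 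The assumed comparability $\Phi_m\le c\,\Phi_n$ then forces $a_m(K;\mathbb{R}^{n+1})=0$, and, after centring the origin, the analogous singular term $b_m$ in the Minkowski formula also vanishes. Only after this step does $E_{n-m}=c$ hold $\mathcal{H}^n$-a.e.\ on $\partial K$, and only then can the Minkowski identity and the Heintze--Karcher inequality (proved here via the volume representation involving interior reach, Proposition~\ref{s5:prop-main}) be run, with the equality case analysed directly from \eqref{s6:prop1-pf1}--\eqref{s6:prop1-pf2} rather than via limits of normal cycles. Your proposal reaches for either approximation or ``rigidity surviving a limit,'' but the actual mechanism is an a priori vanishing of the singular part, which is not a limiting argument at all.
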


The classification result in Theorem \ref{s1.thm-Sch} was generalized later by Kohlmann \cite{Koh98} to the case where a linear combination of curvature measures with nonnegative coefficients equals the boundary area measure. If $K$ is a smooth convex body, then the curvature measures can be given as the following integrals
\begin{align}\label{s1.cur-meas2}
 \mathcal{ C}_{k}(K,\beta)=&\int_{\beta\cap \partial K}E_{n-k}(\lambda)d\mathcal{H}^n,\quad k=0,\dots,n
\end{align}
for any Borel set $\beta\in \mathcal{B}(\mathbb{R}^{n+1})$, where $E_{n-k}(\lambda)$ is the $(n-k)$th mean curvature of the hypersurface $\partial K$ and is defined as the normalized $(n-k)$th elementary symmetric polynomial of the principal curvatures $\lambda=(\lambda_1,\dots,\lambda_n)$ of $\partial K$. The equation \eqref{s1.cur-meas1} for a smooth convex body $K$ is equivalent to that  $E_{n-m}(\lambda)$ is a constant on the boundary $\partial K$, which implies that $K$ is a Euclidean ball by the classical Alexandrov theorem.

The curvature measures are useful in the study of the geometry of hypersurfaces. Recently, the first and the third authors \cite{AW17} developed a new application of the curvature measures in the study of volume preserving curvature flows. The classification result in Theorem \ref{s1.thm-Sch} was used crucially in \cite{AW17} to establish the Hausdorff convergence of a class of volume preserving curvature flows in the Euclidean space by positive powers of higher order mean curvatures. The main objective of this paper is to establish the theory of curvature measures in the anisotropic geometry, and then to develop their applications in the study of volume preserving anisotropic curvature flows.

\subsection{Anisotropic curvature measures}
In this paper, we first develop the analogues of the curvature measures by incorporating some explicit anisotropy, which we call the anisotropic curvature measures. The concept of anisotropy originates from crystallography and dates back to Wulff \cite{Wulff}. Let $\gamma\in C^{\infty}(\mathbb{S}^n)$ be a smooth positive function such that the square of its positively $1$-homogeneous extension on $\mathbb{R}^{n+1}\setminus \{0\}$ is uniformly convex.  Consider the anisotropic surface energy functional
\begin{equation}\label{s1:PK}
  \mathcal{E}(K)=\int_{\partial^*K}\gamma(\nu)d\mathcal{H}^n
\end{equation}
among sets of finite perimeter, where $\partial^*K$ denotes the reduced boundary of $K$ and $\nu$ denotes the outward unit normal vector field. The isoperimetric problem for this energy functional is uniquely solved by a scaled translated Wulff shape $W$, which is the set
\begin{equation*}
  W=\bigcap_{\nu\in \mathbb{S}^n}\left\{x\in \mathbb{R}^{n+1}: x\cdot\nu\leq \gamma(\nu)\right\}
\end{equation*}
determined by the function $\gamma$, see \cite{BM94,FM91,Tay78}. A sharp quantitative version of the anisotropic isoperimetric problem was established by Figalli, Maggi and Pratelli \cite{FMP2010}, corresponding to the stability for the Wulff shape of a given surface energy \eqref{s1:PK}. The function $\gamma\in C^{\infty}(\mathbb{S}^n)$ is called the support function of the Wulff shape $W$. The assumption on $\gamma$ ensures that $W$ is a smooth uniformly convex body in $\mathbb{R}^{n+1}$, and its boundary $\Sigma=\partial W$ is also called the Wulff shape determined by the support function $\gamma$. See \S \ref{sec:2-1} for details.

The anisotropy is considered in the relative (or Minkowski) differential geometry as well, where the Wulff shape plays the similar role as the unit sphere plays in the Euclidean geometry; see \cite{BF87} and references therein.  In fact, the Wulff shape allows us to define the anisotropic normal of any smooth orientable hypersurface $M^n$ in $\mathbb{R}^{n+1}$ by a map $\nu_{\gamma}: M\to \Sigma$, which takes each point $x\in M$ to the point in $\Sigma$ with the same oriented tangent hyperplane.  The anisotropic Weingarten map ${\mathcal W}_{\gamma}$ is the derivative of the anisotropic normal $\nu_{\gamma}$, which is a linear map from $T_xM$ to itself at each point.  The eigenvalues $\kappa=(\kappa_1,\dots,\kappa_n)$ of ${\mathcal W}_{\gamma}$ are called the anisotropic principal curvatures. We define the $k$th anisotropic mean curvature of $M$ as the normalized $k$th elementary symmetric function $E_k$ of the anisotropic principal curvatures $\kappa$:
 \begin{equation}\label{eq:defEk}
   {E}_{k}=E_k(\kappa)={\binom{n}{k}}^{-1}\sum_{1\leq i_1<\cdots<i_k\leq n}\kappa_{i_1}\cdots \kappa_{i_k},\quad k=1,\dots,n.
 \end{equation}
It was proved in \cite{HLMG09} that any smooth embedded closed hypersurface in $\R^{n+1}$ with constant $k$th anisotropic mean curvature for some $1\leq k\leq n$ is a scaled Wulff shape. Note that when the Wulff shape is the unit sphere, the $k$th anisotropic mean curvature defined in \eqref{eq:defEk} is just the $k$th mean curvature of the hypersurface in the Euclidean space, and the result in \cite{HLMG09} reduces to the classical Alexandrov theorem. 


To introduce the anisotropic curvature measures, we need a notion of local ``$\varepsilon$-parallel set'' of a convex body in the anisotropic setting. This will be defined via the following relative distance function. Given a Wulff shape $W$, we define a (non-symmetric) distance function in $\mathbb{R}^{n+1}$ relative to $W$, by setting $d_W(x,y) = \inf\{r>0:\ y\in x+rW\}$. Then the relative distance of a point $y\in \R^{n+1}$ from a set $K$ is defined as
\begin{equation*}
  d_W(K,y) = \inf\{d_W(x,y):\ x\in K\}.
\end{equation*}
If $K$ is a convex body, then the infimum in the above definition is attained at a single point which we denote by $f_K(y)\in\partial K$.  For any open set $\beta$ in $\R^{n+1}$ and $\varepsilon>0$, we define the local $\varepsilon$-parallel set of $K$ as follows
$$
A_{\varepsilon}(K,\beta) = \{x\in\R^{n+1}:\ 0<d_W(K,x)\leq \varepsilon,\ f_K(x)\in\beta\}.
$$
The anisotropic curvature measures can be formally defined as the coefficients of the polynomial
\begin{equation}\label{s1:Sten}
  {\mathcal H}^{n+1}(A_\varepsilon(K,\beta)) = \frac{1}{n+1}\sum_{r=0}^n\varepsilon^{n+1-r}{n+1\choose r}\Phi_r(K;\beta),
\end{equation}
which is called the Steiner formula for the volume of the anisotropic local parallel set of $K$. Strictly speaking,  as $A_{\varepsilon}(K,\beta)$ and $\Phi_r(K;\beta)$ depend on the Wulff shape $W$ and it would be better to use the notation $A_{\varepsilon,W}(K,\beta)$ and $\Phi_{r,W}(K;\beta)$, but we will omit the subscript $W$ for the simplicity of the notation.  We will discuss in \S \ref{sec:ACM} the derivation of the formula \eqref{s1:Sten}, and prove that $\Phi_r(K;\beta)$ indeed defines a measure on $\R^{n+1}$ and has the weak continuity property with respect to the Hausdorff distance. As a key ingredient, we introduce the generalized anisotropic principal curvatures of a convex body $K$ living on the anisotropic normal bundle of $K$. In \S \ref{sec:Mink}, using the anisotropic curvature measures we prove a generalized version of the Minkowski formulas, which hold for any convex body in the Euclidean space. Moreover, the anisotropic curvature measures allow us to derive in \S \ref{sec:Vol} a representation formula for the volume of a convex body involving the anisotropic interior reach, which leads to an inequality of Heintze--Karcher type. Combining the anisotropic Minkowski formulas and the inequality of Heintze--Karcher type, we are able to prove the  following theorem on the anisotropic curvature measures, which is the content of \S \ref{sec:proof_thm1}.
\begin{thm}\label{thm-main}
Let $W$ be a Wulff shape in $\mathbb{R}^{n+1}$ with the support function $\gamma\in C^{\infty}(\mathbb{S}^n)$. For a convex body $K$ and some constants $\lambda_0,\dots,\lambda_{n-1}\geq 0$, assume that the anisotropic curvature measures of $K$ satisfy
\begin{equation}
\Phi_n(K;\cdot)=\sum_{r=0}^{n-1} \lambda_r \Phi_r(K;\cdot).
\end{equation}
Then $K$ is a scaled Wulff shape.
\end{thm}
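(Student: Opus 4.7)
The plan is to implement, in the anisotropic framework, the approach used by Schneider \cite{Sch79a} and Kohlmann \cite{Koh98} in the isotropic case: combining the generalized anisotropic Minkowski formulas from Section \ref{sec:Mink} with the Heintze--Karcher type inequality from Section \ref{sec:Vol}, both of which are valid for an arbitrary convex body and not only for smooth strictly convex ones, so the proof can be run directly on $K$.

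First, I would evaluate the hypothesis at $\beta=\mathbb{R}^{n+1}$ to obtain the scalar identity
\[
\Phi_n(K;\mathbb{R}^{n+1})=\sum_{r=0}^{n-1}\lambda_r\Phi_r(K;\mathbb{R}^{n+1}),
\]
and also integrate the measure identity against the anisotropic support function $\gamma^*$ of $\partial K$. The anisotropic Minkowski formulas from Section \ref{sec:Mink} allow each weighted integral $\int \gamma^*(x)\,d\Phi_r(K;\cdot)$ to be rewritten as a multiple of $\Phi_{r+1}(K;\mathbb{R}^{n+1})$, producing a second scalar identity purely among the total masses $\Phi_r(K;\mathbb{R}^{n+1})$.

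Next, I would invoke the anisotropic Heintze--Karcher inequality, which bounds $(n+1)V(K)$ from below by an integral of a suitable reciprocal anisotropic mean curvature, formulated in terms of the generalized anisotropic principal curvatures on the anisotropic normal bundle from Section \ref{sec:ACM}, and with equality exactly on scaled Wulff shapes. Combining this with the pointwise Newton--MacLaurin inequalities $E_{k-1}^{\gamma}E_{k+1}^{\gamma}\le (E_k^{\gamma})^2$ for the anisotropic elementary symmetric functions, and with the Minkowski-derived scalar identities above, one should obtain a closed chain
\[
(n+1)V(K)\ \leq\ \cdots\ \leq\ (n+1)V(K),
\]
whose two ends coincide precisely because of the hypothesis together with the sign assumption $\lambda_r\ge 0$. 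Consequently every inequality in the chain must be an equality, and the equality case of Heintze--Karcher then forces $K$ to be a scaled Wulff shape.

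The main obstacle, distinguishing this from the smooth classical proof, is that $K$ is allowed to be an arbitrary convex body, so $\partial K$ need not be smooth and the measures $\Phi_r(K;\cdot)$ cannot be represented as surface integrals of pointwise anisotropic elementary symmetric functions. This is exactly what the generalized anisotropic principal curvatures on the anisotropic normal bundle (Section \ref{sec:ACM}) are designed to overcome: they allow one to phrase the Newton--MacLaurin monotonicity of the ratios $E_{k-1}^{\gamma}/E_k^{\gamma}$ and the equality case of Heintze--Karcher in the measure-theoretic setting needed here. Alternatively, one may approximate $K$ by smooth strictly convex bodies and pass to the limit using the weak continuity of $\Phi_r(K;\cdot)$ with respect to the Hausdorff distance established in Section \ref{sec:ACM}.
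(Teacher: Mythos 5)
Your overall strategy is the right one and matches the paper's: normalize the coefficients, feed the anisotropic Minkowski formulas (Section \ref{sec:Mink}) into the Heintze--Karcher inequality (Section \ref{sec:proof_thm1}), close the chain of inequalities, and read off the equality case. But there are two concrete gaps that the paper has to work to fill and that your sketch glosses over.

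First, the step ``integrate the measure identity against the anisotropic support function and use the Minkowski formula to convert $\int S\,d\Phi_r$ into $\Phi_{r+1}(K;\mathbb R^{n+1})$'' does not go through as stated, because the two integrals live on different spaces. The generalized Minkowski formula \eqref{s4:eq-GM} is an identity of integrals over the anisotropic normal bundle $\mathcal N_K$, and the quantity $S(v)=G(v)(v,\Pi(v))$ genuinely depends on the normal direction $v$, not only on the base point $\Pi(v)$; on the singular part $\tilde{\mathcal N}_K\setminus \tilde{\mathcal N}_{K,0}$ the fibre over a base point is positive-dimensional. So $\int_{\mathcal N_K}\psi\,S\,E_r\,d\tilde{\mathcal H}^n$ is \emph{not} the integral of a Borel function on $\mathbb R^{n+1}$ against the measure $\Phi_{n-r}(K;\cdot)$. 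To make the argument legitimate one must first show that, under the hypothesis $\Phi_n=\sum_r\lambda_r\Phi_r$ with $\lambda_r\ge 0$, the singular contributions
\[
a_r(K;\mathbb R^{n+1})=\int_{\tilde{\mathcal N}_K\setminus\tilde{\mathcal N}_{K,0}}\psi\,E_{n-r}\,d\tilde{\mathcal H}^n,
\qquad
b_r(K;\mathbb R^{n+1})=\int_{\tilde{\mathcal N}_K\setminus\tilde{\mathcal N}_{K,0}}\psi\,S\,E_{n-r}\,d\tilde{\mathcal H}^n
\]
actually vanish. This is exactly Lemma \ref{s6:lem3}: since $\Phi_n(K;\cdot)$ charges only the twice-differentiable part $\widetilde{\partial K}$ and $\Phi_r(K;\cdot)\le \lambda\,\Phi_n(K;\cdot)$ forces $\Phi_r$ to be absolutely continuous with respect to $\Phi_n$, the contribution of the singular fibres to $\Phi_r$ must be zero, and a translation makes $b_r=0$ as well. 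Only after that can the Minkowski formula be read as a statement about surface integrals of the $E_r$'s, yielding the pointwise a.e.\ identity $1=\sum_{r}\lambda_{n-r}E_r(p)$ and hence $E_1\ge 1$ a.e.

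Second, the equality case of the anisotropic Heintze--Karcher inequality does not immediately output ``$K$ is a scaled Wulff shape.'' What it gives is that the generalized anisotropic principal curvatures satisfy $\kappa_1(p)=\dots=\kappa_n(p)=r(K,p)^{-1}$ almost everywhere, which after normalizing the coefficients means $\kappa_i\equiv 1$ a.e. For an arbitrary (nonsmooth) convex body this a.e.\ umbilicity does not by itself classify $K$; the paper completes the argument by deducing $\lvert\partial K\rvert_\gamma\le\lvert\partial W\rvert_\gamma$ from $a_0=0$ and the mapping property \eqref{s3:Phi0}, and $\mathrm{Vol}(K)\ge\mathrm{Vol}(W)$ from the interior reach estimate, and then invoking the equality case of the anisotropic isoperimetric inequality \eqref{s2:Iso}. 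You need to include this final comparison step; the Newton--MacLaurin and Heintze--Karcher equality cases alone do not close the argument in the measure-theoretic setting.
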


Theorem \ref{thm-main} is the anisotropic analogue of the results proved by Schneider \cite{Sch79a} and Kohlmann \cite{Koh98}. In particular, any convex body with anisotropic curvature measures satisfying
\begin{equation}\label{s1:Phk-cst}
\Phi_n(K;\cdot)=c\Phi_{n-k}(K;\cdot)
\end{equation}
for some $k=1,\dots, n$ and some constant $c>0$ must be a scaled Wulff shape. When the Wulff shape $W$ is the unit Euclidean ball, Theorem \ref{thm-main} with anisotropic curvature measures satisfying \eqref{s1:Phk-cst} reduces to Theorem \ref{s1.thm-Sch} by Schneider \cite{Sch79a}.  When the convex body $K$ is smooth, Equation \eqref{s1:Phk-cst} implies that the $k$th anisotropic mean curvature of $\partial K$ is a constant. Then in this case the conclusion of Theorem \ref{thm-main} reduces to the result proved by He, Li, Ma, and Ge \cite{HLMG09} for hypersurfaces with constant $k$th anisotropic mean curvature.

\subsection{Volume preserving anisotropic curvature flows} In the second part of this paper, we will apply Theorem \ref{thm-main} to study the anisotropic analogue of the volume preserving curvature flow. Let $W$ be a Wulff shape in $\mathbb{R}^{n+1}$ with the support function $\gamma\in C^{\infty}(\mathbb{S}^n)$, and let $X_0: M^n\to \mathbb{R}^{n+1}$ be a smooth embedding such that $M_0=X_0(M)$ is a strictly convex hypersurface in $\mathbb{R}^{n+1}$ enclosing a smooth convex body $K_0$. We consider the smooth family of embeddings $X:M^n\times [0,T)\rightarrow \mathbb{R}^{n+1}$ satisfying
\begin{equation}\label{flow-VMCF}
 \left\{\begin{aligned}
 \frac{\partial}{\partial t}X(x,t)=&~(\phi(t)-{E}_k^{{\alpha}/k}(\kappa))\nu_{\gamma}(x,t),\\
 X(\cdot,0)=&~X_0(\cdot),
  \end{aligned}\right.
 \end{equation}
where $\alpha>0$, $E_k(\kappa)$ is the $k$th anisotropic mean curvature defined in \eqref{eq:defEk} and $\nu_{\gamma}(x,t)$ is the anisotropic normal of the evolving hypersurface $M_t=X(M,t)$.
The global term $\phi(t)$ in the flow \eqref{flow-VMCF} is chosen as
 \begin{align}\label{s1:phi-1}
  \phi(t) =& \frac{1}{|M_t|_{\gamma}}\int_{M_t}{E}_k^{{\alpha}/k}(\kappa)d\mu_{\gamma}
\end{align}
 to preserve the volume of the closure $K_t$ of the domain enclosed by $M_t$, where $d\mu_{\gamma}=\gamma(\nu)d\mu$ is the anisotropic area form on $M_t$ and $|M_t|_{\gamma}$ is the anisotropic area of $M_t$ (i.e., the anisotropic perimeter defined in \eqref{s1:PK} for smooth convex bodies).

We will prove the following convergence result for the flow \eqref{flow-VMCF}.
\begin{thm}\label{thm-2}
Let $W$ be a Wulff shape in $\mathbb{R}^{n+1}$ with the smooth support function $\gamma\in C^{\infty}(\mathbb{S}^n)$, and $X_0: M^n\to \mathbb{R}^{n+1}$ be a smooth embedding such that $M_0=X_0(M)$ is a closed strictly convex hypersurface in $\mathbb{R}^{n+1}$ enclosing a convex body $K_0$.
\begin{enumerate}
  \item For any $k\in \{1,\dots,n\}$ and $\alpha>0$, the volume preserving flow \eqref{flow-VMCF} has a smooth strictly convex solution $M_t$ for all time $t\in [0,\infty)$, and $M_t$ converges as $t\to\infty$ to a scaled Wulff shape $M_{\infty}=\bar{r}\Sigma$ in the Hausdorff sense, where $\bar{r}$ is the radius such that $\mathrm{Vol}(K_0)=\bar{r}^{n+1}\mathrm{Vol}(W)$.
  \item For (i) $k=1$ and $\alpha>0$, or (ii) $k=n$ and $\alpha>0$, or (iii) $\alpha\geq k=1,\dots,n$,  we can improve the Hausdorff convergence in (1) to the smooth and exponential convergence.
\end{enumerate}
\end{thm}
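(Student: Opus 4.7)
My plan is to run the standard program for volume-preserving convex curvature flows --- short-time existence, a priori estimates, monotone Lyapunov functional, subsequential limits --- and invoke Theorem~\ref{thm-main} at the last step to identify the limiting convex body. The case split in part~(2) reflects exactly where classical PDE techniques supply uniform $C^2$ bounds; part~(1) is the regime in which such bounds are not yet available and the anisotropic characterization theorem is essential.

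Short-time existence and preservation of strict convexity follow from standard quasilinear parabolic theory. Reparametrize $M_t$ by the inverse anisotropic Gauss map $\nu_\gamma^{-1}:\Sigma\to M_t$ so that the flow becomes a strictly parabolic scalar equation for the anisotropic support function on $\Sigma$; the global term $\phi(t)$ depends only on $t$ and does not affect parabolicity. For the uniform $C^0$ estimate I would combine (i) conservation of the enclosed volume, forced by the definition of $\phi(t)$ via $\partial_t X\cdot\nu=(\phi-E_k^{\alpha/k})\gamma(\nu)$; (ii) monotonicity of a suitable anisotropic quermassintegral $V_j(K_t)$, computed via the anisotropic Minkowski formulas of Section~\ref{sec:Mink} together with a Cauchy--Schwarz-type argument exploiting the monotone dependence of $E_k^{\alpha/k}$ on the anisotropic principal curvatures; and (iii) barrier comparisons with scaled Wulff shapes along trajectories. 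Together with the anisotropic isoperimetric inequality, these control the inradius and circumradius of $K_t$ relative to $W$ uniformly in time.

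For part~(1), these $C^0$ bounds are all the regularity available a priori. Via Blaschke selection I would extract a Hausdorff-subsequential limit $K_\infty=\lim K_{t_i}$ in the class of convex bodies. Lyapunov monotonicity gives $\int_0^\infty|\tfrac{d}{dt}V_j(K_t)|\,dt<\infty$, so along some subsequence the Lyapunov derivative tends to zero. Using the weak continuity of the anisotropic curvature measures (Section~\ref{sec:ACM}) and the generalized Minkowski formulas, this passes to the limit as an identity
\begin{equation*}
\Phi_n(K_\infty;\cdot)=\sum_{r=0}^{n-1}\lambda_r\Phi_r(K_\infty;\cdot)
\end{equation*}
with nonnegative coefficients $\lambda_r$ determined by $k$ and $\alpha$; Theorem~\ref{thm-main} then forces $K_\infty$ to be a scaled Wulff shape, and volume preservation fixes the scale at $\bar r=(\mathrm{Vol}(K_0)/\mathrm{Vol}(W))^{1/(n+1)}$. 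Uniqueness of the subsequential limit, and hence full Hausdorff convergence, follows from the monotone convergence of the Lyapunov functional. The main obstacle here --- and the reason Theorem~\ref{thm-main} is needed rather than the smooth rigidity of \cite{HLMG09} --- is that without an upper $C^2$ bound one genuinely has to work with $K_\infty$ in the weak class of convex bodies and characterize it through its anisotropic curvature measures; making the passage from $\tfrac{d}{dt}V_j\to 0$ to the algebraic identity above rigorous in this weak class is the technical heart of the argument.

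For part~(2), uniform $C^2$ bounds on the anisotropic principal curvatures are accessible. When $k=1$ the flow is semilinear in the support function and maximum-principle arguments on the second fundamental form suffice; when $k=n$ a Tso-type argument via the Legendre transform yields an upper curvature bound; and when $\alpha\ge k$ the speed is concave (or convex, after a suitable change of variables) in the relevant curvature matrix, so Krylov--Evans theory delivers $C^{2,\beta}$ estimates, with Schauder bootstrapping upgrading to $C^\infty$. The Hausdorff convergence of part~(1) then refines to full $C^\infty$ convergence to $\bar r\Sigma$. Finally, linearizing the flow around $\bar r\Sigma$ produces a self-adjoint, nonpositive operator on normal graphs whose kernel is generated by translations and homothety; centering eliminates the translation modes, volume preservation eliminates the homothety mode, and the resulting positive spectral gap yields exponential decay of the deviation from $\bar r\Sigma$ in every Sobolev and Hölder norm.
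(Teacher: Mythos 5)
Your outline captures the broad architecture the paper follows (Gauss-map parametrization, $C^0$ bounds from isoperimetric-ratio monotonicity, Blaschke selection, weak continuity of anisotropic curvature measures feeding into Theorem~\ref{thm-main}, linearization for exponential decay), but there are two substantial gaps.

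First, you do not address long-time existence, which is \emph{not} a formality here. After the $C^0$/$C^1$ estimates, passing from short-time to global existence requires an upper bound on the speed (Tso's technique applied to $Z = F_*^{-\alpha}/(s-\frac14 R_1)$, Proposition~\ref{s5:thm-Ek-ub}) \emph{and} a lower bound on the anisotropic principal curvatures on every finite interval $[0,T)$ (Proposition~\ref{s5:prop-conv}). The latter is where the anisotropy genuinely interferes: the evolution equation for $\tau_{ij}$ contains uncontrolled terms with $Q_{ijk}$ and $\bar\nabla Q$ multiplied by $\dot\Psi^{k\ell}\bar g_{k\ell}$, so the tensor maximum principle cannot be applied to $\tau_{ij}$ directly. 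The paper gets around this by running the maximum principle on the auxiliary quantity $\zeta/(s-\frac14 R_1)^a$, exploiting the $-a\,s\,\dot\Psi^{k\ell}\bar g_{k\ell}$ term in the evolution of $s$ to kill the bad terms. Your assertion that ``these $C^0$ bounds are all the regularity available a priori'' misses this: without those curvature bounds, there is no flow on $[0,\infty)$ to extract a subsequential limit from.

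Second, you misidentify the role of the hypothesis $\alpha\geq k$ in part~(2)(iii). The speed $\Psi=-F_*^{-\alpha}$ (with $F_*=(E_n/E_{n-k})^{1/k}$) is concave in $\tau_{ij}$ for \emph{every} $\alpha>0$, so concavity and Krylov--Evans are available uniformly in $\alpha$ and are not what distinguishes the regime $\alpha\geq k$. What $\alpha\geq k$ actually buys is H\"older's inequality
\begin{equation*}
\phi(t)=\frac{1}{|M_t|_\gamma}\int_{M_t}E_k^{\alpha/k}\,d\mu_\gamma\geq\Bigl(\frac{V_{n-k}(K_t,W)}{V_n(K_t,W)}\Bigr)^{\alpha/k},
\end{equation*}
which together with the Hausdorff convergence gives $\liminf_{t\to\infty}\phi(t)\geq\bar r^{-\alpha}$ (Corollary~\ref{s10.cor}). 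That lower bound on the global term is precisely what makes the auxiliary function $\log\zeta-\mu_1 s+\mu_2|\bar\nabla s|^2$ of Lemma~\ref{s12.lem3} tractable, together with the improved $C^0/C^1$ estimates (Lemma~\ref{s12.lem1}) and the Guan--Shi--Sui dichotomy (Proposition~\ref{s12.prop-Guan}). That estimate, plus the uniform lower bound on $E_k$ established by the Smoczyk-type argument of Proposition~\ref{s11.prop1} (itself relying on the Hausdorff convergence you already have), produces the two-sided pinching of $\kappa_i$. Only \emph{then} does concavity of $F_*$ yield the $C^{2,\beta}$ estimate via Tian--Wang and Schauder bootstrap. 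Your proposal skips both the $E_k$ lower bound and the upper bound on $\tau_{ij}$, which together form the technical core of part~(2)(iii). A smaller inaccuracy: the kernel of the linearized operator consists precisely of the translation generators; the homothety mode is already removed by working in the mean-zero subspace, not by a separate ``volume preservation eliminates homothety'' step.
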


Anisotropic analogue of mean curvature flow has been studied as models of crystal growth \cite{TCH92} and other phenomena involving the motion of interfaces \cite{AG89,AG94}. The graphical anisotropic mean curvature flow has been studied by the first author and Clutterbuck \cite{And-C09,Clu07}, but the convergence result of the anisotropic mean curvature flow for smooth, closed and convex hypersurfaces (anisotropic analogue of Huisken's result \cite{Hu84} in 1984) has not yet been proved. The difficulty lies in the increased complexity of the evolution equations of geometric quantities due to the presence of the anisotropy and thus it is hard to obtain a priori curvature pinching ratio estimates.  In 2001, the first author \cite{And01} considered the anisotropic analogue of Huisken's volume-preserving mean curvature flow \cite{Hu87} (corresponding to the case $k=1$ and $\alpha=1$ of the flow \eqref{flow-VMCF}), and proved that the solutions converge to the Wulff shape corresponding to the anisotropy. Though the curvature pinching is still not available, the convergence result was proved by exploring the monotonicity of the isoperimetric ratio. Our Theorem \ref{thm-2} is a generalization of the main result in \cite{And01} to the flow by arbitrary positive powers of higher order anisotropic mean curvatures.

In the proof of Theorem \ref{thm-2}, we exploit the isoperimetric bounds on the evolving domains. For a smooth convex body $K$ in $\mathbb{R}^{n+1}$, the mixed volume relative to $W$ is defined as
\begin{align*}
 & V_{n+1}(K,W)=(n+1)\mathrm{Vol}(K),\qquad  V_{0}(K,W)=(n+1)\mathrm{Vol}(W),\\
  & V_{n+1-k}(K,W)=\int_{\partial K}{E}_{k-1}d\mu_{\gamma},\quad k=1,\dots,n,
\end{align*}
where ${E}_{k-1}$ is the $(k-1)$st anisotropic mean curvature of $\partial K$ relative to the Wulff shape $W$. We consider the following isoperimetric ratio
\begin{equation}\label{s1:Iso-def}
{\mathcal I}_\ell(K,W) = \frac{V_\ell(K,W)^{n+1}}{V_{n+1}(K,W)^\ell ~V_0(K,W)^{n+1-\ell}},\quad 1\leq \ell\leq n,
\end{equation}
for the convex body $K$. We will prove that $\mathcal{I}_{n+1-k}(K_t,W)$ is monotone non-increasing in time along the flow \eqref{flow-VMCF}, and thus is bounded from above by its initial value. On the other hand, the Alexandrov--Fenchel inequality \eqref{AF-k=n+1} implies
\begin{equation*}
{\mathcal I}_\ell(K,W)\geq 1,\quad 1\leq \ell\leq n,
\end{equation*}
with the equality if and only if $K$ is homothetic to $W$. These can be used to control the geometry of the evolving hypersurface $M_t$, including the $n$-dimensional anisotropic area, the anisotropic inner radius and outer radius.

To study the long-time behavior of the flow \eqref{flow-VMCF}, we employ the anisotropic Gauss map parametrization which we will review in \S\ref{sec:gauss map} and we rewrite the flow as a scalar parabolic equation of the ``anisotropic support function $s$'' on the Wulff shape $\Sigma$. This allows us to use the inner radius bound and Tso's technique \cite{Tso85} to give an upper bound on the $k$th anisotropic mean curvature and the two-sided positive bounds on the global term $\phi(t)$. We can also obtain the convexity estimate for the evolving hypersurfaces by estimating an upper bound on the anisotropic principal radii $\tau=(\tau_1,\dots,\tau_n)$, which are eigenvalues of the matrix
\begin{equation*}
  \tau_{ij}[s]=\bar{\nabla}_i\bar{\nabla}_js+\bar{g}_{ij}s-\frac 12Q_{ijk}\bar{\nabla}_ks.
\end{equation*}
Here $Q_{ijk}$ comes from the anisotropy which creates lots of extra terms in the evolution equation of $\tau_{ij}$. Thus the maximum principle for tensors may not be applied directly to the evolution of $\tau_{ij}$ to deduce its upper bound. To overcome this problem, we notice that the evolution equation of $s$ has some terms which could be used to control the bad terms in the evolution of $\tau_{ij}$, and we can combine them together to get a time dependent upper bound on the largest anisotropic principal radius. Then a standard contradiction argument can be used to conclude the long-time existence of the flow.

From the monotonicity of $\mathcal{I}_{n+1-k}(K_t,W)$ and the long-time existence of the flow, we show that the anisotropic curvature measures of the limit convex body satisfy the equation \eqref{s1:Phk-cst} and thus the limit convex body is a scaled Wulff shape by our Theorem \ref{thm-main}. This proves the Hausdorff convergence of $K_t$ to a scaled Wulff shape. Then we can apply the argument in \cite[\S 7]{AW17} (a variation of Smoczyk's method \cite{Smo98}) to conclude a uniform lower bound on the $k$th anisotropic mean curvature. This allows us to obtain uniform regularity estimates for the flow \eqref{flow-VMCF} in the cases $k=1$, $\alpha>0$ and $k=n$, $\alpha>0$. The smooth convergence of the flow then follows in these cases.

To deal with the case $\alpha\geq k=1,\dots,n$, we need to further explore the Hausdorff convergence of the solution. We first improve the $C^0$ and $C^1$ estimates and show that the anisotropic support function is close to a constant (with respect to some moving center) and its gradient is almost zero. Since $\alpha\geq k$, by the Haussdorff convergence and H\"{o}lder inequality we also show that the global term $\phi(t)$ satisfies the property
\begin{equation*}
  \liminf_{t\to\infty}\phi(t)\geq \bar{r}^{-\alpha},
\end{equation*}
where $\bar{r}$ is the anisotropic radius of the limit Wulff shape. To derive the $C^2$ estimate, we again employ the anisotropic Gauss map parametrization. As we already have the $C^1$ estimate, the $C^2$ estimate is equivalent to the bounds on the anisotropic principal curvatures. Equivalently, it suffices to estimate the upper bound on the anisotropic principal radii of curvature  $\tau_1,\dots,\tau_n$. For this purpose, we apply the maximum principle to the evolution equation of an auxiliary function involving $\tau_{ij}[s]$, the anisotropic support function $s$ and its gradient, and use the improved $C^0$, $C^1$ estimates of $s$ and the above-mentioned property of $\phi(t)$. This is the most technical part in the proof, where an observation on smooth symmetric functions due to Guan, Shi, and Sui \cite{Guanbo15} will be used. Finally, we show that the smooth convergence is in the exponential rate by studying the linearization of the flow around the Wulff shape.

\begin{rem}
We remark that the smooth convergence in Theorem \ref{thm-2} holds for all $\alpha>0$ and all $k=1,\cdots,n$ in the isotropic case (i.e., in the case that the Wulff shape $W$ is the unit Euclidean ball), see Theorem 1.1 in \cite{AW17}. It's a natural question whether this is still true in the general anisotropic setting for $0<\alpha<k$, $k=2,\cdots,n-1$.
\end{rem}

\subsection{Organization of the paper} In \S\ref{sec:2} we review some basic facts on the Wulff shape, the anisotropic geometry, the mixed volumes and the Alexandrov--Fenchel inequalities. \S\S\ref{sec:ACM}--\ref{sec:proof_thm1} comprise the first part of the paper. In \S\ref{sec:ACM} we introduce the anisotropic curvature measures and study their basic properties. In \S\ref{sec:Mink} we prove the generalized anisotropic Minkowski integral formulas. Next in \S\ref{sec:Vol} we discuss the volume representation of convex bodies in terms of the anisotropic interior reach. Finally in \S\ref{sec:proof_thm1} we first use the volume representation in \S\ref{sec:Vol} to establish an inequality of Heintze--Karcher type, and then combine this inequality with the Minkowski formulas in \S\ref{sec:Mink} to complete the proof of Theorem~\ref{thm-main}. After \S\ref{sec:proof_thm1} we come to the second part of the paper, which consists of \S\S\ref{sec:VPACF}--\ref{sec:SC}. In \S\ref{sec:VPACF} we review the variation equations along a general anisotropic curvature flow and study basic properties of our volume preserving anisotropic curvature flow. In \S\ref{sec:gauss map} we introduce the anisotropic Gauss map parametrization of convex hypersurfaces and derive evolution equations for various geometric quantities under this parametrization. Next in \S\ref{sec:LTE} we show the long-time existence of our anisotropic curvature flow by investigating the upper bound for the anisotropic principal radii of curvature of
the evolving hypersurfaces. Then in \S\ref{sec:HC} we use the characterization of Wulff shapes via anisotropic curvature measures in Theorem~\ref{thm-main} to obtain the Hausdorff convergence of the flow. This could be used to improve the estimate on the global term $\phi(t)$. By the Hausdorff convergence and a variant of Smoczyk's method \cite{Smo98}, in \S\ref{sec:HC} we also prove a uniform positive lower bound on the $k$th anisotropic mean curvature. In the last section \S\ref{sec:SC} we show the smooth and exponential convergence of the flow for some special cases as mentioned above, and so finish the proof of Theorem~\ref{thm-2}. Throughout the paper the Einstein convention on the summation for indices is used unless otherwise indicated.

\begin{ack}
The research of the first two authors was supported by Laureate Fellowship FL150100126 of the Australian Research Council. The third author was supported by National Key Research and Development Project SQ2020YFA070080 and Research grant KY0010000052 from University of Science and Technology of China. The fourth author was supported by the funding (no. 1082204112549) from Sichuan University.
\end{ack}

\section{Preliminaries on anisotropic geometry}\label{sec:2}
In this section, we briefly review the anisotropic geometry and mixed volumes in convex geometry. We refer the readers to \cite{And01,Schn,Xia13} for more details.
\subsection{The Wulff shape}\label{sec:2-1}
Let $\gamma$ be a smooth positive function on the sphere $\mathbb{S}^n$ such that the matrix
\begin{equation}\label{s2:A_F}
  A_{\gamma}(x)~=~\nabla^{\mathbb{S}}\nabla^{\mathbb{S}} {\gamma}(x)+{\gamma}(x)g_{\mathbb{S}^n},\quad x\in \mathbb{S}^n
\end{equation}
is positive definite, where $\nabla^{\mathbb{S}}$ denotes the covariant derivative on $\mathbb{S}^n$. Then there exists a unique smooth strictly convex hypersurface $\Sigma$ given by
\begin{equation*}
  \Sigma=\left\{\phi(x)|\phi(x):={\gamma}(x)x+\nabla^{\mathbb{S}} {\gamma}(x),~x\in \mathbb{S}^n\right\}
\end{equation*}
whose support function is given by ${\gamma}$. Denote the closure of the enclosed domain of $\Sigma$ by $W$. We call $W$ (and $\Sigma$) the Wulff shape determined by the function $\gamma\in C^{\infty}(\mathbb{S}^n)$. When $\gamma$ is a constant, the Wulff shape is just a round sphere.

The smooth function ${\gamma}$ on $\mathbb{S}^n$ can be extended homogeneously to a $1$-homogeneous function on $\mathbb{R}^{n+1} $ by
\begin{equation*}
  {\gamma}(x)=|x|{\gamma}({x}/{|x|}), \quad x\in \mathbb{R}^{n+1}\setminus\{0\}
\end{equation*}
and setting ${\gamma}(0)=0$. Then it is easy to show that $\phi(x)=D\gamma(x)$ for $x\in \mathbb{S}^n$, where $D$ denotes the gradient on $\mathbb{R}^{n+1}$. The homogeneous extension ${\gamma}$ defines a Minkowski norm on $\mathbb{R}^{n+1}$, that is, $\gamma$ is a norm on $\mathbb{R}^{n+1}$ and $D^2(\gamma^2)$ is uniformly positive definite in $\mathbb{R}^{n+1}\setminus\{0\}$. We can define a dual Minkowski norm ${\gamma}^0$ on $\mathbb{R}^{n+1}$ by
\begin{equation}\label{s2:gam0}
  {\gamma}^0(z):=\sup_{x\neq 0}\frac{\langle x,z\rangle}{{\gamma}(x)},\quad z\in \mathbb{R}^{n+1}.
\end{equation}
Then the Wulff shape $W$ can be written as
\begin{equation*}
W=\left\{z\in \R^{n+1}:{\gamma}^0(z)\leq 1\right\},
\end{equation*}
and $\Sigma=\partial W=\{z\in \R^{n+1}: \gamma^0(z)=1\}$.

\subsection{Anisotropic geometry}\label{sec:2-2}
Let $M$ be a smooth orientable hypersurface in the Euclidean space $\mathbb{R}^{n+1}$ with a unit  normal vector field $\nu$, and $\Sigma$ be the Wulff shape defined in \S \ref{sec:2-1}. We define the anisotropic normal of $M$ as the map $\nu_{\gamma}: M\to \Sigma$ given by
\begin{equation*}
  \nu_{\gamma}(x)~=~\phi(\nu(x))={\gamma}(\nu(x))\nu(x)+\nabla^{\mathbb{S}} {\gamma}(\nu(x))~\in ~\Sigma
\end{equation*}
for any point $x\in M$. It follows from the $1$-homogeneity of $\gamma$ that $\nu_{\gamma}(x)=D\gamma(\nu(x))$. The anisotropic Weingarten map is a linear map
$$\mathcal{W}_{\gamma}=d\nu_{\gamma}: T_xM\to T_{\nu_{\gamma}(x)}\Sigma.$$
Note that $\mathcal{W}_{\gamma}=A_{\gamma}\circ \mathcal{W}$, where $\mathcal{W}=d\nu=(h_i^j)$ is the Weingarten map of the hypersurface $M$. Recall that the eigenvalues of $\mathcal{W}$ are the principal curvatures $\lambda=(\lambda_1,\dots,\lambda_n)$. The eigenvalues of $\mathcal{W}_{\gamma}$ are called the anisotropic principal curvatures of $M$, and we denote them by $\kappa=(\kappa_1,\dots,\kappa_n)$.  If we write $\mathcal{W}_{\gamma}=(\hat{h}_i^j)$ in local coordinates, then
\begin{equation}\label{s2:hat-h}
  \hat{h}_i^j(x)=(A_{\gamma}(\nu(x)))_i^kh_k^j(x).
\end{equation}
In particular, it follows from \eqref{s2:hat-h} that $E_n(\kappa)=\det(A_{\gamma}(\nu))E_n(\lambda)$.

For any smooth orientable hypersurface $M$ in $\mathbb{R}^{n+1}$, we define the anisotropic area functional as
\begin{equation*}
  |M|_{\gamma}:=\int_M\gamma(\nu)d\mu,
\end{equation*}
where $\nu$ is  a fixed unit normal vector field of $M$ and $d\mu$ is the area form of the induced metric on $M$ from the Euclidean space $\mathbb{R}^{n+1}$. We set $d\mu_{\gamma}=\gamma(\nu)d\mu$ and call it the anisotropic area form of $M$.

There is another formulation of the anisotropic principal curvatures, which was introduced by the first author in \cite{And01} and reformulated by Xia in \cite{Xia13}. The idea is to define a new metric on $\mathbb{R}^{n+1}$ using the second derivatives of the dual function ${\gamma}^0$ of the homogeneous extension of $\gamma$. This new metric on $\mathbb{R}^{n+1}$ is defined as
\begin{equation}\label{s2:G}
  G(z)(U,V):=\frac 12\sum_{i,j=1}^{n+1}\frac{\partial^2({\gamma}^0)^2(z)}{\partial z^i\partial z^j}U^iV^j
\end{equation}
for $z\in \mathbb{R}^{n+1}\setminus\{0\}$ and $U,V\in T_{z}\mathbb{R}^{n+1}$.  The third derivatives of ${\gamma}^0$ can be used to define
\begin{equation}\label{s2:Q-def}
  Q(z)(U,V,Y):=\frac 12\sum_{i,j,k=1}^{n+1}\frac{\partial^3({\gamma}^0)^2(z)}{\partial z^i\partial z^j\partial z^k}U^iV^jY^k
\end{equation}
for $z\in \mathbb{R}^{n+1}\setminus\{0\}$ and $U,V,Y\in T_{z}\mathbb{R}^{n+1}$. The $1$-homogeneity of ${\gamma}^0$ implies that
\begin{align*}
  G(z)(z,z) =& 1,\quad G(z)(z,V)=0,\quad \mathrm{for}\quad z\in \Sigma ~\mathrm{and} ~V\in T_{z}\Sigma, \\
  Q(z)(z,U,V) =& 0,\qquad \mathrm{for}\quad z\in \Sigma ~\mathrm{and}~U,V\in \mathbb{R}^{n+1}.
\end{align*}

For a smooth hypersurface $M$ in $\mathbb{R}^{n+1}$, the anisotropic normal $\nu_{\gamma}$ lies in $\Sigma$. Then
\begin{align*}
  G(\nu_{\gamma})(\nu_{\gamma},\nu_{\gamma}) =& 1,\quad G(\nu_{\gamma})(\nu_{\gamma},U)=0,\quad \mathrm{for} ~U\in TM, \\
   Q(\nu_{\gamma})(\nu_{\gamma},U,V)=&0,\qquad \mathrm{for} ~U,V\in \mathbb{R}^{n+1}. 
\end{align*}
Thus $\nu_{\gamma}$ is perpendicular to $TM$ with respect to the metric $G(\nu_{\gamma})$. This induces a Riemannian metric $\hat{g}$ on $M$ from $(\mathbb{R}^{n+1},G)$ by
\begin{equation}\label{s2:g-hat}
\hat{g}(x):=G(\nu_{\gamma}(x))\big|_{T_xM},\quad x\in M. 
\end{equation}
The second fundamental form of $(M,\hat{g})\subset (\mathbb{R}^{n+1},G)$ is defined by
\begin{equation}\label{s2:sff}
  \hat{h}_{ij}=-G(\nu_{\gamma})(\nu_{\gamma},\partial_i\partial_jX),
\end{equation}
where $X$ is the position vector of the hypersurface $M$ in $\mathbb{R}^{n+1}$. Then the anisotropic Weingarten map has the expression $\mathcal{W}_{\gamma}=(\hat{h}_i^j)=(\sum_k\hat{g}^{jk}\hat{h}_{ik})$.  The anisotropic Weingarten formula says that
\begin{equation}\label{s2:AWeingart}
  \partial_i\nu_F=\hat{h}_{i}^k\partial_kX.
\end{equation}
We can state the anisotropic analogue of the Gauss equation and Codazzi equation:
\begin{align}\label{s2:Gaus1}
  \hat{R}_{ijk\ell} =& ~\hat{h}_{ik}\hat{h}_{j\ell}-\hat{h}_{i\ell}\hat{h}_{jk}+\hat{\nabla}_{\ell}A_{jki}  -\hat{\nabla}_kA_{j\ell i}\nonumber\\
 & \qquad +\hat{g}^{pm}A_{jkp}A_{m\ell i}-\hat{g}^{pm}A_{j\ell p}A_{mki},\\
  \hat{\nabla}_k\hat{h}_{ij}+&\hat{h}_j^\ell{A}_{\ell ki}~=~\hat{\nabla}_j\hat{h}_{ik}+\hat{h}_k^\ell{A}_{\ell ji},\nonumber
\end{align}
where $\hat{\nabla}$ and $\hat{R}$ are the Levi-Civita connection and Riemannian curvature tensor of $\hat{g}$ respectively, and $A$ is a $3$-tensor
\begin{equation*}
{A}_{ijk}=-\frac 12\left(\hat{h}_i^\ell Q_{jk\ell}+\hat{h}_j^\ell Q_{i\ell k}-\hat{h}_k^\ell Q_{ij\ell}\right),
\end{equation*}
with $Q_{ijk}=Q(\nu_{\gamma})(\partial_iX,\partial_jX,\partial_kX)$.  Note that by definition \eqref{s2:Q-def}, $Q$ is totally symmetric in all three indices. Hence the tensor ${A}_{ijk}$ is symmetric for the first two indices.

When the hypersurface $M$ is the Wulff shape $\Sigma$, the anisotropic normal $\nu_{\gamma}$ is just the position vector and the anisotropic principal curvatures are all equal to $1$. Then $A_{ijk}=-Q_{ijk}/2$ which is symmetric in all indices.  We use the notations $\bar{g}$, $\bar{\nabla}$ and $\bar{R}$ to denote the induced metric on the Wulff shape $\Sigma$ from $(\mathbb{R}^{n+1},G)$, and its Levi-Civita connection and curvature tensor, respectively. Then $\bar{\nabla}_\ell Q_{ijk}$ is totally symmetric in four indices (Prop.~2.2 in \cite{xia-2}). The Gauss equation \eqref{s2:Gaus1} is simplified as
 \begin{equation}\label{s2:gauss-2}
  \bar{R}_{ijk\ell}=\bar{g}_{ik}\bar{g}_{j\ell}-\bar{g}_{i\ell}\bar{g}_{jk}+\frac 14\bar{g}^{pq}Q_{i\ell p}Q_{qjk}-\frac 14\bar{g}^{pq}Q_{ikp}Q_{j\ell q}.
\end{equation}

\subsection{Mixed volumes and Alexandrov--Fenchel inequalities}\label{sec:2-3}
Let $W$ be the Wulff shape defined in \S \ref{sec:2-1}, which is a smooth strictly convex body in $\mathbb{R}^{n+1}$ with boundary $\Sigma=\partial W$ and with the support function given by a smooth function $\gamma\in C^{\infty}(\mathbb{S}^n)$. Associated with $W$,  the anisotropic distance in $\mathbb{R}^{n+1}$ is defined by
\begin{equation}\label{s2:dW}
d_W(x,y)={\gamma}^0(y-x),\qquad x,y\in \mathbb{R}^{n+1},
\end{equation}
where $\gamma^0$ is the dual norm defined in \eqref{s2:gam0}.  Equivalently, we have
\begin{equation*}
d_W(x,y)=\inf \left\{r>0:y\in x+rW\right\}.
\end{equation*}
This distance is not symmetric (unless $W$ is centrosymmetric), but satisfies the triangle inequality
\begin{equation*}
  d_W(x,z)\leq d_W(x,y)+d_W(y,z)
\end{equation*}
with the equality if and only if $x,y,z$ are collinear in that order.

Let $K$ be a convex body in $\R^{n+1}$. For any $y\in \mathbb{R}^{n+1}$ we define the anisotropic distance of $y$ to $K$ by
\begin{equation}\label{s2:dWK}
d_W(K,y)=\inf\left\{d_W(x,y):x\in K\right\}.
\end{equation}
For any $\varepsilon>0$, we define the $\varepsilon$-parallel set of $K$ by
\begin{equation*}
  K^{\varepsilon}:=\left\{x\in \mathbb{R}^{n+1}~|~d_W(K,x)\leq\varepsilon\right\}
\end{equation*}
which is also equal to the Minkowski sum of the two convex bodies $K$ and $W$
\begin{equation*}
 K^\varepsilon= K+\varepsilon W=\left\{x+\varepsilon y~|~x\in K,y\in W\right\}.
\end{equation*}
The mixed volumes of a convex body $K$ relative to $W$ are defined as the coefficients of the volume of the above $\varepsilon$-parallel set of $K$, which is a polynomial of $\varepsilon$:
\begin{equation}\label{s2:Ste}
  \mathrm{Vol}(K^{\varepsilon})=\frac 1{n+1}\sum_{k=0}^{n+1}\varepsilon^{n+1-k}\binom{n+1}kV_{k}(K,W).
\end{equation}
Here the left-hand side of \eqref{s2:Ste} is the volume of $K^{\varepsilon}$ with respect to the Euclidean metric, i.e., the $(n+1)$-dimensional Lebesgue measure (or $(n+1)$-dimensional Hausdorff measure) of $K^{\varepsilon}$.  The formula \eqref{s2:Ste} is called the Steiner formula for the convex body $K$. In particular,
\begin{align*}
  V_{n+1}(K,W)=&(n+1)\mathrm{Vol}(K), \\
 V_0(K,W)=&(n+1)\mathrm{Vol}(W).
\end{align*}
If $\partial K$ is smooth,  we can express the mixed volumes in terms of the anisotropic curvature integrals
\begin{equation}\label{def-quermass}
   V_{n+1-k}(K,W)=\int_{\partial K}{E}_{k-1}d\mu_{\gamma},\quad k=1,\dots,n,
\end{equation}
where ${E}_{k-1}$ is the $(k-1)$st anisotropic mean curvature of $\partial K$ relative to $W$ and $d\mu_{\gamma}={\gamma}(\nu)d\mu$ denotes the anisotropic area form.

Moreover, the mixed volumes satisfy the following Alexandrov--Fenchel inequalities:
\begin{thm}[{see \cite[\S 7]{Schn}}]
For any convex body $K$ in $\mathbb{R}^{n+1}$, there holds
\begin{equation}\label{AF-00}
  V_{n+1-j}^{k-i}(K,W)~\geq~V_{n+1-i}^{k-j}(K,W)V_{n+1-k}^{j-i}(K,W)
\end{equation}
for all $0\leq i<j<k\leq n+1$.  In particular, for $k=n+1$, \eqref{AF-00} reduces to
\begin{equation}\label{AF-k=n+1}
  V_{n+1-j}^{n+1-i}(K,W)~\geq~V_{n+1-i}^{n+1-j}(K,W)V_0(K,W)^{j-i}
\end{equation}
for $0\leq i<j<n+1$. Furthermore, the equality of \eqref{AF-k=n+1} holds if and only if $K$ is a scaled translate of $W$.
\end{thm}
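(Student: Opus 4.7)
The plan is to treat the inequality \eqref{AF-00} as the classical Alexandrov--Fenchel inequality specialized to mixed volumes involving only two convex bodies, namely $K$ and $W$. Since the volume in \eqref{s2:Ste} is the ordinary Euclidean $(n+1)$-dimensional Lebesgue measure (the anisotropy enters only through the choice of $W$), the entire classical Euclidean mixed-volume theory applies directly. First I would record the polynomial structure: combining \eqref{s2:Ste} with the $(n+1)$-homogeneity of volume yields $\mathrm{Vol}((1-t)K+tW)=\frac{1}{n+1}\sum_{k=0}^{n+1}\binom{n+1}{k}(1-t)^{k}t^{n+1-k}V_k(K,W)$, so that all of the mixed volumes $V_k(K,W)$ appear as coefficients of a single polynomial in $t\in[0,1]$.

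Next I would reduce \eqref{AF-00} to the log-concavity of the sequence $\ell\mapsto V_\ell(K,W)$. Writing $m=n+1-j$, $p=n+1-i$, $q=n+1-k$ so that $q<m<p$, the inequality \eqref{AF-00} becomes $V_m^{p-q}(K,W)\geq V_p^{m-q}(K,W)V_q^{p-m}(K,W)$, which is precisely three-point log-concavity of the sequence. By a standard interpolation argument this is equivalent to the consecutive-index inequality $V_m(K,W)^2\geq V_{m-1}(K,W)V_{m+1}(K,W)$ for every $1\leq m\leq n$. This consecutive inequality is the Alexandrov--Fenchel inequality applied to the mixture with $m$ copies of $K$ and $n+1-m$ copies of $W$. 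I would follow Alexandrov's argument: approximate $K$ in the Hausdorff sense by smooth strictly convex bodies, express $V_m(K,W)$ as a symmetric bilinear form in the support functions of the two bodies, and analyze the signature of that form via the classical mixed-discriminant identity. Weak continuity of mixed volumes then extends the resulting inequality to all convex $K$.

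The special case \eqref{AF-k=n+1} is obtained from \eqref{AF-00} by setting $k=n+1$, since then $V_{n+1-k}=V_0=(n+1)\mathrm{Vol}(W)$. For the equality characterization, the base case $i=0$, $j=1$ of \eqref{AF-k=n+1} reads $V_n(K,W)^{n+1}\geq V_{n+1}(K,W)^n V_0(K,W)$, which is the anisotropic Minkowski inequality; it follows directly by differentiating the Brunn--Minkowski inequality $\mathrm{Vol}(K+\varepsilon W)^{1/(n+1)}\geq \mathrm{Vol}(K)^{1/(n+1)}+\varepsilon\,\mathrm{Vol}(W)^{1/(n+1)}$ at $\varepsilon=0$, and its equality case forces $K$ to be a scaled translate of $W$ by the classical rigidity in Brunn--Minkowski. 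For general $0\leq i<j<n+1$, equality in \eqref{AF-k=n+1} propagates through the chain of consecutive log-concavity inequalities, and the equality analysis of Alexandrov--Fenchel restricted to the two-body mixture $(K,W)$ then forces $K$ to be a scaled translate of $W$.

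The main obstacle is clearly the consecutive-index inequality $V_m(K,W)^2\geq V_{m-1}(K,W)V_{m+1}(K,W)$, which is the core content of the Alexandrov--Fenchel theorem. Its proof requires a genuinely nontrivial signature analysis of a symmetric bilinear form built from mixed discriminants of support functions, and relies on the positivity of the principal curvatures of the smooth approximating bodies. The anisotropy does not add real difficulty here, because $V_\ell(K,W)$ is an ordinary Euclidean mixed volume of the two bodies $K$ and $W$; the classical isotropic machinery as presented in \cite[\S 7]{Schn} applies verbatim, and I would appeal to that reference rather than reproduce the long proof.
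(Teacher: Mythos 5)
The paper gives no proof of this theorem at all; it simply cites Schneider's treatise, and your proposal is precisely the elaboration of why that citation applies. Your key framing is the right one and worth stating explicitly: because $\mathrm{Vol}$ in \eqref{s2:Ste} is ordinary Lebesgue measure, the quantities $V_k(K,W)$ are (up to the harmless normalization $n+1$) the classical Euclidean mixed volumes $V(K[k],W[n+1-k])$, so the anisotropy plays no special role here and the entire statement is the two--body specialization of the Alexandrov--Fenchel inequality as developed in \cite[\S 7]{Schn}. Your reduction of \eqref{AF-00} to log-concavity of $\ell\mapsto V_\ell(K,W)$, then to the consecutive-index AF inequality $V_m^2\geq V_{m-1}V_{m+1}$ via the index substitution $m=n+1-j$, $p=n+1-i$, $q=n+1-k$, is correct, and the homogeneity/Steiner computation checks out.

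The only place you are a bit loose is the equality characterization for general $0\leq i<j<n+1$. Equality in \eqref{AF-k=n+1} at such a triple forces, by concavity of $\log V_\ell$ with positive terms, all the consecutive equalities $V_\ell^2=V_{\ell-1}V_{\ell+1}$ for $0<\ell<n+1-i$; but passing from these to ``$K$ is a scaled translate of $W$'' still requires the delicate AF equality analysis for the pair $(K,W)$, and the clean characterization hinges on $W$ being smooth and strictly convex (which the Wulff-shape hypothesis guarantees). This is exactly what Schneider's equality discussion in \cite[\S 7]{Schn} supplies, so invoking it (as you do for the inequality itself) rather than only differentiating Brunn--Minkowski at the endpoint $i=0$, $j=1$ would close the gap. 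With that caveat, your sketch matches the paper's approach, which is to defer the content to Schneider.
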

We note that the inequality \eqref{AF-k=n+1} implies that the isoperimetric ratio $\mathcal{I}_\ell(K,W)$ defined in \eqref{s1:Iso-def} satisfies
\begin{equation}\label{s2:Iso}
\mathcal{I}_\ell(K,W)\geq 1,\quad 1\leq \ell\leq n,
\end{equation}
with the equality holding if and only if $K$ is homothetic to $W$.

\section{Anisotropic curvature measures}\label{sec:ACM}
In this section, we introduce the anisotropic curvature measures for convex bodies in $\R^{n+1}$. For that purpose, we first define the generalized anisotropic principal curvatures on the unit anisotropic normal bundle of a convex body $K$. Then we calculate the volume of the local $\varepsilon$-parallel set of $K$, and express it as a polynomial of degree $n+1$ in $\varepsilon$. The anisotropic curvature measures will be defined as the coefficients of the expression. At the end of this section, we will prove that such defined anisotropic curvature measures are weakly continuous with respect to the Hausdorff distance.

\subsection{Generalized anisotropic principal curvatures}\label{sec:3-1}

Let $W$ be a Wulff shape in $\R^{n+1}$ with the support function ${\gamma}\in C^{\infty}(\mathbb{S}^n)$.  Associated with $W$,  the anisotropic distance function $d_W(\cdot,\cdot)$ in $\mathbb{R}^{n+1}$ is defined by \eqref{s2:dW}. Let $K$ be a convex body in $\R^{n+1}$. For any $y\in \mathbb{R}^{n+1}$ the anisotropic distance of $y$ to $K$ is defined by
\begin{equation*}
d_W(K,y)=\inf\{d_W(x,y):x\in K\}.
\end{equation*}
Since $K$ is convex,  for any $p\in \R^{n+1}$ there exists a unique $p^*\in K$ so that $d_W(K,p)=d_W(p^*,p)$. If $p\in \mathbb{R}^{n+1}\setminus K$, the nearest point $p^*$ lies on the boundary of $K$.
\begin{defn}
For any $p\in \mathbb{R}^{n+1}$, we call $f_K(p):=p^*$ the \emph{anisotropic metric projection} of $p$ on $K$ with respect to $W$. For any $p\in \mathbb{R}^{n+1}\setminus K$, we denote by
\begin{equation*}
  v_K(p):=\dfrac{p-f_K(p)}{\gamma^0(p-f_K(p))}
\end{equation*}
the vector pointing from the nearest point $f_K(p)$ to $p$ with unit $\gamma^0$-norm.  Then $v_K(p)$ lies on the boundary $\Sigma$ of the Wulff shape $W$.
\end{defn}

\begin{defn}
Define $F_K:\R^{n+1}\setminus K\rightarrow T\R^{n+1}$ by
\begin{equation}
F_K(p):=\left(f_K(p),v_K(p)\right)~\in \mathbb{R}^{n+1}\times\Sigma.
\end{equation}
We call the image $F_K(\R^{n+1}\setminus K)=:\mathcal{N}_K$ the unit anisotropic normal bundle of $K$.
\end{defn}

We have the following analytic properties for the maps $f_K$ and $F_K$.
\begin{lem}\label{s3:1-lem1}
The function $f_K$ is Lipschitz continuous, and so is $F_K$.
\end{lem}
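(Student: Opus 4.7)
The plan is first to prove $f_K$ is globally Lipschitz on $\R^{n+1}$, then to deduce that $F_K$ is locally Lipschitz on $\R^{n+1}\setminus K$.

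For $f_K$, I would exploit the variational characterization of the anisotropic projection. Set $\Psi(z) := \tfrac{1}{2}(\gamma^0(z))^2$. The assumption in \S\ref{sec:2-1} that the $2$-homogeneous extension of $\gamma^2$ is uniformly convex transfers by polar duality to $(\gamma^0)^2$: since $\Sigma = \partial W$ is smooth with everywhere positive (Euclidean) principal curvatures, $\gamma^0$ is smooth on $\R^{n+1}\setminus\{0\}$, and a short computation on $\Sigma$ (using the Euler identity and the $0$-homogeneity of $D\gamma^0$) gives constants $0 < c_0 \leq C_0$ with $c_0 I \leq D^2\Psi \leq C_0 I$ on $\R^{n+1}\setminus\{0\}$; by $0$-homogeneity of $D^2\Psi$ these bounds are global. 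Since $\Psi(p-\cdot)$ is strictly convex, coercive, and $K$ is closed convex, $f_K(p)$ is the unique minimizer, and by one-sided directional differentiation it is characterized by the variational inequality
\begin{equation*}
\langle D\Psi(p - f_K(p)),\ y - f_K(p)\rangle \leq 0 \qquad \text{for all } y \in K,
\end{equation*}
which also holds trivially for $p\in K$ (there $f_K(p)=p$ and $D\Psi(0)=0$).

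Given $p,q\in\R^{n+1}$, I set $u = p - f_K(p)$ and $v = q - f_K(q)$, apply the variational inequality for $p$ with test point $y = f_K(q)$ and for $q$ with $y = f_K(p)$, and add the two resulting inequalities. Using $f_K(q) - f_K(p) = (q-p) + (u-v)$, the sum rearranges to
\begin{equation*}
\langle D\Psi(u) - D\Psi(v),\ u - v\rangle \leq \langle D\Psi(v) - D\Psi(u),\ p - q\rangle.
\end{equation*}
The strong convexity bound gives a lower estimate $c_0 |u-v|^2$ for the left-hand side, while the upper Hessian bound yields $|D\Psi(u) - D\Psi(v)| \leq C_0 |u-v|$ for the right-hand side, so that $c_0|u-v|^2 \leq C_0|u-v|\cdot|p-q|$; dividing (or noting the trivial case $u=v$) gives $|u - v| \leq (C_0/c_0)|p - q|$. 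Since $f_K(p) - f_K(q) = (p - q) - (u - v)$, the triangle inequality produces the global Lipschitz bound $|f_K(p) - f_K(q)| \leq (1 + C_0/c_0)|p - q|$.

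For $F_K(p) = (f_K(p), v_K(p))$, the second component is $v_K(p) = (p - f_K(p))/\gamma^0(p - f_K(p))$. Because $p\mapsto p - f_K(p)$ is Lipschitz by the previous step and $z\mapsto z/\gamma^0(z)$ is smooth on $\R^{n+1}\setminus\{0\}$, the composition is Lipschitz on any subset of $\R^{n+1}\setminus K$ on which $d_W(K,\cdot)=\gamma^0(p-f_K(p))$ is bounded below by a positive constant; that is, $F_K$ is locally Lipschitz on $\R^{n+1}\setminus K$, as claimed. The main technical point is the polar-duality transfer of uniform convexity from $\gamma^2$ to $(\gamma^0)^2$; once this two-sided Hessian bound is in hand the rest is the standard projection argument adapted to the non-symmetric distance $d_W$.
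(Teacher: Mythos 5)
Your proposal is correct but takes a genuinely different route from the paper. The paper's proof is a case-by-case geometric argument: for $p,q$ both outside $K$ it differentiates $\gamma^0(p-\rho(t))$ along the segment $\rho(t)$ from $f_K(p)$ to $f_K(q)$, obtains the two one-sided conditions $\langle D\gamma^0(p-f_K(p)),e\rangle\le 0$ and $\langle D\gamma^0(q-f_K(q)),e\rangle\ge 0$, and then argues about the (generally non-half-space) cones $C(e)=\{\xi\ne 0:\langle D\gamma^0(\xi),e\rangle\ge 0\}$ and translates of their boundaries to get a Lipschitz constant $c_0(e)$ depending on the direction $e$. Your argument instead packages the first-order condition as the variational inequality $\langle D\Psi(p-f_K(p)),\,y-f_K(p)\rangle\le 0$ for $\Psi=\tfrac12(\gamma^0)^2$, symmetrizes in $p,q$, and then invokes strong monotonicity of $D\Psi$ (from the lower Hessian bound) and Lipschitzness of $D\Psi$ (from the upper Hessian bound). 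This is the standard strongly-monotone-projection argument and handles all three of the paper's cases uniformly, with the explicit constant $1+C_0/c_0$; the extra ingredient you need, and correctly identify, is the polar-duality transfer of the two-sided Hessian bound from $\gamma^2$ to $(\gamma^0)^2$. One caveat worth spelling out is that $D^2\Psi$ lives only on $\R^{n+1}\setminus\{0\}$, so the strong monotonicity and Lipschitz estimates for $D\Psi$ through the origin need the small approximation/continuity step (perturb segments off $0$ and pass to the limit, using $D\Psi(0)=0$ and continuity of $D\Psi$); you implicitly do this but it deserves a sentence. There is also a harmless sign typo in the rearranged inequality (it should read $\langle D\Psi(u)-D\Psi(v),\,u-v\rangle\le\langle D\Psi(u)-D\Psi(v),\,p-q\rangle$), which does not affect the Cauchy--Schwarz estimate that follows. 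Finally, for $F_K$ your conclusion of local Lipschitz continuity on $\R^{n+1}\setminus K$ is the right reading of the lemma (the paper itself later refers to $f_K$ as ``locally Lipschitz'' and uses bi-Lipschitzness of $F_K$ only on $\partial K^\varepsilon$); global Lipschitzness of $F_K$ is not claimed and would in fact fail near $\partial K$.
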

\proof
Take any two points $p,q\in \R^{n+1}$ with $p\neq q$. There are three cases to be considered.

Case 1: $p\in K$ and $q\in K$. In this case we have
\begin{equation*}
|f_K(p)-f_K(q)|=|p-q|\leq |p-q|.
\end{equation*}

Case 2: $p\in K$ and $q\in K^c=\mathbb{R}^{n+1}\setminus K$. So $f_K(p)=p\in K$ and by the definition of $f_K$ we have
\begin{equation*}
d_W(f_K(q),q)\leq d_W(p,q).
\end{equation*}
Since $\gamma$ is a smooth positive function on $\mathbb{S}^n$, the anisotropic distance function $d_W$ in $\mathbb{R}^{n+1}$ is equivalent to the Euclidean distance, i.e., there exists a positive constant $C$ depending only on $\gamma$ such that
\begin{equation*}
  \frac 1C|x-y|\leq d_W(x,y)\leq C|x-y|
\end{equation*}
for any two points $x,y\in \mathbb{R}^{n+1}$.  Therefore, we have
\begin{align*}
|f_K(p)-f_K(q)|& =|f_K(q)-f_K(p)|\\
&\leq Cd_W(f_K(q),f_K(p))\\
&\leq C(d_W(f_K(q),q)+d_W(q,f_K(p)))\\
&\leq C(d_W(p,q)+d_W(q,p))\\
&\leq 2C^2|p-q|.
\end{align*}
\begin{figure}
\centering
\begin{tikzpicture}[scale=1.2]
\path[fill=green!5,draw]
(-3,-3.4) [rounded corners=10pt] -- (-3.6,-2.6) -- (-3.5,-1.4)
 [rounded corners=12pt]--(-2.8,-0.5)
 [rounded corners=12pt]--(-1.7,-0.6)
 [rounded corners=12pt]--(-0.3,-1.2)
 [rounded corners=9pt]--(-0.1,-1.9)
  [rounded corners=12pt]--(-0.4,-2.7)--(-1.9,-3.6)
  [rounded corners=10pt]--cycle;
\node at (-1.2,-2.6) {$K$};
\draw[blue,thick] (-4.5,-0.4) circle (0.02) node[left] {$p$};
\draw[dashed] (-4.5,-0.4) -- (-3.5,-1.6);
\draw[blue,thick] (-3.5,-1.6) circle (0.02) node[left] {$f_K(p)$};
\draw[blue,thick] (-1.35,0.6) circle (0.02) node[left] {$q$};
\draw[dashed] (-1.35,0.6) -- (-1.75,-0.62);
\draw[blue,thick] (-1.75,-0.62) circle (0.02) node[right] {$f_K(q)$};
\draw[red](-3.5,-1.6) -- (-1.75,-0.62);
\node at (-2.3,-1.2) {$\rho(t)$};
\draw[red,thick] (-2.66,-1.13) circle (0.02);
\draw[dashed, red] (-4.5,-0.4) -- (-2.66,-1.13);
\end{tikzpicture}
\caption{Lipschitz continuity of $f_K$}
\end{figure}
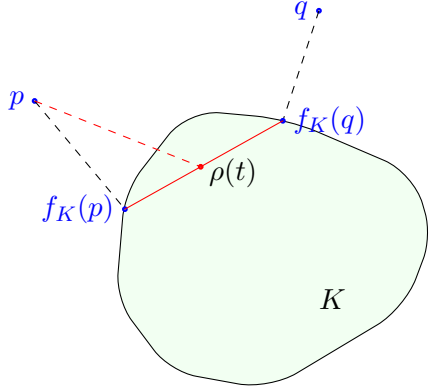

Case 3: $p\in K^c$ and $q\in K^c$. In this case we adapt the idea in the proof of  \cite[Theorem~1.2.1]{Schn} and consider the line segment from $f_K(p)$ to $f_K(q)$, which is given by
\begin{equation*}
\rho(t)=f_K(p)+te,
\end{equation*}
where $e=(f_K(q)-f_K(p))/|f_K(q)-f_K(p)|$ is the unit vector pointing from $f_K(p)$ to $f_K(q)$ and $0\leq t\leq |f_K(q)-f_K(p)|$. Since $K$ is convex, $\rho(t)$ lies in $K$ for all $0\leq t\leq |f_K(q)-f_K(p)|$. The definitions of $f_K$ and $d_W$ imply that $\gamma^0(p-f_K(p))\leq \gamma^0(p-\rho(t))$ for $0\leq t\leq |f_K(q)-f_K(p)|$. It follows that
\begin{align*}
0&\leq  \frac{d}{dt}\bigg|_{t=0} ( \gamma^0(p-\rho(t)))=-\langle D\gamma^0(p-f_{K}(p)),e\rangle.
\end{align*}
Similarly, we have
\begin{align*}
0&\geq  \frac{d}{dt}\bigg|_{t=|f_K(q)-f_K(p)|} ( \gamma^0(q-\rho(t)))=-\langle D\gamma^0(q-f_{K}(q)),e\rangle.
\end{align*}

For a fixed unit vector $e\in \mathbb{S}^n$, let us consider the set
\begin{equation*}
C(e):=\{\xi\in \R^{n+1}\setminus \{0\}: \langle D\gamma^0(\xi),e\rangle \geq 0\}.
\end{equation*}
Since $D\gamma^0(t\xi)=D\gamma^0(\xi)$ for any $t>0$, we know that $C(e)$ is a cone and so it is determined by its subset $C(e)\cap \pt W$. Then it is easy to see that $C(e)\cap \pt W$ consists of all the points $\xi \in \pt W$ such that the isotropic normal $\nu(\xi)$ ($=D\gamma^0(\xi)$) of $\pt W$ at $\xi$ satisfies $\langle \nu(\xi),e\rangle\geq 0$. It follows that the cone $C(e)$ is determined only by $e$. (Note that generally $C(e)$ is not a half-space.) Moreover, $e$ lies in the interior of $C(e)$ since $\langle D\gamma^0(e),e\rangle =\gamma^0(e)>0$.

Back to our setting, we see that $q$ lies in the set $f_K(q)+C(e)$ with vertex $f_K(q)$, while $p$ lies in the closure of the set $f_K(p)+C(e)^c$. Since the boundary $Bd(q,e)$ of $f_K(q)+C(e)$ is the translation of the boundary $Bd(p,e)$ of $f_K(p)+C(e)^c$ along the line segment $\rho(t)$, $0\leq t\leq |f_K(q)-f_K(p)|$, there exists a constant $c_0=c_0(e)$ such that
\begin{equation*}
|f_K(p)-f_K(q)|= c_0(e) \mathrm{dist}(Bd(p,e),Bd(q,e)).
\end{equation*}
Noting that $\mathrm{dist}(Bd(p,e),Bd(q,e))\leq |p-q|$, we derive
\begin{equation*}
|f_K(p)-f_K(q)|\leq c_0(e) |p-q|.
\end{equation*}
Finally since $e\in \SS^n$, the Lipschitz constant of $f_K$ in this case can be taken as $\max_{e\in \SS^n} c_0(e)$.

In summary, $f_K$ is Lipschitz continuous with Lipschitz constant $\max\{1,2C^2,\max_{e\in \SS^n} c_0(e)\}$.
\endproof

We can equip the unit anisotropic normal bundle $\mathcal{N}_K$ of a convex body $K$ in $\mathbb{R}^{n+1}$ with a new metric $\tilde{G}((p,v(p)))(\cdot,\cdot)$ for $(p,v(p))\in \mathcal{N}_K$, using the formulation in \cite{And01,Xia13}.
\begin{defn}
Let $(p,v(p))\in T\R^{n+1}\setminus \{0\}$ be any element such that $p\in \R^{n+1}$ and $v(p)\in T_p\R^{n+1}\setminus \{0\}$. Then any element in $T_{(p,v(p))}(T\R^{n+1}\setminus \{0\})$ is a pair of vectors $(u,w)$. We introduce a metric $\tilde{G}$ on $T\R^{n+1}\setminus \{0\}$ such that
\begin{equation}
\tilde{G}((p,v(p)))((u,w),(u,w))=G(v(p))(u,u)+G(v(p))(w,w),
\end{equation}
where $G(v(p))(\cdot,\cdot)$  is the metric defined in \eqref{s2:G}. Note that $\mathcal{N}_K\subset T\R^{n+1}\setminus \{0\}$. Then $\mathcal{N}_K$ will be equipped with the metric induced from $\tilde{G}$.
\end{defn}

\begin{rem}
In the isotropic case (i.e., ${\gamma}^0(v)^2=|v|^2_{\delta_{ij}}$), the metric $\tilde{G}$ on $T\R^{n+1}\setminus \{0\}$ is well-known and called Sasaki metric.
\end{rem}

Let $K$ be a convex body in $\mathbb{R}^{n+1}$. For $\varepsilon>0$, let
\begin{equation*}
  K^\varepsilon=\{x\in \mathbb{R}^{n+1}:~d_W(K,x)\leq \varepsilon\}
\end{equation*}
be the anisotropic parallel body of $K$ with anisotropic distance $\varepsilon$. Then $\pt K^\varepsilon$ is a $C^{1,1}$ hypersurface. In particular, it is $C^2$ a.e., and the anisotropic principal curvatures $\kappa_i(p)$ of $\pt K^\varepsilon$ are well defined for almost every point $p\in \pt K^\varepsilon$. For $p\in \partial K^{\varepsilon}$, we have
\begin{equation*}
  p=f_K(p)+\varepsilon v_K(p),\quad v_{\pt K^{\varepsilon}}(p)=v_K(p),
\end{equation*}
where $v_{\pt K^{\varepsilon}}(p)$ denotes the anisotropic normal of $\pt K^\varepsilon$ at $p$. Since $f_K$ is locally Lipschitz, the analytical properties of $\mathcal{N}_K$ can be derived as in \cite{Wal76}. The restriction of the map $F_K$ on the boundary of $K^{\varepsilon}$
\begin{equation*}
  F_K|_{\pt K^\varepsilon}:\pt K^\varepsilon\rightarrow \mathcal{N}_K
\end{equation*}
is a bi-Lipschitz homeomorphism. This follows from the fact that the inverse of $F_K|_{\partial K^{\varepsilon}}$ is given by $(F_K|_{\partial K^{\varepsilon}})^{-1} (x,v)=x+\varepsilon v\in \partial K^{\varepsilon}$. It follows that $\mathcal{N}_K$ is a Lipschitz submanifold of $T\R^{n+1}$ of dimension $n$.

Since the anisotropic normal vector of a point of the boundary $\partial K$ is in general not unique, we adapt the idea in \cite{Wal76,Zah86,Koh91} in the isotropic case to define the generalized anisotropic principal curvatures associated with $K$ living on the anisotropic normal bundle $\mathcal{N}_K$ rather than on $\partial K$. Since $f_K$ and $F_K$ are Lipschitz maps, they are differentiable almost everywhere on $\mathbb{R}^{n+1}\setminus K$, by Rademacher's theorem. Let $\mathscr{D}_K\subset \mathbb{R}^{n+1}\setminus K$ be the set of points where $f_K$ and $F_K$ are differentiable. Note that if $y\in \mathscr{D}_K$, then the whole ray $\{f_K(y)+\rho v_K(y):~\rho>0\}$ belongs to $\mathscr{D}_K$. It follows that there exists a set $\tilde{\mathcal{N}}_K\subset \mathcal{N}_K$ with $\mathcal{H}^{n}(\mathcal{N}_K\setminus\tilde{\mathcal{N}}_K)=0$ such that $v_K(\cdot)$ is differentiable at $x+\rho v$ for all $(x,v)\in \tilde{\mathcal{N}}_K$ and all $\rho>0$. We fix a point $(x,v)\in \tilde{\mathcal{N}}_K$ and consider the point $p=x+\varepsilon v\in \partial K^{\varepsilon}$.  The anisotropic principal curvatures of $\pt K^{\varepsilon}$ are well defined at the point $p\in \pt K^{\varepsilon}$ for any $\varepsilon>0$. Consider the line segment $\sigma:[0,\varepsilon]\to \overline{f_K(p)p}$ connecting $\sigma(0)=f_K(p)=x$ and $\sigma(\varepsilon)=p$, which can be written as $\sigma(t)=p-(\varepsilon-t)v$. Since
\begin{equation*}
  v_{\pt K^t}(\sigma(t))=v_{\pt K^{\varepsilon}}(\sigma(\varepsilon)),\quad \forall~t\in (0,\varepsilon),
\end{equation*}
a differentiation gives
\begin{equation}\label{s3:1-b}
  dv_{\pt K^t}\big|_{\sigma(t)}\big(\left(1+(t-\varepsilon)\kappa_i(p)\right)e_i\big)=\kappa_i(p)e_i,\quad i=1,\dots,n,
\end{equation}
where $\{e_i, i=1,\dots,n\}$ denotes an orthonormal basis of the anisotropic Weingarten map at $p\in \pt K^\varepsilon$ with respect to the induced metric from $G$. The equation \eqref{s3:1-b} means that the parallel displacement of $e_i$ along $\sigma(t)$ yields an orthonormal basis of the anisotropic Weingarten map of $\partial K^t$ at $\sigma(t)$ for $t\in (0,\varepsilon)$, and the corresponding eigenvalues at $\sigma(t)$ are related to $\kappa_i(p)$ by
\begin{equation}\label{s3:1-k}
  \kappa_i(\sigma(t))=\frac{\kappa_i(p)}{1+(t-\varepsilon)\kappa_i(p)}.
\end{equation}
If $\kappa_i(p)\neq 1/{\varepsilon}$, the equation is equivalent to
\begin{equation}\label{s3:1-k2}
  \frac{\kappa_i(\sigma(t))}{1-t\kappa_i(\sigma(t))}=\frac{\kappa_i(p)}{1-\varepsilon\kappa_i(p)}.
\end{equation}
Since both sides of \eqref{s3:1-k2} do not depend on $t$, we can define their value as $\kappa_i(x,v)$ for $(x,v)\in \tilde{\mathcal{N}}_K$, which we call the generalized anisotropic principal curvatures of $K$.
\begin{defn}\label{s3:1-def1}
Let $K$ be a convex body in $\mathbb{R}^{n+1}$. We define the generalized anisotropic principal curvatures of $K$ at $(x,v)\in \tilde{\mathcal{N}}_K\subset \mathcal{N}_K$ by
\begin{equation}\label{s3:kap-v}
  \kappa_i(x,v):=\left\{\begin{array}{cc}
                          \dfrac{\kappa_i(p)}{1-\varepsilon\kappa_i(p)}, &  \mathrm{if}~\kappa_i(p)\neq 1/\varepsilon, \\
                          +\infty, & \mathrm{if}~\kappa_i(p)=1/\varepsilon ,
                        \end{array}\right.
\end{equation}
where $p=x+\varepsilon v\in \partial K^{\varepsilon}$. Note that the right-hand side of \eqref{s3:kap-v} does not depend on $\varepsilon$.
\end{defn}
\begin{rem}
Alternatively, we have
\begin{equation*}
\kappa_i(x,v):=\lim_{t\rightarrow 0+}\kappa_i(\sigma(t))\in \R\cup \{+\infty\},
\end{equation*}
where $\sigma(t)=x+tv\in \partial K^t$. If $\partial K$ is $C^2$ then $\kappa_i(x,v)$ coincide with the ordinary anisotropic principal curvatures of $\partial K$. In the following, without loss of generality, we will write
\begin{equation*}
  \kappa_i(v)=\kappa_i(x,v)
\end{equation*}
and write $v\in \mathcal{N}_K$  instead of $(x,v)\in \mathcal{N}_K$ for the simplicity of the notation. If the base point of $v$ needs to be emphasized, we use the notation $x=\Pi(v)\in\partial K$ to denote the base point of $v$.
\end{rem}

Relabel $\kappa_i(v)$ such that $\kappa_1(v)\geq \cdots \geq \kappa_n(v)$. For integer $r=0,1,\dots,n$, define
\begin{equation*}
\tilde{\mathcal{N}}_{K,r}:=\{\;v\in \tilde{\mathcal{N}}_K\;|\; \kappa_1(v)=\cdots=\kappa_r(v)=+\infty\text{ and }\kappa_{r+1}(v),\dots,\kappa_n(v)<+\infty\;\}.
\end{equation*}

The following proposition can be derived using the equations \eqref{s3:1-b} and \eqref{s3:1-k} as in Theorem 2.4 of \cite{Koh91}. We omit the proof here.
\begin{prop}\label{s3:prop1}
Let $v\in \tilde{\mathcal{N}}_{K}$ and $x=\Pi(v)\in \partial K$ be the base point of $v$. The following three statements are equivalent:
\begin{enumerate}
\item $v\in \tilde{\mathcal{N}}_{K,r}$.
\item For any $\varepsilon>0$, $1/\varepsilon$ is an anisotropic principal curvature of the hypersurface $\pt K^\varepsilon$ at $p=x+\varepsilon v$ with multiplicity $r$.
\item For any $\varepsilon>0$ and $p=x+\varepsilon v$, we have $\mathrm{rank}((f_K)_*|_p)=n-r$.
\end{enumerate}
\end{prop}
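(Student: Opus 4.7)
The plan is to derive all three equivalences from the relation \eqref{s3:1-k} between the anisotropic principal curvatures of the parallel hypersurface $\partial K^\varepsilon$ at $p=x+\varepsilon v$ and the generalized anisotropic principal curvatures $\kappa_i(v)$, together with a direct computation of the differential $(f_K)_*|_p$. The structure of the argument is that $(1)\Leftrightarrow(2)$ is essentially a restatement of Definition~\ref{s3:1-def1}, while $(2)\Leftrightarrow(3)$ follows from a short calculation showing that the rank deficit of $(f_K)_*|_p$ equals precisely the multiplicity of $1/\varepsilon$ as an anisotropic principal curvature of $\partial K^\varepsilon$ at $p$.

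For $(1)\Leftrightarrow(2)$, Definition~\ref{s3:1-def1} gives $\kappa_i(v)=+\infty$ if and only if the $i$-th anisotropic principal curvature of $\partial K^\varepsilon$ at $p=x+\varepsilon v$ equals $1/\varepsilon$, and by \eqref{s3:1-k} this characterization is independent of the choice of $\varepsilon>0$. Consequently, $v\in\tilde{\mathcal{N}}_{K,r}$, namely $\kappa_1(v)=\cdots=\kappa_r(v)=+\infty$ with $\kappa_{r+1}(v),\ldots,\kappa_n(v)<+\infty$, is equivalent to $1/\varepsilon$ appearing with multiplicity exactly $r$ among the values $\kappa_1(p),\ldots,\kappa_n(p)$, which is precisely (2).

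For $(2)\Leftrightarrow(3)$, I would start from the identity $f_K(p)=p-\varepsilon v_K(p)$, valid for $p\in\partial K^\varepsilon$, combined with $v_K(p)=v_{\partial K^\varepsilon}(p)$. Differentiating along a principal direction $e_i\in T_p\partial K^\varepsilon$ of the anisotropic Weingarten map of $\partial K^\varepsilon$ gives
\begin{equation*}
(f_K)_*|_p(e_i) \;=\; e_i-\varepsilon\,(dv_{\partial K^\varepsilon})_p(e_i) \;=\; (1-\varepsilon\kappa_i(p))\,e_i, \qquad i=1,\ldots,n.
\end{equation*}
Moving $p$ in the anisotropic normal direction $v_K(p)$ keeps $f_K(p)$ fixed on the corresponding normal ray, so $(f_K)_*|_p(v_K(p))=0$. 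Hence in the basis $\{e_1,\ldots,e_n,v_K(p)\}$ of $T_p\mathbb{R}^{n+1}$, the linear map $(f_K)_*|_p$ is diagonal with eigenvalues $1-\varepsilon\kappa_1(p),\ldots,1-\varepsilon\kappa_n(p),0$. Therefore $\mathrm{rank}((f_K)_*|_p)$ equals the number of indices $i$ with $\kappa_i(p)\neq 1/\varepsilon$, which is $n-r$ exactly when $1/\varepsilon$ is an anisotropic principal curvature of $\partial K^\varepsilon$ at $p$ with multiplicity $r$.

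The point requiring care, rather than a serious obstacle, is justifying the pointwise differentiation: since $v\in\tilde{\mathcal{N}}_K$, the map $v_K$ is differentiable at every $p=x+\rho v$ with $\rho>0$, and $\partial K^\varepsilon$ is $C^{1,1}$, so the anisotropic principal directions and curvatures at $p$ are well defined and the identifications read off from \eqref{s3:1-b} are legitimate. The independence of $\kappa_i(v)$ from $\varepsilon$, established in Definition~\ref{s3:1-def1}, is what makes the ``for any $\varepsilon>0$'' clauses in (2) and (3) simultaneously consistent.
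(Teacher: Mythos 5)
Your proof is correct and follows precisely the route the paper indicates (the paper omits the proof, referring to equations \eqref{s3:1-b}, \eqref{s3:1-k} and Kohlmann's Theorem 2.4). Your diagonal formula $(f_K)_*|_p(e_i)=(1-\varepsilon\kappa_i(p))e_i$, $(f_K)_*|_p(v_K(p))=0$ is exactly what one reads off from the first component of the paper's own computation \eqref{s3:2-F*} of $(F_K)_*$, so the rank count and the $(1)\Leftrightarrow(2)$ identification via Definition~\ref{s3:1-def1} are both as intended.
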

Using Proposition \ref{s3:prop1}, we have the following characterization for the set $\tilde{\mathcal{N}}_{K,0}$.
\begin{prop}\label{prop-regular-points}
Define the following two sets
\begin{equation*}
\widetilde{\pt K}:=\{x\in \pt K: \pt K \text{ is twice differentiable at }x\},
\end{equation*}
and
\begin{equation*}
\widehat{\pt K}:=\{x\in \pt K: \text{the unit anisotropic normal is unique at }x\}.
\end{equation*}
Then $\widetilde{\pt K}\subset \Pi (\tilde{\mathcal{N}}_{K,0})\subset \widehat{\pt K}$ and $\Pi^{-1}(\widetilde{\pt K})\cap \tilde{\mathcal{N}}_K\subset \tilde{\mathcal{N}}_{K,0}$. It follows that $\widetilde{\pt K} \cap \Pi(\bigcup_{r=1}^n \tilde{\mathcal{N}}_{K,r})=\emptyset$.
\end{prop}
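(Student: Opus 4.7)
The plan is to establish the three inclusions in order, and then deduce the final disjointness statement as a formal corollary of the third.

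I would begin with the inclusion $\Pi(\tilde{\mathcal{N}}_{K,0})\subset \widehat{\pt K}$. Suppose $(x,v)\in \tilde{\mathcal{N}}_{K,0}$ and assume, for contradiction, that the set of unit anisotropic normals to $K$ at $x$ has positive dimension. Then for any $\varepsilon>0$ the set $\{x+\varepsilon v':\ v' \text{ is a unit anisotropic normal at } x\}$ is a positive-dimensional subset of $\pt K^\varepsilon$ all of whose points are mapped by $f_K$ to $x$. Choosing a tangent direction $w\in T_p(\pt K^\varepsilon)$ along this subset at $p=x+\varepsilon v$ gives $(f_K)_*|_p(w)=0$, hence $\mathrm{rank}((f_K)_*|_p)\leq n-1$. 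By Proposition \ref{s3:prop1}(3) this forces $(x,v)\in \tilde{\mathcal{N}}_{K,r}$ for some $r\geq 1$, contradicting $(x,v)\in \tilde{\mathcal{N}}_{K,0}$. Therefore the unit anisotropic normal at $x$ is unique and $x\in \widehat{\pt K}$.

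Next I would handle $\Pi^{-1}(\widetilde{\pt K})\cap \tilde{\mathcal{N}}_K\subset \tilde{\mathcal{N}}_{K,0}$. Given $(x,v)\in \tilde{\mathcal{N}}_K$ with $x\in \widetilde{\pt K}$, Alexandrov-type twice differentiability of $\pt K$ at $x$ yields finite (isotropic and hence anisotropic) principal curvatures $\kappa_1(x),\dots,\kappa_n(x)\in[0,+\infty)$. Rearranging equation \eqref{s3:1-k} at $t=0$, namely $\kappa_i(x)=\kappa_i(p)/(1-\varepsilon\kappa_i(p))$, the curvatures of $\pt K^\varepsilon$ at $p=x+\varepsilon v$ satisfy $\kappa_i(p)=\kappa_i(x)/(1+\varepsilon\kappa_i(x))<1/\varepsilon$ for every $i$ and every $\varepsilon>0$. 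Consequently Definition \ref{s3:1-def1} gives $\kappa_i(v)<+\infty$ for all $i$, so $(x,v)\in \tilde{\mathcal{N}}_{K,0}$.

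For the first inclusion $\widetilde{\pt K}\subset \Pi(\tilde{\mathcal{N}}_{K,0})$, fix $x\in \widetilde{\pt K}$. Twice differentiability forces a unique tangent hyperplane at $x$, hence a unique outward unit isotropic normal $\nu(x)$; taking $v=D\gamma(\nu(x))\in \Sigma$ produces the unique candidate pair with $(x,v)\in \mathcal{N}_K$. The main technical step is to show that $(x,v)\in \tilde{\mathcal{N}}_K$, i.e.\ that $f_K$ (and hence $F_K$) is differentiable at every $p=x+\rho v$, $\rho>0$; this follows from the quadratic Taylor expansion of $\pt K$ at $x$ by a direct inverse-function-type argument, adapting to the anisotropic distance $d_W$ the classical Alexandrov--Kohlmann reasoning from the isotropic case \cite{Koh91}. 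Once $(x,v)\in \tilde{\mathcal{N}}_K$ is established, the preceding paragraph upgrades this to $(x,v)\in \tilde{\mathcal{N}}_{K,0}$, and so $x\in \Pi(\tilde{\mathcal{N}}_{K,0})$.

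The final claim $\widetilde{\pt K}\cap \Pi\bigl(\bigcup_{r=1}^n \tilde{\mathcal{N}}_{K,r}\bigr)=\emptyset$ is then immediate: any $(x,v)$ contributing to the intersection would lie in $\Pi^{-1}(\widetilde{\pt K})\cap \tilde{\mathcal{N}}_K$, hence in $\tilde{\mathcal{N}}_{K,0}$ by the second step, contradicting disjointness of $\tilde{\mathcal{N}}_{K,0}$ from the $\tilde{\mathcal{N}}_{K,r}$ with $r\geq 1$. The hardest part of the proof is verifying differentiability of $f_K$ along the entire normal ray in the first inclusion, since the general fact that $F_K$ is differentiable $\mathcal{H}^n$-a.e.\ on $\mathcal{N}_K$ does not by itself identify which rays lie in $\tilde{\mathcal{N}}_K$; one must extract this from the finer geometric information provided by twice differentiability of $\pt K$ at the individual point $x$.
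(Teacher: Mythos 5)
Your proof is correct and follows essentially the same route as the paper: both rest on Proposition \ref{s3:prop1} for the middle inclusion and on the finiteness of (generalized) anisotropic principal curvatures at a twice-differentiable boundary point for the first and third. You handle $\Pi(\tilde{\mathcal{N}}_{K,0})\subset\widehat{\pt K}$ by contraposition (a positive-dimensional set of normals at $x$ would kill the rank of $(f_K)_*$), whereas the paper argues directly that full rank of $(f_K)_*$ at $p=\Pi(v)+\varepsilon v$ forces $n$ independent tangent directions at $\Pi(v)$ and hence a unique normal; these are the same observation read in opposite directions. The one genuine difference is that you explicitly flag that the inclusion $\widetilde{\pt K}\subset\Pi(\tilde{\mathcal{N}}_{K,0})$ requires knowing $(x,v(x))\in\tilde{\mathcal{N}}_K$ in the first place, i.e.\ that twice-differentiability of $\pt K$ at $x$ forces differentiability of $f_K$ along the whole ray $x+\rho v(x)$, $\rho>0$. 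The paper's proof passes over this silently (it simply writes ``$\kappa_i(v(x))$ are all finite, therefore $\widetilde{\pt K}\subset\Pi(\tilde{\mathcal{N}}_{K,0})$''), whereas you correctly identify it as the technical core and defer it to an inverse-function-type argument adapted from \cite{Koh91}; supplying that argument in full would be a worthwhile addition beyond what either version currently contains.
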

\begin{proof}
First, for any $x\in \widetilde{\pt K}$ the unit anisotropic normal $v(x)$ at $x$ is unique and the anisotropic principal curvatures $\kappa_i(x)$ of $\pt K$ at $x$ are finite. So $\kappa_i(v(x))$ are all finite. Therefore $\widetilde{\pt K}\subset \Pi (\tilde{\mathcal{N}}_{K,0})$.

Second, take any $v\in \tilde{\mathcal{N}}_{K,0}$. By Proposition \ref{s3:prop1}, for any $\varepsilon>0$ and $p=\Pi(v)+\varepsilon v$, we have $\mathrm{rank}((f_K)_*|_p)=n$. So there exist $n$ linearly independent tangent vectors for $\pt K$ at $f_K(p)=\Pi(v)$. It follows that at $\Pi(v)$ the unit anisotropic normal is unique. So $\Pi (\tilde{\mathcal{N}}_{K,0})\subset \widehat{\pt K}$. The other statements follow immediately. 
\end{proof}

\subsection{Volumes of anisotropic parallel sets}\label{sec:3-2}$\ $

Let $\beta\subset \R^{n+1}$ be a bounded Borel set. Then the preimage of $\beta$ under the projection $f_K$ is
\begin{equation*}
f_K^{-1}(\beta)=(\beta\cap K)\cup f_K^{-1}(\beta\cap \pt K).
\end{equation*}
For $\varepsilon>0$, define
\begin{align*}
\beta_\varepsilon&:=f_K^{-1}(\beta)\cap \pt K^\varepsilon,\quad \mathrm{and} \quad \tilde{\beta}_\varepsilon:=F_K(\beta_\varepsilon)\in \mathcal{N}_K.
\end{align*}
Since $\pt K^\varepsilon$ is regular, i.e., the support hyperplane is unique at any point of $\partial K^{\varepsilon}$, there exists a unique isotropic normal vector $\nu$ at any point of $\pt K^\varepsilon$. Then the anisotropic normal vector on $\pt K^\varepsilon$ is given by $v=D\gamma (\nu)$. We can equivalently express the isotropic normal vector $\nu$ in terms of the anisotropic normal vector $v$ by
 \begin{equation}\label{s3:nu}
  \nu~=~\frac{D{\gamma}^0(v)}{|D{\gamma}^0(v)|_{\delta_{ij}}}.
\end{equation}
On $\pt K^\varepsilon$, we use the induced metric $\hat{g}=G(v)(\cdot,\cdot)$ from $G$ and let $d\mu_\gamma=\gamma(\nu)d\mu$ be the anisotropic area form on $\partial K^{\varepsilon}$, where $d\mu$ the isotropic area form on $\pt K^\varepsilon$ with respect to the metric induced from the Euclidean metric.

Let $v(p)$ denote the unit outward anisotropic normal of $\pt K^\varepsilon$ at $p$. We have
\begin{equation*}
F_K(p)=(f_K(p),v_K(p))=(p-\varepsilon v(p),v(p))~\in~\mathcal{N}_K.
\end{equation*}
For any $(x,v)\in \tilde{\mathcal{N}}_K$, we calculate the differential of the map $F_K$ at the point $p=x+\varepsilon v\in \partial K^{\varepsilon}$. Let $\{e_i\}_{i=1}^n$ be an orthonormal eigenbasis of the anisotropic Weingarten map on $T_p(\pt K^\varepsilon)$ with respect to the new metric $G(v)(\cdot,\cdot)$. We get
\begin{align*}
(F_K)_*(e_i)=&(e_i-\varepsilon v_*(e_i), v_*(e_i))\\
=& \left((1-\varepsilon \kappa_i(p))e_i, \kappa_i(p)e_i\right),
\end{align*}
where $\kappa_i(p)$ are the corresponding anisotropic principal curvatures of $\partial K^{\varepsilon}$ at $p$ which exist as we assumed that $(x,v)\in\tilde{\mathcal{N}}_K$. By definition \eqref{s3:kap-v} of $\kappa_i(v)$, we have
\begin{equation}\label{s3:2-kap}
\kappa_i(p)=\frac{\kappa_i(v)}{1+\varepsilon \kappa_i(v)}.
\end{equation}
The equation \eqref{s3:2-kap} is true algebraically even when $\kappa_i(v)=+\infty$ for some $i$. Then
\begin{align}\label{s3:2-F*}
(F_K)_*(e_i)=& \left(\left(1-\frac{\varepsilon\kappa_i(v)}{1+\varepsilon \kappa_i(v)}\right)e_i, \frac{\kappa_i(v)}{1+\varepsilon \kappa_i(v)}e_i\right).
\end{align}
Therefore, the Jacobi determinant of the map $F_K|_{\pt K^\varepsilon}$ is given by
\begin{equation}\label{s3:2-Jn2}
J_n(F_K|_{\pt K^\varepsilon})=\prod_{i=1}^n \frac{(1+\kappa_i^2(v))^{1/2}}{1+\varepsilon \kappa_i(v)}
\end{equation}
for $v\in \tilde{\mathcal{N}}_K$. On $\mathcal{N}_K$ we use the $n$-dimensional Hausdorff measure $\tilde{\mathcal{H}}^n$ induced from the metric  $\tilde{G}$ on $T\R^{n+1}$. This is comparable to the Euclidean Hausdorff measure $\mathcal{H}^n$, as the metric $G(v)(\cdot,\cdot)$ is comparable to the Euclidean metric. Therefore, we also have $\tilde{\mathcal{H}}^n(\mathcal{N}_K\setminus\tilde{\mathcal{N}}_K)=0$.

As introduced in Section \ref{sec:1}, we define the local $\varepsilon$-parallel set $A_{\varepsilon}(K,\beta)$ of a convex body $K$  by
\begin{equation}\label{s3:A}
  A_{\varepsilon}(K,\beta)=\{x\in \mathbb{R}^{n+1}|~0<d_W(K,x)\leq \varepsilon,~f_K(x)\in\beta\}.
\end{equation}
This is equal to the set $f_K^{-1}(\beta)\cap (K^\varepsilon \setminus K)$. In the following, we compute the $(n+1)$-dimensional Lebesgue measure $\mathcal{L}^{n+1}( A_{\varepsilon}(K,\beta))$. Since $\partial K^{\varepsilon}$ is the set of the points in $\mathbb{R}^{n+1}\setminus K$ having distance $\varepsilon$ to the boundary $\partial K$,  we can write
\begin{equation*}
  \partial K^{\varepsilon}=\{x\in \mathbb{R}^{n+1}\setminus K|~u(x):=d_W(\partial K,x)=\varepsilon\}.
\end{equation*}
We have shown in Lemma \ref{s3:1-lem1} that the projection map $f_K$ from $\mathbb{R}^{n+1}\setminus K$ to $\partial K$ is Lipschitz, so the function $u$ is a Lipschitz function. By Federer's coarea formula \cite{Fed59} for Lipschitz functions, we have
\begin{align*}
  \mathcal{L}^{n+1}( A_{\varepsilon}(K,\beta))=&\mathcal{L}^{n+1}(f_K^{-1}(\beta)\cap (K^\varepsilon \setminus K))\nonumber\\
  =& \int_0^{\varepsilon}\int_{u^{-1}(t)\cap f^{-1}_K(\beta)} \frac 1{|Du|}d\mu(p)dt.
\end{align*}
At any point $p\in u^{-1}(t)=\partial K^t$ for $0<t< \varepsilon$, we have for $|\tau|$ small,
\begin{equation*}
  u(p+\tau v(p))=t+\tau.
\end{equation*}
Differentiating this equation with respect to $\tau$ gives that
\begin{equation*}
  1=Du(p)\cdot v(p)= Du(p)\cdot\left(\gamma(\nu)\nu+\nabla^{\mathbb{S}}\gamma |_{\nu}\right)=|Du|\gamma(\nu),\quad \forall~p\in u^{-1}(t),
\end{equation*}
where $\nu$ is the isotropic unit normal vector which exists and is unique as $\partial K^t$ has only regular points. Hence $|Du|=1/{\gamma(\nu)}$. Consequently,
\begin{align}\label{s3:Vol-1}
\mathcal{L}^{n+1}( A_{\varepsilon}(K,\beta))=&\mathcal{L}^{n+1}(f_K^{-1}(\beta)\cap (K^\varepsilon \setminus K))\nonumber\\
=&\int_0^{\varepsilon}\left(\int_{\beta_{t}}\gamma(\nu)d\mu(p)\right)dt\nonumber\\
=&\int_0^{\varepsilon}\left(\int_{\beta_{t}\cap \mathscr{D}_K}\gamma(\nu)d\mu(p)\right)dt\nonumber\\
=&\int_0^\varepsilon \left(\int_{F_K({\beta}_{t})\cap \tilde{\mathcal{N}}_K}(J_n(F_K|_{\pt K^t}))^{-1}d\tilde{\mathcal{H}}^nv\right)dt\nonumber\\
=&\int_0^\varepsilon \left(\int_{\Pi^{-1}(\beta\cap\partial K)\cap \tilde{\mathcal{N}}_K}  \prod_{i=1}^n \frac{1+t \kappa_i(v)}{(1+\kappa_i^2(v))^{1/2}} d\tilde{\mathcal{H}}^nv\right)dt,
\end{align}
where in the third equality we used the fact that $\mathscr{D}_K$ has full measure, and in the fourth equality we applied the generalized formula for the change of variables for Lipschitz maps (see, e.g., \cite[Thm.~3.2.5]{Fed69} and \cite[Sec.~3.2.46]{Fed69}).
\begin{figure}
  \centering
 \begin{tikzpicture}
\begin{scope}[even odd rule]
\clip (-3,-3.4) [rounded corners=10pt] -- (-3.6,-2.6) -- (-3.5,-1.4)
 [rounded corners=12pt]--(-2.8,-0.5)
 [rounded corners=12pt]--(-1.7,-0.6)
 [rounded corners=12pt]--(-0.3,-1.2)
 [rounded corners=9pt]--(-0.1,-1.9)
  [rounded corners=12pt]--(-0.4,-2.7)--(-1.9,-3.6)
  [rounded corners=10pt]--cycle (-3,-3.9) [rounded corners=10pt] -- (-3.9,-2.9) --(-4.1,-1.7)-- (-3.6,-0.8)
 [rounded corners=12pt]--(-2.8,-0.1)
 [rounded corners=12pt]--(-1.7,0)
 [rounded corners=12pt]--(-0.2,-0.5)
 [rounded corners=9pt]--(0.4,-1.9)
  [rounded corners=12pt]--(0.2,-2.7)--(-1,-3.8)
  [rounded corners=10pt]--cycle;
\path[fill=green!20,dashed,draw] (-1.75,0.2) -- (-1.9,-1) -- (-0.6,-1.2) -- (-0.3,-0.3) --cycle;
\end{scope}
\path[fill=green!5,draw]
(-3,-3.4) [rounded corners=10pt] -- (-3.6,-2.6) -- (-3.5,-1.4)
 [rounded corners=12pt]--(-2.8,-0.5)
 [rounded corners=12pt]--(-1.7,-0.6)
 [rounded corners=12pt]--(-0.3,-1.2)
 [rounded corners=9pt]--(-0.1,-1.9)
  [rounded corners=12pt]--(-0.4,-2.7)--(-1.9,-3.6)
  [rounded corners=10pt]--cycle;
\path[draw,dashed,thick,color=blue]
(-3,-3.9) [rounded corners=10pt] -- (-3.9,-2.9) --(-4.1,-1.7)-- (-3.6,-0.8)
 [rounded corners=12pt]--(-2.8,-0.1)
 [rounded corners=12pt]--(-1.7,0)
 [rounded corners=12pt]--(-0.2,-0.5)
 [rounded corners=9pt]--(0.4,-1.9)
  [rounded corners=12pt]--(0.2,-2.7)--(-1,-3.8)
  [rounded corners=10pt]--cycle;

\node at (-1.2,-2.9) {$K$};
\node at (0.8,-1.9) {$\partial K^{\varepsilon}$};
\draw[-stealth,dashed] (-1,-0.6) to [out=80,in=170] (0.5,0.4) node[right]{$A_{\varepsilon}(K,\beta)$};
\draw[-stealth,dashed] (-1.5,-0.7) to [out=-170,in=80] (-2,-1.4) node[below]{$\beta\cap\partial K$};
\end{tikzpicture}
 \caption{Local $\varepsilon$-parallel set $A_{\varepsilon}(K,\beta)$}
\end{figure}
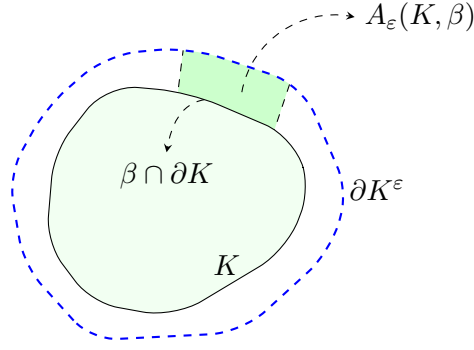

\subsection{Anisotropic curvature measures}$\ $

The formula \eqref{s3:Vol-1} allows us to define the anisotropic curvature measures of a convex body $K$ in $\mathbb{R}^{n+1}$. Let
\begin{align*}
\psi(v):=\prod_{i=1}^n (1+\kappa_i^2(v))^{-1/2}
\end{align*}
and $E_{r}(v)$ be the normalized $r$th elementary symmetric function of $\kappa_i(v)$.  The product $\prod_{i=1}^n (1+t \kappa_i(v))$ can be expanded as
\begin{equation*}
  \prod_{i=1}^n (1+t \kappa_i(v))=\sum_{r=0}^n \binom{n}{r}E_{n-r}(v)t^{n-r}.
\end{equation*}
Therefore by \eqref{s3:Vol-1},
\begin{align*}
\mathcal{L}^{n+1}(A_{\varepsilon}(K,\beta))=&\sum_{r=0}^n \binom{n}{r} \int_0^\varepsilon \left(\int_{\Pi^{-1}(\beta\cap\partial K)\cap \tilde{\mathcal{N}}_K}  \psi(v)E_{n-r}(v)d\tilde{\mathcal{H}}^nv\right)t^{n-r}dt\\
=& \sum_{r=0}^n \binom{n}{r} \frac {\varepsilon^{n+1-r}}{n+1-r}\int_{\Pi^{-1}(\beta\cap\partial K)\cap \tilde{\mathcal{N}}_K}  \psi(v)E_{n-r}(v)d\tilde{\mathcal{H}}^nv\\
=& \frac 1{n+1}\sum_{r=0}^n \varepsilon^{n+1-r}\binom{n+1}{r}\int_{\Pi^{-1}(\beta\cap\partial K)\cap \tilde{\mathcal{N}}_K}  \psi(v)E_{n-r}(v)d\tilde{\mathcal{H}}^nv.
\end{align*}
\begin{rem}
As in \cite{Koh91}, the product $ \psi(v)E_{n-r}(v)$ needs to be understood algebraically. In other words, if $\kappa_i(v)=+\infty$ for some $1\leq i\leq n$,  we replace all such $\kappa_i(v)$ in $ \psi(v)E_{n-r}(v)$ by a finite $c>0$ and then take $c\rightarrow +\infty$.
\end{rem}
If we denote
\begin{align}\label{s3:psi-r}
\Phi_r(K;\beta)=& \int_{\Pi^{-1}(\beta\cap\partial K)\cap \tilde{\mathcal{N}}_K} \psi(v)E_{n-r}(v) d\tilde{\mathcal{H}}^nv,\quad 0\leq r\leq n,
\end{align}
we arrive at the following local Steiner type formula
\begin{equation}\label{s3:Ste}
\mathcal{L}^{n+1}(A_{\varepsilon}(K,\beta))=\frac 1{n+1}\sum_{r=0}^{n}\varepsilon^{n+1-r}\binom{n+1}{r}\Phi_r(K;\beta).
\end{equation}
By setting
\begin{equation*}
  \Phi_{n+1}(K;\beta)=(n+1)\mathcal{L}^{n+1}(K\cap \beta),
\end{equation*}
the formula \eqref{s3:Ste} is equivalent to
\begin{equation}\label{s3:Ste2}
\mathcal{L}^{n+1}(f_K^{-1}(\beta)\cap K^{\varepsilon})=\frac 1{n+1}\sum_{r=0}^{n+1}\varepsilon^{n+1-r}\binom{n+1}{r}\Phi_r(K;\beta).
\end{equation}
Since the $(n+1)$-dimensional Lebesgue measure coincides with the $(n+1)$-dimensional Hausdorff measure in the Euclidean space $\mathbb{R}^{n+1}$, the formula \eqref{s1:Sten} is equivalent to \eqref{s3:Ste}.
\begin{defn}
Let $W$ be a Wulff shape in $\mathbb{R}^{n+1}$ with  the support function $\gamma\in C^{\infty}(\mathbb{S}^n)$. We call $\Phi_r(K;\cdot)$ ($0\leq r\leq n+1$) the \emph{anisotropic curvature measures} of the convex body $K$ (with respect to the Wulff shape $W$). 
\end{defn}

\begin{rem}
\begin{itemize}
  \item[(i)] Comparing \eqref{s3:Ste2} with \eqref{s2:Ste} we can  see that
  \begin{equation*}
    \Phi_r(K;\mathbb{R}^{n+1})=V_r(K,W).
  \end{equation*}
  That is, the anisotropic curvature measures of a convex body is the localized version of the mixed volumes introduced in  \S \ref{sec:2-3}.
  \item[(ii)]When $\pt K$ is $C^2$, then $\mathcal{N}_K=\{(p,v(p)):p\in  \pt K\}$, $\kappa_i(v)=\kappa_i(p)$ and the Jacobi determinant of the map (with the definition naturally extended to $\pt K$)
  \begin{equation*}
    F_K:\pt K\rightarrow \mathcal{N}_K,\qquad F_K(p)=(p,v(p))
  \end{equation*}
  is $\psi(v)^{-1}$. Thus
\begin{equation*}
\psi(v)d\tilde{\mathcal{H}}^n v=d\mu_\gamma(p)=\gamma(\nu)d\mu(p)
\end{equation*}
and
\begin{equation}\label{s3:Phi-s}
  \Phi_r(K;\beta) = \int_{\partial K\cap \beta} E_{n-r}(p)\gamma(\nu)\,d \mu(p).
\end{equation}
This   implies the same formula as in (i),
\begin{equation*}
  \Phi_r(K;\mathbb{R}^{n+1})= \int_{\partial K}E_{n-r}\gamma(\nu)d\mu=V_r(K,W)
\end{equation*}
by \eqref{def-quermass} and means that the anisotropic curvature measures provide the local information of the mixed volumes of a convex body.
\end{itemize}
\end{rem}

In particular, for a general convex body $K$ in $\mathbb{R}^{n+1}$ we have the following geometric interpretation of the anisotropic curvature measures in the extreme cases $r=0$ and $r=n$.
\begin{prop}\label{s3:3-prop}
Let $W$ be a Wulff shape in $\mathbb{R}^{n+1}$ with the support function $\gamma\in C^{\infty}(\mathbb{S}^n)$ and $K$ be a convex body in $\mathbb{R}^{n+1}$. Then for any Borel set $\beta$ in $\mathbb{R}^{n+1}$,
\begin{align}
\Phi_0(K;\beta) =& \int_{\pi(\Pi^{-1}(\beta\cap\partial K)\cap \mathcal{N}_K)}d\mu_{\gamma},\label{s3:Phi0-0}\\
\Phi_n(K;\beta)=&\int_{\widetilde{\pt K}\cap \beta} {\gamma}(\nu) d\mu(p),\label{s3:Phin}
\end{align}
where $\pi: \mathcal{N}_K\to \Sigma=\partial W$ is the projection of $\mathcal{N}_K$ to the vector part given by $\pi(x,v)=v$ and $\widetilde{\pt K}$ is given in Proposition~\ref{prop-regular-points}.
\end{prop}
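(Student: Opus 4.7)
My plan is to derive both identities from the area formula applied to the two natural projections from $\mathcal{N}_K$, namely $\Pi:(x,v)\mapsto x\in\partial K$ and $\pi:(x,v)\mapsto v\in\Sigma$. The key observation is that the Jacobians of these Lipschitz maps, with respect to $\tilde G$ on $\mathcal{N}_K$ and $\hat g$ (or $G$) on the target, are precisely the integrands $\psi(v)$ and $\psi(v)E_n(v)$ appearing in $\Phi_n$ and $\Phi_0$.

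To compute the Jacobians, I parameterize $\mathcal{N}_K$ locally via $F_K|_{\partial K^\varepsilon}$: by \eqref{s3:2-F*} the tangent basis at $(x,v)\in\tilde{\mathcal{N}}_K$ is $\xi_i=(F_K)_*(e_i)=((1-\varepsilon\kappa_i(p))e_i,\kappa_i(p)e_i)$, where $\{e_i\}$ is a $\hat g$-orthonormal eigenbasis of the anisotropic Weingarten map on $\partial K^\varepsilon$ and $p=x+\varepsilon v$. Since $T_p\partial K^\varepsilon$ and $T_v\Sigma$ coincide as the $G(v)$-orthogonal complement of $v$, the same $\{e_i\}$ is $G(v)$-orthonormal on both sides. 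Then $|\xi_i|_{\tilde G}^2=(1-\varepsilon\kappa_i(p))^2+\kappa_i^2(p)$, $\Pi_*\xi_i=(1-\varepsilon\kappa_i(p))e_i$ and $\pi_*\xi_i=\kappa_i(p)e_i$. Substituting \eqref{s3:2-kap} to eliminate $\varepsilon$, I obtain the $\varepsilon$-independent formulas
\begin{equation*}
J\Pi(v)=\prod_{i=1}^n\frac{1}{\sqrt{1+\kappa_i^2(v)}}=\psi(v),\qquad J\pi(v)=\prod_{i=1}^n\frac{\kappa_i(v)}{\sqrt{1+\kappa_i^2(v)}}=\psi(v)E_n(v),
\end{equation*}
consistent with the algebraic convention at $\kappa_i(v)=+\infty$.

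Applying the area formula for $\Pi$ to the indicator of $\Pi^{-1}(\beta\cap\partial K)$, together with $d\hat\mu=d\mu_\gamma=\gamma(\nu)d\mu$ on $\partial K$, gives
\begin{equation*}
\Phi_n(K;\beta)=\int_{\partial K\cap\beta}\#\bigl(\Pi^{-1}(p)\cap\tilde{\mathcal{N}}_K\bigr)\,\gamma(\nu(p))\,d\mu(p).
\end{equation*}
By Proposition~\ref{prop-regular-points}, for $p\in\widetilde{\partial K}$ the preimage is the singleton $\{(p,v(p))\}$, so the count equals $1$; since $\mathcal{H}^n(\partial K\setminus\widetilde{\partial K})=0$ by Alexandrov's theorem, \eqref{s3:Phin} follows. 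The same scheme with $\pi$ yields
\begin{equation*}
\Phi_0(K;\beta)=\int_\Sigma\#\bigl(\pi^{-1}(y)\cap\Pi^{-1}(\beta\cap\partial K)\cap\tilde{\mathcal{N}}_K\bigr)\,d\mu_\gamma(y).
\end{equation*}
Since the support function of a convex body is differentiable $\mathcal{H}^n$-a.e., for $\mu_\gamma$-a.e.~$y\in\Sigma$ the face of $\partial K$ with outer anisotropic normal $y$ is a single point, so $\pi$ is essentially injective on $\mathcal{N}_K$; the counting function then reduces to the indicator of $\pi(\Pi^{-1}(\beta\cap\partial K)\cap\mathcal{N}_K)$, giving \eqref{s3:Phi0-0}.

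The chief technical obstacles are the coherent treatment of the singular strata $\tilde{\mathcal{N}}_{K,r}$---which the $\varepsilon$-independent Jacobian formulas show matches the algebraic convention via the limits $\varepsilon\downarrow 0$---and the verification of $\mu_\gamma$-a.e.~injectivity of $\pi$, a standard consequence of the a.e.~differentiability of the support function of a convex body.
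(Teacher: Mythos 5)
Your proposal is correct and follows essentially the same path as the paper: both proofs rest on computing the Jacobians of the two projections $\Pi$ and $\pi$ from $\mathcal{N}_K$ (via the frame \eqref{s3:2-F*} on $\partial K^\varepsilon$) and recognizing them as $\psi(v)$ and $\psi(v)E_n(v)$, then invoking a change-of-variables. The only genuine difference is in how the multiplicities are handled: the paper first restricts the $\Phi_n$ integral to $\tilde{\mathcal{N}}_{K,0}$ using that $\psi$ vanishes on the singular strata, and then uses injectivity of $\Pi$ there (since $\Pi(\tilde{\mathcal{N}}_{K,0})\subset\widehat{\partial K}$ where the anisotropic normal is unique), whereas you apply the counting version of the area formula over all of $\tilde{\mathcal{N}}_K$ and then identify the counting function with $\mathbf{1}$ a.e.\ via Proposition~\ref{prop-regular-points}; both routes are valid. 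You also make explicit the $\mu_\gamma$-a.e.\ injectivity of $\pi$ (via a.e.\ differentiability of the support function, which guarantees single-point faces for a.e.\ direction), a point the paper leaves implicit under the phrase ``transformation for change of variables.'' Your treatment is therefore slightly more careful on the measure-theoretic side but does not introduce a different idea.
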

\begin{proof}
For $r=0$, we consider the projection map $\pi: \mathcal{N}_K\to \Sigma=\partial W$. The equations \eqref{s3:2-F*} and \eqref{s3:2-Jn2} imply that the Jacobi determinant of the map $\pi$ is given by
\begin{equation*}
  J_n(\pi)= \prod_{i=1}^n\frac{\kappa_i(v)}{(1+\kappa_i^2(v))^{1/2}}.
\end{equation*}
By \eqref{s3:psi-r},
\begin{align*}
  \Phi_0(K;\beta)=&  \int_{\Pi^{-1}(\beta\cap\partial K)\cap \tilde{\mathcal{N}}_K}\psi(v)E_n(v)d\tilde{\mathcal{H}}^nv\\
  =& \int_{\Pi^{-1}(\beta\cap\partial K)\cap \tilde{\mathcal{N}}_K}\prod_{i=1}^n\frac{\kappa_i(v)}{(1+\kappa_i^2(v))^{1/2}}d\tilde{\mathcal{H}}^nv \\
  = &\int_{\pi(\Pi^{-1}(\beta\cap\partial K)\cap \tilde{\mathcal{N}}_K)}d\mu_{\gamma},
\end{align*}
where in the last equality we used the transformation for change of variables. Since $\tilde{\mathcal{H}}^n(\mathcal{N}_K\setminus \tilde{\mathcal{N}}_K)=0$, we obtain the formula \eqref{s3:Phi0-0}.

For $r=n$, by the definition of $\psi(v)$,
\begin{align*}
\Phi_n(K;\beta)&= \int_{\Pi^{-1}(\beta\cap\partial K)\cap \tilde{\mathcal{N}}_K}\psi(v) d\tilde{\mathcal{H}}^nv\\
&=\int_{\Pi^{-1}(\beta\cap\partial K)\cap \tilde{\mathcal{N}}_{K,0}} \psi(v) d\tilde{\mathcal{H}}^nv\\
&=\int_{\beta\cap \Pi(\tilde{\mathcal{N}}_{K,0}) } {\gamma}(\nu) d\mu(p),
\end{align*}
where the last equality is due to the transformation from $\tilde{\mathcal{N}}_{K,0}$ to $\Pi(\tilde{\mathcal{N}}_{K,0})$. Since $\Pi(\tilde{\mathcal{N}}_{K,0}) \subset \widehat{\pt K}$ (Proposition \ref{prop-regular-points}), the transformation is admissible. Alternatively, we may first transform the integral over $\tilde{\mathcal{N}}_{K,0}$ onto $\pt K^\varepsilon$ and then let $\varepsilon\rightarrow 0+$. Finally, by Proposition \ref{prop-regular-points}, we have $\widetilde{\pt K}\subset \Pi(\tilde{\mathcal{N}}_{K,0})$. Since the $n$-dimensional Hausdorff measure of $\pt K\setminus \widetilde{\pt K}$ is equal to zero (see \cite[\S 2]{Schn}), we obtain the formula \eqref{s3:Phin}.
\end{proof}
\begin{rem}
As a consequence of Proposition \ref{s3:3-prop}, we also call $\Phi_n(K;\cdot)$ the \emph{boundary anisotropic curvature measure} of a convex body. Moreover, for any convex body $K$ in $\mathbb{R}^{n+1}$ we have
\begin{align}
  \Phi_0(K;\mathbb{R}^{n+1})=&\int_{\partial W}d\mu_\gamma=|\partial W|_{\gamma},\label{s3:Phi0}\\
  \Phi_n(K;\mathbb{R}^{n+1})=&\int_{\widetilde{\partial K}}d\mu_\gamma=|\partial K|_{\gamma}.
\end{align}
\end{rem}

\subsection{Weak continuity of the anisotropic curvature measures}$\ $

As in the classical setting of curvature measures for convex bodies in the Euclidean space, the anisotropic curvature measures introduced here are also weakly continuous in the Hausdorff topology.

For any two convex bodies $K, L$ in $\mathbb{R}^{n+1}$, the anisotropic Hausdorff distance between them is defined as
\begin{equation*}
d^\mathcal{H}_W(K,L)=\inf_{r>0} \{r:K+rW\subset L \text{ and } L+rW\subset K\}.
\end{equation*}
Note that the anisotropic and isotropic distance functions in $\mathbb{R}^{n+1}$ have the relation
\begin{equation*}
\frac{1}{C}d(x,y)\leq d_W(x,y)\leq Cd(x,y)
\end{equation*}
for some positive constant $C$ depending only on the Wulff shape $W$. So the anisotropic Hausdorff distance $d^\mathcal{H}_W$ and the isotropic Hausdorff distance $d^\mathcal{H}$ for sets are equivalent.
\begin{thm}\label{s3:thm-W}
Let $W$ be a Wulff shape in $\mathbb{R}^{n+1}$ with the support function $\gamma\in C^{\infty}(\mathbb{S}^n)$, and $\{K_j\}_{j=1}^{\infty}$ be a sequence of convex bodies in $\R^{n+1}$ such that $K_j\to K$ as $j\to \infty$ in the Hausdorff topology. Then for every $r=0,\dots,n$, the anisotropic curvature measure $\Phi_r(K_j;\cdot)$ converges weakly in the sense of measure to the anisotropic curvature measure $\Phi_r(K;\cdot)$.
\end{thm}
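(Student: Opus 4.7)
The proof plan follows the classical strategy of Schneider for isotropic curvature measures, which passes through the local Steiner formula and a polynomial inversion argument, adapted here to the anisotropic setting.

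The first step is to establish pointwise convergence of the anisotropic projection maps. Since $\gamma^0$ is a continuous norm and $K_j \to K$ in the Hausdorff sense, the distance function satisfies $d_W(K_j,\cdot)\to d_W(K,\cdot)$ uniformly on compact subsets of $\mathbb R^{n+1}$. For any $x\in\mathbb R^{n+1}\setminus K$, if $f_{K_j}(x)\to y$ along a subsequence, then $y\in K$ and $d_W(y,x)=\lim d_W(f_{K_j}(x),x)=\lim d_W(K_j,x)=d_W(K,x)$, so $y=f_K(x)$ by the uniqueness of the anisotropic metric projection established in \S\ref{sec:3-1}. Hence the whole sequence satisfies $f_{K_j}(x)\to f_K(x)$ for every $x\in\mathbb R^{n+1}\setminus K$.

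The second step is an integral form of the local Steiner identity \eqref{s3:Ste2}. For any bounded Borel function $g:\mathbb R^{n+1}\to\mathbb R$, approximation by simple functions yields
\[
I_K(\varepsilon,g):=\int_{K^\varepsilon\setminus K}g(f_K(x))\,dx=\frac{1}{n+1}\sum_{r=0}^{n}\varepsilon^{n+1-r}\binom{n+1}{r}\int g\,d\Phi_r(K;\cdot).
\]
I would then prove $I_{K_j}(\varepsilon,g)\to I_K(\varepsilon,g)$ for every fixed $\varepsilon>0$ and every $g\in C_c(\mathbb R^{n+1})$. The integrands $g\circ f_{K_j}$ are uniformly bounded by $\|g\|_\infty$, are supported in the uniformly bounded sets $K_j^\varepsilon$, and converge pointwise to $g\circ f_K$ on $\mathbb R^{n+1}\setminus K$ by the first step (the residual set $K\setminus\liminf K_j$ has zero Lebesgue measure because Hausdorff convergence of convex bodies implies convergence of their interiors up to a boundary set of measure zero). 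Dominated convergence then gives the claimed limit.

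The third step extracts the individual measures. Fix $n+1$ distinct values $\varepsilon_0,\dots,\varepsilon_n>0$ and apply the previous convergence at each $\varepsilon_i$; the resulting linear system has Vandermonde matrix $(\varepsilon_i^{n+1-r}\binom{n+1}{r})_{i,r}$ which is invertible, so each coefficient converges:
\[
\int g\,d\Phi_r(K_j;\cdot)\;\longrightarrow\;\int g\,d\Phi_r(K;\cdot),\qquad r=0,\dots,n,
\]
for every $g\in C_c(\mathbb R^{n+1})$. Since the total masses $\Phi_r(K_j;\mathbb R^{n+1})=V_r(K_j,W)$ are bounded by Hausdorff convergence and the continuity of mixed volumes, this is precisely weak convergence of measures.

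The main technical obstacle is justifying the passage to the limit in $I_{K_j}(\varepsilon,g)$: although $f_{K_j}\to f_K$ pointwise off $K$, one must carefully treat (a) the thin transitional regions $K_j^\varepsilon\triangle K^\varepsilon$ near the boundary $\partial K^\varepsilon$, which however have Lebesgue measure tending to zero because $K_j^\varepsilon\to K^\varepsilon$ in Hausdorff and $\partial K^\varepsilon$ has zero $(n{+}1)$-volume, and (b) the behaviour of $f_{K_j}$ on points that fall inside $K_j$ but outside $K$ (or vice versa), where continuity of $f_K$ may fail; for these one observes that $g\circ f_{K_j}$ remains bounded and the exceptional set has vanishing measure. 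Once these minor points are handled, dominated convergence closes the argument.
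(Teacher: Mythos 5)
Your proof is correct and follows essentially the same strategy as the paper: both reduce, via the local Steiner formula, to weak convergence of the parallel-set measures $\mu_K^\varepsilon(\cdot)=\mathcal{L}^{n+1}(f_K^{-1}(\cdot)\cap K^\varepsilon)$, with the key input being the pointwise convergence $f_{K_j}(x)\to f_K(x)$ for $x\notin K$. The only difference is cosmetic — the paper verifies the Portmanteau conditions (a $\liminf$ estimate for open sets together with total-mass convergence via the sandwich $\mu_K^{\varepsilon-d}\le\mu_{K_j}^\varepsilon\le\mu_K^{\varepsilon+d}$ with $d=d_W^{\mathcal{H}}(K_j,K)$), whereas you test directly against $C_c$ functions by dominated convergence, and your Vandermonde argument simply makes explicit the paper's implicit "polynomial coefficients must converge" step.
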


According to the Steiner type formula \eqref{s3:Ste}, Theorem \ref{s3:thm-W} is a direct consequence of the following lemma.
\begin{lem}\label{s3:lem-WM}
Let $W$ be a Wulff shape in $\mathbb{R}^{n+1}$ with the support function $\gamma\in C^{\infty}(\mathbb{S}^n)$, and let $K$ and $K_i$, $i\geq 1$ be convex bodies in $\R^{n+1}$. Suppose that
\begin{equation*}
  \lim_{i\to\infty}d_W^{\mathcal{H}}(K_i,K)=0.
\end{equation*}
Define $\mu_K^\varepsilon(\beta):=\mathcal{L}^{n+1}(f_K^{-1}(\beta)\cap K^{\varepsilon})$ for Borel set $\beta$, and similarly $\mu_{K_i}^\varepsilon(\beta):=\mathcal{L}^{n+1}(f_{K_i}^{-1}(\beta)\cap K_i^{\varepsilon})$. Then
\begin{equation*}
  \mu_{K_i}^\varepsilon \stackrel{w}\longrightarrow \mu_K^\varepsilon
\end{equation*}
as $i\to\infty$.
\end{lem}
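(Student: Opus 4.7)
The plan is to represent each $\mu_K^\varepsilon$ as the pushforward of Lebesgue measure restricted to $K^\varepsilon$ under the Lipschitz projection $f_K$, and then pass to the limit using dominated convergence. For any bounded Borel set $\beta$,
\begin{equation*}
\mu_{K_i}^\varepsilon(\beta) = \int_{\R^{n+1}} \chi_{K_i^\varepsilon}(y)\,\chi_\beta(f_{K_i}(y))\,dy,
\end{equation*}
so testing against any $g\in C_b(\R^{n+1})$ yields $\int g\,d\mu_{K_i}^\varepsilon = \int \chi_{K_i^\varepsilon}(y)\,g(f_{K_i}(y))\,dy$, and similarly for $K$. It therefore suffices to establish two convergence statements and then invoke the dominated convergence theorem.

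First, since $K_i\to K$ in the Hausdorff sense, the Minkowski sums $K_i^\varepsilon = K_i+\varepsilon W$ converge to $K^\varepsilon = K+\varepsilon W$ in the Hausdorff sense and are contained in a common compact set. Because $\partial K^\varepsilon$ has Lebesgue measure zero (as the boundary of a convex body), this gives $\chi_{K_i^\varepsilon}(y)\to\chi_{K^\varepsilon}(y)$ for $\mathcal{L}^{n+1}$-a.e.\ $y\in\R^{n+1}$.

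Second, I claim that $f_{K_i}(y)\to f_K(y)$ for every $y\in\R^{n+1}$. Let $z$ be any subsequential limit of $f_{K_i}(y)$. Since $f_{K_i}(y)\in K_i$ and $K_i\to K$ in Hausdorff distance, we have $z\in K$. For any prescribed point in $K$ (in particular $f_K(y)$) we can pick $x_i\in K_i$ with $x_i\to f_K(y)$, and by definition of $f_{K_i}$,
\begin{equation*}
d_W\bigl(f_{K_i}(y),\,y\bigr)\ \leq\ d_W(x_i,y)\ \longrightarrow\ d_W\bigl(f_K(y),\,y\bigr).
\end{equation*}
Passing to the limit gives $d_W(z,y)\leq d_W(f_K(y),y)=d_W(K,y)$, so $z$ realizes the anisotropic distance from $y$ to $K$. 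By uniqueness of the nearest-point projection onto the convex body $K$ (recalled in \S\ref{sec:3-1}), $z=f_K(y)$. Hence the whole sequence $f_{K_i}(y)$ converges to $f_K(y)$.

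With these two facts in hand, the integrand $\chi_{K_i^\varepsilon}(y)\,g(f_{K_i}(y))$ converges a.e.\ to $\chi_{K^\varepsilon}(y)\,g(f_K(y))$, is uniformly bounded by $\|g\|_\infty$, and is supported in a common compact set; dominated convergence then yields $\int g\,d\mu_{K_i}^\varepsilon\to\int g\,d\mu_K^\varepsilon$ for every $g\in C_b(\R^{n+1})$, which is the desired weak convergence. The main (and only) substantive obstacle is the pointwise convergence $f_{K_i}\to f_K$ in Step~2; the uniform Lipschitz bound from Lemma~\ref{s3:1-lem1} together with the uniqueness of the anisotropic nearest point, which is guaranteed by the strict convexity of $W$, makes the argument go through cleanly, and the remaining measure-theoretic steps are routine.
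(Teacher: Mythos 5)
Your proof is correct, and it takes a genuinely different route from the paper's. The paper verifies the two Portmanteau-type criteria separately: (1) $\mu_K^\varepsilon(\beta)\le\liminf_i\mu_{K_i}^\varepsilon(\beta)$ for open $\beta$, which it proves by showing $f_{K_i}(p)\to f_K(p)$ for $p$ in the interior of $K^\varepsilon$ and then comparing the sets $f_K^{-1}(\beta)\cap\mathring K^\varepsilon$ and $f_{K_m}^{-1}(\beta)\cap K_m^\varepsilon$; and (2) convergence of the total masses $\mu_{K_i}^\varepsilon(\R^{n+1})\to\mu_K^\varepsilon(\R^{n+1})$, via the sandwich $\mu_K^{\varepsilon-\delta_i}(\R^{n+1})\le\mu_{K_i}^\varepsilon(\R^{n+1})\le\mu_K^{\varepsilon+\delta_i}(\R^{n+1})$. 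You instead view $\mu_{K_i}^\varepsilon$ as the pushforward $(f_{K_i})_*\bigl(\mathcal{L}^{n+1}\!\restriction_{K_i^\varepsilon}\bigr)$ and prove $\int g\,d\mu_{K_i}^\varepsilon\to\int g\,d\mu_K^\varepsilon$ directly for every $g\in C_b$ by dominated convergence, using that $\chi_{K_i^\varepsilon}\to\chi_{K^\varepsilon}$ a.e.\ (since $\partial K^\varepsilon$ is Lebesgue-null) and that $f_{K_i}(y)\to f_K(y)$ for all $y$. The common geometric core is the pointwise convergence of the anisotropic projections; your subsequential argument for it (extract a limit $z$, show $z\in K$ and $d_W(z,y)\le d_W(K,y)$, invoke uniqueness of the nearest point) is essentially the same in spirit as the paper's contradiction argument but slightly cleaner and stated for all $y$ rather than just $y\in\mathring K^\varepsilon$. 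The advantage of your approach is that it dispenses with the separate total-mass step and any appeal to a Portmanteau-type characterization; the paper's approach keeps everything at the level of sets and measures of sets, which is closer in style to the classical treatment of curvature measures in Schneider's book.
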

\begin{proof}
The sets $f_K^{-1}(\beta)\cap K^{\varepsilon}$ and $f_{K_i}^{-1}(\beta)\cap K_i^{\varepsilon}$ are measurable since $f_K$ and $f_{K_i}$ are continuous. Then it suffices to prove:
\begin{itemize}
  \item[(1)] for an open set $\beta$ in $\mathbb{R}^{n+1}$, $\mu_{K}^\varepsilon(\beta)\leq \liminf_{i\rightarrow \infty} \mu_{K_i}^\varepsilon(\beta)$, and
  \item[(2)] $\lim_{i\rightarrow \infty}\mu_{K_i}^\varepsilon(\R^{n+1})=\mu_K^\varepsilon(\R^{n+1})$.
\end{itemize}

To prove (1), let $p\in \mathring{K}^\varepsilon$, the interior of $K^\varepsilon$, with $f_K(p)\in \beta$. First we claim $\lim_{i\rightarrow \infty}f_{K_i}(p)=f_K(p)$. Otherwise assume that a subsequence (still denoted by $f_{K_i}(p)$) converges to a point $p^*$ which is not $f_K(p)$. Note that
\begin{equation*}
d_W(f_{K_i}(p),p)=d_W(K_i,p)\leq d_W(K_i,K)+d_W(K,p),
\end{equation*}
which implies
\begin{equation*}
p\in (d_W(K_i,K)+d_W(K,p))W+f_{K_i}(p).
\end{equation*}
Letting $i\rightarrow \infty$ yields $p\in d_W(K,p)W+p^*$. On the other hand, since $f_{K_i}(p)\in K_i$ and $d_W(K_i,K)\rightarrow 0$, we know $p^*\in K$. So $d_W(K,p)$ is attained at two distinct points $f_K(p)$ and $p^*$, which is a contradiction.  Since $\lim_{i\rightarrow \infty}f_{K_i}(p)=f_K(p)$ and $\beta$ is open, we have $f_{K_i}(p)\in\beta$ for any fixed $q\in B\cap \pt K$ and any fixed $p\in f_K^{-1}(q)\cap \mathring{K}^\varepsilon$ and any large $i$. So we have proved the claim. It follows that
\begin{equation*}
f_K^{-1}(\beta)\cap \mathring{K}^{\varepsilon}\subset \lim_{i\rightarrow \infty}\bigcap_{m\geq i}f_{K_m}^{-1}(\beta)\cap K_m^{\varepsilon}.
\end{equation*}
Therefore,
\begin{equation*}
\mu_{K }^\varepsilon(\beta)\leq \liminf_{i\rightarrow \infty} \mu_{K_i}^\varepsilon(\beta),
\end{equation*}
which completes the proof of (1).

To prove (2), note that
\begin{equation*}
\mu_{K}^{\varepsilon-d_W^\mathcal{H}(K_i,K)}(\R^{n+1})\leq \mu_{K_i}^{\varepsilon}(\R^{n+1}) \leq \mu_{K}^{\varepsilon+d_W^\mathcal{H}(K_i,K)}(\R^{n+1})
\end{equation*}
for $i$ large enough. So (2) follows immediately.
\end{proof}

\section{Minkowski integral formulas}\label{sec:Mink}

The Minkowski integral formulas are important tools in the study of the differential geometry. For smooth closed hypersurfaces, the following anisotropic Minkowski integral formulas are proved in \cite{HL08} by He and Li.
\begin{thm}[\cite{HL08}]\label{s4:thm-M}
Let $W$ be a Wulff shape with the support function ${\gamma}\in C^\infty(\SS^n)$.  For any smooth closed orientable hypersurface $M$ in $\R^{n+1}$ with a unit isotropic normal $\nu$ and any integer $1\leq r\leq n$, we have
\begin{equation}\label{s4:MF-1}
\int_{M} (E_{r-1}(p)-S(p)E_r(p)){\gamma}(\nu)d\mu(p)=0,
\end{equation}
where $E_r$ denotes the anisotropic $r$th mean curvature of $M$ with respect to the Wulff shape $W$, and $S(p)=G(v)(v,p)$ is the anisotropic support function on $M$ with $v=D\gamma(\nu)$ denoting the unit anisotropic normal of $M$.
\end{thm}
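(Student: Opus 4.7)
The identity is the anisotropic analogue of the classical Minkowski--Hsiung formula $\int_M(E_{r-1}-\langle X,\nu\rangle E_r)\,d\mu=0$, and my plan is to reproduce the classical ``scaling plus first variation'' argument in the anisotropic setting. Consider the family of dilations $X_t:=e^tX$, $M_t=e^tM$. The isotropic and anisotropic normals are invariant ($\nu_t=\nu$, $v_t=v$), the anisotropic principal curvatures scale as $\kappa_i^{(t)}=e^{-t}\kappa_i$, and the anisotropic area form scales as $d\mu_\gamma^{(t)}=e^{nt}d\mu_\gamma$. Hence
\begin{equation*}
\int_{M_t}E_{r-1}\,d\mu_\gamma^{(t)}=e^{(n-r+1)t}\int_ME_{r-1}\,d\mu_\gamma ,
\end{equation*}
and differentiating at $t=0$ yields $(n-r+1)\int_ME_{r-1}\,d\mu_\gamma$.

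For the other expression of this derivative I would invoke the anisotropic analogue of the Hsiung--Reilly first variation formula: for a variation $\partial_tX=f\,v$ in the anisotropic normal direction,
\begin{equation*}
\frac{d}{dt}\bigg|_{t=0}\int_{M_t}E_{r-1}\,d\mu_\gamma=(n-r+1)\int_Mf\,E_r\,d\mu_\gamma .
\end{equation*}
For the dilation $\partial_tX|_{t=0}=X$, which decomposes under the $G(v)$-orthogonal splitting $T_pM\oplus \R v$ as $X(p)=X^T(p)+S(p)\,v(p)$ with $S(p)=G(v)(v,X)$. Since the tangential component $X^T$ corresponds merely to a reparametrization of $M$ and does not affect a geometric integral, the effective anisotropic normal speed of the dilation is $f=S(p)$. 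Equating this with the scaling computation gives $(n-r+1)\int_ME_{r-1}\,d\mu_\gamma=(n-r+1)\int_MSE_r\,d\mu_\gamma$, and dividing by $(n-r+1)\neq 0$ produces \eqref{s4:MF-1}.

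The main obstacle is establishing the anisotropic first variation formula invoked above. Unlike in the isotropic case, $E_{r-1}$ is built from $\hat h_i^j=(A_\gamma(\nu))_i^kh_k^j$ as in \eqref{s2:hat-h}, so under a variation both the ordinary second fundamental form and the anisotropy factor $A_\gamma(\nu)$ evolve, and moreover the induced metric $\hat g=G(v)|_{TM}$ itself depends on $v$ and hence varies. An alternative route avoiding the first variation formula is a direct divergence computation: introduce the Newton transformation $P_{r-1}$ of $\hat h$, show that $\mathrm{div}_{\hat g}(P_{r-1}(X^T))=(n-r+1)\binom{n}{r-1}(E_{r-1}-SE_r)$ using the anisotropic Weingarten relation $\partial_iv=\hat h_i^k\partial_kX$ from \eqref{s2:AWeingart}, and apply the divergence theorem on the closed $M$. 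The delicate point along this route is the cancellation of the correction terms coming from the anisotropic Codazzi identity $\hat\nabla_k\hat h_{ij}+\hat h_j^\ell A_{\ell ki}=\hat\nabla_j\hat h_{ik}+\hat h_k^\ell A_{\ell ji}$ in \eqref{s2:Gaus1}: the contributions involving the $3$-tensor $A_{ijk}$ must vanish after contraction with $P_{r-1}$, which is ultimately forced by the total symmetry of $Q_{ijk}$ recorded in \S\ref{sec:2-2}.
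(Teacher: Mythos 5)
The paper itself does not prove Theorem~\ref{s4:thm-M}; it is quoted directly from He--Li \cite{HL08}. Your scaling-plus-first-variation argument is a valid alternative and, within this paper's toolkit, is essentially complete: the anisotropic Hsiung--Reilly formula you invoke, namely that $\partial_tX=\eta\,\nu_\gamma$ gives $\tfrac{d}{dt}\int_{M_t}E_{r-1}\,d\mu_\gamma=(n+1-r)\int_{M_t}\eta\, E_r\,d\mu_\gamma$, is precisely the Reilly variation formula \eqref{s4:evl-Vk} quoted from \cite{Rei76} (after writing $V_{n+1-k}(K,W)=\int_{\partial K}E_{k-1}\,d\mu_\gamma$), and it holds for any smooth closed hypersurface since it is a local first-variation identity. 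The decomposition $X=X^T+S\,\nu_\gamma$ is a genuine tangent--normal split because, as recorded in \S\ref{sec:2-2}, $G(\nu_\gamma)(\nu_\gamma,U)=0$ for all $U\in T_pM$, so $X^T\in T_pM$ and contributes only a reparametrization; the homogeneity computation and the factor $(n+1-r)\neq 0$ check out, so the identity follows cleanly. He--Li's original derivation (and the fallback you sketch) instead establishes a pointwise divergence identity for $\hat P_{r-1}(X^T)$ using the anisotropic Weingarten relation \eqref{s2:AWeingart} and the Codazzi equation in \eqref{s2:Gaus1}, with the total symmetry of $Q_{ijk}$ killing the $A$-tensor corrections after contraction. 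The trade-off is exactly as you identify: your variational route is conceptually cleaner and shorter but treats the Reilly formula as a black box, while the divergence route is local and elementary but has to carefully track the anisotropic correction terms; both are sound.
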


In this section, we generalize Theorem \ref{s4:thm-M} to nonsmooth setting by adapting the idea in Kohlmann's work \cite{Koh94} for the isotropic case.
\begin{thm}\label{s4:thm-GM}
Let $W$ be a Wulff shape with the support function ${\gamma}\in C^\infty(\SS^n)$. For any convex body $K$ in $\R^{n+1}$ and $1\leq r\leq n$, we have
\begin{equation}\label{s4:eq-GM}
\int_{\mathcal{N}_K} \psi(v)(E_{r-1}(v)-S(v)E_r(v)) d\tilde{\mathcal{H}}^nv=0,
\end{equation}
where $S(v)$ is the anisotropic support function defined on $\mathcal{N}_K$ by
\begin{equation*}
S(v):=G(v)(v,\Pi(v))
\end{equation*}
for any $v\in \mathcal{N}_K$, and $E_r(v)$ is the generalized $r$th mean curvature of the convex body $K$.
\end{thm}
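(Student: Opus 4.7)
The plan is to reduce the general convex body case to the smooth case via the anisotropic parallel body construction, following Kohlmann's strategy in the isotropic setting. For $\varepsilon > 0$, the parallel body $K^\varepsilon = K+\varepsilon W$ has $C^{1,1}$ boundary. I would first approximate $K^\varepsilon$ in the Hausdorff distance by a sequence of smooth strictly convex bodies $K^\varepsilon_j$, apply Theorem~\ref{s4:thm-M} to each $K^\varepsilon_j$, and pass to the limit $j\to\infty$ using Theorem~\ref{s3:thm-W} together with the continuity of the anisotropic support function $p\mapsto G(v(p))(v(p),p)$ under Hausdorff convergence of uniformly $C^{1,1}$ convex bodies, to obtain
\begin{equation*}
\int_{\partial K^\varepsilon}\bigl(E_{r-1}(p)-S(p)E_r(p)\bigr)\gamma(\nu(p))\,d\mu(p)=0
\end{equation*}
for every $\varepsilon>0$.

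Next, I would pull back this identity from $\partial K^\varepsilon$ to $\mathcal{N}_K$ through the bi-Lipschitz map $F_K|_{\partial K^\varepsilon}$ studied in \S\ref{sec:3-2}. Three algebraic inputs are needed. The Jacobian formula \eqref{s3:2-Jn2} translates the anisotropic area elements via
\begin{equation*}
\gamma(\nu(p))\,d\mu(p)=\psi(v)\prod_{i=1}^n(1+\varepsilon\kappa_i(v))\,d\tilde{\mathcal{H}}^n(v).
\end{equation*}
The curvature relation \eqref{s3:2-kap}, combined with the generating-function manipulation
\begin{equation*}
\prod_{i=1}^n(1+t\kappa_i(p))=\Bigl(\prod_{i=1}^n(1+\varepsilon\kappa_i(v))\Bigr)^{-1}\prod_{i=1}^n(1+(t+\varepsilon)\kappa_i(v)),
\end{equation*}
yields, upon expanding in $t$, the identity
\begin{equation*}
\Bigl(\prod_{i=1}^n(1+\varepsilon\kappa_i(v))\Bigr)E_r(\kappa(p))=\sum_{k=r}^{n}\binom{n-r}{k-r}\varepsilon^{k-r}E_k(v).
\end{equation*}
Since $p=\Pi(v)+\varepsilon v$ and $G(v)(v,v)=1$, bilinearity gives $S(p)=S(v)+\varepsilon$. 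Substituting these three identities into the formula on $\partial K^\varepsilon$ transforms it into a polynomial identity in $\varepsilon$ on $\mathcal{N}_K$, whose constant term (as $\varepsilon\to 0^+$) is precisely $\int_{\mathcal{N}_K}\psi(v)(E_{r-1}(v)-S(v)E_r(v))\,d\tilde{\mathcal{H}}^n(v)$, yielding \eqref{s4:eq-GM}.

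The main obstacle is the rigorous handling of the generalized anisotropic principal curvatures $\kappa_i(v)=+\infty$ throughout the change of variables. On $\tilde{\mathcal{N}}_{K,r}$ with $r\geq 1$, individual factors in both $\psi(v)\prod_i(1+\varepsilon\kappa_i(v))$ and $E_k(v)$ blow up but cancel algebraically under the convention from the remark following \eqref{s3:psi-r}. The key observation is that the product $\psi(v)\prod_{i=1}^n(1+\varepsilon\kappa_i(v))=\prod_{i=1}^n(1+\varepsilon\kappa_i(v))/(1+\kappa_i^2(v))^{1/2}$ remains uniformly bounded and vanishes at the rate $\varepsilon^r$ along $\tilde{\mathcal{N}}_{K,r}$, so each coefficient of the polynomial identity in $\varepsilon$ is a well-defined finite integral and extracting the constant coefficient is legitimate. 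A secondary issue is the weak continuity in Step~1 of the measures weighted by $S$; this reduces to a Portmanteau-type argument once one notes that $S$ restricted to $\partial K^\varepsilon_j$ is a uniformly Lipschitz function of $p$ that converges uniformly as $K^\varepsilon_j\to K^\varepsilon$ in the $C^{1,1}$ topology.
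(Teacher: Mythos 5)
Your proposal follows essentially the same approach as the paper: approximate by smooth bodies, apply He--Li's smooth Minkowski formula, pass to the limit via weak continuity of the anisotropic curvature measures, pull back from $\partial K^\varepsilon$ to $\mathcal{N}_K$ through $F_K$ using the Jacobian formula \eqref{s3:2-Jn2} and the curvature relation \eqref{s3:2-kap}, and extract the constant term in $\varepsilon$. Your explicit generating-function identity and the observation that $S(p)=S(v)+\varepsilon$ (which the paper's display appears to elide, though it does not affect the $\varepsilon^0$ coefficient) are both correct and make the final coefficient-comparison slightly cleaner than the paper's presentation.
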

Note that our formulas live on the unit anisotropic normal bundle $\mathcal{N}_K$ rather than on the boundary $\partial K$ itself. Though the generalized principal curvatures $\kappa_i(v)$ are defined only at those points in $\tilde{\mathcal{N}}_K$, we can still write our formulas in the form \eqref{s4:eq-GM} as we have $\tilde{\mathcal{H}}^n(\mathcal{N}_K\setminus \tilde{\mathcal{N}}_K)=0$ and the integrand on this part can be assigned any finite value. Restricted to the smooth case, $S(v)=G(v)(v(p),p)=S(p)$ for $p\in \partial K$ is just the anisotropic support function defined on the boundary $\partial K$ that is used in Theorem \ref{s4:thm-M}.

\begin{proof}[Proof of Theorem~\ref{s4:thm-GM}]
For the proof we use the fact that any convex body $K$ in $\mathbb{R}^{n+1}$ can be approximated by a sequence of smooth convex bodies $\{K_i\}_{i=1}^\infty$ with respect to the Hausdorff distance (cf. \cite{BF87,Schn}). Then by \eqref{s4:MF-1} we have
\begin{equation}\label{s4:eq3}
\int_{\pt K_i^\varepsilon} (E_{r-1}(p)-S(p)E_r(p)){\gamma}(\nu)d\mu(p)=0
\end{equation}
for small $\varepsilon>0$, where $K_i^{\varepsilon}=\{x:~d_W(K_i,x)\leq \varepsilon\}$ is the anisotropic parallel body of $K_i$ with the anisotropic distance $\varepsilon$. Choose $K_i$ such that $d_W^\mathcal{H}(K_i,K)<{\varepsilon}/{i}$. Then
\begin{lem} \label{s4:lem3}
We have
\begin{equation*}
d^\mathcal{H}_W({K^\varepsilon_i},{K^\varepsilon})< \frac{\varepsilon}{i}.
\end{equation*}
\end{lem}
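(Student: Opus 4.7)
\smallskip

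\noindent\textbf{Plan of proof for Lemma~\ref{s4:lem3}.} My approach is to unfold the two anisotropic parallel bodies as Minkowski sums with $W$ and then exploit the elementary fact that Minkowski addition with a fixed convex body is $1$-Lipschitz with respect to the Hausdorff distance. Concretely, the plan is the following.

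\smallskip

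\noindent\emph{Step 1: Rewrite as Minkowski sums.} As recalled in \S\ref{sec:2-3}, for any convex body $L\subset \R^{n+1}$ and any $\varepsilon>0$ one has
\begin{equation*}
L^{\varepsilon}=\{x\in \R^{n+1}:\ d_W(L,x)\leq \varepsilon\}=L+\varepsilon W.
\end{equation*}
Hence $K_i^{\varepsilon}=K_i+\varepsilon W$ and $K^{\varepsilon}=K+\varepsilon W$. This is the identification that makes the anisotropic parallel body directly susceptible to translation by elements of $rW$.

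\smallskip

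\noindent\emph{Step 2: Use the hypothesis on $d_W^{\mathcal{H}}(K_i,K)$.} By hypothesis $d_W^{\mathcal{H}}(K_i,K)<\varepsilon/i$, so we may pick some $r$ with $d_W^{\mathcal{H}}(K_i,K)\leq r<\varepsilon/i$ for which
\begin{equation*}
K_i\subset K+rW \quad\text{and}\quad K\subset K_i+rW.
\end{equation*}
(Here I use the natural reading of the anisotropic Hausdorff distance as mutual containment in the $rW$-enlargement; by the equivalence of $d_W^{\mathcal{H}}$ with the isotropic Hausdorff distance, the infimum in its definition is attained in this sense.)

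\smallskip

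\noindent\emph{Step 3: Add $\varepsilon W$ to both sides.} Using the associativity and monotonicity of Minkowski addition, together with the convexity of $W$ (which gives $rW+\varepsilon W=(r+\varepsilon)W$),
\begin{equation*}
K_i^{\varepsilon}=K_i+\varepsilon W\subset K+rW+\varepsilon W=(K+\varepsilon W)+rW=K^{\varepsilon}+rW,
\end{equation*}
and symmetrically $K^{\varepsilon}\subset K_i^{\varepsilon}+rW$. Therefore
\begin{equation*}
d_W^{\mathcal{H}}(K_i^{\varepsilon},K^{\varepsilon})\leq r<\frac{\varepsilon}{i},
\end{equation*}
which is the desired inequality.

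\smallskip

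There is essentially no hard step here: the entire content is that Minkowski addition is $1$-Lipschitz for $d_W^{\mathcal{H}}$, a translation-invariant, translation-compatible statement that only uses $rW+\varepsilon W=(r+\varepsilon)W$. The only minor point to be careful about is the passage from the strict inequality $d_W^{\mathcal{H}}(K_i,K)<\varepsilon/i$ to the existence of an $r<\varepsilon/i$ realizing the containments, which is immediate since both $K$ and $K_i$ are compact and $W$ is convex with $0\in\mathrm{int}(W)$, so the set of admissible $r$ is a closed upper half-line.
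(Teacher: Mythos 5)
Your proof is correct, but it takes a genuinely different route from the paper. The paper argues pointwise: given $p\in K^{\varepsilon}$, it uses the triangle inequality for $d_W$ to construct a nearby $q\in K_i^{\varepsilon}$, splitting into two cases depending on whether $d_W(q',p)\leq d_W(q',p')$ or not (with $p'\in K$ close to $p$ and $q'\in K_i$ close to $p'$). Your argument instead works at the level of whole bodies, exploiting $K^{\varepsilon}=K+\varepsilon W$ together with the semigroup identity $rW+\varepsilon W=(r+\varepsilon)W$ for the convex body $W$, which makes Minkowski addition with $\varepsilon W$ nonexpansive for $d_W^{\mathcal H}$. This is cleaner and more conceptual, and it avoids the case analysis entirely. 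One minor remark: you don't actually need the infimum in the definition of $d_W^{\mathcal H}$ to be attained; since the hypothesis gives a strict inequality $d_W^{\mathcal H}(K_i,K)<\varepsilon/i$, any $r$ strictly between the two works, and the mutual-containment reading of $d_W^{\mathcal H}$ then certainly holds for such an $r$ (the set of admissible $r$ is an upper half-line because $0\in W$ makes $r\mapsto L+rW$ monotone). Also note that the definition of $d_W^{\mathcal H}$ as printed in the paper has the inclusions reversed (it should read $K\subset L+rW$ and $L\subset K+rW$); your reading is the correct one.
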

\begin{proof}
Since $K$ and $K_i$ are symmetric in the statement, it suffices to prove that for any $p\in {K^\varepsilon}$ we can find $q\in {K^\varepsilon_i}$ such that $d_W(q,p)<  \varepsilon/i$. To this end, first we can find $p'\in K$ with $d_W(p',p)\leq \varepsilon$. Then we can choose $q'\in K_i$ such that $d_W(q',p')<\varepsilon/i$. Therefore by the triangle inequality we get $d_W(q',p)<\varepsilon(1+i^{-1})$.

Now consider two cases: (1) If $d_W(q',p)\leq d_W(q',p')$, then we choose $q=q'$ ($\in K_i\subset {K^\varepsilon_i}$). So we have
\begin{equation*}
d_W(q,p)=d_W(q',p)\leq d_W(q',p')<\frac{\varepsilon}{i}.
\end{equation*}
(2) If $d_W(q',p)> d_W(q',p')$, we can choose $q$ on the segment $\overline{q'p}$ such that $d_W(q',q)=d_W(q',p)-d_W(q',p')$ ($\leq d_W(p',p)\leq \varepsilon $). Then we have
\begin{equation*}
d_W(q,p)=d_W(q',p)-d_W(q',q)=d_W(q',p')<\frac{\varepsilon}{i}.
\end{equation*}
Since $d_W(q',q)\leq \varepsilon$, we have $q\in K_i^{\varepsilon}$. This completes the proof of the lemma.
\end{proof}

It follows from Lemma \ref{s4:lem3} that
\begin{equation*}
K^{\varepsilon (1-\frac{1}{i})}\subset K^\varepsilon_i\subset K^{\varepsilon (1+\frac{1}{i})}.
\end{equation*}

For any $p\in \pt K^\varepsilon$, we consider the half-line $f_K(p)p$ starting from $f_K(p)$. Since $K_i^\varepsilon$ is convex, there exists a unique point $p_i(p)\in \pt K_i^\varepsilon$ on the half-line $f_K(p)p$. Denote by $v_{K_i^\varepsilon}(p_i)$ the unit outward anisotropic normal of $\pt K_i^\varepsilon$ at $p_i$. Then we have the following result.
\begin{lem}\label{s4:lem-conv}
In $T\R^{n+1}$, we have for fixed $p\in \pt K^\varepsilon$,
\begin{equation*}
(p_i(p),v_{K_i^\varepsilon}(p_i(p)))\longrightarrow (p,v_{K^\varepsilon}(p)),\text{ as } i\rightarrow \infty.
\end{equation*}
\end{lem}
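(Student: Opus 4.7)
The plan is to establish the two components of the convergence separately: first $p_i \to p$ in $\mathbb{R}^{n+1}$, then $v_{K_i^\varepsilon}(p_i) \to v_{K^\varepsilon}(p)$ as points of the tangent bundle.

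For the base point convergence I would exploit the inclusion chain
\begin{equation*}
K^{\varepsilon(1-1/i)}\subset K^\varepsilon_i\subset K^{\varepsilon(1+1/i)}
\end{equation*}
coming from Lemma~\ref{s4:lem3}. Parametrize the half-line from $f_K(p)$ through $p$ by anisotropic arclength, $\rho(t)=f_K(p)+tv$ with $v=v_{K^\varepsilon}(p)=(p-f_K(p))/\varepsilon$ so that $\gamma^0(v)=1$ and $\rho(\varepsilon)=p$. A crucial observation is that the anisotropic projection is constant along this ray, $f_K(\rho(t))=f_K(p)$ for all $t\geq 0$, because $v$ is the anisotropic outer normal to $K$ at $f_K(p)$; hence $d_W(K,\rho(t))=t$. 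Combining this with the inclusion chain, $\rho(t)\in K^\varepsilon_i$ whenever $t\leq \varepsilon(1-1/i)$ and $\rho(t)\notin K^\varepsilon_i$ whenever $t>\varepsilon(1+1/i)$. Since by convexity the ray meets $\partial K_i^\varepsilon$ at the single point $p_i=\rho(t_i)$, the parameter $t_i$ lies in $[\varepsilon(1-1/i),\varepsilon(1+1/i)]$, so $t_i\to\varepsilon$ and therefore $p_i\to p$.

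For the convergence of the anisotropic normals I would first pass to the isotropic unit normals $\nu_{K_i^\varepsilon}(p_i)\in\mathbb{S}^n$, which are well defined since each $\partial K_i^\varepsilon$ is $C^{1,1}$. The excerpt has already recorded that $\partial K^\varepsilon$ is itself $C^{1,1}$, so $K^\varepsilon$ admits a unique supporting hyperplane at $p$, with outer normal $\nu_{K^\varepsilon}(p)$. Given any subsequence of $\{\nu_{K_i^\varepsilon}(p_i)\}$, compactness of $\mathbb{S}^n$ allows extraction of a further subsequence converging to some $\nu_\ast$. The supporting half-space condition $\langle y-p_i,\nu_{K_i^\varepsilon}(p_i)\rangle\leq 0$ for all $y\in K_i^\varepsilon$ passes to the limit using Hausdorff convergence $K_i^\varepsilon\to K^\varepsilon$ and $p_i\to p$, yielding $\langle y-p,\nu_\ast\rangle\leq 0$ for all $y\in K^\varepsilon$. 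Uniqueness of the supporting hyperplane at $p$ forces $\nu_\ast=\nu_{K^\varepsilon}(p)$, so the whole sequence converges. Since $v=D\gamma(\nu)$ and $D\gamma$ is continuous on $\mathbb{S}^n$, this yields $v_{K_i^\varepsilon}(p_i)\to v_{K^\varepsilon}(p)$, completing the convergence of the pair in $T\mathbb{R}^{n+1}$.

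The main obstacle is the second step: one must justify that limits of supporting hyperplanes at $p_i$ remain supporting hyperplanes at $p$ despite the double limit in $i$ (both the body and the base point are moving). The passage requires combining Hausdorff convergence of the bodies with the base point convergence obtained in the first step, and relies essentially on the $C^{1,1}$ regularity of $\partial K^\varepsilon$ (a property of the parallel body rather than of $K$ itself) to pin down the limit. Once this is handled, continuity of $D\gamma$ finishes the lemma without further work.
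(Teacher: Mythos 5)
Your proposal is correct. The base-point convergence is essentially the paper's argument (the inclusion chain from Lemma~\ref{s4:lem3} plus the observation that $p_i$ lies on the fixed ray through $f_K(p)$ and $p$); you make the ray geometry slightly more explicit, which is fine. For the convergence of the normals, however, you take a genuinely different route. The paper works anisotropically: it considers the interior Wulff shapes $W_{\varepsilon(1-1/i)}(f_K(p))\subset K_i^\varepsilon$ which are tangent at $p_i(p)$ in the limit, forms the cones $C_i$ over them with vertex $p_i(p)$, and squeezes the supporting half-spaces $T_i$ of $K_i^\varepsilon$ between these cones and the limiting tangent half-space of $K^\varepsilon$ at $p$; the strict convexity of the Wulff shape forces the limit to be the unique tangent half-space at $p$. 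You instead pass to the isotropic unit normals $\nu_{K_i^\varepsilon}(p_i)$, extract a subsequential limit $\nu_\ast$ by compactness of $\mathbb{S}^n$, push the supporting half-space inequality to the limit using Hausdorff convergence $K_i^\varepsilon\to K^\varepsilon$ (which follows from Lemma~\ref{s4:lem3}) and $p_i\to p$, use the $C^{1,1}$ regularity of $\partial K^\varepsilon$ recorded in \S\ref{sec:3-1} to identify $\nu_\ast=\nu_{K^\varepsilon}(p)$ (so the whole sequence converges), and then invoke continuity of $D\gamma$ to recover $v_{K_i^\varepsilon}(p_i)\to v_{K^\varepsilon}(p)$. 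Your argument is more elementary and relies only on the standard stability of supporting hyperplanes under Hausdorff limits plus the regularity of the outer parallel body; the paper's argument stays entirely in the anisotropic picture via the interior tangent Wulff shapes. Both correctly exploit the same key regularity input (uniqueness of the supporting hyperplane at $p\in\partial K^\varepsilon$), and both are valid proofs.
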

\begin{proof}
First, since $K^{\varepsilon (1-\frac{1}{i})}\subset K^\varepsilon_i\subset K^{\varepsilon (1+\frac{1}{i})}$, we have
\begin{equation*}
\varepsilon (1-\frac{1}{i})\leq d_W(f_K(p),p_i(p))\leq \varepsilon (1+\frac{1}{i}).
\end{equation*}
Letting $i\rightarrow \infty$ yields $d_W(f_K(p),p_i(p))\rightarrow \varepsilon$. So $p_i(p)\rightarrow p$ as $i\rightarrow \infty$.

Second, to show the convergence of the unit anisotropic normals, we consider the translated scaled Wulff shape
\begin{equation*}
  W_{\varepsilon(1-\frac{1}{i})}(f_K(p)):=\varepsilon(1-\frac{1}{i}) W+f_K(p),
\end{equation*}
and the cone $C_i$ generated by it with vertex $p_i(p)$
\begin{equation*}
C_i=\left\{t\:\overrightarrow{p_i(p)q}:t\geq 0, \quad q\in W_{\varepsilon(1-\frac{1}{i})}(f_K(p))\right\}.
\end{equation*}
As $i\rightarrow \infty$, the Wulff shape $W_{\varepsilon(1-\frac{1}{i})}(f_K(p))$ converges to $W_{\varepsilon}(f_K(p))$ which is contained in $K^\varepsilon$ and is tangential to $\pt K^\varepsilon$. Noting that $p_i(p)\rightarrow p$, we conclude that $C_i$ converges to $T$, one of the half-spaces bounded by $T_pK_\varepsilon$.

On the other hand, since $W_{\varepsilon(1-\frac{1}{i})}(f_K(p))$ is contained in the convex body $K_i^\varepsilon$, the cone $C_i$ is contained in $T_i$, one of the half-spaces determined by $T_{p_i(p)}\pt K_i^\varepsilon$. Taking $i\rightarrow \infty$, we can see that $T_i$ converges to a half-space and the limit half-space is $T$. Then we conclude that $v_{K_i^\varepsilon}(p_i(p))\rightarrow v_{K^\varepsilon}(p)$, as $i\rightarrow \infty$. So we finish the proof.

\end{proof}

Letting $i\rightarrow \infty$ in \eqref{s4:eq3} and using the convergence result in Lemma \ref{s4:lem-conv}, we can prove the following lemma.
\begin{lem}
For sufficiently small $\varepsilon>0$, we have
\begin{equation}\label{s4:Ke}
\int_{\Pi(\tilde{\mathcal{N}}_{K^{\varepsilon}})}\biggl(S(v_{K^\varepsilon}(p))E_r(p)-E_{r-1}(p)\biggr)\gamma(\nu_{K^{\varepsilon}})d\mu(p)=0,
\end{equation}
where $v_{K^{\varepsilon}}$ denotes the unit anisotropic normal of $\partial K^{\varepsilon}$ which exists uniquely for any point of $\partial K^{\varepsilon}$, and $\nu_{K^{\varepsilon}}$ is the corresponding isotropic normal determined by the identity \eqref{s3:nu}.
\end{lem}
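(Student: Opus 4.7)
The plan is to pass $i\to\infty$ in the smooth Minkowski formula \eqref{s4:eq3}, which applies on each $K_i^\varepsilon=K_i+\varepsilon W$ since $K_i$ is smooth and uniformly convex. The first observation is that $K^\varepsilon=K+\varepsilon W$ is itself $C^{1,1}$ strictly convex (as the Minkowski sum of a convex body with the smooth uniformly convex Wulff body), so by Alexandrov's theorem $\partial K^\varepsilon$ is twice differentiable at $\mathcal{H}^n$-a.e.\ point. In particular the anisotropic normal $v_{K^\varepsilon}$ (and hence $\nu_{K^\varepsilon}$ via \eqref{s3:nu}) is uniquely defined everywhere on $\partial K^\varepsilon$, and $\Pi(\tilde{\mathcal{N}}_{K^\varepsilon})$ carries full $\mathcal{H}^n$-measure in $\partial K^\varepsilon$.

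I would transfer \eqref{s4:eq3} to the fixed domain $\partial K^\varepsilon$ using the map $p\mapsto p_i(p)$ of Lemma \ref{s4:lem-conv}, which provides a bi-Lipschitz homeomorphism $\partial K^\varepsilon\to\partial K_i^\varepsilon$. After change of variables \eqref{s4:eq3} becomes
$$\int_{\partial K^\varepsilon}\Bigl(E_{r-1}^{(i)}-S^{(i)}E_r^{(i)}\Bigr)\bigl(p_i(p)\bigr)\,\gamma\bigl(\nu_{K_i^\varepsilon}(p_i(p))\bigr)\,J_i(p)\,d\mu(p)=0,$$
where the superscript $(i)$ denotes quantities on the smooth hypersurface $\partial K_i^\varepsilon$ and $J_i$ is the Jacobian of the change of variables. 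From Lemma \ref{s4:lem-conv} we have $p_i(p)\to p$ and $v_{K_i^\varepsilon}(p_i(p))\to v_{K^\varepsilon}(p)$ pointwise, so the continuity of $G$, $\gamma$ and the formula \eqref{s3:nu} give the pointwise convergence $S^{(i)}(p_i(p))\to S(v_{K^\varepsilon}(p))$, $\gamma(\nu_{K_i^\varepsilon}(p_i(p)))\to\gamma(\nu_{K^\varepsilon}(p))$, and $J_i(p)\to 1$ almost everywhere.

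The main obstacle will be the pointwise a.e.\ convergence $E_k^{(i)}(p_i(p))\to E_k(v_{K^\varepsilon}(p))$ for $k=r-1,r$. I would handle this by parametrizing both $\partial K^\varepsilon$ and $\partial K_i^\varepsilon$ by $\Sigma$ through their anisotropic support functions $s_\varepsilon$ and $s_{i,\varepsilon}$ (the parametrization developed in \S\ref{sec:gauss map}): since $K_i\to K$ in Hausdorff distance we have $s_{i,\varepsilon}\to s_\varepsilon$ uniformly on $\Sigma$ with uniform semiconvexity in $i$, so an Alexandrov-type argument yields that the symmetric matrices $\tau_{ij}[s_{i,\varepsilon}]$ converge pointwise a.e.\ on $\Sigma$ to $\tau_{ij}[s_\varepsilon]$; taking elementary symmetric functions gives the desired convergence of the $E_k$. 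To apply dominated convergence I would use the uniform bound $\kappa_j^{(i)}\leq 1/\varepsilon$ coming from the tangent interior Wulff shape $f_{K_i}(q)+\varepsilon W\subset K_i^\varepsilon$ at each $q\in\partial K_i^\varepsilon$, together with the obvious uniform bounds on $|S^{(i)}|$, $\gamma(\nu)$, and $J_i$. Dominated convergence then delivers \eqref{s4:Ke}, with the integral confined to $\Pi(\tilde{\mathcal{N}}_{K^\varepsilon})$ because the complement has $\mathcal{H}^n$-measure zero.
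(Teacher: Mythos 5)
Your strategy of changing variables to the fixed hypersurface $\partial K^\varepsilon$ and then applying dominated convergence is natural, but the key step where you claim $\tau_{ij}[s_{i,\varepsilon}]\to\tau_{ij}[s_\varepsilon]$ pointwise a.e.\ on $\Sigma$ is not correct. Uniform convergence $s_{i,\varepsilon}\to s_\varepsilon$ together with the uniform lower bound $\tau_{ij}[s_{i,\varepsilon}]\geq\varepsilon\bar{g}_{ij}$ only guarantees weak-$*$ convergence of the distributional Hessians as matrix-valued measures; it does \emph{not} give a.e.\ convergence of the Alexandrov (pointwise) Hessians, and in fact this fails. For a concrete obstruction take $W$ the unit ball and let the $K_i$ be finely smoothed inscribed polytopes converging to a round ball $K$. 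Then $\partial K_i^\varepsilon$ consists of nearly flat pieces (anisotropic principal curvatures $\approx 0$) glued to sharply curved caps near the vertices of $K_i$ (curvatures $\approx 1/\varepsilon$), and under the radial map $p\mapsto p_i(p)$ these two families each occupy a fraction of $\partial K^\varepsilon$ that stays bounded away from zero for all $i$. Consequently $E_k^{(i)}(p_i(p))$ does not converge a.e.\ to the constant value of $E_k$ on the sphere $\partial K^\varepsilon$, the domination $E_k^{(i)}\leq\varepsilon^{-k}$ notwithstanding, so the dominated convergence theorem has nothing to latch onto. Note also that Lemma~\ref{s4:lem-conv} gives only pointwise convergence $(p_i(p),v_{K_i^\varepsilon}(p_i(p)))\to(p,v_{K^\varepsilon}(p))$, not a bi-Lipschitz bound uniform in $i$, and the assertion $J_i\to 1$ a.e.\ runs into the same difficulty as the curvature convergence.

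The paper's proof is built precisely to avoid this obstacle: each $\lambda_i$ is recast as the pairing of a bounded factor (the anisotropic support function $S(v_{K_i^\varepsilon}(\cdot))$, whose pointwise convergence is all that Lemma~\ref{s4:lem-conv} is used for) against an anisotropic curvature measure $\Phi_{n-r}(K_i^\varepsilon;\cdot)$ or $\Phi_{n-r+1}(K_i^\varepsilon;\cdot)$, and the passage to the limit uses only the \emph{weak} continuity of these measures under Hausdorff convergence established in Theorem~\ref{s3:thm-W}. Weak convergence of the curvature measures is the correct and available notion here; the pointwise a.e.\ convergence of the curvature densities that your argument requires is generally false, which is exactly why the machinery of curvature measures is introduced in the first place.
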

\proof
For any point $p\in \partial K^{\varepsilon}$, let $p_i=p_i(p)\in \partial K_i^{\varepsilon}$ as before. Since $\partial K_i^{\varepsilon}$ is smooth, the anisotropic support function satisfies $S(p_i)=S(v_{K_i^{\varepsilon}}(p_i))$. Let
\begin{equation}\label{s4:lem5-pf1}
\lambda_i:=\int_{\pt K^\varepsilon_i} S(v_{K^\varepsilon_i}(p_i))E_r(p_i){\gamma}(\nu_{K_i^{\varepsilon}}(p_i))d\mu(p_i),
\end{equation}
where $\nu_{K_i^{\varepsilon}}(p_i)$ is the unit isotropic normal of $\partial K_i^{\varepsilon}$ at the point $p_i$ which is determined by the anisotropic normal $v_{K_i^{\varepsilon}}(p_i)$ via the identity \eqref{s3:nu}. Then by the equation \eqref{s4:eq3} and the fact $S(p_i)=S(v_{K_i^{\varepsilon}}(p_i))$,  we have
\begin{equation}\label{s4:lem5-pf2}
\lambda_i=\int_{\pt K^\varepsilon_i} E_{r-1}(p_i){\gamma}(\nu_{K_i^{\varepsilon}}(p_i))d\mu(p_i).
\end{equation}
We compute the limit $\lim_{i\to\infty}\lambda_i$ using both expressions of $\lambda_i$. First, by \eqref{s4:lem5-pf2} and using the anisotropic curvature measure and its weak continuity with respect to the anisotropic Hausdorff distance, we have
\begin{align}\label{s4:lem5-pf3}
  \lim_{i\to\infty}\lambda_i =& \lim_{i\to\infty}\int_{\pt K_i^\varepsilon} d\Phi_{n-r+1}(K_i^\varepsilon;p_i) \nonumber\\
  =&\int_{\pt K^\varepsilon} d\Phi_{n-r+1}(K^\varepsilon;p) \nonumber\\
  = &\int_{\tilde{\mathcal{N}}_{K^\varepsilon}}\psi(v_{K^{\varepsilon}}) E_{r-1}(v_{K^{\varepsilon}})d\tilde{\mathcal{H}}^nv_{K^{\varepsilon}}.
\end{align}
Since $\partial K^{\varepsilon}$ has only regular points and the anisotropic principal curvatures $\kappa_i(p)$ are well defined on $\Pi(\tilde{\mathcal{N}}_{K^{\varepsilon}})$, we can rewrite \eqref{s4:lem5-pf3} as
\begin{align}\label{s4:lem5-pf4}
  \lim_{i\to\infty}\lambda_i 
  =&\int_{\Pi(\tilde{\mathcal{N}}_{K^{\varepsilon}})}E_{r-1}(p)\gamma(\nu_{K^{\varepsilon}})d\mu(p).
\end{align}
On the other hand, since $S(v_{K^\varepsilon_i}(p_i))$ is a bounded function, by \eqref{s4:lem5-pf1} and the weak continuity of anisotropic curvature measures, we obtain
\begin{align*}
\lim_{i\rightarrow \infty} \lambda_i &=\lim_{i\rightarrow \infty}\int_{\pt K_i^\varepsilon} S(v_{K^\varepsilon_i}(p_i))d\Phi_{n-r}(K_i^\varepsilon;p_i)\\
&=\int_{\tilde{\mathcal{N}}_{K^\varepsilon}} \psi(v_{K^{\varepsilon}})S(v_{K^\varepsilon}(p))E_r(v_{K^{\varepsilon}})d\tilde{\mathcal{H}}^nv_{K^{\varepsilon}}\\
&=\int_{\Pi(\tilde{\mathcal{N}}_{K^{\varepsilon}})}S(v_{K^\varepsilon}(p))E_r(p)\gamma(\nu_{K^{\varepsilon}})d\mu(p),
\end{align*}
where we used Lemma~\ref{s4:lem-conv} to show the convergence of $S(v_{K^\varepsilon_i}(p_i))$ to $S(v_{K^\varepsilon}(p))$ as $i\to\infty$. This completes the proof of the lemma.
\endproof

Put $v=v_{K^{\varepsilon}}(p)$. Then the map $F_K: \partial K^{\varepsilon}\to \mathcal{N}_K$ maps any point $p\in\partial K^{\varepsilon}$ to $(p-\varepsilon v, v)\in \mathcal{N}_K$. For any point $(p,v)\in \tilde{\mathcal{N}}_{K^{\varepsilon}}$, we also have $(p-\varepsilon v,v)\in \tilde{\mathcal{N}}_K$. Now we can transform the integral \eqref{s4:Ke} on $\Pi(\tilde{N}_{K^{\varepsilon}})\subset \partial K^{\varepsilon}$ to the integral on $\tilde{\mathcal{N}}_K$ or $\mathcal{N}_K$ as follows:
\begin{align*}
  0= &  \int_{\Pi(\tilde{N}_{K^{\varepsilon}})}\biggl(S(v_{K^\varepsilon}(p))E_r(p)-E_{r-1}(p)\biggr)\gamma(\nu_{K^{\varepsilon}})d\mu(p)\\
   =&\int_{\mathcal{N}_K} \biggl(S(v)E_r\left(\frac{\kappa_i(v)}{1+\varepsilon\kappa_i(v)}\right)-E_{r-1}\left(\frac{\kappa_i(v)}{1+\varepsilon\kappa_i(v)}\right)\biggr)\prod_{i=1}^n\frac{1+\varepsilon\kappa_i(v)}{(1+\kappa_i(v)^2)^{1/2}}d\tilde{\mathcal{H}}^nv.
\end{align*}
Comparing the coefficients of $\varepsilon^m$  and letting $\varepsilon\to 0$, we arrive at the equation \eqref{s4:eq-GM} and finish the proof of Theorem~\ref{s4:thm-GM}.
\end{proof}

\section{Volume representation involving anisotropic interior reach}\label{sec:Vol}

In this section, we derive a representation for the volume of a convex body involving the anisotropic interior reach. Let $W$ be a Wulff shape with the support function ${\gamma}\in C^\infty(\SS^n)$.  Given a convex body $K$ in $\R^{n+1}$, the anisotropic inner radius $r(K)$ of $K$ is the radius of the largest Wulff shape contained in $K$, i.e.,
\begin{equation*}
  r(K)=\sup \{\lambda>0: \lambda W+x\subset K~\mathrm{for}~\mathrm{some}~x\in \mathbb{R}^{n+1}\}.
\end{equation*}
The anisotropic interior reach of $K$ at the boundary point $p\in \pt K$ is defined by
\begin{equation*}
r(K,p)=\sup \{\lambda\geq 0: p\in \lambda W+x\subset K \text{ for some }x\in \R^{n+1}\}.
\end{equation*}
The main result of this section is stated as follows.
\begin{prop}\label{s5:prop-main}
Let $W$ be a Wulff shape with the support function ${\gamma}\in C^\infty(\SS^n)$.  For any convex body $K$ in $\mathbb{R}^{n+1}$, we have the following representation for the volume of $K$:
\begin{align}\label{s5:VK}
\mathrm{Vol}(K)&=\frac 1{n+1}\sum_{i=0}^n (-1)^{n-i}\binom{n+1}{i}\int_{\pt K} r(K,p)^{n+1-i} d\Phi_i(K;p),
\end{align}
where $r(K,p)$ is the anisotropic interior reach of $K$, and $d\Phi_i(K;p)$ is the anisotropic curvature measure element.
\end{prop}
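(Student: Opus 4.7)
The plan is to derive \eqref{s5:VK} as the area formula for an ``inner-normal parametrization'' of $K$. I would introduce the map
\begin{equation*}
T : \tilde{\mathcal{N}}_K \times [0,\infty) \to \mathbb{R}^{n+1}, \qquad T(v,t) = \Pi(v) - tv,
\end{equation*}
and the domain $D_K := \{(v,t) : v \in \tilde{\mathcal{N}}_K,\ 0 \leq t \leq r(K,\Pi(v))\}$. First, $T(D_K) \subset K$: writing $c(p) := p - r(K,p)\,v$ for the center of the maximal inscribed Wulff shape $r(K,p)W + c(p) \subset K$ touching $\partial K$ at $p = \Pi(v)$, the identity $T(v,t) = c(p) + (r(K,p) - t)v$ places $T(v,t)$ inside $r(K,p)\overline{W} + c(p) \subset K$ for $0 \leq t \leq r(K,p)$.

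Next I would compute the Jacobian of $T$ by a bookkeeping parallel to \S\ref{sec:3-2} with $-t$ in place of $\varepsilon$. Choosing a $G(v)$-orthonormal eigenbasis $\{e_i\}$ of the anisotropic Weingarten map $\mathcal{W}_\gamma$ at $p$ with eigenvalues $\kappa_i(v)$, one finds $T_\ast(e_i) = (1 - t\kappa_i(v)) e_i$ and $T_\ast(\partial_t) = -v$, whence
\begin{equation*}
T^\ast d\mathcal{L}^{n+1} = \psi(v) \prod_{i=1}^n (1 - t\kappa_i(v))\, dt\, d\tilde{\mathcal{H}}^n v.
\end{equation*}
Since the inscribed Wulff shape of radius $r(K,p)$ has anisotropic principal curvatures $1/r(K,p)$ that dominate those of $\partial K$ at $p$ (the Wulff shape lies inside $K$), one has $\kappa_i(v) \leq 1/r(K,p)$, so the Jacobian factor is non-negative throughout $D_K$.

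The crux is the covering claim: $T|_{D_K}$ surjects onto $K^\circ$ modulo a Lebesgue-null set (the inward anisotropic cut locus of $\partial K$) and is a.e.\ injective. I would first establish this for smooth strictly convex $K$ via a direct convex-geometric argument — for generic $x \in K^\circ$ the boundary points $p$ with $p - x = t\,v(p)$ and $t \in (0, r(K,p)]$ are isolated, and uniqueness a.e.\ follows from the fact that the inward cut locus has codimension at least one. The area formula for Lipschitz maps then yields
\begin{equation*}
\mathrm{Vol}(K) = \int_{\tilde{\mathcal{N}}_K} \psi(v) \int_0^{r(K,\Pi(v))} \prod_{i=1}^n (1 - t\kappa_i(v))\, dt\, d\tilde{\mathcal{H}}^n v.
\end{equation*}
Expanding $\prod(1-t\kappa_i(v)) = \sum_{j=0}^n (-1)^j\binom{n}{j}E_j(v)t^j$, integrating in $t$ via $\binom{n}{j}/(j+1) = \binom{n+1}{j+1}/(n+1)$, and reindexing $i = n-j$, the right-hand side rearranges by the definition \eqref{s3:psi-r} into $\tfrac{1}{n+1}\sum_{i=0}^n (-1)^{n-i}\binom{n+1}{i}\int_{\partial K} r(K,p)^{n+1-i}\,d\Phi_i(K;p)$, proving \eqref{s5:VK} in the smooth strictly convex case. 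General convex bodies are handled by Hausdorff approximation, using Theorem~\ref{s3:thm-W} together with the continuity of $r(K,\cdot)$ under Hausdorff convergence.

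The main obstacle is the covering property of $T|_{D_K}$: verifying rigorously that every $x \in K^\circ$ lies on the axis $[p, c(p)]$ of some maximal inscribed Wulff shape and that the cut locus is Lebesgue-null requires careful convex-geometric/topological analysis (the naive identification of $r(K,p)$ with the local focal distance $1/\kappa_{\max}(p)$ fails in general). Once these covering properties are established, the remainder of the proof reduces to the Jacobian computation above and a purely algebraic expansion.
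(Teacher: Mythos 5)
Your approach is genuinely different from the paper's. The paper works through inner parallel bodies $K_{-\lambda}$ together with three supporting lemmas --- the interior skeleton $\mathscr{S}(K)$ has zero $\mathcal{L}^{n+1}$-measure (Lemma~\ref{s5:lem2}), a shift identity $\Phi_i(K_{-\lambda}+\lambda W;\cdot)=\Phi_i(K;\cdot)$ on the relevant Borel set (Lemma~\ref{s5:lem3}), and a binomial inversion (Lemma~\ref{s5:lem4}) --- and then assembles \eqref{s5:VK} by the local Steiner formula and a supremum over partitions of $[0,r(K)]$. You instead propose a direct inward parametrization $T(v,t)=\Pi(v)-tv$ on $\tilde{\mathcal{N}}_K\times[0,\infty)$ and a change-of-variables calculation. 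Your Jacobian bookkeeping is correct and produces precisely the unexpanded form
\[
\mathrm{Vol}(K)=\int_{\tilde{\mathcal{N}}_K}\psi(v)\int_0^{r(K,\Pi(v))}\prod_{i=1}^n\bigl(1-t\kappa_i(v)\bigr)\,dt\,d\tilde{\mathcal{H}}^nv
\]
which the paper itself extracts from \eqref{s5:VK} at the start of the Heintze--Karcher argument; the reindexing you describe then recovers \eqref{s5:VK}. So your route is a legitimate coarea-style shortcut to the same identity, and what it buys is conceptual transparency about the origin of the alternating signs.

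There are, however, two gaps. First, the covering property you flag is not a side remark: a.e.\ surjectivity of $T|_{D_K}$ onto $K^\circ$ and a.e.\ injectivity are essentially the content of Lemma~\ref{s5:lem2}. For $x\in K^\circ\setminus\mathscr{S}(K)$, $x\in\partial K_{-\lambda}$ with $r(K_{-\lambda},x)>0$, so by Lemma~\ref{lem-interior-reach} the nearest boundary point is unique and the axis parameter equals $\lambda\leq r(K,\Pi(v))$; conversely, multiple nearest boundary points force $r(K_{-\lambda},x)=0$ and hence $x\in\mathscr{S}(K)$. Thus your covering claim reduces to $\mathcal{L}^{n+1}(\mathscr{S}(K))=0$, which is not a free dividend and requires the coarea estimate of Lemma~\ref{s5:lem2}. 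Second, and more seriously, the plan to prove the formula for smooth strictly convex $K$ and then pass to general convex bodies by Hausdorff approximation does not close. Theorem~\ref{s3:thm-W} gives $\int f\,d\Phi_i(K_j;\cdot)\to\int f\,d\Phi_i(K;\cdot)$ for a \emph{fixed} bounded continuous $f$, whereas you would need $\int r(K_j,\cdot)^{n+1-i}\,d\Phi_i(K_j;\cdot)\to\int r(K,\cdot)^{n+1-i}\,d\Phi_i(K;\cdot)$, where the test function $r(K_j,\cdot)$ varies with $j$, lives on a changing domain $\partial K_j$, and is neither continuous nor convergent in any useful uniform sense (the anisotropic interior reach is a global quantity, discontinuous across inner cut loci and sensitive to Hausdorff-small perturbations). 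You would be better off running the area-formula argument directly for general convex $K$ --- the Jacobian computation already works on $\tilde{\mathcal{N}}_K$ --- once the covering lemma is in hand; the approximation step is both the shakiest part of the plan and ultimately unnecessary.
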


The proof of \eqref{s5:VK} uses inner parallel bodies and is inspired by the argument in Sangwine-Yager's \cite{San94} for the isotropic case. For $0\leq \lambda \leq r(K)$, the inner parallel body with respect to $W$ at a distance $\lambda$ from $\pt K$ is defined by
\begin{equation*}
K_{-\lambda}=\{x:\lambda W+x\subset K\}.
\end{equation*}
Then $K_{-\lambda}$ is also a convex body. The outer parallel body at the distance $\lambda\geq 0$ is defined as $K_\lambda=K+\lambda W$. Since the set of boundary points with positive anisotropic interior reach is a subset of the set of boundary points with unique anisotropic normals, we have the following lemma which can be  verified straightforwardly.
\begin{lem}\label{lem-interior-reach}
Let $K$ be a convex body with a positive anisotropic inner radius $r(K)$ in $\R^{n+1}$. If $p\in \pt K_{-\lambda}$, $0\leq \lambda <r(K)$, with $r(K_{-\lambda},p)=\varepsilon> 0$, then for all $t>-\varepsilon$, we have $p+tv(p)\in \pt K_{-\lambda+t}$ and $$r(K_{-\lambda+t},p+tv(p))=\varepsilon+t,$$
where $v(p)$ denotes the unique unit anisotropic normal at the point $p$.
\end{lem}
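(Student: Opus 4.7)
The strategy is to pin down the unique Wulff shape of radius $\varepsilon$ realizing the interior reach at $p$, and then to scale and translate it to produce the optimal Wulff shape at $q := p + tv(p)$ inside $K_{-\lambda + t}$. First, by hypothesis there exists $x_0 \in \R^{n+1}$ with $p \in \varepsilon W + x_0 \subset K_{-\lambda}$, equivalently $(\varepsilon + \lambda) W + x_0 \subset K$. Since $r(K_{-\lambda},p)>0$, the anisotropic normal $v(p)$ at $p$ is unique, so the unique supporting hyperplane to $K_{-\lambda}$ at $p$ is also a supporting hyperplane of the inscribed convex body $\varepsilon W + x_0$ at $p$. Hence the anisotropic normal of $\varepsilon W + x_0$ at $p$ coincides with $v(p)$. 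Because the anisotropic Gauss map of a scaled Wulff shape sends each boundary point $x_0 + \varepsilon w$ to $w$, this pins down $x_0 = p - \varepsilon v(p)$.

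For $t > -\varepsilon$, the dilate $(\varepsilon + t)W + x_0$ contains $q$ since $q - x_0 = (\varepsilon + t) v(p) \in (\varepsilon + t) \partial W$, and is contained in $K_{-\lambda + t}$ because, using $\alpha W + \beta W = (\alpha + \beta)W$, this inclusion is equivalent to $(\varepsilon + \lambda) W + x_0 \subset K$, which is already in hand. To upgrade $q \in K_{-\lambda+t}$ to $q \in \partial K_{-\lambda+t}$, let $\nu$ be the outward isotropic unit normal at $p$ with $D\gamma(\nu) = v(p)$; then Euler's identity $\langle v(p),\nu\rangle = \gamma(\nu)$ together with $h_{K_{-\lambda + t}}(\nu) = h_{K_{-\lambda}}(\nu) + t\gamma(\nu)$ yields $\langle q, \nu\rangle = h_{K_{-\lambda + t}}(\nu)$, placing $q$ on a supporting hyperplane of $K_{-\lambda+t}$. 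This proves the first conclusion and the lower bound $r(K_{-\lambda + t}, q) \geq \varepsilon + t$.

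For the matching upper bound I would argue by contradiction. Suppose $q \in \varepsilon' W + x_1 \subset K_{-\lambda + t}$ with $\varepsilon' > \varepsilon + t$. Uniqueness of the anisotropic normal at $q$ (forced again to be $v(p)$ by the same tangency reasoning applied in $K_{-\lambda+t}$) gives $x_1 = q - \varepsilon' v(p)$. The dilate $(\varepsilon' - t)W + x_1$ then contains $p$ because $p - x_1 = (\varepsilon' - t) v(p)$, and, using the equivalence $\varepsilon' W + x_1 \subset K_{-\lambda + t} \Leftrightarrow (\varepsilon' + \lambda - t) W + x_1 \subset K$, is contained in $K_{-\lambda}$. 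This yields $r(K_{-\lambda}, p) \geq \varepsilon' - t > \varepsilon$, a contradiction.

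The main obstacle I expect is justifying the equivalence $\mu W + y \subset K_s \Leftrightarrow (\mu - s) W + y \subset K$ uniformly in the sign of $s$, since $K_{-\lambda + t}$ is an inner parallel body when $t < \lambda$ and an outer parallel body when $t > \lambda$. For $s < 0$ this is immediate from the definition of Minkowski subtraction together with $\alpha W + \beta W = (\alpha + \beta)W$; for $s > 0$ it rests on the cancellation identity $(K + sW) \ominus sW = K$ for convex $K$, or equivalently on the support-function identity $h_{K_s}(u) = h_K(u) + s\gamma(u)$. Once this equivalence is in hand, the remainder of the argument is purely algebraic manipulation of the Wulff radii.
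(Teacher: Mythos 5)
Your identification of the center $x_0 = p - \varepsilon v(p)$, the containment $(\varepsilon + t)W + x_0 \subset K_{-\lambda+t}$ via the parallel-body equivalence, and the by-contradiction upper-bound argument are all correct; the paper itself only asserts the lemma, so this is a genuine elaboration. The gap is in the step showing $q := p + tv(p) \in \partial K_{-\lambda+t}$ for $t > 0$. Writing $h_A(\nu) := \sup_{x\in A}\langle x,\nu\rangle$, you invoke $h_{K_{-\lambda+t}}(\nu) = h_{K_{-\lambda}}(\nu) + t\gamma(\nu)$, but this is not a general fact relating inner parallel bodies: for $t>0$ one only has $K_{-\lambda}+tW \subset K_{-\lambda+t}$ (an inclusion which is strict in general, e.g.\ when $K$ is a polytope and $W$ a ball, the left side has rounded corners while the right does not), and hence only the inequality $h_{K_{-\lambda+t}}(\nu) \geq h_{K_{-\lambda}}(\nu)+t\gamma(\nu)$, which merely re-expresses $q\in K_{-\lambda+t}$. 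The reverse inequality is precisely what places $q$ on $\partial K_{-\lambda+t}$, so quoting the identity is circular. Your closing paragraph addresses $h_{K+sW}=h_K+s\gamma$, which is the dilation direction; what you actually need is the \emph{erosion} identity $h_{K_{-s}}(\nu)=h_K(\nu)-s\gamma(\nu)$, which fails in general.

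The identity does hold at this particular $\nu$, but only because of the inscribed Wulff shape, and it needs a separate argument. One route: take any $x^*$ achieving $h_{K_{-\lambda+t}}(\nu)$ and set $\mu=\varepsilon/(\varepsilon+t)$; for $0<t\leq\lambda$ the Minkowski combination $\mu\bigl[(\lambda-t)W+x^*\bigr]+(1-\mu)\bigl[(\lambda+\varepsilon)W+x_0\bigr]=\lambda W+\bigl(\mu x^*+(1-\mu)x_0\bigr)$ is contained in $K$, so $\mu x^*+(1-\mu)x_0\in K_{-\lambda}$, and expanding $\mu\langle x^*,\nu\rangle+(1-\mu)\langle x_0,\nu\rangle\leq\langle p,\nu\rangle$ gives $h_{K_{-\lambda+t}}(\nu)\leq\langle p,\nu\rangle+t\gamma(\nu)=\langle q,\nu\rangle$ (a similar convex combination with $x^*=z+(t-\lambda)w$, $z\in K$, $w\in W$ handles $t>\lambda$). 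Equivalently, the function $u(x)=d_W(x,\partial K)$, extended by $-d_W(K,x)$ outside $K$, is concave with $K_{-s}=\{u\geq s\}$ and satisfies $u(y)-u(x)\leq d_W(y,x)$; since $u(x_0)\geq\lambda+\varepsilon$ and $u(p)=\lambda$, concavity along the ray $x_0+s\,v(p)$ forces $u(q)\leq\lambda-t$ for $t>0$, while the Lipschitz property gives $u(q)\geq\lambda-t$, so $u(q)=\lambda-t$. With either argument supplied, the rest of your proof goes through unchanged; and for $-\varepsilon<t<0$ your support-function step already works, since there the easy direction $h_{K_{-\lambda+t}}(\nu)\leq h_{K_{-\lambda}}(\nu)+t\gamma(\nu)$ combined with $q\in K_{-\lambda+t}$ gives equality directly.
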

In fact, for $p\in \partial K_{-\lambda}$ with $r(K_{-\lambda},p)=\varepsilon>0$, the unique outward unit anisotropic normal $v(p)$ is well-defined. So $\varepsilon W+(p-\varepsilon v(p))\subset K_{-\lambda}$, and for all $t>-\varepsilon$, $(t+\varepsilon)W+(p-\varepsilon v(p))$ is the largest interior Wulff shape contained in $K_{-\lambda+t}$ and touching at the boundary point $p+tv(p)$.

For any convex body $K$ with a positive anisotropic inner radius, let $\mathscr{Z}(K)=\{p\in \pt K:r(K,p)=0\}$. We now define the interior skeleton of $K$ by
\begin{equation*}
\mathscr{S}(K)=\bigcup_{0<\lambda\leq r(K)}\mathscr{Z}(K_{-\lambda}).
\end{equation*}
The following lemma shows that the interior skeleton is of zero measure.
\begin{lem}\label{s5:lem2}
If $K$ is a convex body with a positive anisotropic inner radius $r(K)$ in $\R^{n+1}$, then
\begin{equation*}
\mathcal{L}^{n+1}(\mathscr{S}(K))=0.
\end{equation*}
\end{lem}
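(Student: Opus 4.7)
The idea is to encode the skeleton in the \emph{anisotropic inner distance function}
\[
\rho(x) := d_W(x, K^c) = \inf\{\gamma^0(y - x) : y \notin K\}, \qquad x \in K,
\]
which satisfies $\rho(x) = \sup\{\lambda \geq 0 : x \in K_{-\lambda}\}$, so that $K_{-\lambda} = \rho^{-1}([\lambda, \infty))$ and $\partial K_{-\lambda} = \rho^{-1}(\lambda)$ for $\lambda \in (0, r(K)]$. Since its superlevel sets are the convex bodies $K_{-\lambda}$, $\rho$ is concave on $K$; the triangle inequality for $\gamma^0$ gives $\rho(y) - \rho(x) \leq \gamma^0(x - y)$, so $\rho$ is globally Lipschitz. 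Observe also that $\mathscr{S}(K) \subset \mathrm{int}(K)$, since $K_{-\lambda} \subset \mathrm{int}(K)$ whenever $\lambda > 0$.

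A direct computation (using $\lambda W + \varepsilon W = (\lambda + \varepsilon) W$ by convexity of $W$) shows that an inscribed Wulff ball $\varepsilon W + c$ of radius $\varepsilon > 0$ lies in $K_{-\rho(x)}$ with $x$ on its boundary if and only if $\gamma^0(x - c) = \varepsilon$ and $\rho(c) \geq \rho(x) + \varepsilon$. Combining this with the Lipschitz bound $\rho(c) - \rho(x) \leq \gamma^0(x-c)$ yields the characterization
\[
x \in \mathscr{S}(K) \;\Longleftrightarrow\; \rho(x) > 0 \text{ and } \rho(c) - \rho(x) < \gamma^0(x-c) \text{ for every } c \in K \setminus \{x\},
\]
i.e., $x$ lies in the skeleton precisely when the Lipschitz bound for $\rho$ is \emph{strictly} not attained from $x$ to any other point of $K$.

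By Rademacher's theorem there is an $\mathcal{L}^{n+1}$-null set $N_1 \subset K$ off which $\rho$ is differentiable; and, being the $\gamma^0$-distance function to the closed set $K^c$, $\rho$ satisfies the Hamilton--Jacobi eikonal equation $\gamma(-\nabla \rho(x)) = 1$ at $\mathcal{L}^{n+1}$-a.e.\ $x \in \mathrm{int}(K)$, say off a null set $N_2$. The claim is $\mathscr{S}(K) \subset N_1 \cup N_2$. Fix $x \in \mathrm{int}(K) \setminus (N_1 \cup N_2)$: the eikonal condition combined with the concavity of $\rho$ forces a unique nearest boundary point $q^* \in \partial K$ realizing $\gamma^0(q^* - x) = \rho(x)$, and the segment $[q^*, x]$ is a Hamilton--Jacobi characteristic along which $\rho$ grows linearly with slope $1$ in $\gamma^0$-arclength. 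Because $\rho$ is differentiable at $x$, the point $x$ is not a cut point of this characteristic (cut points being exactly those where the nearest-boundary projection fails to be single-valued, and these are all absorbed into $N_1$). Hence the characteristic extends past $x$ for some length $\varepsilon > 0$ within $K$; setting $c := x + \varepsilon (x - q^*)/\gamma^0(x - q^*) \in K$ gives $\rho(c) = \rho(x) + \varepsilon = \rho(x) + \gamma^0(x - c)$, contradicting the strict inequality in the characterization above. Thus $x \notin \mathscr{S}(K)$, proving the claim and hence $\mathcal{L}^{n+1}(\mathscr{S}(K)) = 0$.

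The chief technical obstacle is the characteristic-extension argument: one must justify rigorously, in the non-smooth anisotropic setting, that at every differentiability point of $\rho$ the Hamilton--Jacobi characteristic through $x$ has a strictly positive remaining cut length. This reduces to the standard correspondence, for the Wulff distance function of a convex body, between cut points of $\rho$-characteristics and points of non-uniqueness of the nearest-boundary projection, which is what allows the cut locus to be subsumed into the Rademacher null set $N_1$.
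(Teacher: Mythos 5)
The proposal has a genuine gap at the decisive step. You reduce the claim to showing that
\[
\mathscr{S}(K)\ \subset\ \{\text{points where }\rho\text{ is not differentiable}\}\ \cup\ \{\text{points where the eikonal equation fails}\},
\]
and you justify this by asserting that "cut points [are] exactly those where the nearest-boundary projection fails to be single-valued." That equivalence is false: the set of points past which the characteristic cannot be extended is in general strictly larger than the set of points with multiple nearest boundary points. At a \emph{first focal point} the nearest boundary point may still be unique (so $\rho$ is differentiable and the eikonal equation holds there), yet the characteristic stops. Such a point lies in $\mathscr{S}(K)$ but is absorbed into neither $N_1$ nor $N_2$, so the inclusion fails.

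A concrete Euclidean instance ($W$ the unit disk): let $K\subset\R^2$ have boundary given near the origin by $y=g(a)=a^2+a^4/2$. Then for the point $x_0=(0,1/2)$ one checks that $F(a)=a^2+(1/2-g(a))^2$ has a strict global minimum at $a=0$, so $(0,0)$ is the unique nearest boundary point and $\rho$ is differentiable at $x_0$ with $\nabla\rho(x_0)=(0,1)$, $|\nabla\rho(x_0)|=1$. However, for every $\varepsilon>0$ the derivative $F'(a)=2a\bigl[-2\varepsilon+a^2+O(\varepsilon a^2, a^4)\bigr]$ at height $h=1/2+\varepsilon$ is negative for small $a>0$, so $\rho(0,1/2+\varepsilon)<1/2+\varepsilon$; the characteristic cannot be extended, and one verifies that no other choice of center $c$ with $|c-x_0|=\varepsilon$ achieves $\rho(c)=\rho(x_0)+\varepsilon$. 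Hence $x_0\in\mathscr{S}(K)$ while $x_0\notin N_1\cup N_2$. Concavity of $\rho$ only tells you the right derivative of $t\mapsto\rho(x_0+t\nabla\rho(x_0))$ at $t=0$ is $\leq 1$ and equals $1$ by differentiability; it does not tell you the function remains linear on any interval $(0,\varepsilon)$, which is exactly what extending the characteristic requires.

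The lemma is nonetheless true, so the remaining ``degenerate focal'' part of $\mathscr{S}(K)$ must also be shown to be $\mathcal{L}^{n+1}$-null, and this is not a Rademacher-type statement. The paper sidesteps the whole issue: it bounds the measure of the \emph{complement} of the skeleton from below. Using a partition $0=r_0<\dots<r_m=r(K)$, the local anisotropic Steiner formula gives $\mathcal{L}^{n+1}(K\setminus\mathscr{S}(K))\ge\sum_j (r_j-r_{j-1})\,\Phi_n(K_{-r_j};\partial K_{-r_j})$, and the coarea formula applied to $u(x)=d_W(x,\partial K)$ (together with the identity $\Phi_n(K_{-\lambda};\partial K_{-\lambda})=\int_{\partial K_{-\lambda}}\gamma(\nu)\,d\mu$) shows the supremum over partitions equals $\mathcal{L}^{n+1}(K)$, forcing $\mathcal{L}^{n+1}(\mathscr{S}(K))=0$. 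If you want to salvage your route, you would need an independent argument that the focal set has zero Lebesgue measure (e.g., via Alexandrov's a.e. second-differentiability of the concave function $\rho$, showing that at a point of second differentiability the second-order Taylor estimate for $\rho$ along the characteristic rules out zero reach); this is substantially more than Rademacher.
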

\begin{proof}
We first observe that $\Phi_n(K;\mathscr{Z}(K))=0$. In fact, recall that $\widetilde{\pt K}$ consists of the boundary points which have a unique anisotropic normal. From the definition of $\mathscr{Z}(K)$ and the fact that the set of boundary points with positive anisotropic interior reach is a subset of $\widetilde{\pt K}$, we have $\widetilde{\pt K}\cap \mathscr{Z}(K)=\emptyset$. This together with \eqref{s3:Phin} implies that
\begin{equation*}
\Phi_n(K;\mathscr{Z}(K))=\int_{\widetilde{\pt K}\cap \mathscr{Z}(K)}{\gamma}(\nu)d\mu(p)=0.
\end{equation*}

Given a positive integer $m$, let $0=r_0<r_1<\cdots<r_{m-1}<r_m=r(K)$ be a partition of the interval $[0,r(K)]$. For $j=1,\dots,m-1$, let
\begin{equation*}
\beta_j=\pt K_{-r_j}\setminus \mathscr{Z}(K_{-r_j}) \text{ and }A_j=A_{r_j-r_{j-1}}(K_{-r_j},\beta_j),
\end{equation*}
where the notation $A_\varepsilon(K,\beta)$ denotes the local $\varepsilon$-parallel set of a convex body $K$ defined as in \eqref{s3:A}.
Then any point $p$ in $A_j$ lies on the boundary of some inner parallel body, which implies $K\setminus \mathscr{S}(K)\supset \bigcup_{j=1}^{m-1} A_j$. Therefore, using the local Steiner's formula \eqref{s3:Ste}, we have
\begin{align*}
\mathcal{L}^{n+1}(K\setminus \mathscr{S}(K))&\geq \sum_{j=1}^{m-1}\mathcal{L}^{n+1}(A_j)=\sum_{j=1}^{m-1}\mathcal{L}^{n+1}(A_{r_j-r_{j-1}}(K_{-r_j},\beta_j))\\
&=\frac 1{n+1}\sum_{j=1}^{m-1}\sum_{i=0}^n(r_j-r_{j-1})^{n+1-i}\binom{n+1}{i}\Phi_i(K_{-r_j};\beta_j)\\
&\geq \sum_{j=1}^{m-1}(r_j-r_{j-1}) \Phi_n(K_{-r_j};\beta_j)\\
&=\sum_{j=1}^{m-1}(r_j-r_{j-1}) \Phi_n(K_{-r_j};\pt K_{-r_j}).
\end{align*}
Taking the supremum of the right-hand side over all partitions of the interval $[0,r]$ yields
\begin{align}\label{s5:Integration}
\mathcal{L}^{n+1}(K\setminus \mathscr{S}(K))&\geq \int_0^{r(K)} \Phi_n(K_{-\lambda};\pt K_{-\lambda}) d\lambda.
\end{align}

On the other hand, we can apply Federer's coarea formula \cite{Fed59} for Lipschitz functions and the formula \eqref{s3:Phin} to show that the right-hand side of \eqref{s5:Integration} is equal to $\mathcal{L}^{n+1}(K)$. In fact, since $\partial K_{-\lambda}$ is the set of points in $K$ having the anisotropic distance $\lambda$ to the boundary $\partial K$, we can write
\begin{equation*}
  \partial K_{-\lambda}=\{x\in K| ~u(x):=d_W(x,\partial K)=\lambda\},
\end{equation*}
i.e., $\partial K_{-\lambda}$ is the level set $u^{-1}(\lambda)$ of $u$. We claim that $u:K\rightarrow \R$ is a Lipschitz function. Note that the projection map from $x\in K$ to the nearest points on the boundary $\partial K$ is different from the map $f_K$ in \S\ref{sec:3-1} (e.g., it may be multi-valued). So we cannot use the property for $f_K$. To prove the claim, let $x,y\in K$ satisfy $0\leq u(x)<u(y)\leq r(K)$. Choose a point $y_0\in \{y+u(y)W\}\cap \pt K$. Then there exists a point $y_1$ on the segment $yy_0$ such that $u(y_1)=u(x)$ and we can derive
\begin{align*}
|u(y)-u(x)|&=d_W(y,y_0)-d_W(y_1,y_0)=d_W(y,y_1)=d_W(\pt K_{-u(y)},y_1)\\
&\leq d_W(\pt K_{-u(y)},x)\leq d_W(y,x)\leq C|y-x|.
\end{align*}
So we obtain the claim.


In view of the claim, by Federer's coarea formula \cite{Fed59} for Lipschitz functions, we have
\begin{align}\label{s5:coarea}
  \mathcal{L}^{n+1}(K) =& \int_0^{r(K)} \int_{u^{-1}(\lambda)}\frac 1{|Du|}d\mu d\lambda.
\end{align}
Arguing as in \S \ref{sec:3-2}, we see that
\begin{equation*}
  |Du|\gamma(\nu)~=~1
\end{equation*}
at any point $x\in u^{-1}(\lambda)$, where $\nu$ is the isotropic unit normal vector. Then applying \eqref{s3:Phin} to \eqref{s5:coarea} we conclude that
\begin{align}\label{s5:coarea2}
  \mathcal{L}^{n+1}(K) =& \int_0^{r(K)} \int_{u^{-1}(\lambda)}\gamma(\nu)d\mu d\lambda\nonumber\\
  =&\int_0^{r(K)}\Phi_n(K_{-\lambda};\pt K_{-\lambda}) d\lambda.
\end{align}
This combined with \eqref{s5:Integration} completes the proof of the lemma.
\end{proof}

\begin{lem}\label{s5:lem3}
Let $K$ be a convex body with a positive anisotropic inner radius $r(K)$ in $\R^{n+1}$. If $0<\lambda<\lambda+\varepsilon\leq r(K)$ and
\begin{equation*}
\beta=\{p\in \pt K_{-\lambda}:0<r(K_{-\lambda},p)\leq \varepsilon\},
\end{equation*}
then
\begin{equation*}
\Phi_i(K_{-\lambda}+\lambda W;\beta+\lambda W)=\Phi_i(K;\beta+\lambda W),\qquad i=0,\dots,n.
\end{equation*}
\end{lem}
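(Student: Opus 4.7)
The plan is to use the local Steiner formula \eqref{s3:Ste} characterization of the anisotropic curvature measures. Writing $L:=K_{-\lambda}+\lambda W$, both $\Phi_r(K;\beta+\lambda W)$ and $\Phi_r(L;\beta+\lambda W)$ arise as coefficients of the polynomial in $\delta$ given by $\mathcal{L}^{n+1}(A_\delta(\,\cdot\,;\beta+\lambda W))$, so it suffices to show that
\begin{equation*}
A_\delta(K;\beta+\lambda W)=A_\delta(L;\beta+\lambda W)
\end{equation*}
as subsets of $\R^{n+1}$ for every sufficiently small $\delta>0$; matching polynomial coefficients then yields the desired equality for all $r=0,\dots,n$.

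The key geometric identity is $(\beta+\lambda W)\cap\partial K=(\beta+\lambda W)\cap\partial L=\Psi(\beta)$, where $\Psi(p):=p+\lambda v(p)$ and $v(p)$ denotes the unique anisotropic normal of $K_{-\lambda}$ at $p\in\beta$, well-defined since $p$ has positive anisotropic interior reach. The inclusion $\Psi(\beta)\subseteq(\beta+\lambda W)\cap\partial K$ is immediate from $v(p)\in\Sigma\subseteq W$ and Lemma~\ref{lem-interior-reach} with $t=\lambda$. For the reverse, suppose $q=p+\lambda w\in\partial K$ with $p\in\beta$ and $w\in W$. Since $p+\lambda W\subseteq K$, the point $q$ must lie on $\partial(p+\lambda W)$, forcing $w\in\Sigma$. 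Choosing $x_0$ with $p\in x_0+\eta W\subseteq K_{-\lambda}$ for $\eta:=r(K_{-\lambda},p)>0$, so that $p=x_0+\eta v(p)$, the strict convexity of the dual norm $\gamma^0$ gives
\begin{equation*}
\gamma^0(\eta v(p)+\lambda w)\leq\eta+\lambda,
\end{equation*}
with equality if and only if $w=v(p)$. Strict inequality would place $q=x_0+(\eta v(p)+\lambda w)$ in the interior of $x_0+(\eta+\lambda)W\subseteq L\subseteq K$, contradicting $q\in\partial K$. Hence $w=v(p)$ and $q=\Psi(p)$. The argument for $\partial L$ in place of $\partial K$ is identical.

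Since both $K$ and $L$ contain the tangent Wulff shape $p+\lambda W$ at each $q=\Psi(p)$, and this Wulff shape is strictly convex, the unique outward anisotropic normal of each body at $q$ equals $v(p)$. Using the tangent-hyperplane characterization of the anisotropic projection, one checks directly that
\begin{equation*}
A_\delta(K;\beta+\lambda W)=\{\,p+(\lambda+s)v(p):p\in\beta,\ 0<s\leq\delta\,\}=A_\delta(L;\beta+\lambda W),
\end{equation*}
where the common support hyperplane of $K$ and $L$ at $\Psi(p)$ ensures that the tube point $p+(\lambda+s)v(p)$ projects under both $f_K$ and $f_L$ to $\Psi(p)$ with anisotropic distance $s$ from each body. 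Taking the $(n+1)$-dimensional Lebesgue measure of both sides and equating coefficients in \eqref{s3:Ste} completes the proof. The main obstacle is the uniqueness-of-touching-point claim in the second paragraph: the strict convexity of $\gamma^0$ is what makes the triangle inequality sharp, and the positive interior reach hypothesis is what produces the interpolating Wulff shape $x_0+(\eta+\lambda)W\subseteq L$ that certifies $\Psi(p)$ as the unique contact point.
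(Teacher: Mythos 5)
Your proposal follows essentially the same route as the paper: reduce to showing $A_\delta(K;\beta+\lambda W)=A_\delta(K_{-\lambda}+\lambda W;\beta+\lambda W)$ for all small $\delta>0$ and then match coefficients in the local Steiner formula, with the key step being the identity $(\beta+\lambda W)\cap\partial K=(\beta+\lambda W)\cap\partial(K_{-\lambda}+\lambda W)=\{p+\lambda v(p):p\in\beta\}$. The paper states this identity but omits its proof, deferring to Sangwine-Yager; you supply a correct verification via the strict convexity of $\gamma^0$ and the inner tangent Wulff shape forced by positive interior reach.
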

\proof The idea is to show that the set $A_t(K_{-\lambda}+\lambda W,\beta+\lambda W)$ coincides with $A_t(K,\beta+\lambda W)$ for all $t>0$ and to use the local Steiner's formula \eqref{s3:Ste}. This follows from the following observation
\begin{equation}\label{s5:lem3-pf1}
  (\beta+\lambda W)\cap \partial K=(\beta+\lambda W)\cap \partial (K_{-\lambda}+\lambda W)=\{p+\lambda v(p):~p\in\beta\},
\end{equation}
where $v(p)$ denotes the unit outward anisotropic normal. The proof of \eqref{s5:lem3-pf1} uses the result in Lemma \ref{lem-interior-reach} repeatedly and is similar to that for Lemma 3 of \cite{San94}. We omit it here.
\endproof

\begin{lem}\label{s5:lem4}
Let $K$ be a convex body and $\beta\subset \pt K\setminus \mathscr{Z}(K)$. Then for $t\geq 0$ and $0\leq i\leq n$,
\begin{equation}\label{s5:Phi-i}
\Phi_i(K;\beta)=\sum_{j=0}^i \binom{i}{j}(-t)^{i-j}\Phi_j(K+tW;\beta+tW).
\end{equation}
\end{lem}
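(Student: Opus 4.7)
\medskip

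\noindent\textbf{Proof plan.} The strategy is to relate the local $\varepsilon$-parallel set of the outer parallel body $K+tW$ with that of $K$, then use the local Steiner formula \eqref{s3:Ste} and compare coefficients, finally inverting the resulting binomial relation.

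First I would establish the set-theoretic identity
\begin{equation*}
A_\varepsilon(K+tW,\,\beta+tW) \;=\; A_{t+\varepsilon}(K,\beta) \setminus \overline{A_t(K,\beta)}
\end{equation*}
for all $\varepsilon>0$ and all $t\ge 0$. The key ingredient is Lemma~\ref{lem-interior-reach}: since $\beta \subset \partial K \setminus \mathscr{Z}(K)$, every $p\in\beta$ has a unique outward unit anisotropic normal $v(p)$, and $p+tv(p)\in\partial(K+tW)$ with positive reach $r(K+tW,\,p+tv(p))=r(K,p)+t$. Thus the map $p\mapsto p+tv(p)$ is a bijection from $\beta$ onto $(\beta+tW)\cap\partial(K+tW)$. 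Any point of $A_\varepsilon(K+tW,\beta+tW)$ has the form $x=(p+tv(p))+sv(p)$ with $p\in\beta$, $0<s\le\varepsilon$; then $d_W(K,x)=t+s$ and $f_K(x)=p\in\beta$, so $x\in A_{t+\varepsilon}(K,\beta)\setminus\overline{A_t(K,\beta)}$. The reverse inclusion is similar, using that $f_{K+tW}(x)=p+tv(p)$ whenever $d_W(K,x)>t$ and $f_K(x)=p\in\beta$.

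Second, taking $(n+1)$-dimensional Lebesgue measure gives
\begin{equation*}
\mathcal{L}^{n+1}\bigl(A_\varepsilon(K+tW,\beta+tW)\bigr) \;=\; \mathcal{L}^{n+1}\bigl(A_{t+\varepsilon}(K,\beta)\bigr) - \mathcal{L}^{n+1}\bigl(A_t(K,\beta)\bigr).
\end{equation*}
Apply \eqref{s3:Ste} on both sides, expand $(t+\varepsilon)^{n+1-r}$ by the binomial theorem, collect powers of $\varepsilon$, and use the identity $\binom{n+1}{r}\binom{n+1-r}{n+1-j}=\binom{n+1}{j}\binom{j}{r}$. Matching the coefficient of $\varepsilon^{n+1-j}$ on both sides (the polynomial identity in $\varepsilon$ holds for all small $\varepsilon>0$, hence termwise) yields the ``direct'' relation
\begin{equation*}
\Phi_j(K+tW;\beta+tW) \;=\; \sum_{r=0}^{j}\binom{j}{r}\,t^{\,j-r}\,\Phi_r(K;\beta), \qquad 0\le j\le n.
\end{equation*}

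Finally, I would apply standard binomial inversion: if $a_j=\sum_{r=0}^j \binom{j}{r}t^{j-r}b_r$ then $b_i=\sum_{j=0}^i\binom{i}{j}(-t)^{i-j}a_j$, which is verified by substitution using $\sum_{j=k}^i\binom{i-k}{j-k}(-t)^{i-j}t^{j-k}=(t-t)^{i-k}=\delta_{ik}$. This produces exactly the formula \eqref{s5:Phi-i}.

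The only delicate step is the set equality in the first paragraph: one must check both that every $x\in A_\varepsilon(K+tW,\beta+tW)$ comes from a normal ray based at a point of $\beta$ (so that $f_K(x)\in\beta$), and conversely that the foot $f_{K+tW}$ of a point on such a ray actually lies in $\beta+tW$ rather than at some other boundary point. Both facts rest on the uniqueness of the anisotropic normal at points of $\beta$ (which holds because $r(K,p)>0$ on $\beta$) together with Lemma~\ref{lem-interior-reach}, so no serious analytic difficulty remains once those ingredients are invoked.
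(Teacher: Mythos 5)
Your proposal is correct and follows essentially the same route as the paper: establish the set relation between $A_\varepsilon(K+tW,\beta+tW)$ and $A_{t+\varepsilon}(K,\beta)\setminus A_t(K,\beta)$ (the paper phrases it as $A_t(K,\beta)\cup A_s(K+tW,\beta+tW)=A_{s+t}(K,\beta)$), pass to volumes via the local Steiner formula, match coefficients of powers of $\varepsilon$ to obtain the forward relation $\Phi_j(K+tW;\beta+tW)=\sum_{r\le j}\binom{j}{r}t^{j-r}\Phi_r(K;\beta)$, and invert binomially. The paper states the set identity without proof and instead spells out the binomial inversion as identity \eqref{s5:binom-1}; you do the reverse, fleshing out the set identity using Lemma~\ref{lem-interior-reach} and the uniqueness of the anisotropic normal on $\partial K\setminus\mathscr{Z}(K)$, and taking the inversion as standard — but the substance of the argument is the same.
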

\begin{proof}
First we note that
\begin{equation}\label{s5:lem5-pf1}
\mathcal{L}^{n+1}(A_s(K+tW,\beta+tW))=\frac 1{n+1}\sum_{r=0}^{n}s^{n+1-r}\binom{n+1}r\Phi_r(K+tW;\beta+tW),
\end{equation}
where $A_s(K+tW,\beta+tW)$ is defined by \eqref{s3:A}. On the other hand, since
\begin{equation*}
  A_t(K,\beta)\cup A_s(K+tW,\beta+tW)=A_{s+t}(K,\beta)
\end{equation*}
for $s,t\geq0$, we have
\begin{align}\label{s5:lem5-pf2}
\mathcal{L}^{n+1}&(A_s(K+tW,\beta+tW))=\mathcal{L}^{n+1}(A_{s+t}(K,\beta))-\mathcal{L}^{n+1}(A_{t}(K,\beta))\nonumber\\
&=\frac 1{n+1}\sum_{i=0}^{n}(s+t)^{n+1-i}\binom{n+1}{i}\Phi_i(K;\beta)-\mathcal{L}^{n+1}(A_{t}(K,\beta))\nonumber\\
&=\frac 1{n+1}\sum_{i=0}^{n}\binom{n+1}{i}\sum_{j=0}^{n+1-i}\binom{n+1-i}{j}t^{n+1-i-j}s^j\Phi_i(K;\beta)  -\mathcal{L}^{n+1}(A_{t}(K,\beta)).
\end{align}
Comparing the coefficients of $s^{n+1-r}$ on the right-hand side of the equations \eqref{s5:lem5-pf1} and \eqref{s5:lem5-pf2}, we have
\begin{align*}
 \binom{n+1}r\Phi_r(K+tW;\beta+tW)= &  \sum_{i=0}^{r}\binom{n+1}{i}\binom{n+1-i}{n+1-r}t^{r-i}\Phi_i(K;\beta)\\
  =& \sum_{i=0}^{r}\binom{n+1}r\binom{r}{i}t^{r-i}\Phi_i(K;\beta).
\end{align*}
It follows that
\begin{equation*}
\Phi_r(K+tW;\beta+tW)=\sum_{i=0}^r \binom{r}{i}t^{r-i}\Phi_i(K;\beta).
\end{equation*}
This is a linear system of equations, and can be solved to get $\Phi_i(K;\beta)$ as in \eqref{s5:Phi-i}. The verification that \eqref{s5:Phi-i} is the solution to this system relies on the fact
\begin{align}\label{s5:binom-1}
 \sum_{i=0}^r\binom{r}{i}t^{r-i} \sum_{j=0}^i\binom{i}{j}(-t)^{i-j}a_j=&\sum_{j=0}^rt^{r-j}a_j\sum_{i=j}^r(-1)^{i-j}\binom{r}{i}\binom{i}{j}\nonumber \\
  = & \sum_{j=0}^rt^{r-j}\binom{r}{j}a_j\sum_{i=j}^r(-1)^{i-j}\binom{r-j}{i-j}=a_r,
\end{align}
where $\{a_j\}_{j=0}^r$ is any sequence.
\end{proof}

Let $K$ and $\beta$ in Lemma \ref{s5:lem4} be the $K_{-\lambda}$ and $\beta$ in Lemma \ref{s5:lem3}. As an immediate consequence of Lemma \ref{s5:lem4} we can derive the following formula
\begin{align}\label{s5:binom-2}
\sum_{i=0}^n\lambda^{n+1-i}\binom{n+1}{i}\Phi_i(K_{-\lambda};\beta)&=\sum_{i=0}^n \lambda^{n+1-i}(-1)^{n-i}\binom{n+1}{i}\Phi_i(K_{-\lambda}+\lambda W;\beta+\lambda W)\nonumber\\
&=\sum_{i=0}^n \lambda^{n+1-i}(-1)^{n-i}\binom{n+1}{i}\Phi_i(K;\beta+\lambda W),
\end{align}
where in the first equality we used the exchange of the summations as in \eqref{s5:binom-1}.

Now we prove the main result of this section.
\begin{proof}[Proof of Proposition \ref{s5:prop-main}]
Given any positive integer $m$, let $0<r_1<\dots< r_m=r(K)$ be a partition of the interval $[0,r(K)]$. For $j=1,\dots,m-1$, let
\begin{align*}
\beta_j=\{p:p\in &\pt K_{-r_j}, \: 0<r(K_{-r_j},p)\leq r_{j+1}-r_j\},\\
&\quad A_j=A_{r_j}(K_{-r_j},\beta_j).
\end{align*}
Note that $A_j\cap \pt K$ consists of all points $x$ with $r_j<r(K,x)\leq r_{j+1}$.

On the other hand, any $y\in K\setminus (\mathscr{S}(K)\cup \partial K)$ must lie on the boundary of some parallel body $K_{-\lambda}$, and the set $(\lambda W+y)\cap \pt K$ consists of only one point $x$. Thus for sufficiently large $m$, we can choose some partition of the interval $[0,r(K)]$, such that for some $1\leq j\leq m-1$,
\begin{equation*}
 \lambda < r_j <r(K,x)\leq r_{j+1}.
\end{equation*}
It follows that $y\in A_j$. Thus $\mathcal{L}^{n+1}(K\setminus (\mathscr{S}(K)\cup \partial K))$ is the supremum of the sum of $\mathcal{L}^{n+1}(A_j)$ over all partitions of the interval $[0,r(K)]$. Since the interior skeleton is of zero measure by Lemma \ref{s5:lem2}, this also gives the volume of $K$. As a consequence, we obtain
\begin{align*}
\mathrm{Vol}(K)&=\mathcal{L}^{n+1}(K\setminus (\mathscr{S}(K)\cup \partial K))=\sup_{0<r_1<\dots< r_m=r(K)}\mathcal{L}^{n+1}(\bigcup_j A_j)\\
&=\sup_{0<r_1<\dots< r_m=r(K)}\sum_{j}\mathcal{L}^{n+1}(A_j)\\
&=\frac 1{n+1}\sup_{0<r_1<\dots< r_m=r(K)}\sum_{j}\sum_{r=0}^{n}(r_j)^{n+1-r}\binom{n+1}{r}\Phi_r(K_{-r_j};\beta_j)\\
&=\frac 1{n+1}\sup_{0<r_1<\dots< r_m=r(K)}\sum_{j}\sum_{i=0}^n (r_j)^{n+1-i}(-1)^{n-i}\binom{n+1}{i}\Phi_i(K;\beta_j+r_j W)\\
&=\frac 1{n+1}\sum_{i=0}^n (-1)^{n-i}\binom{n+1}{i}\left(\sup_{0<r_1<\dots< r_m=r(K)}\sum_{j}(r_j)^{n+1-i}\Phi_i(K;\beta_j+r_j W)\right)\\
&=\frac 1{n+1}\sum_{i=0}^n (-1)^{n-i}\binom{n+1}{i}\int_{\pt K} r(K,p)^{n+1-i} d\Phi_i(K;p),
\end{align*}
where we used \eqref{s5:binom-2} in the fourth equality. This completes the proof of Proposition \ref{s5:prop-main}.
\end{proof}

\section{Characterization of the Wulff shape}\label{sec:proof_thm1}
In this section, we will characterize the Wulff shape using an equation involving linear combinations of curvature measures with positive coefficients. We first prove an inequality of Heintze--Karcher type.

\subsection{An inequality of Heintze--Karcher type}
\begin{prop}
Let $W$ be a Wulff shape with the support function ${\gamma}\in C^\infty(\SS^n)$.   For any convex body $K$ in $\mathbb{R}^{n+1}$, we have
\begin{equation}\label{s6:HK}
(n+1)\mathrm{Vol}(K)\leq \int_{\pt K} \frac{{\gamma}(\nu)}{E_1}d\mu(p),
\end{equation}
where $\mathrm{Vol}(K)=\mathcal{L}^{n+1}(K)$ denotes the volume of $K$ and $E_1=E_1(\kappa)$ is the anisotropic mean curvature of $\partial K$ defined for almost every point of $\partial K$. Moreover, the equality holds in \eqref{s6:HK} if and only if the anisotropic principal curvatures satisfy $\kappa_1(p)=\cdots=\kappa_n(p)=r(K,p)^{-1}>0$ almost everywhere.
\end{prop}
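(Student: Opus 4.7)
The plan is to combine the volume representation in Proposition~\ref{s5:prop-main} with an algebraic identity that rewrites the alternating sum as an integral, then to deduce the inequality from a pointwise comparison on the anisotropic normal bundle followed by AM--GM. First I would unfold the volume formula using the definition of $\Phi_i(K;\cdot)$: for any function depending only on the base point,
\begin{equation*}
\int_{\pt K} f(p)\,d\Phi_i(K;p) = \int_{\tilde{\mathcal{N}}_K} f(\Pi(v))\,\psi(v)\,E_{n-i}(v)\,d\tilde{\mathcal{H}}^n v,
\end{equation*}
so Proposition~\ref{s5:prop-main} becomes an integral over $\tilde{\mathcal{N}}_K$ whose integrand is the weighted alternating sum
\begin{equation*}
\frac{\psi(v)}{n+1}\sum_{i=0}^{n}(-1)^{n-i}\binom{n+1}{i}r(K,\Pi(v))^{n+1-i}E_{n-i}(v).
\end{equation*}

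The key algebraic observation is the identity
\begin{equation*}
\sum_{i=0}^{n}(-1)^{n-i}\binom{n+1}{i}r^{n+1-i}E_{n-i}(v)=(n+1)\int_{0}^{r}\prod_{j=1}^{n}\bigl(1-t\kappa_j(v)\bigr)\,dt,
\end{equation*}
which I would verify through the generating expansion $\prod_j(1-t\kappa_j)=\sum_{k=0}^{n}(-t)^{k}\binom{n}{k}E_k(v)$, integrating in $t$ from $0$ to $r$ term by term, and using $\tfrac{1}{k+1}\binom{n}{k}=\tfrac{1}{n+1}\binom{n+1}{k+1}$ after the index change $k=n-i$. The volume representation then takes the clean ``tube-integral'' form
\begin{equation*}
\mathrm{Vol}(K)=\int_{\tilde{\mathcal{N}}_K}\psi(v)\int_{0}^{r(K,\Pi(v))}\prod_{j=1}^{n}\bigl(1-t\kappa_j(v)\bigr)\,dt\,d\tilde{\mathcal{H}}^n v.
\end{equation*}

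The remaining ingredient is the pointwise bound $\kappa_j(v)\le 1/r(K,\Pi(v))$ for $\tilde{\mathcal{H}}^n$-a.e.\ $v\in\tilde{\mathcal{N}}_K$ with $r(K,\Pi(v))>0$. This follows by comparison with the largest interior Wulff shape $rW+x$ touching $\partial K$ at $\Pi(v)$: the outer parallel body $(r+\varepsilon)W+x$ lies inside $K^\varepsilon$, so at the smooth point $\Pi(v)+\varepsilon v\in\partial K^\varepsilon$ the usual anisotropic curvature comparison gives $\kappa_j(\Pi(v)+\varepsilon v)\le 1/(r+\varepsilon)$, and the transformation formula \eqref{s3:2-kap} then yields $\kappa_j(v)\le 1/r$ after dividing. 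Hence $1-t\kappa_j(v)\ge 0$ on $[0,r]$, and AM--GM gives $\prod_j(1-t\kappa_j(v))\le(1-tE_1(v))^n$; integrating and using $1-rE_1(v)\ge 0$ yields $\int_0^{r}(1-tE_1(v))^n\,dt=\tfrac{1-(1-rE_1(v))^{n+1}}{(n+1)E_1(v)}\le\tfrac{1}{(n+1)E_1(v)}$ (trivially when $E_1(v)=0$, the right-hand side being $+\infty$). Combining these estimates and converting back to $\partial K$ via $\psi(v)\,d\tilde{\mathcal{H}}^n v=\gamma(\nu)\,d\mu(p)$ on $\widetilde{\partial K}$ (Proposition~\ref{prop-regular-points} and \eqref{s3:Phin}) delivers $(n+1)\mathrm{Vol}(K)\le\int_{\pt K}\gamma(\nu)/E_1\,d\mu$. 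For the equality characterization, AM--GM equality forces $\kappa_1(v)=\cdots=\kappa_n(v)$ and the second inequality equality forces $(1-rE_1(v))^{n+1}=0$, i.e.\ $E_1(v)=1/r(K,\Pi(v))$; together these give $\kappa_j(v)=r(K,\Pi(v))^{-1}>0$ for a.e.\ $v$, equivalently for $\mathcal{H}^n$-a.e.\ $p\in\partial K$. I expect the main technical point to be the curvature comparison $\kappa_j(v)\le 1/r(K,\Pi(v))$ for generalized curvatures living on the normal bundle; once that is handled via parallel sets, everything else is a chain of elementary inequalities.
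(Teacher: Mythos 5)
Your proposal is correct and follows essentially the same route as the paper's proof: unfolding the volume representation of Proposition~\ref{s5:prop-main} into a tube integral over $[0,r(K,p)]$ via the generating-function expansion of $\prod_j(1-t\kappa_j)$, establishing the comparison $\kappa_j\le 1/r(K,p)$ through parallel bodies and the transformation formula \eqref{s3:2-kap}, and then applying AM--GM. The only cosmetic difference is that you work on $\tilde{\mathcal{N}}_K$ before pushing forward to $\partial K$, while the paper restricts directly to $\partial K\setminus\mathscr{Z}(K)$; the substance and the equality analysis are identical.
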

\begin{rem}
On the right-hand side of the inequality \eqref{s6:HK}, the integrand $\gamma(\nu)/E_1$ is well defined on $\widetilde{\pt K}$, the set of twice differentiable points; while on $\pt K\setminus \widetilde{\pt K}$ which has zero $n$-dimensional Hausdorff measure, $\gamma(\nu)/E_1$ can be assigned any finite value. The inequality \eqref{s6:HK} for a smooth $K$ was first proved in \cite{HLMG09} (see also \cite{Del18,MaX11,XiaZ17} for alternative proofs).
\end{rem}
\begin{proof}
Recall that the set $\mathscr{Z}(K)=\{p\in \pt K:r(K,p)=0\}$ consists of the points on $\partial K$ with zero anisotropic interior reach. So by the representation \eqref{s5:VK} for the volume of $K$, we have
\begin{align*}
\mathrm{Vol}(K)&=\frac 1{n+1}\sum_{i=0}^n (-1)^{n-i}\binom{n+1}{i}\int_{\pt K} r(K,p)^{n+1-i} d\Phi_i(K;p)\\
&=\frac 1{n+1}\sum_{i=0}^n (-1)^{n-i}\binom{n+1}{i}\int_{\pt K \setminus \mathscr{Z}(K)} r(K,p)^{n+1-i} d\Phi_i(K;p).
\end{align*}

On the other hand, since on $\pt K \setminus \mathscr{Z}(K)$ the anisotropic unit normal is unique, we obtain that
\begin{equation*}
d\Phi_i(K;p)=\psi(v)E_{n-i}(v)d\tilde{\mathcal{H}}^nv= E_{n-i}(v(p)){\gamma}(\nu)d\mu(p)
\end{equation*}
holds on $(\pt K \setminus \mathscr{Z}(K))\cap \Pi(\tilde{\mathcal{N}}_K)$. In addition, by Lemma \ref{lem-interior-reach}, for any $\varepsilon>0$ and $p\in (\pt K \setminus \mathscr{Z}(K))\cap \Pi(\tilde{\mathcal{N}}_K)$, the parallel body $K^\varepsilon$ has an interior touching Wulff shape with radius $r(K,p)+\varepsilon$ at $p+\varepsilon v(p)$, which implies
\begin{equation*}
r(K,p)+\varepsilon \leq \frac{1}{\kappa_i(p+\varepsilon v(p))}, \: i=1,\dots,n.
\end{equation*}
Sending $\varepsilon\rightarrow 0+$, by the definition of the anisotropic principal curvatures on $\tilde{\mathcal{N}}_K$ we get
\begin{equation}\label{s6:prop1-pf1}
r(K,p) \leq \frac{1}{\kappa_i(v(p))}, \: i=1,\dots,n.
\end{equation}
Hence $r(K,p)\leq 1/{E_1(v(p))}$. From the inequality above we also know that all $\kappa_i(v(p))$ are finite for $p\in (\pt K \setminus \mathscr{Z}(K))\cap \Pi(\tilde{\mathcal{N}}_K)$. In addition, by the arithmetic--geometric means inequality,
\begin{equation}\label{s6:prop1-pf2}
  \prod_{i=1}^n (1-\kappa_i(v)t) \leq \left(\frac{1}{n}\sum_{i=1}^n (1-\kappa_i(v)t)\right)^n = (1-E_1(v)t)^n,\quad 0\leq t\leq r(K,p).
\end{equation}
Combining these facts together and noting that $\mathcal{H}^n(\mathscr{Z}(K))=0$, we get
\begin{align*}
\mathrm{Vol}(K)&=\int_{ \pt K \setminus \mathscr{Z}(K)} \int_0^{r(K,p)} \sum_{i=0}^n(-t)^{n-i}\binom{n}{i} E_{n-i}(v(p)){\gamma}(\nu)dt d\mu(p)\\
&=\int_{ \pt K \setminus \mathscr{Z}(K)} \int_0^{r(K,p)} \prod_{i=0}^n(1-\kappa_i(v(p)) t){\gamma}(\nu)dt d\mu(p)\\
&\leq \int_{ \pt K \setminus \mathscr{Z}(K)} \int_0^{1/E_{1}(v(p))} (1-E_{1}(v(p)) t)^n{\gamma}(\nu)dt d\mu(p)\\
&=\frac{1}{n+1}\int_{ \pt K \setminus \mathscr{Z}(K)}\frac{{\gamma}(\nu)}{E_{1}(v(p))}d\mu(p)\\
&=\frac{1}{n+1}\int_{\pt K}\frac{{\gamma}(\nu)}{E_{1}(v(p))}d\mu(p).
\end{align*}
This proves the inequality \eqref{s6:HK}. The equality case follows from the two inequalities \eqref{s6:prop1-pf1} and \eqref{s6:prop1-pf2} becoming equalities.
\end{proof}

\subsection{Proof of Theorem \ref{thm-main}}$\ $

Now we  prove Theorem \ref{thm-main}, which is recalled here.
\begin{thm}\label{s6:thm}
Let $W$ be a Wulff shape with the support function ${\gamma}\in C^\infty(\SS^n)$.  For a convex body $K$ in $\mathbb{R}^{n+1}$, assume that the anisotropic curvature measures satisfy
\begin{equation}\label{S6:condition}
\Phi_n(K;\cdot)=\sum_{r=0}^{n-1} \lambda_r \Phi_r(K;\cdot)
\end{equation}
for some constants $\lambda_0,\dots,\lambda_{n-1}\geq 0$.  Then $K$ is a scaled Wulff shape.
\end{thm}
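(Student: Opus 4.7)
The plan mirrors Kohlmann's isotropic argument. First, I would extract a pointwise identity on the regular part of $\partial K$ from the measure equality \eqref{S6:condition}. Restricting it to $\widetilde{\partial K}$, the $\mathcal{H}^n$-full subset of twice-differentiable boundary points, one observes that the projection $\Pi$ from $\mathcal{N}_K$ is injective over this set, and that the change of variables carried out in \S\ref{sec:3-2} (together with \eqref{s3:Phin} for $r=n$) shows $\Phi_r(K;\cdot)$ has density $E_{n-r}(p)\gamma(\nu)$ with respect to $\mathcal{H}^n$ on $\widetilde{\partial K}$. Hence \eqref{S6:condition} collapses to
\begin{equation*}
1 \;=\; \sum_{r=0}^{n-1}\lambda_r E_{n-r}(p) \qquad \text{for $\mathcal{H}^n$-a.e.\ } p\in\partial K, \qquad (\ast)
\end{equation*}
and in particular $E_1>0$ a.e.

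Next, multiplying $(\ast)$ by the anisotropic support function $S$ and integrating against $d\mu_\gamma$ produces, via the Minkowski identity $\int_{\partial K} S\,E_{n-r}\,d\mu_\gamma = V_{r+1}(K,W)$ for $0\leq r\leq n-1$ (from Theorem~\ref{s4:thm-GM}) together with the divergence-theorem identity $\int_{\partial K} S\,d\mu_\gamma = (n+1)\,\mathrm{Vol}(K) = V_{n+1}(K,W)$ (which uses the algebraic identity $\langle x,\nu\rangle=\gamma(\nu)S(x)$ coming from the duality $D\gamma^0(\nu_\gamma)=\nu/\gamma(\nu)$), the global relation
\begin{equation*}
V_{n+1}(K,W) \;=\; \sum_{r=0}^{n-1}\lambda_r V_{r+1}(K,W). \qquad (\ast\ast)
\end{equation*}
Alternatively, dividing $(\ast)$ by $E_1$ and applying the Newton inequality $E_{n-r}/E_1\leq E_{n-r-1}$ (valid pointwise on $\widetilde{\partial K}$ since $K$ is convex, with equality at $p$ iff $\kappa_1(p)=\cdots=\kappa_n(p)$), then integrating, I obtain
\begin{equation*}
\int_{\partial K}\frac{d\mu_\gamma}{E_1} \;\leq\; \sum_{r=0}^{n-1}\lambda_r V_{r+1}(K,W) \;=\; V_{n+1}(K,W)
\end{equation*}
by $(\ast\ast)$. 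The reverse inequality is precisely \eqref{s6:HK}, so equality holds throughout; the equality case of \eqref{s6:HK} then yields $\kappa_1(p)=\cdots=\kappa_n(p)=r(K,p)^{-1}$ for $\mathcal{H}^n$-a.e.\ $p\in\partial K$. A standard convex-geometric rigidity argument concludes the proof: at each regular $p$ the inscribed Wulff shape of anisotropic radius $r(K,p)$ centered at $p-r(K,p)v(p)$ is contained in $K$ and osculates $\partial K$ at $p$ to second order, and continuity of the inscribed radius together with connectedness of the regular boundary forces $r(K,\cdot)$ and the center to be constant, so $K$ is a scaled translate of $W$.

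The technical heart is the first step: rigorously passing from the Borel measure identity \eqref{S6:condition} on $\mathbb{R}^{n+1}$ to the a.e.\ pointwise relation $(\ast)$ requires that the singular part $\partial K\setminus\widetilde{\partial K}$ contribute trivially to the density comparison. Here one uses that $\Phi_n$ is concentrated on $\widetilde{\partial K}$ by \eqref{s3:Phin}, so any singular-set contribution to $\sum_{r}\lambda_r\Phi_r$ must vanish identically. The closing rigidity for a non-smooth convex body is also delicate, but becomes standard once the pointwise umbilicity $\kappa_1=\cdots=\kappa_n=r(K,\cdot)^{-1}$ is in hand.
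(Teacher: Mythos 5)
Your argument tracks the paper's proof closely in its overall shape (pointwise identity from the measure equation, Minkowski formula, Heintze--Karcher, forced umbilicity), but there are two points worth flagging.

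First, a minor omission in the equality $(\ast\ast)$. You correctly note that the measure identity forces the singular contributions to $\Phi_r$ to vanish when $\lambda_r>0$ (the part the paper calls $a_r(K;\mathbb{R}^{n+1})$). However, to get $\int_{\partial K}S\,E_{n-r}\,d\mu_\gamma = V_{r+1}(K,W)$ as an exact identity you also need the \emph{second} singular quantity $b_r(K;\mathbb{R}^{n+1})=\int_{\tilde{\mathcal{N}}_K\setminus\tilde{\mathcal{N}}_{K,0}}\psi(v)S(v)E_{n-r}(v)\,d\tilde{\mathcal{H}}^n v$ to vanish, since Theorem~\ref{s4:thm-GM} is stated over $\mathcal{N}_K$, not $\partial K$. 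Without $b_r=0$ you only get $V_{n+1}\leq\sum_r\lambda_r V_{r+1}$, and your two-sided squeeze does not close. This gap is easy to fill: once $a_r=0$ you know $\psi(v)E_{n-r}(v)=0$ a.e.\ on the singular set, so $b_r=0$ automatically (after translating the origin into the interior of $K$, as the paper does in Lemma~\ref{s6:lem3} to ensure $S$ is nonnegative and bounded). But it should be said explicitly, and it is the point of Lemma~\ref{s6:lem3}.

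Second, and more seriously, the final rigidity step is a genuine gap. From the Heintze--Karcher equality case you obtain $\kappa_1(p)=\cdots=\kappa_n(p)=r(K,p)^{-1}$ for $\mathcal{H}^n$-a.e.\ $p\in\partial K$, and this can be normalized so that $r(K,p)=1$ a.e. You then appeal to ``continuity of the inscribed radius together with connectedness of the regular boundary'' to force the inscribed center $c(p)=p-r(K,p)v(p)$ to be constant. For a general (non-smooth) convex body this does not follow: the set $\widetilde{\partial K}$ of twice-differentiable boundary points is of full measure but need not be open or connected, and $r(K,\cdot)$ is not continuous on $\partial K$ (it drops to $0$ at singular points). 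So the proposed ``standard'' Alexandrov-type continuity argument, which works for $C^2$ hypersurfaces, does not transfer directly to this a.e.\ setting. The paper instead avoids this entirely: from umbilicity it deduces $|\partial K|_\gamma=\int E_n\,d\mu_\gamma\leq\Phi_0(K;\mathbb{R}^{n+1})=|\partial W|_\gamma$ and $\mathrm{Vol}(K)\geq\mathrm{Vol}(W)$ (the latter because $K$ contains a unit Wulff shape), and then invokes the equality case of the anisotropic isoperimetric inequality $\mathcal{I}_n(K,W)\geq1$. That argument requires no local regularity of the boundary and is the essential extra idea needed to close the proof in the non-smooth category; your proposal is missing it.
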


\begin{proof}
First we apply the scaling property of the anisotropic curvature measures to get
\begin{equation*}
\Phi_n(\alpha K;\cdot )=\alpha^n \Phi_n( K;\cdot{} )=\sum_{r=0}^{n-1}\lambda_r \alpha^{n-r}\Phi_r(\alpha K;\cdot )
\end{equation*}
for $\alpha>0$. Since $\sum_{r=0}^{n-1}\lambda_r\neq 0$,  we can assume
\begin{equation}\label{S6:normalization}
\sum_{r=0}^{n-1}\lambda_r= 1.
\end{equation}
Recall the definition \eqref{s3:psi-r} of the anisotropic curvature measure
\begin{align}\label{s6:psi-r}
\Phi_r(K;\beta)= \int_{\Pi^{-1}(\beta\cap\partial K)\cap \tilde{\mathcal{N}}_K} \psi(v)E_{n-r}(v) d\tilde{\mathcal{H}}^nv
\end{align}
for $0\leq r\leq n$ and a Borel set $\beta$ in $\mathbb{R}^{n+1}$. At any point $v\in \tilde{\mathcal{N}}_{K,0}$, we can write the anisotropic principal curvature $\kappa_i(v)$ as $\kappa_i(p)$ with $p=\Pi(v)\in \partial K$, and $v$ is the unique anisotropic unit normal of $\partial K$ at the point $p$. Since the $n$-dimensional Hausdorff measure of $\partial K\setminus \widetilde{\partial K}$ is zero and by Proposition \ref{prop-regular-points} the set $\widetilde{\partial K}$ is contained in $\Pi(\tilde{\mathcal{N}}_{K,0})$, we have the following decomposition
\begin{align}\label{s6:pf-1}
\Phi_r(K;\beta)= &\int_{\Pi^{-1}(\beta\cap\partial K)\cap \tilde{\mathcal{N}}_{K,0}} \psi(v)E_{n-r}(v) d\tilde{\mathcal{H}}^nv+\int_{\Pi^{-1}(\beta\cap\partial K)\cap (\tilde{\mathcal{N}}_K\setminus \tilde{\mathcal{N}}_{K,0})} \psi(v)E_{n-r}(v) d\tilde{\mathcal{H}}^nv\nonumber\\
=&\int_{\beta\cap \Pi(\tilde{\mathcal{N}}_{K,0})}E_{n-r}(p){\gamma}(\nu)d\mu(p)+ \int_{\Pi^{-1}(\beta\cap\partial K)\cap (\tilde{\mathcal{N}}_K\setminus \tilde{\mathcal{N}}_{K,0})} \psi(v)E_{n-r}(v) d\tilde{\mathcal{H}}^nv\nonumber\\
=&\int_{\beta\cap \pt K}E_{n-r}(p){\gamma}(\nu)d\mu(p)+ \int_{\Pi^{-1}(\beta\cap\partial K)\cap (\tilde{\mathcal{N}}_K\setminus \tilde{\mathcal{N}}_{K,0})} \psi(v)E_{n-r}(v) d\tilde{\mathcal{H}}^nv
\end{align}
for $0\leq r\leq n$. Let us define the singular part
\begin{equation*}
a_r(K;\beta):=\int_{\Pi^{-1}(\beta\cap\partial K)\cap (\tilde{\mathcal{N}}_K\setminus \tilde{\mathcal{N}}_{K,0})} \psi(v)E_{n-r}(v) d\tilde{\mathcal{H}}^nv.
\end{equation*}
Similarly, we define
\begin{equation*}
b_r(K;\beta):= \int_{\Pi^{-1}(\beta\cap\partial K)\cap (\tilde{\mathcal{N}}_K\setminus \tilde{\mathcal{N}}_{K,0})} \psi(v)S(v)E_{n-r}(v) d\tilde{\mathcal{H}}^nv.
\end{equation*}
Then the Minkowski formula \eqref{s4:eq-GM} can be written as
\begin{align}\label{s6:GM}
  \int_{\partial K}E_{r-1}(p)\gamma(\nu)d\mu(p)+a_{n-r+1}(K;\mathbb{R}^{n+1}) =& \int_{\partial K}S(p)E_r(p)\gamma(\nu)d\mu(p)+b_{n-r}(K;\mathbb{R}^{n+1})
\end{align}
for any integer $1\leq r\leq n$, where $a_{n-r+1}$ and $b_{n-r}$ are singular parts of the formula. We write the regular parts as integrations over $\partial K$, but we need to keep in mind that the integrands are only defined in the smooth part of $\partial K$ and that we have set the integrands to be any finite value on $\partial K\setminus \widetilde{\partial K}$.

\begin{lem}\label{s6:lem3}
For a convex body $K$, an integer $r\in \{0,\dots,n-1\}$ and a constant $\lambda>0$, assume that
\begin{equation*}
\Phi_{r}(K;\cdot)\leq \lambda \Phi_n(K;\cdot).
\end{equation*}
Then $a_r(K;\R^{n+1})=0$ and we can choose the origin such that $b_r(K;\R^{n+1})=0$.
\end{lem}
\begin{proof}
For the first assertion, to get a contradiction, let us assume
\begin{align*}
a_r(K;\R^{n+1})&=\int_{\tilde{\mathcal{N}}_K\setminus \tilde{\mathcal{N}}_{K,0}} \psi(v)E_{n-r}(v) d\tilde{\mathcal{H}}^nv> 0.
\end{align*}
Then there exists a subset $B\subset \tilde{\mathcal{N}}_K\setminus \tilde{\mathcal{N}}_{K,0}$ with $\tilde{\mathcal{H}}^n(B)> 0$ such that $\psi(v)E_{n-r}(v)>0$ on $B$. Then we get
\begin{align*}
0&<\int_{B} \psi(v)E_{n-r}(v) d\tilde{\mathcal{H}}^nv\\
&\leq  \int_{\Pi^{-1}(\Pi(B))\cap \tilde{\mathcal{N}}_K} \psi(v)E_{n-r}(v) d\tilde{\mathcal{H}}^nv\\
&=\Phi_r(K;\Pi(B))\leq \lambda \Phi_n(K;\Pi(B)).
\end{align*}
By \eqref{s3:Phin}, the last term is given by
\begin{align*}
\Phi_n(K;\Pi(B))=\int_{\Pi(B)\cap \widetilde{\pt K}}{\gamma}(\nu)d\mu(p).
\end{align*}
However, the set $\Pi(B)\cap \widetilde{\pt K}=\emptyset$ by Proposition \ref{prop-regular-points}. So $\Phi_n(K;\Pi(B))=0$, which is a contradiction. Therefore we have $a_r(K;\R^{n+1})=0$.

For the second assertion, by translating the origin, we may assume that there exist $0<r_1<r_2$ such that
\begin{equation*}
r_1W\subset K \subset r_2W.
\end{equation*}
It follows that $r_1\leq S(v)\leq r_2$ for $\forall v\in \mathcal{N}_K$. So by the definitions of $a_r$ and $b_r$ we have
\begin{equation*}
0=r_1 \:a_r(K;\R^{n+1})\leq b_r(K;\R^{n+1})\leq r_2\: a_r(K;\R^{n+1})=0,
\end{equation*}
which implies $b_r(K;\R^{n+1})=0$.
\end{proof}

For $\lambda_{r}>0$, by Lemma \ref{s6:lem3} we have $a_r(K;\R^{n+1})=0$. It follows from \eqref{S6:condition} that
\begin{equation}\label{s6:pf-2}
1=\sum_{r=1}^n \lambda_{n-r}  E_r(p),
\end{equation}
almost everywhere on $\pt K$. Then
\begin{equation*}
1\leq \sum_{r=1}^n \lambda_{n-r}  E_1^r(p),
\end{equation*}
which implies $E_1(p)\geq 1>0$ a.e. on $\partial K$ in view of \eqref{S6:normalization}. 

Similarly, by Lemma~\ref{s6:lem3} the singular part $b_{n-r}(K;\R^{n+1})$ in the Minkowski formula \eqref{s6:GM} also vanishes for $\lambda_{n-r}>0$ after choosing a suitable origin. Meanwhile, using the generalized divergence formula \cite[Thm.~4.5.6]{Fed69} and the relationship between the anisotropic support function $S(p)$ and the isotropic support function $\langle p,\nu\rangle$, we get
\begin{align*}
(n+1)\mathrm{Vol}(K)&=\int_{\pt K} \langle p,\nu\rangle d\mu(p)=\int_{\pt K} \frac{\langle p,\nu\rangle}{\gamma(\nu)} \gamma(\nu)d\mu(p)=\int_{\pt K} S(p){\gamma}(\nu)d\mu(p).
\end{align*}
Then by \eqref{s6:pf-2} and the Minkowski formula \eqref{s6:GM} we derive
\begin{align*}
(n+1)\mathrm{Vol}(K)& =\sum_{r=1}^n \lambda_{n-r} \int_{\pt K} S(p)E_r(p){\gamma}(\nu)d\mu(p)\\
&=\sum_{r=1}^n \lambda_{n-r}  \left(\int_{\pt K} E_{r-1}(p){\gamma}(\nu)d\mu(p)+a_{n-r+1}(K;\mathbb{R}^{n+1})\right)\\
&\geq \sum_{r=1}^n \lambda_{n-r}  \int_{\pt K} E_{r-1}(p){\gamma}(\nu)d\mu(p).
\end{align*}
On the other hand, by the Heintze--Karcher type inequality \eqref{s6:HK},
\begin{align*}
(n+1)\mathrm{Vol}(K)&\leq \int_{\pt K} \frac{{\gamma}(\nu)}{E_{1}(p)}d\mu(p)\\
&=\int_{\pt K} \frac{{\gamma}(\nu)}{E_{1}(p)}\left(\sum_{r=1}^n \lambda_{n-r}  E_r(p)\right)d\mu(p)\\
&\leq\sum_{r=1}^n \lambda_{n-r} \int_{\pt K} E_{r-1}(p){\gamma}(\nu)d\mu(p),
\end{align*}
where we used the inequality $E_r(p)\leq E_{r-1}(p)E_1(p)$. So the equality must hold, and $\kappa_1(p)=\cdots=\kappa_n(p)=r(K,p)^{-1}=1$ almost everywhere on $\pt K$. This implies that the anisotropic area of the boundary $\partial K$ satisfies
\begin{align*}
|\pt K|_{\gamma}&=\int_{\pt K}{\gamma}(\nu)d\mu(p)=\int_{\pt K}E_n(p){\gamma}(\nu)d\mu(p)\\
&=\Phi_0(K;\R^{n+1})-a_0(K;\R^{n+1})\\
&\leq \Phi_0(K;\R^{n+1})= |\pt W|_{\gamma},
\end{align*}
where in the third equality we used the decomposition \eqref{s6:pf-1} and in the last equality we used the fact \eqref{s3:Phi0}. Since the anisotropic reach of $K$ is 1 almost everywhere, $K$ contains a Wulff shape of radius $1$ and we have
\begin{align*}
\mathrm{Vol}(K)&\geq  \mathrm{Vol}(W).
\end{align*}
Combining the above two inequalities gives rise to
\begin{equation*}
\left(\frac{|\pt K|_{\gamma}}{|\pt W|_{\gamma}}\right)^{n+1}\leq \left(\frac{\mathrm{Vol}(K)}{\mathrm{Vol}(W)}\right)^n,
\end{equation*}
which together with the equality case of the anisotropic isoperimetric inequality \eqref{s2:Iso} (with $\ell=n$) implies that $K$ is a scaled Wulff shape. This completes the proof of Theorem \ref{s6:thm}.
\end{proof}

\section{Volume preserving anisotropic curvature flow}\label{sec:VPACF}

In the second part of this paper, we will study the volume preserving flow by powers of the $k$th anisotropic mean curvature.

\subsection{Variation equations}$\ $

We first review some variation equations for geometric quantities on the hypersurface $M$ in $\mathbb{R}^{n+1}$. Let $W$ be a Wulff shape with the support function $\gamma\in C^\infty(\mathbb{S}^n)$, and let $X(\cdot,t):M \rightarrow \mathbb{R}^{n+1}$, $t\in [0,T)$, be a smooth family of embeddings satisfying
\begin{equation}\label{eq2.1}
\frac{\pt }{\pt t}X =\eta \nu_\gamma,
\end{equation}
for some time-dependent smooth function $\eta$, where $\nu_\gamma$ denotes the anisotropic unit normal of $M_t=X(M,t)$. The closure of the enclosed domain of $M_t$ is denoted by $K_t$. The following equations were calculated in \cite{And01}.
\begin{lem}[{\cite{And01}}]
Under the variation \eqref{eq2.1}, we have the following evolution equations for the anisotropic area element $d\mu_\gamma$, anisotropic unit normal $\nu_\gamma$, the induced anisotropic metric $\hat{g}_{ij}$ and the anisotropic Weingarten tensor $\hat{h}_k^\ell$ of the evolving hypersurface $M_t=X(M,t)$:
\begin{align}
\frac{\pt}{\pt t}d\mu_\gamma&= \eta H_\gamma d\mu_\gamma,\label{s2:dmu}\\
\frac{\pt}{\pt t}\nu_\gamma&= -\hat{\nabla} \eta,\\
\frac{\pt}{\pt t} \hat{g}_{ij}&=2\eta \hat{h}_{ij}-Q_{ij}{}^k\pt_k \eta,\\
\frac{\pt}{\pt t}\hat{h}^{\ell}_k&= -\hat{\nabla}^\ell\hat{\nabla}_k \eta-{A}_k{}^{p\ell}\pt_p \eta-\eta \hat{h}^p_k\hat{h}_p^\ell,\label{eq2.h}
\end{align}
where the upper indices are lifted using the metric $\hat{g}$, the function $H_{\gamma}=nE_1$ is the anisotropic mean curvature of $M_t$, and the tensors $A$ and $Q$ are defined in \S \ref{sec:2-2}.  In addition, the volume of the enclosed $K_t$ satisfies
\begin{align}\label{s2:Vol-1}
\frac{d}{dt} \mathrm{Vol}(K_t)&=\int_{M_t}\eta d\mu_\gamma.
\end{align}
\end{lem}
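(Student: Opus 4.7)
The plan is to derive each identity by direct differentiation from the defining expressions, working throughout with the anisotropic metric $\hat{g}=G(\nu_\gamma)|_{TM}$ in which $\nu_\gamma$ is literally the unit normal to $M$. This makes the computation structurally parallel to the classical isotropic case, with all deviations arising from the extra dependence of $G$ on $\nu_\gamma$; these show up as the tensors $Q$ and $A$ defined in Section~2.

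First I would dispose of the two easiest identities. For $\mathrm{Vol}(K_t)$, decompose $\nu_\gamma=\gamma(\nu)\nu+\nabla^{\mathbb{S}}\gamma|_\nu$ where the second summand is tangential to $M$; the standard first variation of enclosed volume sees only the isotropic normal component, yielding $\tfrac{d}{dt}\mathrm{Vol}(K_t)=\int_{M_t}\eta\,\gamma(\nu)\,d\mu=\int_{M_t}\eta\,d\mu_\gamma$. For $\nu_\gamma$, use that $\nu_\gamma\in\Sigma$, so $G(\nu_\gamma)(\nu_\gamma,\partial_t\nu_\gamma)=0$ by $1$-homogeneity; differentiating $G(\nu_\gamma)(\nu_\gamma,\partial_i X)=0$ in $t$ and using the anisotropic Weingarten formula $\partial_i\nu_\gamma=\hat{h}_i^k\partial_k X$ gives $G(\nu_\gamma)(\partial_t\nu_\gamma,\partial_i X)=-\partial_i\eta$, which identifies $\partial_t\nu_\gamma$ with $-\hat{\nabla}\eta$.

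Next, for the metric, differentiate $\hat{g}_{ij}=G(\nu_\gamma)(\partial_i X,\partial_j X)$ directly. The term coming from $\partial_i(\eta\nu_\gamma)$ and its symmetric counterpart contribute $2\eta\,\hat{h}_{ij}$ via the Weingarten formula together with $G(\nu_\gamma)(\nu_\gamma,\partial_j X)=0$, while the variation of $G(\nu_\gamma)$ produces the $Q$-term: using the definition \eqref{s2:Q-def} and $\partial_t\nu_\gamma=-\hat{\nabla}\eta$, one gets $(\partial_t G)(\partial_i X,\partial_j X)=-Q_{ij}{}^k\partial_k\eta$. The area element then follows by the Jacobi formula $\partial_t d\mu_\gamma=\tfrac12\hat{g}^{ij}(\partial_t\hat{g}_{ij})d\mu_\gamma$, provided $\hat{g}^{ij}Q_{ijk}=0$; this trace-free property can be extracted from $0$-homogeneity of $G$ in its footpoint combined with $Q(\nu_\gamma)(\nu_\gamma,\cdot,\cdot)=0$, and it pins down the expression $\eta H_\gamma\,d\mu_\gamma$ since $\hat{g}^{ij}\hat{h}_{ij}=H_\gamma$.

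The main obstacle is the Weingarten identity \eqref{eq2.h}. The scheme is to differentiate $\partial_k\nu_\gamma=\hat{h}_k^\ell\,\partial_\ell X$ in $t$, producing
\begin{equation*}
-\partial_k\hat{\nabla}^\ell\eta\,\partial_\ell X - \hat{h}_k^\ell(\partial_\ell\eta\,\nu_\gamma+\eta\,\hat{h}_\ell^p\partial_p X)=(\partial_t\hat{h}_k^\ell)\partial_\ell X+\hat{h}_k^\ell\,\partial_t\partial_\ell X,
\end{equation*}
and project onto $\partial_\ell X$ with $\hat{g}$. The delicate step is converting the ordinary second derivative $\partial_k\partial^\ell\eta$ into the covariant Hessian $\hat{\nabla}^\ell\hat{\nabla}_k\eta$: the Christoffel symbols of $\hat{g}$ differ from the induced-metric Christoffel symbols of the Riemannian connection by precisely the tensor $A$ appearing in \eqref{s2:Gaus1}, which is built from $\hat h$ and $Q$. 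Tracking this correction carefully, together with the symmetry identities for $Q$ and the Codazzi relation, produces the term $-A_k{}^{p\ell}\partial_p\eta$, while $-\eta\,\hat{h}_k^p\hat{h}_p^\ell$ is the standard $\eta|\hat{h}|^2$-style contribution. This bookkeeping of connection differences and $Q$-contractions is where all the genuine anisotropic complications are concentrated; once it is done, the remaining identities fall out as bookkeeping.
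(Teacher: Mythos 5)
Your framework---differentiating the defining expressions against the anisotropic metric $\hat g$, in which $\nu_\gamma$ is the unit normal---is the right one, and your arguments for $\partial_t\nu_\gamma$, $\partial_t\hat g_{ij}$ and $\tfrac{d}{dt}\mathrm{Vol}(K_t)$ are sound. The step for $\partial_t d\mu_\gamma$, however, has a genuine gap: it rests on the claims that $d\mu_\gamma$ is the Riemannian volume form of $\hat g$ and that $\hat g^{ij}Q_{ijk}=0$, and neither is true in general. Working at a point with a Euclidean orthonormal tangent frame and using the $G$-orthogonal splitting $\mathbb R^{n+1}=\mathbb R\nu_\gamma\oplus T_xM$, one finds $\sqrt{\det\hat g}=\gamma(\nu)\,\sigma(\nu_\gamma)\sqrt{\det g}$ with $\sigma(z):=\bigl(\det\tfrac12 D^2(\gamma^0)^2(z)\bigr)^{1/2}$, hence $d\mu_\gamma=\sigma(\nu_\gamma)^{-1}\,\mathrm{vol}_{\hat g}$, and the $0$-homogeneous function $\sigma$ is not constant on $\Sigma$ for a general anisotropy. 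Likewise, since $Q(\nu_\gamma)(\nu_\gamma,\cdot,\cdot)=0$ removes the normal contribution from the full $G^{-1}$-trace, $\hat g^{ij}Q_{ijk}=(G^{-1})^{ab}Q_{abc}(\partial_kX)^c=D_c\log\det D^2(\gamma^0)^2\,(\partial_kX)^c$, a tangential derivative of a generally nonconstant $0$-homogeneous function. The $0$-homogeneity of $G$ and the identity $Q(\nu_\gamma)(\nu_\gamma,\cdot,\cdot)=0$ are one and the same fact; they kill the radial contraction, not the tangential trace.

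The final identity $\partial_t d\mu_\gamma=\eta H_\gamma\,d\mu_\gamma$ is still correct, but only because these two errors cancel: differentiating the omitted conformal factor with $\partial_t\nu_\gamma=-\hat\nabla\eta$ gives $-\partial_t\log\sigma(\nu_\gamma)=\tfrac12\hat g^{ij}Q_{ij}{}^k\partial_k\eta$, which precisely compensates the $-\tfrac12\hat g^{ij}Q_{ij}{}^k\partial_k\eta$ produced by Jacobi's formula applied to $\mathrm{vol}_{\hat g}$. To make the argument rigorous you should either track this factor $\sigma$ explicitly as above, or---closer to what is done in \cite{And01}---differentiate $d\mu_\gamma=\gamma(\nu)\,d\mu$ directly from the evolution of the Euclidean normal $\nu$ and Euclidean area form $d\mu$, which avoids the $\hat g$-volume form entirely. (Separately, the displayed intermediate equation in your Weingarten step does not balance: the left side should be $\partial_t\partial_k\nu_\gamma=-\partial_k(\hat\nabla^\ell\eta\,\partial_\ell X)$, with $\partial_t\partial_\ell X=\partial_\ell\eta\,\nu_\gamma+\eta\hat h_\ell^p\partial_pX$ appearing only on the right; the subsequent plan of projecting with $\hat g$ and absorbing the connection difference into the $A$-tensor is, however, the correct structure.)
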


In \cite{Rei76} Reilly derived the following variational formula for the mixed volumes.
\begin{lem}[{\cite{Rei76}}]
Under the variation \eqref{eq2.1},  the mixed volume $V_{n+1-k}(K_t,W)$ relative to the Wulff shape $W$ evolves by \begin{equation}\label{s4:evl-Vk}
  \frac d{dt}V_{n+1-k}(K_t,W)~=~(n+1-k)\int_{M_t}\eta\: {E}_kd\mu_{\gamma},
\end{equation}
where $k=1,\dots,n$.
\end{lem}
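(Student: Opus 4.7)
The plan is to avoid a direct tensor calculation using \eqref{eq2.h}, and instead to extract the formula from the Steiner formula \eqref{s2:Ste} by computing $\tfrac{d}{dt}\mathrm{Vol}(K_t+sW)$ in two different ways and comparing coefficients in $s$. This sidesteps the Newton-tensor manipulations that would otherwise be needed to handle terms like $\partial E_{k-1}/\partial \hat{h}^i_j \cdot \hat{\nabla}^i\hat{\nabla}^j \eta$ after substitution of \eqref{eq2.h} into $\partial_t E_{k-1}$, where the anisotropic correction terms involving the $A$-tensor would force us through a delicate integration by parts using the anisotropic Codazzi equations in \eqref{s2:Gaus1}.

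For the first computation I would first note that, since Minkowski addition of $sW$ preserves the outward anisotropic normal, the map $\Psi_s\colon M_t \to \partial(K_t+sW)$ defined by $\Psi_s(p) = X(p,t) + s\,\nu_\gamma(p,t)$ is a diffeomorphism (for smooth strictly convex $K_t$) under which the anisotropic Weingarten map of $\partial(K_t+sW)$ at $\Psi_s(p)$ has eigenvalues $\kappa_i/(1+s\kappa_i)$, giving
\[
\Psi_s^{*}\bigl(d\mu_\gamma^{K_t+sW}\bigr) \;=\; \prod_{i=1}^n (1+s\kappa_i)\,d\mu_\gamma .
\]
Next, $\partial_t \Psi_s = \eta\,\nu_\gamma - s\,\hat\nabla \eta$; since $\hat\nabla \eta \in TM_t$ agrees with the tangent space of $\partial(K_t+sW)$ at $\Psi_s(p)$ (the two surfaces share the same outward isotropic normal $\nu$), the isotropic normal velocity of the boundary is $\eta\,\gamma(\nu)$. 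Applying the volume variation identity \eqref{s2:Vol-1} on the domain $K_t + sW$ therefore yields
\[
\frac{d}{dt}\mathrm{Vol}(K_t+sW) \;=\; \int_{\partial(K_t+sW)} \eta\,d\mu_\gamma^{(s)} \;=\; \int_{M_t} \eta\,\prod_{i=1}^n(1+s\kappa_i)\,d\mu_\gamma \;=\; \sum_{j=0}^{n}\binom{n}{j}\,s^{j} \int_{M_t}\eta\,E_j\,d\mu_\gamma .
\]

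For the second computation, differentiating \eqref{s2:Ste} in $t$ gives
\[
\frac{d}{dt}\mathrm{Vol}(K_t+sW) \;=\; \frac{1}{n+1}\sum_{m=0}^{n+1} s^{n+1-m}\binom{n+1}{m}\frac{d}{dt}V_m(K_t,W).
\]
Matching coefficients of $s^{j}$ for $0\leq j\leq n$, setting $m = n+1-j$, and using the elementary identity $\binom{n+1}{j}/(n+1) = \binom{n}{j}/(n+1-j)$, the common factor $\binom{n}{j}$ cancels and yields
\[
\frac{d}{dt}V_{n+1-j}(K_t,W) \;=\; (n+1-j)\int_{M_t}\eta\,E_j\,d\mu_\gamma ,
\]
which is \eqref{s4:evl-Vk} after relabeling $j=k$. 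The absent $s^{n+1}$ coefficient on the first side forces $\tfrac{d}{dt}V_0 = 0$, consistent with $V_0(K_t,W) = (n+1)\mathrm{Vol}(W)$ being constant.

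The main technical point is verifying the two ingredients underlying the first step: the area-form transformation under $\Psi_s$, and the fact that the effective normal speed on $\partial(K_t+sW)$ is still $\eta$. Both rely only on the properties recalled in \S\ref{sec:2-2} that $\nu_\gamma$ has $G(\nu_\gamma)$-length one and is $G(\nu_\gamma)$-orthogonal to $TM_t$, together with smoothness and strict convexity of $K_t$; neither the anisotropic Codazzi equations nor any Newton-tensor identities are needed.
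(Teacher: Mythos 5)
The paper cites Reilly~\cite{Rei76} and gives no proof of its own, so there is nothing internal to compare against; what you have written is a self-contained alternative derivation. Your Steiner-polynomial argument is correct. The two pointwise ingredients hold: the map $\Psi_s = X + s\nu_\gamma$ carries $M_t$ diffeomorphically onto $\partial(K_t+sW)$, its differential is $I + s\mathcal W_\gamma$ (which has eigenvalues $1+s\kappa_i$, so the determinant---basis-independent, even though $\mathcal W_\gamma$ is not Euclidean-self-adjoint---is $\prod_i(1+s\kappa_i)$), the corresponding tangent spaces coincide as subspaces of $\R^{n+1}$ so $\gamma(\nu)$ is unchanged under pullback, and $\partial_t\Psi_s = \eta\nu_\gamma - s\hat\nabla\eta$ has tangential correction $-s\hat\nabla\eta$ that the enclosed-volume variation does not see. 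The coefficient comparison with the Steiner formula then produces \eqref{s4:evl-Vk} exactly, and the vanishing of the $s^{n+1}$ coefficient correctly recovers $\tfrac{d}{dt}V_0 = 0$. Compared with a direct attack via \eqref{eq2.h}---which would require differentiating $E_{k-1}(\hat h^i_j)$ and then integrating by parts against the divergence-free Newton tensor, with the $A$-tensor corrections absorbed via the anisotropic Codazzi identity in \eqref{s2:Gaus1}---your approach trades all of that tensor bookkeeping for the algebra of the Steiner polynomial, at the cost of assuming $K_t$ smooth and strictly convex (so that $\Psi_s$ is a global diffeomorphism and the Minkowski sum $K_t+sW$ has the boundary you describe). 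That restriction is harmless in this paper, since the flow preserves strict convexity, but it is worth noting that Reilly's direct computation applies to arbitrary smooth closed hypersurfaces with the curvature-integral definition \eqref{def-quermass}, whereas your argument leans on the convex-geometric Steiner formula.
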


Since the global term $\phi(t)$ in the flow equation \eqref{flow-VMCF} is defined as in \eqref{s1:phi-1}, it is easy to check using \eqref{s2:Vol-1} that the volume of the evolving domain remains constant along the flow \eqref{flow-VMCF}:
\begin{align*}
\frac d{dt}\mathrm{Vol}(K_t)=\int_{M_t}(\phi(t)-{E}_k^{{\alpha}/k})d\mu_{\gamma}=0.
\end{align*}

\subsection{Monotonicity of the isoperimetric ratio}$\ $

We next show that the flow \eqref{flow-VMCF} decreases the $(n+1-k)$th mixed volume $V_{n+1-k}(K_t,W)$. This will imply that the isoperimetric ratio
\begin{equation*}
{\mathcal I}_{n+1-k}(K_t,W) = \frac{V_{n+1-k}(K_t,W)^{n+1}}{V_{n+1}(K_t,W)^{n+1-k} V_0(K_t,W)^{k}}
\end{equation*}
is monotone non-increasing in time.


\begin{prop}\label{s5:prop-monot}
Let $M_t=X(M,t)=\partial K_t$ be a smooth strictly convex solution of the flow \eqref{flow-VMCF} on $[0,T)$ with the global term $\phi(t)$ given by \eqref{s1:phi-1}. Then $V_{n+1-k}(K_t,W)$ is non-increasing in $t$, and thus the isoperimetric ratio ${\mathcal I}_{n+1-k}(K_t,W)$ is monotone non-increasing in $t$.
\end{prop}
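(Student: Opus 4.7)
The plan is to compute $\frac{d}{dt}V_{n+1-k}(K_t,W)$ via Reilly's variational formula \eqref{s4:evl-Vk}, substitute the explicit expression for the global term $\phi(t)$, and conclude the sign via a Chebyshev-type integral inequality exploiting the monotone dependence of two integrands on the single quantity $E_k$.

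First, I would apply \eqref{s4:evl-Vk} with $\eta=\phi(t)-E_k^{\alpha/k}(\kappa)$ to obtain
\begin{equation*}
\frac{d}{dt}V_{n+1-k}(K_t,W)=(n+1-k)\left(\phi(t)\int_{M_t}E_k\,d\mu_\gamma-\int_{M_t}E_k^{1+\alpha/k}\,d\mu_\gamma\right).
\end{equation*}
Since $k\in\{1,\dots,n\}$ the prefactor $(n+1-k)$ is nonnegative, so it suffices to show the bracketed term is $\leq 0$. Substituting the definition \eqref{s1:phi-1} of $\phi(t)$ and multiplying through by $|M_t|_\gamma>0$, the desired inequality becomes
\begin{equation*}
\int_{M_t}E_k^{\alpha/k}\,d\mu_\gamma\cdot\int_{M_t}E_k\,d\mu_\gamma\;\leq\;|M_t|_\gamma\cdot\int_{M_t}E_k^{1+\alpha/k}\,d\mu_\gamma.
\end{equation*}

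This is Chebyshev's integral inequality applied to the functions $f=E_k^{\alpha/k}$ and $g=E_k$ on $(M_t,d\mu_\gamma)$. Both are monotone nondecreasing functions of the single scalar $E_k$ (note that $\alpha/k>0$ and, since $M_t$ is strictly convex, $E_k>0$ pointwise on $M_t$). The inequality follows from the pointwise identity
\begin{equation*}
\bigl(f(x)-f(y)\bigr)\bigl(g(x)-g(y)\bigr)\geq 0\qquad\text{for all }x,y\in M_t,
\end{equation*}
integrated against $d\mu_\gamma(x)\otimes d\mu_\gamma(y)$ and expanded. Thus $\frac{d}{dt}V_{n+1-k}(K_t,W)\leq 0$.

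For the isoperimetric ratio $\mathcal{I}_{n+1-k}(K_t,W)$, recall that by construction of $\phi(t)$ the enclosed volume is preserved, so $V_{n+1}(K_t,W)=(n+1)\mathrm{Vol}(K_t)$ is constant in $t$; likewise $V_0(K_t,W)=(n+1)\mathrm{Vol}(W)$ does not depend on $t$. Hence $\mathcal{I}_{n+1-k}(K_t,W)$ is a positive constant multiple of $V_{n+1-k}(K_t,W)^{n+1}$ and inherits the monotone non-increasing property.

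The argument is essentially algebraic once \eqref{s4:evl-Vk} and \eqref{s1:phi-1} are in hand; no real obstacle appears. The only point that deserves care is verifying positivity of $E_k$ along the flow so that Chebyshev applies to genuinely monotone functions, which is immediate from strict convexity of $M_t$ (an assumption carried by the hypotheses of the proposition and, eventually, maintained along the flow in later sections).
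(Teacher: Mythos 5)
Your proof is correct and takes essentially the same approach as the paper: both use Reilly's variational formula to reduce the claim to the same integral inequality, and both establish that inequality by exploiting the fact that $E_k$ and $E_k^{\alpha/k}$ are similarly ordered. The paper writes the derivative in the centered form $-\int_{M_t}(E_k-\bar E_k)(E_k^{\alpha/k}-\bar E_k^{\alpha/k})\,d\mu_\gamma\leq 0$, which is precisely what your Chebyshev/doubling argument encodes after expanding, so there is no substantive difference.
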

\proof
By the definition \eqref{s1:phi-1} of $\phi(t)$, the volume of the enclosed $K_t$ remains to be a constant. Since $K_t$ is smooth and convex, we can write the mixed volume $V_{n+1-k}(K_t,W)$ as in \eqref{def-quermass}.  By \eqref{s4:evl-Vk}, we have
\begin{align}\label{s5:Vn1k}
  \frac 1{(n-k+1)}&\frac d{dt}V_{n+1-k}(K_t,W)=~\int_{M_t}{E}_{k}(\phi(t)-{E}_k^{{\alpha}/k})d\mu_{\gamma}\nonumber\\
  =&\frac{1}{|M_t|_{\gamma}}\int_{M_t}{E}_kd\mu_{\gamma}\int_{M_t}{E}_k^{\alpha/k}d\mu_{\gamma} -\int_{M_t}{E}_k^{\frac{\alpha}k+1}d\mu_{\gamma}\nonumber\\
  =&-\int_{M_t}\left({E}_k-\bar{E}_k\right){E}_k^{\alpha/k}d\mu_{\gamma}\nonumber\\
  =&-\int_{M_t}\left({E}_k-\bar{E}_k\right)\left({E}_k^{\alpha/k}-\bar{E}_k^{\alpha/k}\right)d\mu_{\gamma}\nonumber\\
  \leq&~0,
\end{align}
where $\bar{E}_k$ denotes the average integral of $E_k$. This says that $V_{n+1-k}(K_t,W)$ is non-increasing in time. Since $V_{n+1}(K_t,W)=(n+1)\mathrm{Vol}(K_t)$ and $V_{0}(K_t,W)=(n+1)\mathrm{Vol}(W)$ are constants, we conclude that the isoperimetric ratio ${\mathcal I}_{n+1-k}(K_t,W)$ is non-increasing in $t$.
\endproof

The monotonicity of the isoperimetric ratio together with the isoperimetric inequality \eqref{s2:Iso} implies that
\begin{equation*}
  1\leq {\mathcal I}_{n+1-k}(K_t,W)\leq {\mathcal I}_{n+1-k}(K_0,W)
\end{equation*}
along the flow \eqref{flow-VMCF}. This can be used to control the anisotropic area and shape of the evolving hypersurfaces $M_t$. Let $K$  be a convex body in $\mathbb{R}^{n+1}$ with boundary $M=\partial K$. The anisotropic outer radius of $K$ (or $M$) relative to $W$ is defined as
\begin{equation*}
  R(K):=\inf\{\rho>0:~K\subset\rho W+p~\mathrm{for}~\mathrm{some}~ \mathrm{point}~ p\in \mathbb{R}^{n+1}\}.
\end{equation*}
The anisotropic inner radius of $K$ relative to $W$ is defined as
\begin{equation*}
  r(K):=\sup\{\rho>0:~\rho W+p\subset K~\mathrm{for}~\mathrm{some}~ \mathrm{point}~ p\in \mathbb{R}^{n+1}\}.
\end{equation*}
We denote by $r(t)$ and $R(t)$ the anisotropic inner radius and outer radius of the evolving set $K_t$ under the flow \eqref{flow-VMCF}, respectively. As a consequence of Proposition \ref{s5:prop-monot}, we have the following estimate on $r(t)$ and $R(t)$.

\begin{prop}\label{s5:prop-2}
Let $M_t$ be a smooth convex solution of the flow \eqref{flow-VMCF} on $[0,T)$ with the global term $\phi(t)$ given by \eqref{s1:phi-1}. Then there exist constants $c_1,c_2,R_1,R_2$ depending only on $n,k,{\gamma}, \mathrm{Vol}(K_0)$ and $V_{n+1-k}(K_0,W)$ such that
\begin{equation}\label{s5:AreaVol}
  0<c_1\leq |M_t|_{\gamma}\leq c_2
\end{equation}
and
\begin{equation}\label{s5:radius}
  0<R_1\leq r(t)\leq R(t)\leq R_2
\end{equation}
for all $t\in [0,T)$.
\end{prop}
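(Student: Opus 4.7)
The plan is to first control $V_{n+1-k}(K_t,W)$ using volume constancy and Proposition~\ref{s5:prop-monot}, then propagate the bounds to the anisotropic area $|M_t|_\gamma=V_n(K_t,W)$ via the log-concavity of the mixed volume sequence coming from the Alexandrov--Fenchel inequality, and finally deduce the radii bounds by convex-geometric arguments using the volume constraint.

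First, the choice of $\phi(t)$ in \eqref{s1:phi-1} preserves $V_{n+1}(K_t,W)=(n+1)\mathrm{Vol}(K_0)$, and Proposition~\ref{s5:prop-monot} gives $V_{n+1-k}(K_t,W)\le V_{n+1-k}(K_0,W)$. The Alexandrov--Fenchel inequality \eqref{AF-k=n+1} (equivalently $\mathcal{I}_{n+1-k}(K_t,W)\ge 1$) provides the matching positive lower bound
\[ V_{n+1-k}(K_t,W)\ge \bigl((n+1)\mathrm{Vol}(K_0)\bigr)^{(n+1-k)/(n+1)}\bigl((n+1)\mathrm{Vol}(W)\bigr)^{k/(n+1)}. \]
To transfer these two-sided bounds to $V_n(K_t,W)=|M_t|_\gamma$, I would invoke the log-concavity of the mixed volume sequence $\{V_\ell(K_t,W)\}_{\ell=0}^{n+1}$: applying \eqref{AF-00} with three consecutive indices yields $V_\ell^2\ge V_{\ell-1}V_{\ell+1}$, so $\ell\mapsto\log V_\ell(K_t,W)$ is concave on $\{0,\dots,n+1\}$. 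Comparing the slope on $[0,n+1-k]$ with that on $[n+1-k,n]$ gives, for $k\ge 2$,
\[ \log V_n(K_t,W)\le \log V_{n+1-k}(K_t,W)+\frac{k-1}{n+1-k}\bigl(\log V_{n+1-k}(K_t,W)-\log V_0\bigr), \]
which, combined with the uniform bounds on $V_{n+1-k}(K_t,W)$ and the fact that $V_0=(n+1)\mathrm{Vol}(W)$ is fixed, produces $V_n(K_t,W)\le c_2$; the case $k=1$ is immediate since $V_n=V_{n+1-k}$. The matching lower bound $V_n\ge c_1>0$ comes directly from \eqref{AF-k=n+1} via $V_n^{n+1}\ge V_{n+1}^n V_0$.

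For the inner radius, I would apply the anisotropic coarea identity \eqref{s5:coarea2} to $K_t$ and use the monotonicity of $|\partial(K_t)_{-\lambda}|_\gamma$ in $\lambda$ (a consequence of monotonicity of mixed volumes under inclusion) to obtain
\[ \mathrm{Vol}(K_0)=\int_0^{r(t)}|\partial(K_t)_{-\lambda}|_\gamma\,d\lambda\le r(t)\,|M_t|_\gamma\le c_2\,r(t), \]
whence $r(t)\ge \mathrm{Vol}(K_0)/c_2=:R_1>0$. For the outer radius, after a translation assume $x_0+r(t)W\subset K_t$ for some interior point $x_0$; pick $p\in K_t$ maximizing $\gamma^0(p-x_0)$, so that $\gamma^0(p-x_0)\ge R(t)$ by definition of $R(t)$. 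The convex hull $\mathrm{conv}\bigl((x_0+r(t)W)\cup\{p\}\bigr)$ is contained in $K_t$, and an elementary cone-volume computation gives a lower bound of order $r(t)^n R(t)$ for its $(n+1)$-dimensional Lebesgue measure (with a constant depending only on $W$). Since $\mathrm{Vol}(K_t)=\mathrm{Vol}(K_0)$ is fixed and $r(t)\ge R_1$, this forces $R(t)\le R_2$ for a constant $R_2$ depending only on $n,k,\gamma,\mathrm{Vol}(K_0),V_{n+1-k}(K_0,W)$.

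The main obstacle is the upper bound on $V_n(K_t,W)$: unlike the classical $k=1$ case (analogous to Huisken's volume-preserving mean curvature flow), for $k\ge 2$ the anisotropic area is not itself monotone along the flow, so its bound cannot be produced by a direct flow computation and relies crucially on the log-concavity of the mixed volume sequence from the Alexandrov--Fenchel inequality. Once this is in hand, the radii bounds follow from elementary convex geometry combined with the preserved enclosed volume.
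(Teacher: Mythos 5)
Your argument is correct, and it splits naturally into two parts that compare differently to the paper's proof. For the anisotropic area bounds, your log-concavity slope comparison is just a repackaging of the same Alexandrov--Fenchel input the paper uses: comparing the slope of $\ell\mapsto\log V_\ell$ on $[0,n+1-k]$ with that on $[n+1-k,n]$ rearranges exactly to $V_{n+1-k}^n \geq V_n^{n+1-k}V_0^{k-1}$, which is \eqref{AF-k=n+1} with $(i,j)=(1,k)$, and the lower bound $V_n^{n+1}\geq V_{n+1}^n V_0$ is \eqref{AF-k=n+1} with $(i,j)=(0,1)$. So that half is essentially the paper's argument stated in log-concavity language. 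For the radii bounds, however, you take a genuinely different, self-contained route: the paper simply invokes \cite[Prop.~5.1]{And01} (recorded as \eqref{s5:And-ineq}) as a black box, whereas you re-derive both inequalities from first principles. Your coarea identity \eqref{s5:coarea2} plus monotonicity of anisotropic perimeter under inclusion gives $r(t)\geq \mathrm{Vol}(K_t)/|M_t|_\gamma$, exactly the left-hand side of \eqref{s5:And-ineq}. Your convex-hull/cone estimate gives $\mathrm{Vol}(K_t)\gtrsim r(t)^n\,\gamma^0(p-x_0)\gtrsim r(t)^n R(t)$, which, combined with the lower bound on $r(t)$, yields $R(t)\leq R_2$ and is quantitatively comparable to the right-hand side of \eqref{s5:And-ineq}. (One cosmetic remark: the cone you want is the one from the apex $p$ to the $n$-dimensional slice of $x_0+r(t)W$ through $x_0$ perpendicular to $p-x_0$; its height is $|p-x_0|\gtrsim\gamma^0(p-x_0)$, so the bound is uniform in $R(t)/r(t)$ and no case split is needed.) The trade-off is that the paper's proof is shorter by outsourcing the radii bounds to \cite{And01}, while yours keeps the whole proposition self-contained using only tools developed earlier in the paper (the coarea formula and elementary convexity), at the cost of a slightly longer write-up.
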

\proof
First, the isoperimetric inequality \eqref{s2:Iso} and the preserving of the enclosed volume imply that
\begin{align*}
  |M_t|_{\gamma}^{n+1}=&~V_n(K_t,W)^{n+1}\\
  \geq &~V_{n+1}(K_t,W)^nV_0(K_t,W)\\
  =&~(n+1)^{n+1}\mathrm{Vol}(K_0)^n\mathrm{Vol}(W),
\end{align*}
which gives the lower bound of $|M_t|_{\gamma}$. For the upper bound of $|M_t|_{\gamma}$: if $k=1$ in the flow equation \eqref{flow-VMCF}, the monotonicity of $V_{n+1-k}(K_t,W)$ gives the upper bound $|M_t|_{\gamma}\leq |M_0|_{\gamma}$; if $k=2,\dots,n$, the Alexandrov--Fenchel inequality \eqref{AF-k=n+1} implies that
\begin{align*}
  V_{n+1-k}^n(K_0,W)\geq &~V_{n+1-k}^{n}(K_t,W)\\
  \geq &~V_n^{n+1-k}(K_t,W)V_0^{k-1}(K_t,W)\\
  =&~|M_t|_{\gamma}^{n+1-k}(n+1)^{k-1}\mathrm{Vol}(W)^{k-1},
\end{align*}
which gives the upper bound of $|M_t|_{\gamma}$. This proves the inequality \eqref{s5:AreaVol}.

In Proposition 5.1 of \cite{And01}, the first author proved that for any convex body $K$ with boundary $M=\partial K$, there holds
\begin{equation}\label{s5:And-ineq}
  \frac{\mathrm{Vol}(K)}{|M|_{\gamma}}~\leq~r(K)\leq R(K)\leq ~\frac{|M|_{\gamma}^n}{(n+1)^{n-1}\mathrm{Vol}(W)\mathrm{Vol}(K)^{n-1}}.
\end{equation}
Then the estimate \eqref{s5:radius} follows from the estimate \eqref{s5:AreaVol} and the inequality \eqref{s5:And-ineq}.
\endproof

\section{Anisotropic Gauss map parametrization}\label{sec:gauss map}
In this section, we rewrite the flow \eqref{flow-VMCF} as a scalar parabolic equation on the Wulff shape $\Sigma=\partial W$ of the anisotropic support function of the evolving hypersurface, and derive some evolution equations in this parametrization.

\subsection{Anisotropic Gauss map parametrization}\label{sec:8-1}$\ $

We first review the anisotropic Gauss map parametrization of a smooth strictly convex hypersurface. Given a smooth strictly convex hypersurface $M$ in $\mathbb{R}^{n+1}$, we assume that the origin lies inside $M$. The Gauss map is defined as $\nu: M\to \mathbb{S}^n$ which maps the point $x\in M$ to the outward unit normal $\nu\in \mathbb{S}^n$ at this point. The Gauss map of a strictly convex hypersurface is a nondegenerate diffeomorphism between $M$ and $\mathbb{S}^n$. Since the Wulff shape $W$ is also strictly convex, we can define a map from $\mathbb{S}^n$ to $\Sigma=\partial W$ which maps $\nu$ to $\nu_{\gamma}=D{\gamma}(\nu)$. Then the anisotropic Gauss map is defined as $\nu_{\gamma}:M\to \Sigma$ satisfying $\nu_{\gamma}(x)=D{\gamma}(\nu(x))$ for any point $x\in M$.

The anisotropic Gauss map is a nondegenerate diffeomorphism and can be used to reparametrize a strictly convex hypersurface $M$ (see \cite{Xia13}):
\begin{equation*}
  X:\Sigma\to M\subset \mathbb{R}^{n+1},\quad X(z)=X(\nu_{\gamma}^{-1}(z)),\quad z\in \Sigma.
\end{equation*}
The anisotropic support function of $M$ is then defined as a function on $\Sigma$ by
\begin{equation}\label{s8:s-def}
  s(z)=\sup_{y\in M}G(z)(z,y)=G(z)(z,X(z))
\end{equation}
for $z\in \Sigma$, where $G$ is the metric on $\mathbb{R}^{n+1}$ defined in \eqref{s2:G}. Let $\bar{g}$ and $\bar{\nabla}$ denote the induced metric and its Levi-Civita connection on $\Sigma$ from $(\mathbb{R}^{n+1},G)$ respectively. Taking the first covariant derivative of $s$ on $\Sigma$, we have
\begin{equation}\label{s7:s-d1}
  \bar{\nabla}_is(z)=G(z)(\partial_iz,X(z))
\end{equation}
in a local orthonormal basis $\{e_i\}, i=1,\dots,n$ of $T_z\Sigma=T_{X(z)}M$.  Taking the second covariant derivatives of $s$, Xia \cite{Xia13} obtained that the anisotropic principal radii of $M$ are given by the eigenvalues $\tau=(\tau_1,\dots,\tau_n)$ of
\begin{equation}\label{s4:tau-def}
  \tau_{ij}=\tau_{ij}[s]=\bar{\nabla}_i\bar{\nabla}_js+\bar{g}_{ij}s-\frac 12Q_{ijk}\bar{\nabla}_ks.
\end{equation}
The anisotropic principal curvatures $\kappa=(\kappa_1,\dots,\kappa_n)$ are related to the anisotropic principal radii $\tau=(\tau_1,\dots,\tau_n)$ by
\begin{equation*}
  \kappa_i=\frac 1{\tau_i},\qquad i=1,\dots,n.
\end{equation*}
Moreover, any smooth function $s$ on $\Sigma$ satisfying $(\tau_{ij})>0$ is an anisotropic support function of a smooth, closed and strictly convex hypersurface $M$ in $\mathbb{R}^{n+1}$.

\begin{lem}
We have the following Codazzi and Simons type equations for $\tau_{ij}$:
\begin{equation}\label{s4:Codaz}
  \bar{\nabla}_j\tau_{k\ell}+\frac 12Q_{k\ell p}\tau_{jp}~=~\bar{\nabla}_k\tau_{j\ell}+\frac 12Q_{j\ell p}\tau_{kp}
\end{equation}
and
\begin{align}\label{s4:simon}
  \bar{\nabla}_i \bar{\nabla}_j\tau_{k\ell} =&\bar{\nabla}_k\bar{\nabla}_\ell\tau_{ij}+\frac 12Q_{ijp}\bar{\nabla}_p\tau_{k\ell}-\frac 12Q_{k\ell p}\bar{\nabla}_p\tau_{ij}+\bar{g}_{ij}\tau_{k\ell}-\bar{g}_{kj}\tau_{i\ell}\nonumber\\
 &+\bar{g}_{i\ell}\tau_{kj}-\bar{g}_{k\ell}\tau_{ij} +\frac 14\left(Q_{jkq}Q_{ipq}-Q_{kpq}Q_{qij}\right)\tau_{p\ell}\nonumber\\
&+\frac 14\left(Q_{j\ell q}Q_{ipq}-Q_{\ell pq}Q_{qij}\right)\tau_{kp}+\frac 14\left(Q_{k\ell q}Q_{ipq}-Q_{kpq}Q_{qi\ell}\right)\tau_{pj}\nonumber\\
&+\frac 14\left(Q_{k\ell q}Q_{jpq}-Q_{kpq}Q_{qj\ell}\right)\tau_{ip}+\frac 12\bar{\nabla}_pQ_{ijk}\tau_{\ell p}+\frac 12\bar{\nabla}_pQ_{ij\ell}\tau_{kp}\nonumber\\
&-\frac 12\bar{\nabla}_pQ_{jk\ell}\tau_{ip}-\frac 12\bar{\nabla}_pQ_{ik\ell}\tau_{jp}.
\end{align}
\end{lem}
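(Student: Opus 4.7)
The plan is to prove both identities by direct differentiation from the definition $\tau_{ij}[s]=\bar\nabla_i\bar\nabla_j s+\bar g_{ij}s-\tfrac{1}{2}Q_{ijp}\bar\nabla_p s$, combined with the Ricci identity on $(\Sigma,\bar g)$ and the Gauss equation \eqref{s2:gauss-2}. For the Codazzi identity \eqref{s4:Codaz} I would first differentiate $\tau_{k\ell}$ and $\tau_{j\ell}$ in the $j$ and $k$ directions respectively and subtract. A direct computation yields
\begin{align*}
\bar\nabla_j\tau_{k\ell}-\bar\nabla_k\tau_{j\ell}
&=(\bar\nabla_j\bar\nabla_k-\bar\nabla_k\bar\nabla_j)\bar\nabla_\ell s
+(\bar g_{k\ell}\bar\nabla_j s-\bar g_{j\ell}\bar\nabla_k s)\\
&\quad -\tfrac{1}{2}(\bar\nabla_j Q_{k\ell p}-\bar\nabla_k Q_{j\ell p})\bar\nabla_p s
-\tfrac{1}{2}(Q_{k\ell p}\bar\nabla_j\bar\nabla_p s-Q_{j\ell p}\bar\nabla_k\bar\nabla_p s).
\end{align*}
The third term vanishes because $\bar\nabla_p Q_{ijk}$ is totally symmetric in its four indices (noted just before \eqref{s2:gauss-2}). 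The commutator in the first term evaluates via the Ricci identity to a curvature contraction against $\bar\nabla_m s$; expanding it by the Gauss equation \eqref{s2:gauss-2} produces two metric contributions that cancel exactly the parenthesis $\bar g_{k\ell}\bar\nabla_j s-\bar g_{j\ell}\bar\nabla_k s$, plus two quadratic-$Q$ contributions. On the other side, expanding $\tfrac12(Q_{j\ell p}\tau_{kp}-Q_{k\ell p}\tau_{jp})$ with the definition of $\tau$ and using total symmetry of $Q$ to kill the $s$-linear piece ($Q_{j\ell k}=Q_{k\ell j}$), leaves exactly the remaining Hessian-in-$s$ term and the same quadratic-$Q$ pair, closing the identity.

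For the Simons-type equation \eqref{s4:simon} the most systematic route is to apply $\bar\nabla_i$ to \eqref{s4:Codaz} and then to use \eqref{s4:Codaz} again (with indices relabeled so that the pair $(k,\ell)$ plays the role of $(j,k)$) to re-express the outcome in terms of $\bar\nabla_k\bar\nabla_\ell\tau_{ij}$. This procedure introduces a single commutator $[\bar\nabla_i,\bar\nabla_j]\tau_{k\ell}$, which by the Ricci identity on a $(0,2)$-tensor equals
\[
[\bar\nabla_i,\bar\nabla_j]\tau_{k\ell}=-\bar R_{ijk}{}^{p}\tau_{p\ell}-\bar R_{ij\ell}{}^{p}\tau_{kp}.
\]
Substituting the Gauss equation \eqref{s2:gauss-2} for each of the two Riemann terms produces precisely the metric contributions $\bar g_{ij}\tau_{k\ell}-\bar g_{kj}\tau_{i\ell}+\bar g_{i\ell}\tau_{kj}-\bar g_{k\ell}\tau_{ij}$ together with the four quadratic-$Q$ blocks of the form $\tfrac{1}{4}(Q_{\cdot\cdot q}Q_{\cdot pq}-Q_{\cdot pq}Q_{q\cdot\cdot})\tau_{\cdot\cdot}$ displayed in the second, third and fourth lines of \eqref{s4:simon}. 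The two $\tfrac{1}{2}Q_{\cdot\cdot p}\bar\nabla_p\tau_{\cdot\cdot}$ summands in the first line come from differentiating the $Q$-correction of Codazzi, and the four $\tfrac{1}{2}\bar\nabla_p Q_{\cdot\cdot\cdot}\tau_{\cdot p}$ summands at the end come from moving a single covariant derivative past $Q$ during the re-expression; their symmetric arrangement is forced by the total symmetry of $\bar\nabla_p Q_{ijk}$.

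The hard part will be combinatorial bookkeeping rather than any conceptual difficulty: each Ricci-identity step generates two curvature contractions, each of which Gauss splits into two metric and two quadratic-$Q$ pieces, and one must verify that the index symmetries line up with the exact arrangement printed in \eqref{s4:simon}. To minimize sign and index errors I would (i) fully establish \eqref{s4:Codaz} first, (ii) organize the derivation of \eqref{s4:simon} around a single commutator applied to the rank-$2$ tensor $\tau_{k\ell}$ rather than around a fourth-order commutator on the scalar $s$, and (iii) systematically exploit the total symmetry of $\bar\nabla_p Q_{ijk}$ together with $Q_{ijk}=Q_{(ijk)}$ to collapse the $\bar\nabla Q$ and $Q\cdot Q$ terms into the precise symmetric groupings of \eqref{s4:simon}. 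After these reductions, the verification becomes a term-by-term match.
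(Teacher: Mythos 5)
Your strategy---differentiate the definition of $\tau_{ij}$ and invoke the Ricci identity plus the Gauss equation \eqref{s2:gauss-2} for the Codazzi identity, then differentiate Codazzi and resolve a single commutator the same way for the Simons-type identity---is exactly the paper's approach, which it delegates to Lemma 5.2 of Xia \cite{Xia13} and Lemma A.1 of Wei--Xiong \cite{WX21} for the detailed bookkeeping. One small indexing note: after applying Codazzi once to rewrite $\bar\nabla_j\tau_{k\ell}$ as $\bar\nabla_k\tau_{j\ell}$ inside $\bar\nabla_i\bar\nabla_j\tau_{k\ell}$ and then again to reach $\bar\nabla_k\bar\nabla_\ell\tau_{ij}$, the single commutator that appears is $[\bar\nabla_i,\bar\nabla_k]\tau_{j\ell}$ rather than $[\bar\nabla_i,\bar\nabla_j]\tau_{k\ell}$, though this does not affect the plan.
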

The proof of \eqref{s4:Codaz} is by taking covariant derivatives of \eqref{s4:tau-def}, and using the Gauss equation \eqref{s2:gauss-2} and the Ricci identity. See the proof of Lemma 5.2 in \cite{Xia13}. Taking the covariant derivative of \eqref{s4:Codaz}  and using the Gauss equation \eqref{s2:gauss-2} and the Ricci identity, we can obtain \eqref{s4:simon} after rearranging the terms. See the calculation in Lemma A.1 of \cite{WX21}.

\subsection{Properties of symmetric functions}$\ $

To derive the evolution equation of the anisotropic support function along the flow \eqref{flow-VMCF}, we require some properties on smooth symmetric functions which we recall here. The readers can refer to \cite{And07} for more details.

For a smooth symmetric function $F(A)=f(\kappa(A))$, where $A=(A_{ij})\in \mathrm{Sym}(n)$ is a symmetric matrix and $\kappa(A)=(\kappa_1,\dots,\kappa_n)$ give the eigenvalues of $A$, we denote by $\dot{F}^{ij}(A)$ and $\ddot{F}^{ij,kl}(A)$ the first and second derivatives of $F$ with respect to the components of its argument.  We also denote by $ \dot{f}^i(\kappa)$ and $\ddot{f}^{ij}(\kappa)$ the first and second derivatives of $f$ with respect to $\kappa$, respectively. At any diagonal $A$ with distinct eigenvalues $\kappa=\kappa(A)$, the first derivative of $F$ satisfies
\begin{equation*}
  \dot{F}^{ij}(A)~=~\dot{f}^i(\kappa)\delta_{ij}
\end{equation*}
and the second derivative of $F$ in the direction $B\in \mathrm{Sym}(n)$ is given in terms of $\dot{f}$ and $\ddot{f}$ by  (see, e.g., \cite{And07}):
\begin{equation}\label{s2:F-ddt}
  \ddot{F}^{ij,k\ell}(A)B_{ij}B_{k\ell}=\sum_{i,j}\ddot{f}^{ij}(\kappa)B_{ii}B_{jj}+2\sum_{i>j}\frac{\dot{f}^i(\kappa)-\dot{f}^j(\kappa)}{\kappa_i-\kappa_j}B_{ij}^2.
\end{equation}
This formula makes sense as a limit in the case of any repeated values of $\kappa_i$.

We say that $f$ is concave if its Hessian is non-positive definite. From the equation \eqref{s2:F-ddt}, we know that $F$ is concave at $A$ if and only if $f$ is concave at $\kappa$ and
\begin{equation*}
  \left(\dot{f}^k-\dot{f}^\ell\right)(\kappa_k-\kappa_\ell)~\leq ~0,\quad \forall~k\neq \ell.
\end{equation*}
Moreover, if $f$ is concave, homogeneous of degree one and is normalized such that $f(1,\dots,1)=1$, then $\sum_{i=1}^n\dot{f}^i\geq 1$.
%

We say that $f$ is \emph{inverse-concave}, if  the dual function
    \begin{equation*}
      f_*(x_1,\dots,x_n)=f(x_1^{-1},\dots,x_n^{-1})^{-1}
    \end{equation*}
is concave. Note that the dual function $f_*$ is only defined on the positive definite cone $\Gamma_+$
     \begin{equation*}
      \Gamma_+=\{x=(x_1,\dots,x_n)\in\mathbb{R}^n:\ ~x_i>0,~i=1,\dots,n\}.
    \end{equation*}
The functions
\begin{equation*}
  E_k^{1/k},\quad \mathrm{and} \quad (E_k/{E_\ell})^{1/{(k-\ell)}},~ k>\ell
\end{equation*}
are important examples of concave and inverse-concave symmetric functions.

\subsection{Evolution equations}$\ $

The anisotropic support function introduced in \S \ref{sec:8-1} allows the degenerate parabolic system of Equation \eqref{flow-VMCF} to be written as a parabolic scalar equation (see, e.g., \cite{Urb91,xia-2}). Let $F=E_k^{1/k}$ and define its dual function $F_*$ by
\begin{align*}
  F_*(x_1,\dots,x_n)=&F(x_1^{-1},\dots,x_n^{-1})^{-1}  =~\left(\frac{E_n(x)}{E_{n-k}(x)}\right)^{1/k}
\end{align*}
for $x\in \Gamma_+$. Then $F_*$ is concave in its arguments.
\begin{lem}
The solution of the flow \eqref{flow-VMCF} is given, up to a time-dependent diffeomorphism,  by solving the scalar parabolic equation on the Wulff shape $\Sigma$
 \begin{equation}\label{flow-gauss}
   \frac{\partial }{\partial t}s=\Psi(\tau_{ij})+\phi(t)=-F_*(\tau_{ij})^{-\alpha}+\phi(t)
 \end{equation}
for the anisotropic support function $s$, where $F_*(\tau_{ij})$ can be viewed as the function $F_*=F_*(\tau)$ evaluated at the eigenvalues $\tau=(\tau_1,\cdots,\tau_n)$ of $\tau_{ij}$.
\end{lem}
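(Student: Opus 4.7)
The plan is to pull back the geometric flow \eqref{flow-VMCF} to the fixed Wulff shape $\Sigma$ via the anisotropic Gauss map and directly differentiate the support function in time. I will carry this out in three short steps, all of which are essentially bookkeeping once the correct identifications are made.

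First, I will record the algebraic identity relating the integrand in \eqref{flow-VMCF} to $F_*(\tau_{ij})^{-\alpha}$. Since the anisotropic principal curvatures $\kappa_i$ and principal radii $\tau_i$ are reciprocals, $\kappa_i=\tau_i^{-1}$, the very definition of $F_*$ as the dual of $F=E_k^{1/k}$ gives
\begin{equation*}
F_*(\tau)=F(\tau_1^{-1},\dots,\tau_n^{-1})^{-1}=E_k(\kappa)^{-1/k},
\end{equation*}
so $F_*(\tau)^{-\alpha}=E_k^{\alpha/k}(\kappa)$. Since $F_*$ is symmetric, evaluating it on the endomorphism $\tau_{ij}[s]$ of \eqref{s4:tau-def} coincides with evaluating it on the tuple of eigenvalues.

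Second, I will set up the Gauss map reparametrization. For each $t$, define $\tilde X(z,t):=X\bigl(\nu_{\gamma}^{-1}(z,t),\,t\bigr)$ for $z\in\Sigma$, so that the anisotropic normal of $M_t$ at $\tilde X(z,t)$ is exactly $z$. By \eqref{s8:s-def} the anisotropic support function is $s(z,t)=G(z)(z,\tilde X(z,t))$, and since $z\in\Sigma$ is held fixed under the time differentiation,
\begin{equation*}
\partial_t s(z,t)=G(z)\bigl(z,\,\partial_t\tilde X(z,t)\bigr).
\end{equation*}

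Third, I will show that only the normal part of $\partial_t\tilde X$ contributes. Decompose
\begin{equation*}
\partial_t\tilde X(z,t)=\bigl(\partial_t X\bigr)\circ \nu_{\gamma}^{-1}(z,t)+dX\bigl(\partial_t\nu_{\gamma}^{-1}(z,t)\bigr),
\end{equation*}
so that the first summand equals $\eta\,\nu_{\gamma}=(\phi(t)-E_k^{\alpha/k})\,z$ by \eqref{flow-VMCF} and by the identification $\nu_{\gamma}(\tilde X(z,t))=z$, while the second summand lies in $T_{\tilde X(z,t)}M_t$. By the orthogonality recorded in \S\ref{sec:2-2}, $G(z)(z,U)=0$ for every $U\in T_{\tilde X(z,t)}M_t$, whereas $G(z)(z,z)=1$ because $z\in\Sigma$. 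Combining these facts, the tangential reparametrization term drops out and
\begin{equation*}
\partial_t s=\phi(t)-E_k^{\alpha/k}(\kappa)=-F_*(\tau_{ij})^{-\alpha}+\phi(t),
\end{equation*}
which is \eqref{flow-gauss}. The converse direction is immediate: given a smooth $s$ on $\Sigma$ solving \eqref{flow-gauss} with $\tau_{ij}[s]>0$, the hypersurface reconstructed from $s$ has its normal velocity at each point equal to $\eta=\phi(t)-E_k^{\alpha/k}$, so it solves \eqref{flow-VMCF} up to a tangential diffeomorphism. No genuine obstacle is anticipated; the entire substance of the lemma is the vanishing $G(z)(z,U)=0$ on $T_{\tilde X}M_t$, which encodes the fact that tangential reparametrizations do not see the support function.
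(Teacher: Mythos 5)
Your argument is correct and is essentially identical to the paper's proof: both pull back via the time-dependent anisotropic Gauss map to get $\bar X(z,t)=X(\nu_\gamma^{-1}(z,t),t)$, differentiate $s(z,t)=G(z)(z,\bar X(z,t))$ in $t$, discard the tangential reparametrization term using $G(z)(z,U)=0$ for $U\in TM_t$, and identify $E_k^{\alpha/k}(\kappa)$ with $F_*(\tau_{ij})^{-\alpha}$ through $\kappa_i=\tau_i^{-1}$. The only cosmetic difference is that you state the orthogonality mechanism slightly more explicitly than the paper does.
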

\noindent In fact, suppose that $X(\cdot,t):M\to \mathbb{R}^{n+1}$ is a family of smooth embeddings satisfying \eqref{flow-VMCF} and each $M_t$ is strictly convex for $t\in [0,T)$. We can reparametrize $M_t$ using the anisotropic Gauss map. Define $\bar{X}(\cdot,t):\Sigma\to \mathbb{R}^{n+1}$ by
\begin{equation*}
  \bar{X}(z,t)=X(\nu_{\gamma}^{-1}(z,t),t)
\end{equation*}
for each $t>0$. Then
\begin{align*}
  \frac{\partial}{\partial t}\bar{X}(z,t) =& \frac{\partial X}{\partial x^i}\frac{\partial (\nu_{\gamma}^{-1})^i}{\partial t}+\frac{\partial}{\partial t}{X}(\nu_{\gamma}^{-1}(z,t),t),
\end{align*}
where $x^i$, $i=1,\dots,n$, denote the local coordinates on $M$. By the definition of the anisotropic support function, it follows from the above equation that
\begin{align}
  \frac{\partial}{\partial t}s(z,t) =& G(z)(z,\frac{\partial}{\partial t}\bar{X}(z,t))\nonumber\\
  =&G(z)(z,\frac{\partial}{\partial t}{X}(\nu_{\gamma}^{-1}(z,t),t))\nonumber\\
  =&\phi(t)-E_k^{\alpha/k}(\kappa),\label{s8:st-2}
\end{align}
where we used the fact that $ \partial X/\partial x^i$ is tangential to $M_t$. Since $\kappa=(\kappa_1,\dots,\kappa_n)$ are the reciprocal of the eigenvalues of $\tau_{ij}$, the last term of \eqref{s8:st-2} can be expressed as $F_*^{-\alpha}(\tau_{ij})$ and we obtain the equation \eqref{flow-gauss}. The converse is also true: any smooth solution $s(z,t)$ to \eqref{flow-gauss} defines a smooth solution $M_t$ to \eqref{flow-VMCF} whose anisotropic support functions are given by $s(z,t)$.

Applying the implicit function theorem to \eqref{flow-gauss}, we have the existence of a smooth solution of \eqref{flow-VMCF} for a short time for any smooth, strictly convex initial hypersurface. In the rest of this section, we derive the evolution equations of $s, |\bar{\nabla}s|^2$ and $\tau_{ij}[s]$ along the flow \eqref{flow-gauss}.
\begin{lem}
The evolution equation \eqref{flow-gauss} for the anisotropic support function $s$ has the following equivalent form:
\begin{align}\label{s4:s-1}
 \frac{\partial }{\partial t}s- &\dot{\Psi}^{k\ell}\left(\bar{\nabla}_k\bar{\nabla}_\ell s-\frac 12Q_{k\ell p}\bar{\nabla}_ps\right) ~=~\phi(t)+(1+\alpha)\Psi+s\dot{\Psi}^{k\ell}\bar{g}_{k\ell}.
\end{align}
\end{lem}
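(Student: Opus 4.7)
My plan is to derive \eqref{s4:s-1} directly from the flow equation \eqref{flow-gauss} combined with the explicit formula \eqref{s4:tau-def} for $\tau_{ij}$, invoking the homogeneity of $F_*$ via Euler's identity. No new geometric input is required; the identity is an algebraic rearrangement.

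First, I would contract $\dot\Psi^{k\ell}$ with the defining expression \eqref{s4:tau-def} for $\tau_{k\ell}$ to obtain
\begin{equation*}
\dot\Psi^{k\ell}\tau_{k\ell}~=~\dot\Psi^{k\ell}\bigl(\bar\nabla_k\bar\nabla_\ell s-\tfrac12 Q_{k\ell p}\bar\nabla_p s\bigr)+s\,\dot\Psi^{k\ell}\bar g_{k\ell},
\end{equation*}
so that
\begin{equation*}
\dot\Psi^{k\ell}\bigl(\bar\nabla_k\bar\nabla_\ell s-\tfrac12 Q_{k\ell p}\bar\nabla_p s\bigr)~=~\dot\Psi^{k\ell}\tau_{k\ell}-s\,\dot\Psi^{k\ell}\bar g_{k\ell}.
\end{equation*}

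Next, the key observation: since $F=E_k^{1/k}$ is homogeneous of degree one in the anisotropic principal curvatures, its dual $F_*$ is also homogeneous of degree one in its arguments $\tau=(\tau_1,\dots,\tau_n)$, and therefore $\Psi=-F_*^{-\alpha}$ is homogeneous of degree $-\alpha$ in $\tau_{ij}$. Euler's identity for homogeneous symmetric functions of a matrix variable, applied to $\Psi$, yields
\begin{equation*}
\dot\Psi^{k\ell}\tau_{k\ell}~=~-\alpha\,\Psi.
\end{equation*}
Substituting this into the previous display gives
\begin{equation*}
\dot\Psi^{k\ell}\bigl(\bar\nabla_k\bar\nabla_\ell s-\tfrac12 Q_{k\ell p}\bar\nabla_p s\bigr)~=~-\alpha\,\Psi-s\,\dot\Psi^{k\ell}\bar g_{k\ell}.
\end{equation*}

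Finally, subtracting this identity from \eqref{flow-gauss}, i.e.\ from $\partial_t s=\Psi+\phi(t)$, produces
\begin{equation*}
\frac{\partial s}{\partial t}-\dot\Psi^{k\ell}\bigl(\bar\nabla_k\bar\nabla_\ell s-\tfrac12 Q_{k\ell p}\bar\nabla_p s\bigr)~=~\phi(t)+(1+\alpha)\,\Psi+s\,\dot\Psi^{k\ell}\bar g_{k\ell},
\end{equation*}
which is precisely \eqref{s4:s-1}. There is no genuine obstacle in this argument; the only subtlety worth flagging is the correct degree of homogeneity of $\Psi$ (which is $-\alpha$, not $\alpha$, because of the negative exponent on $F_*$), since a sign error here would feed into the coefficient $(1+\alpha)$ on the right-hand side.
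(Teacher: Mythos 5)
Your proof is correct and follows essentially the same route as the paper: both apply Euler's identity to the degree-$(-\alpha)$ homogeneous function $\Psi$ to write $\dot\Psi^{k\ell}\tau_{k\ell}=-\alpha\Psi$, then substitute the definition of $\tau_{k\ell}$ and rearrange against the flow equation. The remark about verifying the degree of homogeneity of $F_*$ (and hence of $\Psi$) is a reasonable sanity check but does not constitute a new idea.
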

\proof
Since $\Psi$ is homogeneous of degree $-\alpha$ with respect to $\tau_{ij}$, we have
\begin{equation*}
 -\alpha \Psi= \dot{\Psi}^{k\ell}\tau_{k\ell}= \dot{\Psi}^{k\ell}\left(\bar{\nabla}_k\bar{\nabla}_\ell s+\bar{g}_{k\ell}s-\frac 12Q_{k\ell p}\bar{\nabla}_ps\right).
\end{equation*}
This implies that
\begin{align*}
 \frac{\partial }{\partial t}s- &\dot{\Psi}^{k\ell}\left(\bar{\nabla}_k\bar{\nabla}_\ell s-\frac 12Q_{k\ell p}\bar{\nabla}_ps\right)  =  \Psi+\phi(t)+\alpha\Psi+s\dot{\Psi}^{k\ell}\bar{g}_{k\ell}.
\end{align*}
\endproof

\begin{lem}
Under the flow \eqref{flow-gauss}, the squared norm $|\bar{\nabla}s|^2$ of the gradient of the anisotropic support function $s$ evolves according to
\begin{align}\label{s8:ds-0}
 & \frac{\partial }{\partial t} |\bar{\nabla}s|^2 -\dot{\Psi}^{k\ell}\left(\bar{\nabla}_k\bar{\nabla}_\ell|\bar{\nabla}s|^2 -\frac 12Q_{k\ell p}\bar{\nabla}_p|\bar{\nabla}s|^2\right)\nonumber\\
 = & ~ \dot{\Psi}^{k\ell}\biggl(-2\tau_{ik}\tau_{i\ell}-2s^2\bar{g}_{k\ell}+4s\tau_{k\ell}-2\tau_{ik}Q_{i\ell p}s_p+2sQ_{k\ell p}s_p\biggr)\nonumber\\
 &\quad +\dot{\Psi}^{k\ell}\biggl(2s_ks_\ell -\frac 12(2Q_{i\ell m}Q_{mkp}-Q_{ipm}Q_{mk\ell})s_is_p-\bar{\nabla}_iQ_{k\ell p}s_ps_i\biggr).
\end{align}
\end{lem}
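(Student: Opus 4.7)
I would tackle this lemma by first computing each piece of the left-hand side of \eqref{s8:ds-0} and then simplifying using the Ricci identity, the definition \eqref{s4:tau-def} of $\tau_{ij}[s]$, and the Gauss equation \eqref{s2:gauss-2} to reduce it to the stated right-hand side.

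From \eqref{flow-gauss} the time derivative yields $\partial_t|\bar\nabla s|^2 = 2\bar g^{ij}s_i \bar\nabla_j(\partial_t s) = 2s^i\dot\Psi^{k\ell}\bar\nabla_i\tau_{k\ell}$. Two spatial covariant derivatives give $\dot\Psi^{k\ell}\bar\nabla_k\bar\nabla_\ell |\bar\nabla s|^2 = 2\dot\Psi^{k\ell}s_{ik}s_{i\ell} + 2\dot\Psi^{k\ell}s^i \bar\nabla_k s_{i\ell}$, and the first-order drift contributes $-\frac{1}{2}\dot\Psi^{k\ell}Q_{k\ell p}\bar\nabla_p|\bar\nabla s|^2 = -\dot\Psi^{k\ell}Q_{k\ell}{}^p s^i s_{ip}$. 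I would then invoke the Ricci identity $\bar\nabla_k s_{i\ell} = \bar\nabla_i s_{k\ell} + \bar R_{ki\ell m}s^m$ (in the sign convention consistent with \eqref{s2:gauss-2}) and substitute $s_{k\ell} = \tau_{k\ell} - s\bar g_{k\ell} + \frac{1}{2}Q_{k\ell m}s^m$ into $\bar\nabla_i s_{k\ell}$ to obtain $\bar\nabla_i s_{k\ell} = \bar\nabla_i \tau_{k\ell} - s_i \bar g_{k\ell} + \frac{1}{2}(\bar\nabla_i Q_{k\ell m})s^m + \frac{1}{2}Q_{k\ell}{}^n s_{in}$. Two cancellations then occur in the left-hand side of \eqref{s8:ds-0}: the $\bar\nabla_i\tau_{k\ell}$ piece is absorbed by $\partial_t |\bar\nabla s|^2$, and the $\frac{1}{2}Q_{k\ell}{}^n s_{in}$ piece exactly cancels the drift term.

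What remains to produce is the curvature contribution $-2\dot\Psi^{k\ell}s^i s^m \bar R_{ki\ell m}$, together with $-2\dot\Psi^{k\ell}s_{ik}s_{i\ell}$, a residual $+2|\bar\nabla s|^2\dot\Psi^{k\ell}\bar g_{k\ell}$ coming from the $-s_i\bar g_{k\ell}$ piece of the substitution, and the already matched $-\dot\Psi^{k\ell}s^i s^m \bar\nabla_i Q_{k\ell m}$. Applying \eqref{s2:gauss-2} to $\bar R_{ki\ell m}$ gives a $\bar g$-quadratic contribution $-2|\bar\nabla s|^2 \dot\Psi^{k\ell}\bar g_{k\ell} + 2\dot\Psi^{k\ell}s_k s_\ell$, where the trace piece cancels the residual $\bar g$-trace above and $+2\dot\Psi^{k\ell}s_k s_\ell$ matches the target. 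Simultaneously, expanding $-2\dot\Psi^{k\ell}s_{ik}s_{i\ell}$ via $s_{ij} = \tau_{ij} - s\bar g_{ij} + \frac{1}{2}Q_{ijm}s^m$ yields precisely the terms $-2\tau_{ik}\tau_{i\ell} + 4s\tau_{k\ell} - 2s^2\bar g_{k\ell} - 2\tau_{ik}Q_{i\ell p}s^p + 2sQ_{k\ell p}s^p$ appearing in the right-hand side of \eqref{s8:ds-0}, plus a residual quadratic-$Q$ piece $-\frac{1}{2}\dot\Psi^{k\ell}Q_{ikm}Q_{i\ell n}s^m s^n$.

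The hard part is the final algebraic reorganization of the three quadratic-$Q$ contributions — one from the square term and two from the Gauss equation — into the stated bracket $-\frac{1}{2}(2Q_{i\ell m}Q_{mkp} - Q_{ipm}Q_{mk\ell})s^i s^p$. This is pure index bookkeeping that exploits two facts: the total symmetry of $Q_{ijk}$ and the symmetry of $\dot\Psi^{k\ell}$ in its two indices. Concretely, $\dot\Psi^{k\ell}Q_{ikm}Q_{i\ell n}s^m s^n$ and $\dot\Psi^{k\ell}Q_{kmp}Q_{pi\ell}s^i s^m$ both reduce, after appropriate relabeling and use of $\dot\Psi$-symmetry, to $\dot\Psi^{k\ell}Q_{i\ell m}Q_{mkp}s^i s^p$, while $-\dot\Psi^{k\ell}Q_{k\ell p}Q_{imp}s^i s^m$ equals $-\dot\Psi^{k\ell}Q_{ipm}Q_{mk\ell}s^i s^p$. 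Summing produces exactly the claimed bracket and completes the verification of \eqref{s8:ds-0}.
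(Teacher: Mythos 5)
Your proposal is correct and follows essentially the same route as the paper: compute $\partial_t|\bar\nabla s|^2 = 2s^i\dot\Psi^{k\ell}\bar\nabla_i\tau_{k\ell}$, expand $\dot\Psi^{k\ell}\bar\nabla_k\bar\nabla_\ell|\bar\nabla s|^2$ via the Ricci identity and the Gauss equation \eqref{s2:gauss-2}, observe that the $\bar\nabla_i\tau_{k\ell}$ and drift terms cancel, and finally substitute $s_{ij}=\tau_{ij}-s\bar g_{ij}+\tfrac12 Q_{ijm}s^m$ into the surviving $-2\dot\Psi^{k\ell}s_{ik}s_{i\ell}$ and reassemble the quadratic-$Q$ terms using the total symmetry of $Q$ and the symmetry of $\dot\Psi^{k\ell}$. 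The only cosmetic difference is that the paper expands $\bar\nabla_i\tau_{k\ell}$ to display the third derivative explicitly before cancelling, while you keep it as a bookmark and absorb it directly.
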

\proof
The evolution of $|\bar{\nabla}s|^2$ can be computed as follows:
\begin{align*}
  \frac{\partial }{\partial t} |\bar{\nabla}s|^2=&~ 2s_i\bar{\nabla}_i(\partial_ts) =  2s_i\bar{\nabla}_i(\Psi+\phi(t))=~2s_i\dot{\Psi}^{k\ell}\bar{\nabla}_i\tau_{k\ell}\\
  =&~2s_i\dot{\Psi}^{k\ell}\biggl(\bar{\nabla}_i\bar{\nabla}_k\bar{\nabla}_\ell s+\bar{\nabla}_is\bar{g}_{k\ell}-\frac 12Q_{k\ell p}\bar{\nabla}_i\bar{\nabla}_ps-\frac 12\bar{\nabla}_iQ_{k\ell p}\bar{\nabla}_ps\biggr).
\end{align*}
By the Ricci identity and \eqref{s2:gauss-2}, we get
\begin{align*}
  \bar{\nabla}_k\bar{\nabla}_\ell|\bar{\nabla}s|^2 =& 2s_{ik}s_{i\ell}+2s_i\bar{\nabla}_k\bar{\nabla}_\ell\bar{\nabla}_is \\
  =&2s_{ik}s_{i\ell}+2s_i(\bar{\nabla}_i\bar{\nabla}_k\bar{\nabla}_\ell s+\bar{R}_{ki\ell p}\bar{\nabla}_ps) \\
  =& 2s_{ik}s_{i\ell}+2s_i\bar{\nabla}_i\bar{\nabla}_k\bar{\nabla}_\ell s+2|\bar{\nabla}s|^2\bar{g}_{k\ell}-2s_ks_\ell +\frac 12(Q_{i\ell m}Q_{mkp}-Q_{ipm}Q_{mk\ell})s_is_p.
\end{align*}
Combining the above two equations yields
\begin{align*}
 & \frac{\partial }{\partial t} |\bar{\nabla}s|^2 -\dot{\Psi}^{k\ell}\left(\bar{\nabla}_k\bar{\nabla}_\ell|\bar{\nabla}s|^2 -\frac 12Q_{k\ell p}\bar{\nabla}_p|\bar{\nabla}s|^2\right)\\
 =&~\dot{\Psi}^{k\ell}\biggl(-2s_{ik}s_{i\ell}+2s_ks_\ell -\frac 12(Q_{i\ell m}Q_{mkp}-Q_{ipm}Q_{mk\ell})s_is_p-\bar{\nabla}_iQ_{k\ell p}s_ps_i\biggr).
\end{align*}
Rewriting $s_{ik}, s_{i\ell}$ using \eqref{s4:tau-def} gives the equation \eqref{s8:ds-0}.
\endproof

\begin{lem}
Under the flow \eqref{flow-gauss}, the speed function $\Psi$ evolves according to
\begin{equation}\label{s4:flow-Psi}
  \frac{\partial}{\partial t}\Psi=\dot{\Psi}^{k\ell}\left(\bar{\nabla}_k\bar{\nabla}_\ell\Psi-\frac 12Q_{k\ell p}\bar{\nabla}_p\Psi\right)+(\Psi+\phi(t))\dot{\Psi}^{k\ell}\bar{g}_{k\ell}.
\end{equation}
The tensor $\tau_{ij}=\tau_{ij}[s]$ evolves by
\begin{align}\label{s4:flow-tau2}
  \frac{\partial}{\partial t}\tau_{ij}=&\dot{\Psi}^{k\ell}\left(\bar{\nabla}_k\bar{\nabla}_\ell\tau_{ij}-\frac 12Q_{k\ell p}\bar{\nabla}_p\tau_{ij}\right)+\ddot{\Psi}^{k\ell,pq}\bar{\nabla}_i\tau_{k\ell}\bar{\nabla}_j\tau_{pq}\nonumber\\
  &+((1-\alpha)\Psi+\phi(t))\bar{g}_{ij}-\dot{\Psi}^{k\ell}\bar{g}_{k\ell}\tau_{ij}\nonumber\\
  & +\frac 12\dot{\Psi}^{k\ell} \left(Q_{jkq}Q_{ipq}-Q_{kpq}Q_{qij}\right)\tau_{p\ell}\nonumber\\
  &+\frac 12\dot{\Psi}^{k\ell} \left(Q_{k\ell q}Q_{ipq}-Q_{kpq}Q_{qi\ell}\right)\tau_{pj}\nonumber\\
&+\dot{\Psi}^{k\ell} \bar{\nabla}_pQ_{ijk}\tau_{p\ell}-\dot{\Psi}^{k\ell} \bar{\nabla}_pQ_{ik\ell}\tau_{pj}.
\end{align}
\end{lem}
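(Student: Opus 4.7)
Both evolutions come from differentiating the definition
$\tau_{ij}=\bar\nabla_i\bar\nabla_j s+\bar g_{ij}s-\tfrac12 Q_{ijp}\bar\nabla_p s$
in time, using $\partial_t s=\Psi+\phi(t)$, and then repeatedly applying the Simons-type identity \eqref{s4:simon} to put spatial derivatives in the ``right'' order so that the operator $\dot\Psi^{k\ell}(\bar\nabla_k\bar\nabla_\ell-\tfrac12 Q_{k\ell p}\bar\nabla_p)$ appears on the right-hand side.

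\textbf{Step 1: evolution of $\tau_{ij}$ in terms of $\Psi$.} Since $\phi(t)$ is spatially constant and the connection and $Q$ are time-independent (they live on the fixed Wulff shape $\Sigma$), differentiating the defining formula for $\tau_{ij}$ gives
\begin{equation*}
\partial_t\tau_{ij}=\bar\nabla_i\bar\nabla_j\Psi+\bar g_{ij}(\Psi+\phi)-\tfrac12 Q_{ijp}\bar\nabla_p\Psi.
\end{equation*}
Contracting with $\dot\Psi^{ij}$ and using the homogeneity relation $\dot\Psi^{ij}\tau_{ij}=-\alpha\Psi$ (so $\dot\Psi^{ij}\bar\nabla_i\bar\nabla_j\Psi$ and $\dot\Psi^{ij}Q_{ijp}\bar\nabla_p\Psi$ combine into the stated divergence-type operator acting on $\Psi$) yields \eqref{s4:flow-Psi} immediately.

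\textbf{Step 2: converting $\bar\nabla_i\bar\nabla_j\Psi$ into the linearized parabolic operator on $\tau_{ij}$.} Using the chain rule twice,
\begin{equation*}
\bar\nabla_i\bar\nabla_j\Psi=\ddot\Psi^{k\ell,pq}\bar\nabla_i\tau_{k\ell}\,\bar\nabla_j\tau_{pq}+\dot\Psi^{k\ell}\bar\nabla_i\bar\nabla_j\tau_{k\ell}.
\end{equation*}
Now apply the Simons-type identity \eqref{s4:simon} to rewrite $\bar\nabla_i\bar\nabla_j\tau_{k\ell}$ as $\bar\nabla_k\bar\nabla_\ell\tau_{ij}$ plus the $\tfrac12 Q_{ijp}\bar\nabla_p\tau_{k\ell}-\tfrac12 Q_{k\ell p}\bar\nabla_p\tau_{ij}$ commutator terms plus the metric/$Q\ast Q$/$\bar\nabla Q$ zeroth- and first-order corrections listed there. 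Substituting into Step 1 and contracting with $\dot\Psi^{k\ell}$, the term $\dot\Psi^{k\ell}\bar\nabla_k\bar\nabla_\ell\tau_{ij}-\tfrac12\dot\Psi^{k\ell}Q_{k\ell p}\bar\nabla_p\tau_{ij}$ is exactly the parabolic operator on $\tau_{ij}$, while the extra $\tfrac12 Q_{ijp}\dot\Psi^{k\ell}\bar\nabla_p\tau_{k\ell}=\tfrac12 Q_{ijp}\bar\nabla_p\Psi$ exactly cancels the $-\tfrac12 Q_{ijp}\bar\nabla_p\Psi$ already present.

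\textbf{Step 3: collecting the zeroth-order terms.} The metric corrections $\bar g_{ij}\tau_{k\ell}-\bar g_{kj}\tau_{i\ell}+\bar g_{i\ell}\tau_{kj}-\bar g_{k\ell}\tau_{ij}$ from \eqref{s4:simon}, after contraction with $\dot\Psi^{k\ell}$ and use of $\dot\Psi^{k\ell}\tau_{k\ell}=-\alpha\Psi$, combine with $\bar g_{ij}(\Psi+\phi)$ from Step 1 to give the coefficient $(1-\alpha)\Psi+\phi(t)$ in front of $\bar g_{ij}$ and the term $-\dot\Psi^{k\ell}\bar g_{k\ell}\tau_{ij}$; note that the two $Q_{jkq}Q_{ipq}$-type blocks in \eqref{s4:simon} (the pair with indices $(j,i)$ and $(k,i)$) reduce to the two displayed $Q\ast Q$ groups in \eqref{s4:flow-tau2} after using the symmetry of $\dot\Psi^{k\ell}$ in $k,\ell$, and the four $\bar\nabla_pQ$ terms similarly collapse to the two displayed $\bar\nabla_p Q$ terms.

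\textbf{Main obstacle.} Conceptually the derivation is straightforward, but the real work is the bookkeeping in Step 3: the Simons identity \eqref{s4:simon} is symmetric in $(i,j)$ but only partly symmetric in $(k,\ell)$, so one must use the $(k,\ell)$-symmetry of $\dot\Psi^{k\ell}$ and the total symmetry of $Q$ (together with $\bar\nabla_\ell Q_{ijk}$ being totally symmetric in all four indices, as recalled after \eqref{s2:gauss-2}) to merge eight lower-order correction terms into the four written in \eqref{s4:flow-tau2}. No new identity beyond those already stated in the excerpt is needed.
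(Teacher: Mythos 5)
Your proof is correct and follows essentially the same path as the paper: differentiate the defining formula for $\tau_{ij}$ in time using $\partial_t s=\Psi+\phi(t)$ and contract with $\dot\Psi^{ij}$ to obtain \eqref{s4:flow-Psi}, then for \eqref{s4:flow-tau2} expand $\bar\nabla_i\bar\nabla_j\Psi$ by the chain rule and convert $\dot\Psi^{k\ell}\bar\nabla_i\bar\nabla_j\tau_{k\ell}$ via the Simons identity \eqref{s4:simon}. Two small corrections to your bookkeeping: in Step 1 the homogeneity relation $\dot\Psi^{ij}\tau_{ij}=-\alpha\Psi$ is not needed to get \eqref{s4:flow-Psi} (the two terms regroup trivially), it only enters in Step 3 to produce $-\alpha\Psi\bar g_{ij}$; and in Step 3 the cancellation of the cross terms $-\dot\Psi^{k\ell}\bar g_{kj}\tau_{i\ell}+\dot\Psi^{k\ell}\bar g_{i\ell}\tau_{kj}$, which you use implicitly, rests on the fact that $\dot\Psi^{k\ell}$ and $\tau_{ij}$ commute as matrices (being simultaneously diagonalizable), a point worth stating explicitly.
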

\proof
(i) The evolution of $\Psi$ follows from the equation \eqref{flow-gauss} and the definition \eqref{s4:tau-def} of $\tau$:
\begin{align*}
  \frac{\partial}{\partial t}\Psi =& \dot{\Psi}^{k\ell}\frac{\partial}{\partial t}\tau_{k\ell} \\
  = & \dot{\Psi}^{k\ell}\left(\bar{\nabla}_k\bar{\nabla}_\ell\frac{\partial}{\partial t}s+\bar{g}_{k\ell}\frac{\partial}{\partial t}s-\frac 12Q_{k\ell p}\bar{\nabla}_p\frac{\partial}{\partial t}s\right)\\
  =&\dot{\Psi}^{k\ell}\left(\bar{\nabla}_k\bar{\nabla}_\ell\Psi-\frac 12Q_{k\ell p}\bar{\nabla}_p\Psi\right)+(\Psi+\phi(t))\dot{\Psi}^{k\ell}\bar{g}_{k\ell}.
\end{align*}
(ii) Again by \eqref{s4:tau-def} and \eqref{flow-gauss}, we get
\begin{align}\label{s4:flow-tau1}
  \frac{\partial}{\partial t}\tau_{ij}=&\bar{\nabla}_i\bar{\nabla}_j \frac{\partial}{\partial t}s+\bar{g}_{ij} \frac{\partial}{\partial t}s-\frac 12Q_{ijp}\bar{\nabla}_p \frac{\partial}{\partial t}s\nonumber\\
  =&\bar{\nabla}_i\bar{\nabla}_j\Psi+\bar{g}_{ij}(\Psi+\phi(t))-\frac 12Q_{ijp}\bar{\nabla}_p\Psi\nonumber\\
=& \dot{\Psi}^{k\ell} \bar{\nabla}_i\bar{\nabla}_j\tau_{k\ell}+\ddot{\Psi}^{k\ell,pq}\bar{\nabla}_i\tau_{k\ell}\bar{\nabla}_j\tau_{pq} +\bar{g}_{ij}(\Psi+\phi(t))-\frac 12Q_{ijp}\bar{\nabla}_p\Psi.
\end{align}
Applying the Simons' type equation \eqref{s4:simon}, we can rewrite the first term on the right-hand side of \eqref{s4:flow-tau1} as
\begin{align}\label{s4:tau-3}
 \dot{\Psi}^{k\ell} \bar{\nabla}_i\bar{\nabla}_j\tau_{k\ell} = & \dot{\Psi}^{k\ell} \bar{\nabla}_k\bar{\nabla}_\ell\tau_{ij}+\frac 12Q_{ijp}\bar{\nabla}_p\Psi-\frac 12\dot{\Psi}^{k\ell}Q_{k\ell p}\bar{\nabla}_p\tau_{ij}-\alpha\Psi\bar{g}_{ij}-\dot{\Psi}^{k\ell}\bar{g}_{k\ell}\tau_{ij} \nonumber\\
& +\frac 12\dot{\Psi}^{k\ell} \left(Q_{jkq}Q_{ipq}-Q_{kpq}Q_{qij}\right)\tau_{p\ell}+\frac 12\dot{\Psi}^{k\ell} \left(Q_{k\ell q}Q_{ipq}-Q_{kpq}Q_{qi\ell}\right)\tau_{pj}\nonumber\\
&+\dot{\Psi}^{k\ell} \bar{\nabla}_pQ_{ijk}\tau_{p\ell}-\dot{\Psi}^{k\ell} \bar{\nabla}_pQ_{ik\ell}\tau_{pj}.
\end{align}
The equation \eqref{s4:flow-tau2} follows by substituting \eqref{s4:tau-3} into \eqref{s4:flow-tau1}.
\endproof

\section{Long-time existence}\label{sec:LTE}

In this section, we prove that the flow \eqref{flow-VMCF} exists for all positive time $t\in [0,\infty)$. Let $M_t$ be a strictly convex solution to the flow \eqref{flow-VMCF} on some time interval $[0,T)$. We first observe that in light of the estimate \eqref{s5:radius} on the anisotropic inner radius and outer radius of $M_t$, for each time we can find an origin $p_t$ such that the anisotropic support function $s(z,t)$ of $M_t$ with respect to $p_t$ satisfies
\begin{equation}\label{s9:s}
R_1\leq s(z,t)\leq R_2,
\end{equation}
where $R_1,R_2$ are positive constants depending only on $n,k,{\gamma}, \mathrm{Vol}(K_0)$ and $V_{n+1-k}(K_0,W)$.  The $C^0$ estimate \eqref{s9:s} of $s$ together with the strict convexity of $M_t$ also implies a bound on the gradient of the anisotropic support function $s$
\begin{equation}\label{s9:ds-1}
  |\bar{\nabla}s|^2(z,t)\leq C(R_2-R_1)
\end{equation}
for all $z\in\Sigma$.   The $C^1$ estimate \eqref{s9:ds-1} in terms of the $C^0$ estimate does not depend on the flow and holds for any strictly convex hypersurface.
\begin{lem}\label{s9.lemC1}
Assume that $M$ is a strictly convex hypersurface in $\mathbb{R}^{n+1}$ with the anisotropic support function $s(z)$ with respect to some point $o$ satisfying
\begin{equation}\label{s9.lemC1-s}
  R_1\leq s(z)\leq R_2,\quad \forall~z\in \Sigma
\end{equation}
for some positive constants $R_1,R_2$. Then the gradient of $s$ satisfies
\begin{equation}\label{s9.lemC1-ds}
  |\bar{\nabla}s|^2(z)\leq C(R_2-R_1)
\end{equation}
for all $z\in \Sigma$.
\end{lem}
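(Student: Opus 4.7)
The plan is to combine the algebraic identity
\[
G(z_0)(X(z_0), X(z_0)) = s(z_0)^2 + |\bar\nabla s|^2(z_0),
\]
valid for any $z_0 \in \Sigma$, with a Taylor comparison using the uniform convexity of $(\gamma^0)^2$. The identity follows by decomposing $X(z_0) = s(z_0)\, z_0 + \bar\nabla_i s(z_0)\, e_i$ in a $\bar g$-orthonormal frame at $z_0$ and using $G(z_0)(z_0,z_0) = 1$, $G(z_0)(z_0, e_i) = 0$. Writing $s_0 := s(z_0)$, $X_0 := X(z_0)$, and $V := X_0 - s_0 z_0 \in T_{z_0}\Sigma$ (so $|V|^2_{G(z_0)} = |\bar\nabla s|^2(z_0)$), the task reduces to bounding $G(z_0)(X_0,X_0) - s_0^2$ by a multiple of $R_2 - R_1$. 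The hypothesis $R_1 \le s \le R_2$ on $\Sigma$ is equivalent to the inclusion $R_1 W \subset K \subset R_2 W$, and in particular $\gamma^0(X_0) \le R_2$.

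Consider the segment $p(t) := s_0 z_0 + tV$, $t \in [0,1]$, and set $f(t) := (\gamma^0)^2(p(t))$. A direct computation using $D(\gamma^0)^2(y) = 2\gamma^0(y)\, D\gamma^0(y)$ and the $0$-homogeneity of $D\gamma^0$ gives $D(\gamma^0)^2(s_0 z_0) = 2 s_0\, D\gamma^0(z_0)$; since $D\gamma^0(z_0)$ is Euclidean-normal to $\Sigma$ at $z_0$ whereas $V$ is tangent, one obtains $f'(0) = 0$. With $f''(t) = 2\, G(p(t))(V,V)$, Taylor's formula with integral remainder yields
\[
(\gamma^0)^2(X_0) = s_0^2 + 2 \int_0^1 (1-t)\, G(p(t))(V,V)\, dt,
\]
and combining with $(\gamma^0)^2(X_0) \le R_2^2$ together with $s_0 \ge R_1$ gives
\[
2 \int_0^1 (1-t)\, G(p(t))(V,V)\, dt \;\le\; R_2^2 - R_1^2 \;=\; (R_1 + R_2)(R_2 - R_1).
\]

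The remaining step, which is the main technical point, is to replace $G(p(t))(V,V)$ in the integrand by a uniform multiple of $G(z_0)(V,V) = |\bar\nabla s|^2(z_0)$. Because $G(z_0)(z_0, V) = 0$, every $p(t)$ lies on the anisotropic supporting hyperplane $H := \{y : G(z_0)(z_0,y) = s_0\}$ of $K$ at $X_0$, whose Euclidean distance from the origin equals $s_0/|D\gamma^0(z_0)| \ge R_1/\max_{\Sigma}|D\gamma^0|$; combined with $|p(t)| \le \max(|s_0 z_0|, |X_0|)$, the segment stays in a compact annulus $A \subset \R^{n+1}\setminus\{0\}$ whose inner and outer radii depend only on $R_1, R_2$ and $\gamma$. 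Since $G$ is smooth and uniformly positive definite on $\R^{n+1}\setminus\{0\}$, its restriction to the compact $A$ is two-sided comparable to $G(z_0)$, producing a constant $c = c(R_1, R_2, \gamma) > 0$ with $G(p(t))(V,V) \ge c\, G(z_0)(V,V)$ for all $t \in [0,1]$. Plugging this together with $\int_0^1 (1-t)\,dt = 1/2$ into the displayed inequality gives $c\, |\bar\nabla s|^2(z_0) \le (R_1 + R_2)(R_2 - R_1)$, which is exactly \eqref{s9.lemC1-ds} with $C = (R_1+R_2)/c$.
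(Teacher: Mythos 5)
Your proof is correct, and it takes a genuinely different route from the one the paper actually writes out. The paper states in one sentence that the estimate ``follows directly'' from the support-function identities \eqref{s8:s-def}, \eqref{s7:s-d1} --- which is precisely the route you flesh out --- but then spells out an alternative maximum-principle argument instead: they set $\omega = |\bar\nabla s|^2/(R_2+\delta-s)$, use $\bar\nabla_is\,\bar\nabla_i\omega=0$ at the interior maximum together with $\tau_{ij}\ge 0$ to bound $\omega$ by $2s+C|\bar\nabla s|$, split into cases, and send $\delta\to 0$. Your argument by contrast is purely pointwise: you exploit that the position vector decomposes in the $G(z_0)$-orthogonal splitting as $X_0 = s_0\,z_0 + \bar\nabla s$, giving $G(z_0)(X_0,X_0)=s_0^2+|\bar\nabla s|^2$; you then compare to $(\gamma^0)^2(X_0)\le R_2^2$ via Taylor along the tangential segment $p(t)=s_0z_0+tV$, where the key observations are that $f'(0)=0$ (because $D\gamma^0(z_0)\perp V$ in the Euclidean sense) and $f''(t)=2G(p(t))(V,V)$, and finally you absorb the base-point change $G(p(t))\leadsto G(z_0)$ by noting that $p(t)$ stays in a fixed compact annulus (inner radius controlled by the distance $s_0/|D\gamma^0(z_0)|$ from the origin to the supporting hyperplane $\{\langle D\gamma^0(z_0),\cdot\rangle = s_0\}$ on which the whole segment lies, outer radius controlled by $K\subset R_2W$) where the $0$-homogeneous metric $G$ is uniformly two-sided comparable to the Euclidean one. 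Each step checks out, and the constant $C=(R_1+R_2)/c$ depends only on $R_1,R_2,\gamma$ as required. Your version avoids the maximum principle entirely and makes the isotropic intuition ($|X|^2 = s^2 + |\nabla s|^2$, so $|\nabla s|^2\le R_2^2-R_1^2$) precise in the anisotropic setting at the price of the metric-comparison lemma; the paper's version avoids the Taylor/annulus bookkeeping at the price of the auxiliary function and case split.
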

\proof
This estimate \eqref{s9.lemC1-ds} follows from the estimate \eqref{s9.lemC1-s} and the equations \eqref{s8:s-def} and \eqref{s7:s-d1} for strictly convex hypersurfaces directly. Alternatively,  we consider the function (cf. \cite[Proposition 4.3]{xia-2})
\begin{equation*}
  \omega(z)=\frac{|\bar{\nabla}s|^2}{R_2+\delta-s}
\end{equation*}
on $\Sigma$, where $\delta>0$ is an arbitrary constant. Suppose that the maximum of $\omega(z)$ is achieved at $z_0\in \Sigma$. Then at the point $z_0$, there holds
\begin{align}\label{s9:ds-0}
  0= & \sum_i\bar{\nabla}_is\bar{\nabla}_i\omega =\frac{2s_is_ks_{ki}}{R_2+\delta-s}+\frac{|\bar{\nabla}s|^4}{(R_2+\delta-s)^2}\nonumber\\
   =& \frac{2s_is_k\tau_{ki}}{R_2+\delta-s}-\frac{2s|\bar{\nabla}s|^2}{R_2+\delta-s}+\frac{Q_{kip}s_is_ks_p}{R_2+\delta-s}+\frac{|\bar{\nabla}s|^4}{(R_2+\delta-s)^2}\nonumber\\
   \geq &\frac{ |\bar{\nabla}s|^2}{(R_2+\delta-s)^2}\left(|\bar{\nabla}s|^2-(2s+C|\bar{\nabla}s|)(R_2+\delta-s)\right),
\end{align}
where $C$ is a constant depending on $Q$, the notation $s_i, s_{ki}$ denotes the covariant derivatives of $s$ on $\Sigma$ with respect to the metric $\bar{g}$, and we used \eqref{s4:tau-def} in the third equality. It follows from \eqref{s9:ds-0} that
\begin{equation*}
  \omega(z_0)=\frac{|\bar{\nabla}s|^2}{R_2+\delta-s}\leq 2s+C|\bar{\nabla}s|.
\end{equation*}
If $2s\geq C|\bar{\nabla}s|$ at $z_0$, then
\begin{equation*}
  \omega(z_0)\leq 4s\leq 4R_2.
\end{equation*}
If $2s\leq C|\bar{\nabla}s|$ at $z_0$, then
\begin{equation*}
\frac{|\bar{\nabla}s|}{R_2+\delta-s}\leq 2C,
\end{equation*}
and
\begin{equation*}
  \omega(z_0)\leq (2C)^2(R_2+\delta-s(z_0))\leq 4C^2(R_2+\delta-R_1).
\end{equation*}
Hence, there exists a constant $C=C(Q,R_1,R_2)$ such that $\omega(z)\leq C$ for all $z\in\Sigma$. This implies that
\begin{equation*}
  |\bar{\nabla}s|^2=(R_2+\delta-s(z))\omega(z)\leq C(R_2+\delta-R_1)
\end{equation*}
for all $z\in\Sigma$. Since $\delta>0$ is arbitrary, the estimate \eqref{s9.lemC1-ds} follows.
\endproof

The estimate \eqref{s9:s} depends on the origin $p_t$ we choose. The following lemma shows the existence of a ball with a fixed center enclosed by our flow hypersurfaces on a suitable fixed time interval.
\begin{lem}\label{s8:lem1}
For any time $t_0\in [0,T)$, let $p_0$ be the center such that the estimate \eqref{s9:s} holds on $M_{t_0}$. Then there exists some positive $\tau$ depending only on $n,k,{\gamma},\alpha, \mathrm{Vol}(K_0)$ and $V_{n+1-k}(K_0,W)$ such that the anisotropic support function $s(z,t)$ of $M_t$ with respect to the center $p_0$ satisfies
\begin{equation}\label{s9:s-1}
  s(z,t)\geq R_1/2,\qquad \forall~t\in [t_0,\min\{T,t_0+\tau\}).
\end{equation}
\end{lem}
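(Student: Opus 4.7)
The plan is to work in the anisotropic Gauss map parametrization based at the fixed point $p_0$ and to control the spatial minimum of the anisotropic support function by a one-dimensional ODE comparison. Under this parametrization \eqref{flow-gauss} reads
\begin{equation*}
\frac{\partial s}{\partial t}=\phi(t)-E_k^{\alpha/k}(\kappa),
\end{equation*}
where $\kappa_i=1/\tau_i$ are the reciprocals of the eigenvalues of $\tau_{ij}[s]$ from \eqref{s4:tau-def}. Since $E_k>0$ on each strictly convex $M_t$ we have $\phi(t)\geq 0$. Set $m(t):=\min_{z\in\Sigma}s(z,t)$; then $m(t_0)\geq R_1$ by \eqref{s9:s}, and $m$ is Lipschitz on the smooth existence interval because $\partial_t s$ is uniformly bounded there.

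The heart of the argument is a pointwise estimate at a spatial minimizer of $s(\cdot,t)$. If $z_t\in\Sigma$ realizes $m(t)$, then $\bar{\nabla}s(z_t,t)=0$ and $\bar{\nabla}^2 s(z_t,t)\geq 0$, so \eqref{s4:tau-def} gives
\begin{equation*}
\tau_{ij}(z_t,t)=\bar{\nabla}_i\bar{\nabla}_j s+s\,\bar{g}_{ij}\geq m(t)\,\bar{g}_{ij}.
\end{equation*}
Every eigenvalue therefore satisfies $\tau_i(z_t,t)\geq m(t)$, hence $\kappa_i(z_t,t)\leq m(t)^{-1}$ and $E_k^{\alpha/k}(\kappa)(z_t,t)\leq m(t)^{-\alpha}$. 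Combined with $\phi(t)\geq 0$ this yields $\partial_t s(z_t,t)\geq -m(t)^{-\alpha}$ at every spatial minimizer.

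Transferring this bound to a differential inequality for $m$ uses the standard envelope estimate: for $\sigma<t$ one has $m(\sigma)\leq s(z_t,\sigma)$ while $m(t)=s(z_t,t)$, so $(m(t)-m(\sigma))/(t-\sigma)\geq(s(z_t,t)-s(z_t,\sigma))/(t-\sigma)$, and letting $\sigma\to t^-$ gives $m'(t)\geq\partial_t s(z_t,t)\geq -m(t)^{-\alpha}$ at almost every $t$. Equivalently $\tfrac{d}{dt}(m^{\alpha+1}/(\alpha+1))\geq -1$, so integration from $t_0$ produces
\begin{equation*}
m(t)^{\alpha+1}\geq R_1^{\alpha+1}-(\alpha+1)(t-t_0).
\end{equation*}
Choosing $\tau:=R_1^{\alpha+1}\bigl(1-2^{-(\alpha+1)}\bigr)/(\alpha+1)$ forces $m(t)\geq R_1/2$ on $[t_0,\min\{T,t_0+\tau\})$; since $R_1$ depends only on $n,k,\gamma,\mathrm{Vol}(K_0),V_{n+1-k}(K_0,W)$ by Proposition~\ref{s5:prop-2}, $\tau$ has the claimed dependence. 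The only mildly subtle step is the Lipschitz-envelope one-sided derivative estimate, and crucially no pointwise upper bound on $\phi(t)$ or on the maximum of $E_k^{\alpha/k}$ over $M_t$ is needed: the obstruction term is automatically controlled by $m(t)^{-\alpha}$ at the minimizer through the convexity inequality for $\tau_{ij}$.
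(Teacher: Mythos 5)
Your proposal is correct and follows essentially the same route as the paper: evaluate at the spatial minimizer where $\bar\nabla s=0$ and $\bar\nabla^2 s\geq0$ to get $\tau_{ij}\geq\underline{s}\,\bar g_{ij}$, conclude $E_k^{\alpha/k}\leq\underline{s}^{-\alpha}$, drop $\phi(t)\geq0$, and integrate the resulting ODE for $\underline{s}$. The extra care you take in spelling out the Hamilton envelope argument and the vanishing of the $Q_{ijk}\bar\nabla_ks$ term is welcome but not a departure from the paper's argument.
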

\proof
Let $\underline{s}(t)=\min_{\Sigma} s(\cdot,t)=s(z_t,t)$. At the point $z_t$, we have
\begin{equation*}
  \tau_{ij}[s(z_t,t)]\geq \underline{s}(t)\bar{g}_{ij}.
\end{equation*}
Then the evolution equation \eqref{flow-gauss} of $s(z,t)$ implies that
\begin{equation*}
  \frac d{dt}\underline{s}(t)\geq -\underline{s}(t)^{-\alpha}+\phi(t)\geq -\underline{s}(t)^{-\alpha}.
\end{equation*}
By the initial condition $\underline{s}(t_0)\geq R_1$, we solve the above inequality as follows
\begin{equation*}
  \underline{s}(t)\geq \left(R_1^{1+\alpha}-(\alpha+1)(t-t_0)\right)^{\frac 1{1+\alpha}}\geq R_1/2,
\end{equation*}
provided that $t-t_0\leq (1+\alpha)^{-1}(1-2^{-\alpha-1})R_1^{\alpha+1}=:\tau$. The conclusion \eqref{s9:s-1} follows immediately.
\endproof

We now estimate the upper bound on the speed of the flow.
\begin{prop}\label{s5:thm-Ek-ub}
Let $M_t$ be a smooth convex solution of the flow \eqref{flow-VMCF} on $[0,T)$ with the global term $\phi(t)$ given by \eqref{s1:phi-1}. Then
\begin{equation}\label{s8:Ek-bd}
  \max_{M_t}{E}_k\leq  ~C\left(1+t^{-\frac{\alpha}{1+\alpha}}\right)
\end{equation}
for any $t\in [0,T)$, where $C$ depends on $n,k,{\gamma},\alpha, \mathrm{Vol}(K_0)$ and $V_{n+1-k}(K_0,W)$.
\end{prop}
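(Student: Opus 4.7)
The plan is a Tso-type curvature bound via the auxiliary function $\omega := -\Psi/(s-c)$, adapted to the fact that the origin realizing the lower bound on $s$ drifts with time. First I would reduce to estimating $\omega$ on short intervals: fix $t_0 \in [0,T)$ and pick the origin $p_{t_0}$ for which $s$ obeys \eqref{s9:s} at time $t_0$; by Lemma~\ref{s8:lem1} we then have $s(\cdot,t) \geq R_1/2$ on $[t_0, \min(T, t_0+\tau))$ with respect to this origin, where $\tau$ is uniform. Setting $c := R_1/4$, the function $\omega = -\Psi/(s-c)$ is well-defined and positive, and $s-c$ has uniform two-sided positive bounds on the interval.

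Next I would compute $\mathcal{L}\omega$ at a spatial maximum, where $\mathcal{L} := \partial_t - \dot{\Psi}^{k\ell}\bigl(\bar{\nabla}_k\bar{\nabla}_\ell - \tfrac12 Q_{k\ell p}\bar{\nabla}_p\bigr)$ is the natural linearized operator along the flow. The condition $\bar{\nabla}\omega = 0$ at the maximum cancels the bilinear cross-terms produced by $\mathcal{L}$ acting on a ratio. Using $\mathcal{L}\Psi = (\Psi + \phi)\dot{\Psi}^{k\ell}\bar{g}_{k\ell}$ from \eqref{s4:flow-Psi} together with $\mathcal{L}s = (1+\alpha)\Psi + \phi + s\dot{\Psi}^{k\ell}\bar{g}_{k\ell}$ from \eqref{s4:s-1}, a short algebraic simplification yields, with $u := -\Psi = E_k^{\alpha/k}$,
\[
(s-c)^2\, \mathcal{L}\omega \;\leq\; -c\,\dot{\Psi}^{k\ell}\bar{g}_{k\ell}\cdot u \,+\, (1+\alpha)u^2,
\]
after discarding the non-positive terms involving $\phi(t) \geq 0$.

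The key analytic input is the estimate $\dot{\Psi}^{k\ell}\bar{g}_{k\ell} \geq \alpha\, u^{(\alpha+1)/\alpha}$: since $F_* = (E_n/E_{n-k})^{1/k}$ is positive, $1$-homogeneous and concave, after the normalization $F_*(1,\ldots,1) = 1$ its gradient sum satisfies $\sum_i \dot{F}_*^i \geq 1$, and differentiating $\Psi = -F_*^{-\alpha}$ delivers the claim. Substituting in and writing $u = \omega(s-c)$ with $s-c$ uniformly bounded gives $\mathcal{L}\omega \leq -C_0 \omega^{2+1/\alpha} + C_1$ for positive uniform constants whenever $\omega$ is large, since $2+1/\alpha > 2$. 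Because $\partial_t \omega_{\max}(t) \leq (\mathcal{L}\omega)_{\max}$, integrating the resulting ODI produces $\omega_{\max}(t) \leq C\bigl(1 + (t-t_0)^{-\alpha/(\alpha+1)}\bigr)$ on $[t_0, t_0+\tau)$. Undoing $u = \omega(s-c)$ and chaining the bound across overlapping time intervals of length $\tau/2$ — with the previous-interval endpoint bound absorbed into the next initial data — yields the global estimate \eqref{s8:Ek-bd}.

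The main technical obstacle is controlling the anisotropic corrections: the $Q$-tensor and the many extra terms in the Codazzi/Simons identities \eqref{s4:Codaz}--\eqref{s4:simon} could a priori spoil the Tso argument. Fortunately, in \eqref{s4:s-1} and \eqref{s4:flow-Psi} these contributions have already been absorbed into the linearized operator $\mathcal{L}$, so they vanish automatically at the critical point once $\bar{\nabla}\omega = 0$ is used. The other delicate ingredient is the concavity/homogeneity lower bound $\sum_i \dot{F}_*^i \geq 1$, which is the anisotropic counterpart of the input that drives the classical Tso exponent and relies on the inverse-concavity of $F = E_k^{1/k}$.
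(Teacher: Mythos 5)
Your proposal is correct and follows essentially the same route as the paper's proof: the identical Tso-type auxiliary function $Z=F_*^{-\alpha}/(s-R_1/4)$ (your $\omega=-\Psi/(s-c)$ with $c=R_1/4$), the same time localization via Lemma~\ref{s8:lem1}, the same use of the concavity/homogeneity of $F_*$ to obtain $\dot{\Psi}^{k\ell}\bar g_{k\ell}\geq\alpha\,u^{(\alpha+1)/\alpha}$, and integration of the resulting superquadratic ODI. One small imprecision: after the maximum-point cancellation the remaining right-hand side is $-C_0\omega^{2+1/\alpha}+(1+\alpha)\omega^2$ rather than $-C_0\omega^{2+1/\alpha}+C_1$ with a constant; the argument still goes through because for $\omega$ large the quadratic term is absorbed into half the superquadratic term, which is exactly what the paper does.
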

\proof
The proof is by applying the technique of Tso \cite{Tso85}.  For any given time $t_0\in [0,T)$, the estimate \eqref{s9:s-1} implies that the anisotropic support function $s(z,t)$ with respect to the center $p_0$ satisfies $s(z,t)\geq R_1/2$ in the time interval $t\in [t_0,\min\{T,t_0+\tau\})$. Then the following function $Z$
\begin{equation*}
  Z(z,t)=\frac{F_*^{-\alpha}}{s(z,t)-\frac 14R_1},\quad z\in \Sigma,
\end{equation*}
is well defined for all time $t\in [t_0,\min\{T,t_0+\tau\})$. The estimate \eqref{s9:s-1} together with \eqref{s5:radius} also implies that the anisotropic support function $s(z,t)$ with respect to the center $p_0$ is bounded from above by $2R_2$. Since ${E}_k^{\alpha/k}=F_*^{-\alpha}$, to prove \eqref{s8:Ek-bd} it suffices to estimate the upper bound of $Z$. Combining \eqref{s4:s-1} and \eqref{s4:flow-Psi}, we obtain
\begin{align}\label{s5:Z}
\frac{\partial }{\partial t}Z- &\dot{\Psi}^{k\ell}\left(\bar{\nabla}_k\bar{\nabla}_\ell Z-\frac 12Q_{k\ell p}\bar{\nabla}_pZ\right)\nonumber\\
=&\frac 1{s-\frac 14R_1}\left(\frac{\partial }{\partial t}F_*^{-\alpha}- \dot{\Psi}^{k\ell}\left(\bar{\nabla}_k\bar{\nabla}_\ell F_*^{-\alpha}-\frac 12Q_{k\ell p}\bar{\nabla}_pF_*^{-\alpha}\right)\right)\nonumber\\
& -\frac Z{s-\frac 14R_1}\left(\frac{\partial }{\partial t}s- \dot{\Psi}^{k\ell}\left(\bar{\nabla}_k\bar{\nabla}_\ell s-\frac 12Q_{k\ell p}\bar{\nabla}_ps\right)\right)+2 \dot{\Psi}^{k\ell}\frac{\bar{\nabla}_kZ\bar{\nabla}_\ell s}{s-\frac 14R_1}\nonumber\\
=& 2 \dot{\Psi}^{k\ell}\frac{\bar{\nabla}_kZ\bar{\nabla}_\ell s}{s-\frac 14R_1}+\frac 1{s-\frac 14R_1}(F_*^{-\alpha}-\phi(t))\dot{\Psi}^{k\ell}\bar{g}_{k\ell}\nonumber\\
& -\frac Z{s-\frac 14R_1}\left(\phi(t)+(1+\alpha)\Psi+s\dot{\Psi}^{k\ell}\bar{g}_{k\ell}\right)\nonumber\\
  = &2 \dot{\Psi}^{k\ell}\frac{\bar{\nabla}_kZ\bar{\nabla}_\ell s}{s-\frac 14R_1}-\frac{R_1Z}{4(s-\frac 14R_1)}\dot{\Psi}^{k\ell}\bar{g}_{k\ell}+(1+\alpha)Z^2-\phi(t)\frac{\dot{\Psi}^{k\ell}\bar{g}_{k\ell}+Z}{s-\frac 12R_1}.
\end{align}
The second term on the right-hand side of \eqref{s5:Z} can be estimated as follows:
\begin{align*}
  \frac{R_1Z}{4(s-\frac 14R_1)}\dot{\Psi}^{k\ell}\bar{g}_{k\ell} =& \frac{\alpha R_1Z}{4(s-\frac 14R_1)}F_*^{-\alpha-1}\dot{F}_*^{k\ell}\bar{g}_{k\ell} \\
  \geq  & \frac{\alpha R_1Z}{4(s-\frac 14R_1)}F_*^{-\alpha-1}\\
  =&\frac{\alpha R_1}4Z^{2+\frac 1{\alpha}}(s-\frac 14R_1)^{1/{\alpha}}\\
  \geq &\alpha\left(\frac{R_1}4\right)^{1+1/{\alpha}}Z^{2+\frac 1{\alpha}},
\end{align*}
where we used the fact $\dot{F}_*^{k\ell}\bar{g}_{k\ell}=\sum_i\dot{f}^i_*\geq 1$ due to the concavity of $F_*$. Let $\tilde{Z}(t)=\sup Z(\cdot,t)$. Since $\phi(t)\geq 0$, Equation \eqref{s5:Z} implies that (in the form given in \cite[Lemma 3.5]{Ha86})
\begin{align*}
\frac{d }{d t}\tilde{Z}\leq &-\alpha\left(\frac{R_1}4\right)^{1+1/{\alpha}}\tilde{Z}^{2+\frac 1{\alpha}}+(1+\alpha)\tilde{Z}^2.
\end{align*}
Let
\begin{equation*}
  \tilde{Z}_0(\alpha,R_1)=\left(\frac 4{R_1}\right)^{\alpha+1}\left(\frac{2(1+\alpha)}{\alpha}\right)^{\alpha},
\end{equation*}
which is a positive constant depending only on $\alpha$ and $R_1$. Whenever $\tilde{Z}(t)\geq \tilde{Z}_0$, we have
\begin{align*}
\frac{d }{d t}\tilde{Z}\leq &-(1+\alpha)\tilde{Z}^2.
\end{align*}
It follows that
\begin{align*}
  \tilde{Z}(t)\leq &\max\left\{\left(\frac 4{R_1}\right)^{\alpha+1}\left(\frac{2(1+\alpha)}{\alpha}\right)^{\alpha},\left(\tilde{Z}(t_0)^{-\frac{1+\alpha}{\alpha}}+\frac{1+\alpha}{2}\left(\frac{R_1}{4}\right)^{\frac{1+\alpha}{\alpha}}(t-t_0)\right)^{-\frac{\alpha}{1+\alpha}} \right\}\\
  \leq& ~C\left(1+(t-t_0)^{-\frac{\alpha}{1+\alpha}}\right)
\end{align*}
for all $t\in (t_0,\min\{T,t_0+\tau\})$, where $C$ depends only on $n,k,{\gamma},\alpha, \mathrm{Vol}(K_0)$ and $V_{n+1-k}(K_0,W)$. The upper bound of ${E}_k^{\alpha/k}=F_*^{-\alpha}$ follows from the bound on $Z$ and the fact $s(z,t)\leq 2R_2$. Proposition \ref{s5:thm-Ek-ub} follows since $t_0$ is arbitrary.
\endproof

Proposition \ref{s5:thm-Ek-ub} implies the following estimate on the global term $\phi(t)$.
\begin{cor}\label{cor-phi-bd}
Let $M_t$ be a smooth convex solution of the flow \eqref{flow-VMCF} on $[0,T)$ with the global term $\phi(t)$ given by \eqref{s1:phi-1}. Then for any $p>0$ we have
\begin{equation}\label{s5:cor-eqn1}
  0<C_1\leq \frac 1{|M_t|_{\gamma}}\int_{M_t}{E}_k^pd\mu_{\gamma}\leq C_2
\end{equation}
on $[0,T)$, where the constants $C_1,C_2$ depend only on $n,k,{\gamma},\alpha,p, \mathrm{Vol}(K_0)$ and $V_{n+1-k}(K_0,W)$. In particular,
\begin{equation}\label{s5:cor-eqn2}
  C_1\leq~\phi(t)\leq C_2
\end{equation}
on $[0,T)$.
\end{cor}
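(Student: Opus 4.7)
The goal is to bound the average $\frac{1}{|M_t|_\gamma}\int_{M_t}E_k^p\,d\mu_\gamma$ uniformly on $[0,T)$ for every $p>0$, from which \eqref{s5:cor-eqn2} will follow by specializing to $p=\alpha/k$. The upper bound will come from the pointwise Tso-type estimate of Proposition~\ref{s5:thm-Ek-ub} together with the isoperimetric bounds of Proposition~\ref{s5:prop-2}; the lower bound will rest on the Alexandrov--Fenchel inequality applied to $V_{n-k}(K_t,W)$, supplemented, when $0<p<1$, by a level-set argument that recycles the upper bound on $E_k$.

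For the upper bound, Proposition~\ref{s5:thm-Ek-ub} gives $\max_{M_t}E_k\leq C$ uniformly on $[\tau_0,T)$ for any fixed $\tau_0>0$; combined with $|M_t|_\gamma\leq c_2$ from Proposition~\ref{s5:prop-2}, this yields $\int_{M_t}E_k^p\,d\mu_\gamma\leq C^p c_2$. On the initial interval $[0,\tau_0]$, smoothness and strict convexity of $M_0$ together with short-time continuity of the smooth flow furnish the analogous pointwise bound, the constants being absorbed into the dependence on the initial data. For the lower bound on $\int E_k\,d\mu_\gamma$, I would apply the Alexandrov--Fenchel inequality \eqref{AF-k=n+1} with $i=0$ and $j=k+1$ (the endpoint $k=n$ being immediate since then $\int_{M_t}E_n\,d\mu_\gamma=V_0(K_t,W)=(n+1)\mathrm{Vol}(W)$) to get
\begin{equation*}
V_{n-k}(K_t,W)^{n+1}\geq V_{n+1}(K_t,W)^{n-k}\,V_0(K_t,W)^{k+1},
\end{equation*}
so that $\frac{1}{|M_t|_\gamma}\int_{M_t}E_k\,d\mu_\gamma\geq c_0/c_2=:c_1^{\ast}>0$ with $c_1^{\ast}$ depending only on $n,k,\gamma$ and $\mathrm{Vol}(K_0)$.

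For $p\geq 1$ the power-mean inequality $\bigl(\frac{1}{|M_t|_\gamma}\int E_k^p\bigr)^{1/p}\geq \frac{1}{|M_t|_\gamma}\int E_k\geq c_1^{\ast}$ closes the argument at once. For the remaining sub-unity regime $0<p<1$, I would decompose $M_t$ at the level $c_1^{\ast}/2$: the inequality $\int_{\{E_k<c_1^{\ast}/2\}}E_k\,d\mu_\gamma\leq(c_1^{\ast}/2)|M_t|_\gamma$ forces the complementary set $A_t:=\{E_k\geq c_1^{\ast}/2\}$ to satisfy $\int_{A_t}E_k\,d\mu_\gamma\geq(c_1^{\ast}/2)|M_t|_\gamma$, whence $|A_t|_\gamma\geq(c_1^{\ast}/2)|M_t|_\gamma/\max_{M_t}E_k$ and therefore
\begin{equation*}
\frac{1}{|M_t|_\gamma}\int_{M_t}E_k^p\,d\mu_\gamma\geq(c_1^{\ast}/2)^p\,\frac{|A_t|_\gamma}{|M_t|_\gamma}\geq \frac{(c_1^{\ast}/2)^{p+1}}{\max_{M_t}E_k}>0.
\end{equation*}
The main subtlety lies precisely in this sub-unity regime, where the power-mean inequality runs in the wrong direction and the lower bound must be extracted by plugging the uniform upper bound on $E_k$ back into a measure-theoretic estimate; aside from that bootstrap, the corollary is a direct combination of Propositions~\ref{s5:thm-Ek-ub} and \ref{s5:prop-2} with the Alexandrov--Fenchel inequality, and \eqref{s5:cor-eqn2} follows immediately by taking $p=\alpha/k$.
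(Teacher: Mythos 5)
Your argument is correct, and it shares the paper's two key ingredients — the Tso upper bound on $E_k$ from Proposition~\ref{s5:thm-Ek-ub} and the Alexandrov--Fenchel lower bound on $V_{n-k}(K_t,W)$ — but it diverges from the paper precisely in the sub-unity regime $0<p<1$, which you correctly flag as the delicate case. The paper settles it with a one-line pointwise power-split: writing $E_k = E_k^{1-p}\,E_k^p \leq (\sup_{M_t} E_k)^{1-p}\,E_k^p$ and integrating gives $\int_{M_t} E_k\,d\mu_\gamma \leq (\sup_{M_t} E_k)^{1-p}\int_{M_t} E_k^p\,d\mu_\gamma$, so the lower bound on $\int E_k^p$ drops out immediately. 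Your level-set decomposition at height $c_1^\ast/2$ is a valid substitute and yields the same qualitative conclusion, at the cost of a slightly worse constant and a little more bookkeeping; the Chebyshev-style step $|A_t|_\gamma \geq (c_1^\ast/2)|M_t|_\gamma/\max_{M_t} E_k$ plays exactly the role of the paper's factor $(\sup E_k)^{1-p}$. Your parenthetical handling of $k=n$ (where $V_0(K_t,W)=(n+1)\mathrm{Vol}(W)$ is constant, so no AF inequality is needed) is actually more careful than the paper's wording, since the AF inequality \eqref{AF-k=n+1} requires $j<n+1$ and the choice $j=k+1=n+1$ falls just outside it. Finally, your remark that the Tso bound $\max_{M_t} E_k \leq C(1+t^{-\alpha/(1+\alpha)})$ blows up as $t\to 0^+$ and that the initial interval must be handled by smoothness of $M_0$ is a genuine point the paper passes over silently; strictly speaking this makes the constants depend on $M_0$ beyond $\mathrm{Vol}(K_0)$ and $V_{n+1-k}(K_0,W)$, but this applies equally to the paper's own proof.
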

\proof
The upper bound in \eqref{s5:cor-eqn1} follows from the upper bound on $E_k$. For the lower bound in \eqref{s5:cor-eqn1}, if $0<p\leq 1$, by the Alexandrov--Fenchel inequality \eqref{AF-k=n+1} and the upper bound on $E_k$,
 \begin{align*}
   (n+1)^{n+1}\mathrm{Vol}(K_t)^{\frac{n-k}{n+1}}\mathrm{Vol}(W)^{\frac{k+1}{n+1}}\leq& \int_{M_t}E_kd\mu_\gamma\leq \sup_{M_t}E_k^{1-p}\int_{M_t}E_k^pd\mu_\gamma.
 \end{align*}
Since the enclosed $K_t$ has fixed volume, the lower bound in \eqref{s5:cor-eqn1} follows from the above inequality and the upper bound on $|M_t|_{\gamma}$ in \eqref{s5:AreaVol}. If $p>1$, the lower bound follows similarly by using the following inequality
  \begin{align*}
   (n+1)^{n+1}\mathrm{Vol}(K_t)^{\frac{n-k}{n+1}}\mathrm{Vol}(W)^{\frac{k+1}{n+1}}\leq \int_{M_t}E_kd\mu_\gamma\leq& ~ \left(\int_{M_t}E_k^pd\mu_\gamma\right)^{1/p}|M_t|_{\gamma}^{1-\frac 1p}.
 \end{align*}
\endproof

Let $[0,T)$ be the maximal interval such that the smooth solution $M_t$ of the flow \eqref{flow-VMCF} exists.
\begin{prop}\label{s5:prop-conv}
If $M_0$ is strictly convex, then the solution $M_t$ of the flow \eqref{flow-VMCF} is strictly convex for all $t\in [0,T)$. If $T<\infty$, there exists a constant $C$ depending on $M_0,{\gamma}$ and $T$ such that the anisotropic principal curvatures of $M_t$ satisfy $\kappa_i\geq C>0$ on $t\in [0,T)$.
\end{prop}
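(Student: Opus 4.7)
The plan is to reduce both conclusions of the proposition to a single ingredient: a time-dependent upper bound
\[
\tau_{\max}(t):=\sup_{z\in\Sigma}\lambda_{\max}\bigl(\tau_{ij}[s(\cdot,t)](z)\bigr)\le C(M_0,\gamma,T)\quad\text{on }[0,T).
\]
Once this is established, $\kappa_i=1/\tau_i\ge 1/\tau_{\max}>0$, which both preserves strict convexity on $[0,T)$ and yields the quantitative lower bound when $T<\infty$. The starting point is the evolution equation \eqref{s4:flow-tau2}. Its structurally favorable terms are the concavity term $\ddot{\Psi}^{k\ell,pq}\bar{\nabla}_i\tau_{k\ell}\bar{\nabla}_j\tau_{pq}\le 0$ (since $F_*$ is concave and $\Psi=-F_*^{-\alpha}$ inherits concavity in $\tau$) and the negative linear reaction $-\dot{\Psi}^{k\ell}\bar{g}_{k\ell}\tau_{ij}$. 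The obstruction to a direct tensor maximum principle consists of the anisotropic bad terms in the last three lines of \eqref{s4:flow-tau2}, schematically $\dot{\Psi}^{k\ell}(Q\ast Q+\bar{\nabla}Q)\ast\tau$, which are of the same order as the good linear reaction and have no a priori sign.

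To absorb these bad terms, I would follow the strategy indicated in the introduction. First, for any $T'<T$, translate the origin so that the anisotropic support function $s(\cdot,t)$ stays in a compact interval $[c,C]\subset(0,\infty)$ on $[0,T']$; this is possible by covering $[0,T']$ by finitely many time intervals of length $\tau$ on each of which Lemma \ref{s8:lem1} provides a local positive lower bound with a common origin, combined with the upper bound from Proposition \ref{s5:prop-2}. The $C^1$ bound $|\bar{\nabla}s|\le C$ from Lemma \ref{s9.lemC1} then holds automatically. Second, consider the scalar auxiliary function
\[
W(z,t):=\log\lambda_{\max}\bigl(\tau_{ij}[s(\cdot,t)]\bigr)(z)+A\,\varphi\bigl(s(z,t),|\bar{\nabla}s(z,t)|^2\bigr),
\]
where $\varphi$ is a smooth positive function and $A>0$ is a large constant to be chosen. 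At a spatial maximum of $W$, $\lambda_{\max}(\tau)$ may be represented as $\tau_{ij}\xi^i\xi^j$ for a unit $\bar{g}$-eigenvector $\xi$, and the usual argument replacing $\log\lambda_{\max}$ by a smooth approximation applies. Combining \eqref{s4:flow-tau2} with \eqref{s4:s-1} and \eqref{s8:ds-0}, one writes a parabolic inequality for $W$ in which the term $A\,\varphi_s\cdot s\dot{\Psi}^{k\ell}\bar{g}_{k\ell}$ from \eqref{s4:s-1} (and its analogue from \eqref{s8:ds-0}) supplies, for $A$ large, a negative linear-in-$\tau_{\max}$ contribution that dominates the bad $Q$ and $\bar{\nabla}Q$ terms.

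A standard scalar maximum principle then yields, together with the bound $\phi(t)\le C$ of Corollary \ref{cor-phi-bd}, a differential inequality on $W_{\max}(t)$ of the form $\tfrac{d}{dt}W_{\max}\le C_1-C_2 e^{\delta W_{\max}}$ (schematically), from which a time-dependent upper bound $W_{\max}(t)\le C(M_0,\gamma,T)$, and hence a bound on $\tau_{\max}(t)$, follows on $[0,T']$. Since $T'<T$ is arbitrary, both conclusions of the proposition are proved.

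The main obstacle is the precise choice of $\varphi$ and the constant $A$. One must verify simultaneously: (i) the cross-gradient terms produced by $A\varphi$ in the elliptic operator are controlled, (ii) the explicit $Q$- and $\bar{\nabla}Q$-contributions in \eqref{s4:flow-tau2} are absorbed by the good term of order $A\dot{\Psi}^{k\ell}\bar{g}_{k\ell}$, and (iii) the extra $Q\ast\bar{\nabla}s$-terms in \eqref{s8:ds-0} are absorbed by the strong good term $-2\dot{\Psi}^{k\ell}\tau_{ik}\tau_{i\ell}$ there. The verification requires careful constant bookkeeping using the $C^0,C^1$ estimates on $s$ and the uniform bounds on $Q,\bar{\nabla}Q$ that follow from compactness of $\Sigma$ and smoothness of $\gamma$.
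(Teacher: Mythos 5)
Your proposal follows essentially the same route as the paper: working in the anisotropic Gauss map parametrization with the uniform local $C^0,C^1$ control of $s$ from Lemma~\ref{s8:lem1} and Lemma~\ref{s9.lemC1}, you absorb the unsigned $Q$- and $\bar{\nabla}Q$-terms of \eqref{s4:flow-tau2} by the coercive contribution $\sim -A\,s\,\dot{\Psi}^{k\ell}\bar{g}_{k\ell}$ that the support-function equation \eqref{s4:s-1} furnishes through a barrier of the form $\log\lambda_{\max}(\tau)+A\,\varphi$; the paper's quantity $\omega=\zeta/(s-\tfrac{R_1}{4})^{a}$ is exactly this with $\varphi=-\log(s-\tfrac{R_1}{4})$ and no $|\bar{\nabla}s|^2$-dependence. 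One small remark: after the absorption the inequality at the spatial maximum is merely linear, $\partial_t\omega\le C_3+C_4\omega$ (the coercive term is used up killing the bad terms), not of the strongly decaying form $C_1-C_2e^{\delta W_{\max}}$ you wrote schematically, so you should only expect the time-dependent bound on $[0,T]$ for finite $T$ rather than a uniform-in-time one---which is what the paper obtains and all that this proposition needs.
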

\proof
To prove this estimate, we use the anisotropic Gauss map parametrization of the flow \eqref{flow-VMCF} as described in \S \ref{sec:gauss map}. The flow \eqref{flow-VMCF} is equivalent to the scalar parabolic equation \eqref{flow-gauss} on $\Sigma$ of the anisotropic support function $s(z,t)$. Since the eigenvalues of $(\tau_{ij})$ are equal to the reciprocal of the anisotropic principal curvatures $\kappa$, in order to estimate the lower bound of $\kappa$, it suffices to estimate the upper bound of the eigenvalues of $\tau_{ij}$.

For any given time $t_0\in [0,T)$, we choose the origin as $p_0$ such that the estimate \eqref{s9:s} holds. Then Lemma \ref{s8:lem1} implies that the anisotropic support function $s(z,t)$ of $M_t$ with respect to the origin $p_0$ satisfies
\begin{equation}\label{s9.s-bd}
  R_1/2\leq s(z,t)\leq 2R_2,\qquad \forall~t\in [t_0,\min\{T,t_0+\tau\})
\end{equation}
for some fixed constant $\tau$ depending only on $n,k,\alpha,\gamma,M_0$. Suppose $e_1$ is the direction where the maximum eigenvalue of $(\tau_{ij})$ occurs. By the evolution equation \eqref{s4:flow-tau2} of $\tau_{ij}$, we have
\begin{align}\label{s5:evl-tau}
  \frac{\partial}{\partial t}\tau_{11}\leq &~\dot{\Psi}^{k\ell}\left(\bar{\nabla}_k\bar{\nabla}_\ell\tau_{11}-\frac 12Q_{k\ell p}\bar{\nabla}_p\tau_{11}\right)+\ddot{\Psi}^{k\ell,pq}\bar{\nabla}_1\tau_{k\ell}\bar{\nabla}_1\tau_{pq}\nonumber\\
  &\quad +(1-\alpha)\Psi+\phi(t)+C_1\dot{\Psi}^{k\ell}\bar{g}_{k\ell}\tau_{11},
\end{align}
where $C_1=C_1(Q,\bar{\nabla}Q)$ is a constant depending only on $Q, \bar{\nabla}Q$. Applying Theorem \ref{s5:thm-Ek-ub} and Corollary \ref{cor-phi-bd}, we have $(1-\alpha)\Psi+\phi(t)\leq C$ for some uniform constant $C>0$. However, $\dot{\Psi}^{k\ell}\bar{g}_{k\ell}=\alpha F_*^{-\alpha-1}\dot{F}_*^{k\ell}\bar{g}_{k\ell}$, which has no control from above. Therefore, the parabolic maximum principle for tensors can not be applied directly to \eqref{s5:evl-tau} to deduce the upper bound of $\tau_{11}$.

To overcome this problem, we define
\begin{equation*}
  \zeta= \sup\{\tau_{ij}\xi^i\xi^j:~|\xi|=1\}
\end{equation*}
and consider
\begin{equation}\label{s9.omege-def}
  \omega(z,t) =\frac{\zeta(z,t)}{(s(z,t)-\frac{R_1}4)^a},
\end{equation}
where the power $a>0$ will be determined later. The function $\omega$ is well defined on the time interval $[t_0,\min\{T,t_0+\tau\})$. We consider the spatial maximum point $(z_1,t_1)$ where $\omega(z_1,t_1)=\max_\Sigma\omega(\cdot,t_1)$ for each $t_1\in [t_0,\min\{T,t_0+\tau\})$. By rotation of the local orthonormal frame, we assume that $\xi=e_1$ and $(\tau_{ij})=\mathrm{diag}(\tau_1,\dots,\tau_n)$ is diagonal at $(z_1,t_1)$. Then we have $\zeta=\tau_{11}$ at $(z_1,t_1)$. Combining \eqref{s4:s-1} and \eqref{s5:evl-tau} gives
\begin{align}\label{s9.omeg-1}
   &\frac{\partial }{\partial t}\omega -\dot{\Psi}^{k\ell}\left(\bar{\nabla}_k\bar{\nabla}_\ell\omega-\frac 12Q_{k\ell p}\bar{\nabla}_p\omega\right)-\frac{2}{(s-\frac{R_1}4)^a}\dot{\Psi}^{k\ell}\bar{\nabla}_k\omega\bar{\nabla}_\ell(s-\frac{R_1}4)^a\nonumber\\
   =&\frac 1{(s-\frac{R_1}4)^a}\left(\frac{\partial }{\partial t}\tau_{11} -\dot{\Psi}^{k\ell}\left(\bar{\nabla}_k\bar{\nabla}_\ell\tau_{11}-\frac 12Q_{k\ell p}\bar{\nabla}_p\tau_{11}\right)\right)\nonumber\\
   & -\frac{a\omega}{s-\frac{R_1}4} \left(\frac{\partial }{\partial t}s -\dot{\Psi}^{k\ell}\left(\bar{\nabla}_k\bar{\nabla}_\ell s-\frac 12Q_{k\ell p}\bar{\nabla}_ps\right)\right)+\frac{a(a-1)}{(s-\frac{R_1}4)^2}\omega\dot{\Psi}^{k\ell}\bar{\nabla}_ks\bar{\nabla}_\ell s\nonumber\\
   \leq & \frac 1{(s-\frac{R_1}4)^a}\left(\ddot{\Psi}^{k\ell,pq}\bar{\nabla}_1\tau_{k\ell}\bar{\nabla}_1\tau_{pq} +(1-\alpha)\Psi+\phi(t)+C_1\dot{\Psi}^{k\ell}\bar{g}_{k\ell}\tau_{11}\right)\nonumber\\
   &-\frac{a\omega}{s-\frac{R_1}4} \left(\phi(t)+(1+\alpha)\Psi+s\dot{\Psi}^{k\ell}\bar{g}_{k\ell}\right)+\frac{a(a-1)}{(s-\frac{R_1}4)^2}\omega\dot{\Psi}^{k\ell}\bar{\nabla}_ks\bar{\nabla}_\ell s\nonumber\\
  \leq &\frac 1{(s-\frac{R_1}4)^a}\ddot{\Psi}^{k\ell,pq}\bar{\nabla}_1\tau_{k\ell}\bar{\nabla}_1\tau_{pq}+\frac{a(a-1)}{(s-\frac{R_1}4)^2}\omega\dot{\Psi}^{k\ell}\bar{\nabla}_ks\bar{\nabla}_\ell s\nonumber\\
  &+\omega\sum_k\dot{\psi}^k\left(C_1-\frac{as}{s-\frac{R_1}4}\right)+\frac 1{(s-\frac{R_1}4)^a}((1-\alpha)\Psi+\phi(t))\nonumber\\
  &-\frac{a\omega}{s-\frac{R_1}4}\left(\phi(t)+(1+\alpha)\Psi\right).
\end{align}

We estimate gradient terms in \eqref{s9.omeg-1}. Since $\Psi$ is concave, we have $\ddot{\psi}\leq 0$ and $(\dot{\psi}^k-\dot{\psi}^{\ell})(\tau_k-\tau_\ell)\leq 0$. Using the equation \eqref{s2:F-ddt} we estimate the first term on the right-hand side of \eqref{s9.omeg-1}
\begin{align*}
  \ddot{\Psi}^{k\ell,pq}\bar{\nabla}_1\tau_{k\ell}\bar{\nabla}_1\tau_{pq}= &~\ddot{\psi}^{k\ell}\bar{\nabla}_1\tau_{kk}\bar{\nabla}_1\tau_{\ell\ell}+2\sum_{k>\ell}\frac{\dot{\psi}^k-\dot{\psi}^\ell}{\tau_k-\tau_\ell}(\bar{\nabla}_1\tau_{k\ell})^2\nonumber\\
   \leq &~ -2\sum_{k>1}\tau_1^{-1}(\dot{\psi}^k-\dot{\psi}^1)(\bar{\nabla}_1\tau_{k1})^2.
\end{align*}
Codazzi equation \eqref{s4:Codaz} implies that
\begin{align*}
  (\bar{\nabla}_1\tau_{k1})^2= & \left( \bar{\nabla}_k\tau_{11}+\frac 12Q_{11p}\tau_{kp}-\frac 12Q_{k1p}\tau_{1p}\right)^2
  \geq   \frac 12(\bar{\nabla}_k\tau_{11})^2-C(\tau_1)^2.
\end{align*}
Since $\bar{\nabla}_k\omega=0$ at $(z_1,t_1)$, we have
\begin{equation}\label{s9.LTE3}
  \frac{\bar{\nabla}_k\tau_{11}}{\tau_{11}}=\frac{a\bar{\nabla}_ks}{s-\frac{R_1}4}.
\end{equation}
Then the gradient terms on the right-hand side of \eqref{s9.omeg-1} satisfy
\begin{align}\label{s9.LTE4}
& \frac 1{(s-\frac{R_1}4)^a}\ddot{\Psi}^{k\ell,pq}\bar{\nabla}_1\tau_{k\ell}\bar{\nabla}_1\tau_{pq}+\frac{a(a-1)}{(s-\frac{R_1}4)^2}\omega\dot{\Psi}^{k\ell}\bar{\nabla}_ks\bar{\nabla}_\ell s\nonumber\\
  = & \omega\biggl(\frac 1{\tau_{11}}\ddot{\Psi}^{k\ell,pq}\bar{\nabla}_1\tau_{k\ell}\bar{\nabla}_1\tau_{pq}+\frac{a-1}a\frac 1{(\tau_{11})^2}\sum_k\dot{\psi}^k(\bar{\nabla}_k\tau_{11})^2\biggr)\nonumber\\
   \leq &~\omega\biggl(-2\sum_{k>1}\tau_{11}^{-2}(\dot{\psi}^k-\dot{\psi}^1)\left(\frac 12(\bar{\nabla}_k\tau_{11})^2-C(\tau_1)^2\right)+\frac{a-1}a\frac 1{(\tau_{11})^2}\sum_k\dot{\psi}^k(\bar{\nabla}_k\tau_{11})^2\biggr)\nonumber\\
= &~\omega\biggl(\dot{\psi}^1\tau_{11}^{-2}\sum_k(\bar{\nabla}_k\tau_{11})^2+2C\sum_{k>1}(\dot{\psi}^k-\dot{\psi}^1)\biggr)\nonumber\\
   \leq &~\omega\biggl(\dot{\psi}^1a^2\frac{|\bar{\nabla}s|^2}{(s-\frac{R_1}4)^2}+C_2\sum_k\dot{\psi}^k\biggr)
\end{align}
at $(z_1,t_1)$.

Substituting \eqref{s9.LTE4} into \eqref{s9.omeg-1}, we have
\begin{align}\label{s9.omeg-3}
\frac{\partial }{\partial t}\omega\leq ~ &\omega\sum_k\dot{\psi}^k\left(C_1+C_2-\frac{as}{s-\frac{R_1}4}\right)+\omega\dot{\psi}^1a^2\frac{|\bar{\nabla}s|^2}{(s-\frac{R_1}4)^2}\nonumber\\
  &+\frac 1{(s-\frac{R_1}4)^a}((1-\alpha)\Psi+\phi(t))-\frac{a\omega}{s-\frac{R_1}4}\left(\phi(t)+(1+\alpha)\Psi\right)
\end{align}
at $(z_1,t_1)$. Since $s(z,t)$ satisfies the estimate \eqref{s9.s-bd} on the time interval $[t_0,\min\{T,t_0+\tau\})$, by choosing a suitable constant $a>0$ depending on $R_1,R_2$ and $C_1, C_2$ we can ensure that the first term on the right-hand side of \eqref{s9.omeg-3} is negative. This kills the bad term $\sum_k\dot{\psi}^k$. The second term on the right-hand side of \eqref{s9.omeg-3} is bounded, since $|\bar\nabla s|$ is bounded by \eqref{s9:ds-1} and
\begin{equation*}
 \omega\dot{\psi}^1=\frac{\dot{\psi}^1\tau_1}{(s-\frac{R_1}4)^a}\leq \frac{ \sum_{i=1}^n\dot{\psi}^i\tau_i}{(s-\frac{R_1}4)^a}=\frac{\alpha {E}_k^{\alpha/k}}{(s-\frac{R_1}4)^a}\leq C.
\end{equation*}
Then at the spatial maximum point $(z_1,t_1)$, the function $\omega$ satisfies
\begin{equation}\label{s9.omeg-2}
\frac{\partial }{\partial t}\omega\leq C_3+C_4\omega,
\end{equation}
where $C_3$, $C_4$ are constants depending on $a, R_1,R_2$, the bound on $\phi(t)$ in Corollary \ref{cor-phi-bd} and the upper bound on $-\Psi=E_k^{\alpha/k}$ in Proposition \ref{s5:thm-Ek-ub}. Applying the parabolic maximum principle to \eqref{s9.omeg-2}, we see that the function $\omega$ in the time interval $[t_0,\min\{T,t_0+\tau\})$ is bounded by a constant depending on its value at the time $t_0$ and on the parameter $\tau$. Since $0<R_1/2\leq s(z,t)\leq 2R_2$ for all $t\in [t_0,\min\{T,t_0+\tau\})$, this also implies that the largest anisotropic principal radius $\tau_1$ is bounded by a constant depending on its value at the time $t_0$ and on the parameter $\tau$. If the maximal existence time $T$ is finite, after a finite number of  iterations, we conclude that the largest anisotropic principal radius $\tau_1$ is bounded from above by a positive constant $C$ depending on the maximal existence time $T$. This completes the proof of Proposition \ref{s5:prop-conv}.
\endproof

Now we can prove the long time existence of the flow \eqref{flow-VMCF}, i.e., the maximum existence time $T=\infty$.
\begin{thm}
Let $W$ be a Wulff shape in $\mathbb{R}^{n+1}$ with the smooth support function $\gamma\in C^{\infty}(\mathbb{S}^n)$, and $X_0: M^n\to \mathbb{R}^{n+1}$ be a smooth embedding such that $M_0=X_0(M)$ is a closed strictly convex hypersurface in $\mathbb{R}^{n+1}$ enclosing a convex body $K_0$. Then for
any $k\in \{1,\dots,n\}$ and $\alpha>0$, the volume preserving flow \eqref{flow-VMCF} with the global term $\phi(t)$ given by \eqref{s1:phi-1} has a smooth strictly convex solution $M_t$ for all time $t\in [0,\infty)$.
\end{thm}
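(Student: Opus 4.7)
The plan is to argue by contradiction. Let $[0,T)$ denote the maximal interval on which a smooth strictly convex solution exists (short-time existence follows from the implicit function theorem applied to \eqref{flow-gauss}); assume $T<\infty$. The strategy is to promote the a priori estimates already established in Propositions~\ref{s5:thm-Ek-ub}--\ref{s5:prop-conv} to uniform $C^{k,\beta}$ bounds on $\Sigma \times [0,T)$ for every $k$, and then invoke the standard continuation principle to extend the flow past $T$, contradicting maximality. The substantive reduction is therefore to establish a two-sided pinching of the anisotropic principal curvatures on $[0,T)$.

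Proposition~\ref{s5:prop-conv} already provides a uniform positive lower bound $\kappa_i \geq c_0 > 0$ on $[0,T)$ under the assumption $T<\infty$. Proposition~\ref{s5:thm-Ek-ub} supplies $E_k \leq C(1 + t^{-\alpha/(1+\alpha)})$; combined with the smoothness of $M_0$ (which handles the degeneracy as $t \to 0^+$), this yields a uniform upper bound $E_k \leq C_1$ on $[0,T)$. Since all $\kappa_i \geq c_0$, the elementary pointwise inequality
\begin{equation*}
E_k(\kappa) \;\geq\; \binom{n}{k}^{-1}\binom{n-1}{k-1}\, c_0^{k-1}\,\max_i \kappa_i \;=\; \frac{k}{n}\, c_0^{k-1}\,\max_i \kappa_i
\end{equation*}
converts the upper bound on $E_k$ into an upper bound on $\max_i \kappa_i$. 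Equivalently, the anisotropic principal radii $\tau_i = 1/\kappa_i$ are pinched inside a fixed compact subset of the positive cone $\Gamma_+$ on $[0,T)$.

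Given this pinching, the scalar evolution \eqref{flow-gauss} becomes uniformly parabolic on $\Sigma \times [0,T)$: the linearization $\dot\Psi^{k\ell} = \alpha F_*^{-\alpha-1}\dot F_*^{k\ell}$ has eigenvalues bounded above and below by positive constants. Moreover the spatial operator has concave structure, since $F_* = (E_n/E_{n-k})^{1/k}$ is concave and positive on $\Gamma_+$ and $y \mapsto y^{-\alpha}$ is convex and decreasing on $(0,\infty)$, so $F_*^{-\alpha}$ is convex in the entries of $\tau_{ij}$ and the right-hand side $-F_*^{-\alpha} + \phi(t)$ of \eqref{flow-gauss} is concave in $D^2 s$. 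Combined with the $C^0$ and $C^1$ estimates \eqref{s9:s} and \eqref{s9:ds-1}, the Krylov--Safonov and Evans--Krylov theorems yield a uniform $C^{2,\beta}$ estimate for $s$ on $\Sigma \times [0,T)$, and parabolic Schauder theory bootstraps this to uniform $C^{k,\beta}$ bounds for every $k$. Arzelà--Ascoli then produces a smooth strictly convex limit hypersurface $M_T$ as $t \to T^-$; applying short-time existence with $M_T$ as the initial datum and using uniqueness for \eqref{flow-gauss} extends the solution smoothly past $T$, contradicting maximality. Thus $T = \infty$. The only delicate point is the curvature pinching of the preceding paragraph; once the combination of the speed bound (Proposition~\ref{s5:thm-Ek-ub}) and the radii bound (Proposition~\ref{s5:prop-conv}) is in place, the remainder is a routine application of the continuation principle for concave fully nonlinear parabolic equations.
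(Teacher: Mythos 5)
Your proposal is correct and follows essentially the same route as the paper: derive two-sided curvature pinching from Propositions~\ref{s5:thm-Ek-ub} and~\ref{s5:prop-conv}, note that \eqref{flow-gauss} is uniformly parabolic with a concave operator, and bootstrap through $C^{2,\beta}$ and Schauder to extend past~$T$. The paper cites Tian--Wang \cite{TW13} where you invoke Krylov--Safonov and Evans--Krylov, and the paper leaves implicit the elementary step $E_k \geq \tfrac{k}{n}c_0^{k-1}\max_i\kappa_i$ that you spell out, but these are cosmetic differences.
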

\proof Suppose that the maximal existence time $T$ is finite. By Proposition \ref{s5:thm-Ek-ub} and Proposition \ref{s5:prop-conv}, there exists a constant $C>0$ depending on $M_0,{\gamma}$ and $T$ such that the anisotropic principal curvatures satisfy
\begin{equation*}
  0<\frac 1C\leq \kappa_i\leq C,\qquad i=1,\cdots,n
\end{equation*}
on $M_t$ for $t\in [0,T)$. This together with $C^0$ and $C^1$ estimates implies the $C^2$ estimate of the evolving hypersurface $M_t$.  To derive the higher regularity estimate, we note that the flow \eqref{flow-VMCF} is equivalent to the equation \eqref{flow-gauss} for the anisotropic support function $s(z,t)$. Since $F_*$ is concave and $\alpha>0$, the right-hand side of \eqref{flow-gauss} is concave with respect to the spatial second derivatives of $s(z,t)$. As the global term $\phi(t)$ is bounded, Theorem 1.1 of  \cite{TW13} can be applied to derive the $C^{2,\beta}$ estimate of $s$ for some $\beta\in (0,1)$. The higher order regularity estimate of $M_t$ follows from the standard parabolic Schauder theory \cite{Lieb96}. Thus $M_t$ converges to a hypersurface $M_{T}$ smoothly as $t\to T$. The short time existence theorem yields that the flow \eqref{flow-VMCF} can be extended past $T$ for a short time interval, contradicting the maximality of $T$.
\endproof

\section{Hausdorff convergence}\label{sec:HC}

We showed in the previous section that the smooth solution $M_t$ of the flow \eqref{flow-VMCF} exists for all time $t\in [0,\infty)$. In this section, using Theorem \ref{thm-main} and Theorem \ref{s3:thm-W} for the anisotropic curvature measures and applying the argument as in \cite[\S 6]{AW17}, we show that the solution $M_t$ converges to a scaled Wulff shape as $t\to\infty$ in the Hausdorff sense. As applications, we further improve the estimate on the global term $\phi(t)$ and obtain the uniform positive lower bound on the $k$th anisotropic mean curvature $E_k$.

\subsection{Hausdorff convergence to the Wulff shape}$\ $

Let
\begin{equation*}
  \bar{E}_k=~\frac 1{|M_t|_{\gamma}}\int_{M_t}{E}_kd\mu_{\gamma}=~\frac{V_{n-k}(K_t,W)}{V_n(K_t,W)}
\end{equation*}
denote the average integral of $E_k$. By Proposition \ref{s5:prop-monot}, the mixed volume $V_{n+1-k}(K_t,W)$ is monotone non-increasing in time. Since $V_{n+1-k}(K_t,W)$ is bounded from below, by the long-time existence of the flow, the equation \eqref{s5:Vn1k} implies that
\begin{equation*}
  \int_0^\infty\int_{M_t} \left(E_k^{\alpha/k}- \bar{E}_k^{\alpha/k}\right)\left(E_k-\bar{E}_k\right)d\mu_\gamma~\leq~V_{n+1-k}(K_0,W)<\infty.
\end{equation*}
Therefore there exists a sequence of times $t_i\to \infty$ such that
\begin{align}\label{s10.1}
\int_{M_{t_i}} \left(E_k^{\alpha/k}- \bar{E}_k^{\alpha/k}\right)\left(E_k-\bar{E}_k\right)d\mu_\gamma~\to 0.
\end{align}
If $0<\alpha<k$, we have
\begin{align*}
  \left(E_k^{\alpha/k}- \bar{E}_k^{\alpha/k}\right)\left(E_k-\bar{E}_k\right) =& \frac{\alpha}k\int_0^1\left((1-s)\bar{E}_k+sE_k\right)^{\alpha/k-1}ds\cdot (E_k-\bar{E}_k)^2 \\
  \geq & \frac{\alpha}k\left(\sup_{M_t}E_k\right)^{\alpha/k-1}(E_k-\bar{E}_k)^2\\
  \geq & C  (E_k-\bar{E}_k)^2
\end{align*}
by the upper bound on $E_k$ that proved in Proposition \ref{s5:thm-Ek-ub}. On the other hand, for $\alpha> k$ we have
\begin{align*}
  \left(E_k^{\alpha/k}- \bar{E}_k^{\alpha/k}\right)\left(E_k-\bar{E}_k\right) =& \frac{\alpha}k\int_0^1\left((1-s)\bar{E}_k+sE_k\right)^{\alpha/k-1}ds\cdot (E_k-\bar{E}_k)^2 \\
  \geq & \frac{\alpha}k\int_0^1(1-s)^{\alpha/k-1}ds \bar{E}_k^{\alpha/k-1}(E_k-\bar{E}_k)^2\\
  \geq & C  (E_k-\bar{E}_k)^2
\end{align*}
by Corollary \ref{cor-phi-bd}. Therefore, \eqref{s10.1} implies that
\begin{align}\label{s10.2}
 \int_{M_{t_i}}\left({E}_k-\bar{E}_k\right)^2d\mu_{\gamma}~\to&~0, \quad\mathrm{ as}~i\to\infty
\end{align}
for a sequence of times $t_i\to \infty$.
\begin{prop}\label{s10.lem1}
Let $M_t$ be the smooth solution to the flow \eqref{flow-VMCF} with the global term $\phi(t)$ given by \eqref{s1:phi-1}. Denote the closure of the enclosed domain of $M_t$ by $K_t$. Then $K_{t}$ converges to a scaled Wulff shape $\bar{r}W$ in the Hausdorff sense as $t\to\infty$,  where $\bar r$ is the radius such that $\mathrm{Vol}(K_0)=\bar{r}^{n+1}\mathrm{Vol}(W)$.
\end{prop}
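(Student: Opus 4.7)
\emph{Proof proposal.} The plan is to combine Blaschke compactness with the weak continuity of anisotropic curvature measures (Theorem~\ref{s3:thm-W}), the characterization of Wulff shapes (Theorem~\ref{thm-main}), and the monotonicity of the isoperimetric ratio (Proposition~\ref{s5:prop-monot}).

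First, I would exploit the uniform inner/outer radius bounds from Proposition~\ref{s5:prop-2}: choose translations $p_t$ so that $p_t+R_1 W\subset K_t\subset p_t+R_2 W$. The Blaschke selection theorem then guarantees that every sequence $t_i\to\infty$ admits a subsequence along which $K_{t_i}-p_{t_i}$ converges in Hausdorff distance to a convex body $K_\infty$ of the same volume as $K_0$. Using \eqref{s10.2} I would refine this subsequence so that additionally
\begin{equation*}
\int_{M_{t_i}}(E_k-\bar E_k)^2\,d\mu_\gamma\longrightarrow 0\quad\text{and}\quad \bar E_k(t_i)\longrightarrow c
\end{equation*}
for some $c\geq 0$. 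The constant $c$ is in fact strictly positive: $\bar E_k=V_{n-k}(K_t,W)/V_n(K_t,W)$, and the Alexandrov--Fenchel inequality \eqref{AF-k=n+1} combined with the upper bound on $|M_t|_\gamma$ in \eqref{s5:AreaVol} gives $\bar E_k\geq c_0>0$ uniformly.

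The identification of the limit measures proceeds by writing, for any continuous $f$ on $\mathbb{R}^{n+1}$,
\begin{equation*}
\int f\,d\Phi_{n-k}(K_{t_i};\cdot)=\bar E_k(t_i)\int f\,d\Phi_n(K_{t_i};\cdot)+\int_{M_{t_i}}f(E_k-\bar E_k)\,d\mu_\gamma,
\end{equation*}
and controlling the last term by Cauchy--Schwarz against the $L^2$ convergence. Applying the weak continuity from Theorem~\ref{s3:thm-W} to both curvature measures $\Phi_{n-k}$ and $\Phi_n$, I obtain in the limit
\begin{equation*}
\Phi_n(K_\infty;\cdot)=c^{-1}\,\Phi_{n-k}(K_\infty;\cdot),
\end{equation*}
which is exactly the hypothesis of Theorem~\ref{thm-main} with only the coefficient $\lambda_{n-k}=c^{-1}>0$ nonzero (noting $n-k\in\{0,\dots,n-1\}$ since $k\in\{1,\dots,n\}$). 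Therefore $K_\infty$ is a scaled Wulff shape, and the volume constraint forces $K_\infty=\bar r W$ up to translation.

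To upgrade from subsequential convergence to full Hausdorff convergence as $t\to\infty$, I would invoke the monotonicity of $\mathcal{I}_{n+1-k}(K_t,W)$: it is non-increasing in time and bounded below by $1$, hence it has a limit $I_\infty\geq 1$. The argument above shows that one subsequential Hausdorff limit is a translate of $\bar r W$, for which $\mathcal{I}_{n+1-k}=1$; monotonicity then forces $I_\infty=1$. Consequently \emph{every} Hausdorff-subsequential limit of $\{K_t-p_t\}$ satisfies $\mathcal{I}_{n+1-k}=1$, and by the equality case of \eqref{AF-k=n+1} must be a translate of $\bar r W$, yielding the claimed Hausdorff convergence (with respect to a moving center). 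The main obstacle I anticipate is coordinating the special $L^2$-convergent sequence from \eqref{s10.2} with the Blaschke extraction and verifying strict positivity of $c$; once these are in place, the classical upgrade via monotonicity of the isoperimetric ratio is standard.
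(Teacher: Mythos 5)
Your proposal is correct and follows essentially the same route as the paper: Blaschke compactness on translates, passing the $L^2$-decay of $E_k-\bar E_k$ to the identity $\Phi_{n-k}(K_\infty;\cdot)=c\,\Phi_n(K_\infty;\cdot)$ via weak continuity of the anisotropic curvature measures, invoking Theorem~\ref{thm-main}, and then upgrading to full convergence via monotonicity of $\mathcal{I}_{n+1-k}$. The only cosmetic difference is the final step, where the paper cites the quantitative stability estimate (7.124) of Schneider while you use a qualitative compactness-and-uniqueness argument (every subsequential limit has isoperimetric ratio $1$, hence is a translate of $\bar r W$ by the equality case of \eqref{AF-k=n+1}); both are valid.
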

\proof
The proof is similar to Lemma 6.3 in \cite{AW17}. We include it here for completeness. By the estimate \eqref{s5:radius} on the anisotropic outer radius of $K_t$, the Blaschke selection theorem (see Theorem 1.8.7 of \cite{Schn}) implies that there exists a subsequence of times $t_i$ and a convex body $\hat{K}$ such that $K_{t_i}$ converges to $\hat{K}$ in the Hausdorff sense as $t_i\to\infty$. As each $K_{t_i}$ has the  anisotropic inner radius $r(K_{t_i})\geq R_1$, the limit convex body $\hat{K}$ has a positive anisotropic inner radius. Without loss of generality, we may assume that the sequence $t_i$ is the same sequence such that \eqref{s10.2} holds.

We will show that the limit convex body $\hat{K}$ satisfies $\Phi_{n-k}(\hat K;.) = c\,\Phi_{n}(\hat K;.)$ where $c= V_{n-k}(\hat K,W)/V_n(\hat K,W)$.  The weak continuity of the anisotropic curvature measure $\Phi_m$ proved in Theorem \ref{s3:thm-W} is equivalent to the statement that $\int f d\Phi_m(K_i)$ converges to $\int f d\Phi_m(K)$ whenever $f$ is a bounded continuous function on ${\mathbb R}^{n+1}$ and $K_i$ is a sequence of convex sets converging to $K$ in the Hausdorff distance.  In particular we get that $\int f d\Phi_{n-k}(K_{t_i})$ converges to $\int f d\Phi_{n-k}(\hat K)$, and $\int f d\Phi_{n}(K_{t_i})$ converges to $\int f d\Phi_{n}(\hat K)$, as $i\to\infty$.  Since $K_{t_i}$ is smooth and uniformly convex, by \eqref{s3:Phi-s} we obtain for any bounded continuous $f$,
\begin{align*}
\left|\int f d\Phi_{n-k}(K_{t_i}) -c \int f d\Phi_n(K_{t_i})\right|
&= \left|\int_{M_{t_i}} f E_k \gamma(\nu)d{\mathcal H}^n-\int_{M_{t_i}} f c \gamma(\nu)d{\mathcal H}^n\right|\\
&\leq \sup |f| \int_{M_{t_i}}|E_k-c| \gamma(\nu)d{\mathcal H}^n\\
&\leq \sup|f|\int_{M_{t_i}}|E_k-\bar E_k|\gamma(\nu)d{\mathcal H}^n\\
&\quad\null+\sup|f|V_n(K_{t_i},W)\left|\frac{V_{n-k}(K_{t_i},W)}{V_n(K_{t_i},W)}-\frac{V_{n-k}(\hat K,W)}{V_n(\hat K,W)}\right|.
\end{align*}
The left-hand side converges to $\left|\int f d\Phi_{n-k}(\hat K)-c\int f d\Phi_n(\hat K)\right|$ by the weak continuity of the anisotropic curvature measures, while the first term on the right-hand side converges to zero by \eqref{s10.2}, and the second also does by the continuity of the mixed volumes with respect to the Hausdorff distance.  It follows that
\begin{equation*}
  \int fd\Phi_{n-k}(\hat K) = c\int fd\Phi_n(\hat K)
\end{equation*}
for all bounded continuous functions $f$, and therefore that $\Phi_{n-k}(\hat K;.) = c\,\Phi_n(\hat K;.)$ as claimed. By Theorem \ref{thm-main}, the convex body $\hat K$ is a scaled Wulff shape.

By Proposition \ref{s5:prop-monot}, ${\mathcal I}_{n+1-k}(K_t,W)$ is non-increasing in time. Since ${\mathcal I}_{n+1-k}(K_{t_i},W)\to 1$ as $i\to\infty$ for some subsequence of times $t_i\to\infty$, we conclude that ${\mathcal I}_{n+1-k}(K_t,W)\to 1$ as $t\to\infty$.  It follows from the stability estimate (7.124) in \cite{Schn} that the whole family of $K_{t}$ converges to a scaled Wulff shape $\bar{r}W$ as $t\to\infty$, where $\bar r$ is the radius such that $\mathrm{Vol}(K_0)=\bar{r}^{n+1}\mathrm{Vol}(W)$. This completes the proof of Proposition \ref{s10.lem1}.
\endproof

\subsection{Improved estimate on the global term $\phi(t)$}$\ $

The Hausdorff convergence of the solution has the following consequence on the global term $\phi(t)$ in the flow equation \eqref{flow-VMCF}.
\begin{cor}\label{s10.cor}
If $\alpha\geq k$ in the flow equation \eqref{flow-VMCF}, the global term $\phi(t)$ given by \eqref{s1:phi-1} satisfies
\begin{equation}\label{s10.phi1}
  \liminf_{t\to\infty} \phi(t)\geq \bar{r}^{-\alpha},
\end{equation}
where $\bar r$ is the radius such that $\mathrm{Vol}(K_0)=\bar{r}^{n+1}\mathrm{Vol}(W)$. In general, for any $\alpha>0$, the global term $\phi(t)$ satisfies
 \begin{equation}\label{s10.phi2}
   \int_t^{t+d}\biggl|\phi(s)-\frac 1{\bar{r}^{\alpha}}\biggr|ds ~\to~ 0
 \end{equation}
 as $t\to\infty$ for any fixed constant $d>0$.
\end{cor}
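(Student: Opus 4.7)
The proof splits naturally into the two assertions. For \eqref{s10.phi1} with $\alpha\geq k$, I would first observe that $x\mapsto x^{\alpha/k}$ is convex on $[0,\infty)$, so applying Jensen's inequality to the probability measure $d\mu_\gamma/|M_t|_\gamma$ on $M_t$ gives
\[
\phi(t)=\frac{1}{|M_t|_\gamma}\int_{M_t}E_k^{\alpha/k}\,d\mu_\gamma\geq \bar{E}_k(t)^{\alpha/k},
\]
where $\bar{E}_k(t)=V_{n-k}(K_t,W)/V_n(K_t,W)$. The Hausdorff convergence $K_t\to \bar{r}W$ (Proposition \ref{s10.lem1}) together with the continuity of mixed volumes under Hausdorff convergence (see \cite[\S 7]{Schn}), combined with the scaling identity $V_j(\lambda W,W)=(n+1)\lambda^j\mathrm{Vol}(W)$ (immediate from the Steiner formula applied to $\lambda W$), gives $\bar{E}_k(t)\to \bar{r}^{-k}$. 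Taking $\liminf$ in the displayed inequality yields \eqref{s10.phi1}.

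For \eqref{s10.phi2}, I would use the triangle inequality
\[
|\phi(s)-\bar{r}^{-\alpha}|\leq |\phi(s)-\bar{E}_k(s)^{\alpha/k}|+|\bar{E}_k(s)^{\alpha/k}-\bar{r}^{-\alpha}|.
\]
The second term tends to zero pointwise and is uniformly bounded (by continuity of mixed volumes and Corollary \ref{cor-phi-bd}), so its integral over $[t,t+d]$ vanishes in the limit by dominated convergence. For the first term, the starting point is the time-integrated monotonicity identity \eqref{s5:Vn1k}:
\[
\int_0^\infty\int_{M_s}(E_k-\bar{E}_k)(E_k^{\alpha/k}-\bar{E}_k^{\alpha/k})\,d\mu_\gamma\,ds\leq \frac{V_{n+1-k}(K_0,W)}{n-k+1}<\infty.
\]
I would upgrade this to
\[
\int_1^\infty\int_{M_s}(E_k-\bar{E}_k)^2\,d\mu_\gamma\,ds<\infty
\]
via the pointwise inequality $(E_k-\bar{E}_k)^2\leq C(E_k-\bar{E}_k)(E_k^{\alpha/k}-\bar{E}_k^{\alpha/k})$, which follows from the mean value theorem applied to $f(x)=x^{\alpha/k}$ together with two complementary regimes: for $\alpha\geq k$ ($f$ convex), the lower bound $f'(\bar{E}_k)\geq c>0$ coming from the uniform positive lower bound on $\bar{E}_k$ (which is controlled through the inner radius estimate \eqref{s5:radius}) suffices; for $\alpha<k$ ($f$ concave), the bound $f'(\max(E_k,\bar{E}_k))\geq c>0$ coming from the uniform upper bound on $E_k$ (Proposition \ref{s5:thm-Ek-ub}, for $t\geq 1$) does the job.

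Finally, I would convert this $L^2$ decay into an $L^1$ bound on $|\phi(s)-\bar{E}_k(s)^{\alpha/k}|$. Starting from
\[
|\phi(s)-\bar{E}_k(s)^{\alpha/k}|\leq \frac{1}{|M_s|_\gamma}\int_{M_s}|E_k^{\alpha/k}-\bar{E}_k^{\alpha/k}|\,d\mu_\gamma,
\]
in the case $\alpha\geq k$ the mean value theorem with bounded $E_k,\bar{E}_k$ gives $|E_k^{\alpha/k}-\bar{E}_k^{\alpha/k}|\leq C|E_k-\bar{E}_k|$ pointwise; in the case $\alpha<k$ I would use instead the H\"older continuity inequality $|a^p-b^p|\leq |a-b|^p$ for $p=\alpha/k\in(0,1)$ and $a,b\geq 0$, followed by H\"older's inequality in the $\mu_\gamma$-integration with the uniform bound on $|M_s|_\gamma$. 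A subsequent Cauchy--Schwarz (respectively H\"older) application in the time variable then controls $\int_t^{t+d}|\phi(s)-\bar{E}_k(s)^{\alpha/k}|\,ds$ by a positive power of $\int_t^{t+d}\int_{M_s}(E_k-\bar{E}_k)^2\,d\mu_\gamma\,ds$, which tends to zero by the finite integral above. The main technical delicacy is the regime $\alpha<k$, where $f'$ is unbounded near the origin and the argument depends crucially on retaining the uniform upper curvature bound supplied by Tso's technique.
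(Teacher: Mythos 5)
Your proof of \eqref{s10.phi1} matches the paper's: Jensen/H\"older applied to the convex function $x\mapsto x^{\alpha/k}$, then Hausdorff convergence and continuity of the mixed volumes. For \eqref{s10.phi2} you take a genuinely different route. The paper proves the pointwise ``upper-slope'' estimate
\[
\bigl(E_k^{\alpha/k}-\bar E_k^{\alpha/k}\bigr)^2\le C\,(E_k-\bar E_k)\bigl(E_k^{\alpha/k}-\bar E_k^{\alpha/k}\bigr),
\]
so that the non-increase of $V_{n+1-k}(K_t,W)$ controls $\int_t^{t+d}\!\int_{M_s}(E_k^{\alpha/k}-\bar E_k^{\alpha/k})^2\,d\mu_\gamma\,ds$ directly, and a single Cauchy--Schwarz on $M_s$ then yields the $L^1$ bound on $|\phi(s)-\bar E_k^{\alpha/k}|$. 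You instead re-use the ``lower-slope'' estimate $(E_k-\bar E_k)^2\le C(E_k-\bar E_k)(E_k^{\alpha/k}-\bar E_k^{\alpha/k})$ from the Hausdorff-convergence step to get $L^2$ decay of $E_k-\bar E_k$, then convert back to $|E_k^{\alpha/k}-\bar E_k^{\alpha/k}|$ via a separate case split: a Lipschitz bound (using the upper bound on $E_k$) when $\alpha\ge k$, and the H\"older modulus $|a^p-b^p|\le|a-b|^p$ followed by H\"older on $M_s$ and Jensen in time when $\alpha<k$. Both routes work; yours economizes by recycling an earlier lemma, the paper's is shorter because it needs only one Cauchy--Schwarz and no separate conversion. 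One caution on your wording for $\alpha\ge k$: the mean value theorem gives $f'(\xi)$ for an intermediate $\xi$ which, when $E_k<\bar E_k$, lies below $\bar E_k$, so ``$f'(\xi)\ge f'(\bar E_k)$'' is not literally what the MVT gives. The correct pointwise bound is $\frac{b^p-a^p}{b-a}\ge \max(a,b)^{p-1}\ge \bar E_k^{\,p-1}$ for $a<b$ and $p\ge1$ (equivalently, as in the paper, write the slope as $p\int_0^1((1-s)a+sb)^{p-1}\,ds$ and use $(1-s)a+sb\ge(1-s)\bar E_k$); the conclusion you want, a lower bound depending only on the uniform positive lower bound for $\bar E_k$, does hold.
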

\proof
If $\alpha\geq k$, then by the H\"{o}lder inequality we get
\begin{equation*}
  \phi(t)=\frac 1{|M_t|_{\gamma}}\int_{M_t}{E}_k^{\alpha/k}d\mu_{\gamma}\geq \left(\frac{\int_{M_t}{E}_k d\mu_{\gamma}}{|M_t|_{\gamma}}\right)^{\alpha/k}=\left(\frac{V_{n-k}(K_t,W)}{V_n(K_t,W)}\right)^{\alpha/k}.
\end{equation*}
Since $K_t$ converges to a scaled Wulff shape $\bar{r}W$ in the Hausdorff sense as $t\to\infty$, the continuity of the mixed volumes implies the estimate \eqref{s10.phi1}.

To show \eqref{s10.phi2} for a general $\alpha>0$, as at the beginning of this section we note that
\begin{align*}
 \left(E_k^{\alpha/k}- \bar{E}_k^{\alpha/k}\right)^2=& \frac{\alpha}k\int_0^1\left((1-s)\bar{E}_k+sE_k\right)^{\alpha/k-1}ds\cdot (E_k-\bar{E}_k) \left(E_k^{\alpha/k}- \bar{E}_k^{\alpha/k}\right).
\end{align*}
If $\alpha>k$,
\begin{align*}
  \frac{\alpha}k\int_0^1\left((1-s)\bar{E}_k+sE_k\right)^{\alpha/k-1}ds \leq  &\frac{\alpha}{k} \left(\sup_{M_t}E_k\right)^{\alpha/k-1}.
\end{align*}
If $0<\alpha<k$,
\begin{align*}
  \frac{\alpha}k\int_0^1\left((1-s)\bar{E}_k+sE_k\right)^{\alpha/k-1}ds \leq  & \bar{E}_k^{\alpha/k-1} \frac{\alpha}k\int_0^1(1-s)^{\alpha/k-1}ds.
\end{align*}
In both cases, we obtain
\begin{align}\label{s10.pf1}
 \left(E_k^{\alpha/k}- \bar{E}_k^{\alpha/k}\right)^2\leq & C (E_k-\bar{E}_k) \left(E_k^{\alpha/k}- \bar{E}_k^{\alpha/k}\right),
\end{align}
where $C=C(n,k,\alpha,\gamma,K_0)$ comes from the bounds on $E_k, \bar{E}_k$ in Proposition~\ref{s5:thm-Ek-ub} and Corollary~\ref{cor-phi-bd}. Since $V_{n+1-k}(K_t,W)$ is non-increasing and converges to the value $V_{n+1-k}(\bar{r}W,W)$, for any small $\varepsilon>0$, there exists a large time $N_0$ such that for all times $t_1>t_2> N_0$,
\begin{equation*}
 V_{n+1-k}(K_{t_2},W)-V_{n+1-k}(K_{t_1},W)\leq \varepsilon.
\end{equation*}
In particular, for any fixed constant $d>0$ the estimates \eqref{s5:Vn1k} and \eqref{s10.pf1} imply that
\begin{align}\label{s10.pf2}
\int_t^{t+d}\int_{M_s}\left(E_k^{\alpha/k}- \bar{E}_k^{\alpha/k}\right)^2d\mu_\gamma ds\leq  & C\varepsilon
\end{align}
for all $t>N_0$, where $C$ depends on $n,k,\alpha,\gamma,K_0$.  Applying the H\"{o}lder inequality
\begin{align*}
  \biggl|\phi(t)-\bar{E}_k^{\alpha/k}\biggr| =& \frac 1{|M_t|_\gamma}\biggl|\int_{M_t}\left(E_k^{\alpha/k} - \bar{E}_k^{\alpha/k}\right)d\mu_\gamma\biggr|\nonumber\\
  \leq & \left(\frac 1{|M_t|_\gamma}\int_{M_t}\left(E_k^{\alpha/k} - \bar{E}_k^{\alpha/k}\right)^2d\mu_\gamma\right)^{1/2}
\end{align*}
and the uniform bounds on $|M_t|_\gamma$, we have
\begin{align}\label{s10.pf3}
\int_t^{t+d}\biggl|\phi(s)-\bar{E}_k^{\alpha/k}\biggr|ds\leq  & C\varepsilon^{1/2}.
\end{align}
By Proposition \ref{s10.lem1}, $K_t$ converges to a scaled Wulff shape $\bar{r}W$ as $t\to\infty$. The continuity of the mixed volume $V_{n-k}(\cdot,W)$ with respect to the Hausdorff distance implies that
\begin{align*}
  \bar{E}_k=&~\frac{V_{n-k}(K_t,W)}{V_n(K_t,W)} \to ~\frac{V_{n-k}(\bar{r}W,W)}{V_n(\bar{r}W,W)}=\bar{r}^{-k},\quad \mathrm{as} ~t\to\infty.
\end{align*}
Then we conclude from \eqref{s10.pf3} that for any fixed constant $d>0$ and any small  $\varepsilon>0$, there exists a large $N_0$ such that
\begin{align}\label{s10.pf4}
\int_t^{t+d}\biggl|\phi(s)-\bar{r}^{-\alpha}\biggr|ds\leq  & C\varepsilon^{1/2}
\end{align}
for all $t>N_0$, where $C$ is a uniform constant depending only on $n,k,\alpha,\gamma$ and $K_0$.
\endproof

\subsection{Lower bound on ${E}_k$}\label{sec:LB}$\ $

In this subsection, we prove a uniform positive lower bound on the $k$th anisotropic mean curvature $E_k$. The key is the Hausdorff convergence of $K_t$ to a scaled Wulff shape $\bar{r}W$ as $t\to\infty$. We also use the uniform two-sided positive bounds on the global term $\phi(t)$.
\begin{prop}\label{s11.prop1}
Let $M_t$, $t\in [0,\infty),$ be a smooth solution of the flow \eqref{flow-VMCF}.  Then there exists a positive constant $C=C(n,k,\alpha,\gamma,K_0)$ such that ${E}_k\geq C$ on $M_t$ for all $t\geq 0$.
\end{prop}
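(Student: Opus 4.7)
The plan is to adapt the Smoczyk-type maximum principle argument of \cite[\S 7]{AW17} to the anisotropic setting, with the Hausdorff convergence from Proposition~\ref{s10.lem1} as the crucial input. Since the flow exists smoothly with $M_t$ strictly convex for all $t\geq 0$ by Proposition~\ref{s5:prop-conv}, $E_k$ is uniformly positive on any compact time interval by continuity, so it suffices to establish the lower bound for $t\geq T_0$ with $T_0$ large.

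By Proposition~\ref{s10.lem1}, $K_t$ converges in the Hausdorff sense to the scaled Wulff shape $\bar r W$. I would fix the origin at the center of $\bar r W$ and work with the anisotropic support function $s(\cdot,t)$ of $M_t$ relative to this origin; then $s(\cdot,t)$ converges uniformly to the constant $\bar r$ on $\Sigma$. For any $\varepsilon>0$, one can choose $T_0$ so that $|s-\bar r|\leq\varepsilon$ on $\Sigma\times[T_0,\infty)$, and Lemma~\ref{s9.lemC1} (applied with $R_1=\bar r-\varepsilon$, $R_2=\bar r+\varepsilon$) then yields the uniform $C^1$ bound $|\bar\nabla s|^2\leq C\varepsilon$ on $[T_0,\infty)$. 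This gives the small-gradient estimate which is the crucial consequence of Hausdorff convergence.

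Following \cite[\S 7]{AW17}, I would introduce an auxiliary function of the form
\begin{equation*}
W(z,t) = -\Psi(z,t)\, h(s(z,t)) = E_k^{\alpha/k}(z,t)\, h(s(z,t)),
\end{equation*}
where $h$ is a positive monotone function (for example $h(s)=(s-c_0)^\beta$ with $c_0\in(0,\bar r/2)$ and $\beta>0$ small) chosen so that a uniform positive lower bound on $W$ produces the desired lower bound on $E_k$. At the spatial minimum of $\log W$, the vanishing gradient condition provides a linear relation between $\bar\nabla(-\Psi)$ and $\bar\nabla s$, and the Hessian is non-negative. Substituting this relation into the evolution equation \eqref{s4:flow-Psi} for $\Psi$, and using the identity $L s=-\alpha\Psi-s\dot\Psi^{ij}\bar g_{ij}$ obtained from \eqref{s4:s-1}, I would derive an evolution inequality for $W$ at its minimum. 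The uniform positivity of $\phi(t)$ from Corollary~\ref{cor-phi-bd} and the upper bound on $E_k$ from Proposition~\ref{s5:thm-Ek-ub} are then used to show that, once the free parameters $c_0,\beta$ are tuned appropriately, the minimum of $W$ cannot decrease past a uniform positive threshold.

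The principal technical obstacle — and where the anisotropic case genuinely departs from \cite{AW17} — is the appearance of extra terms containing the Wulff tensor $Q_{ijk}$ and its covariant derivative $\bar\nabla Q$ in the evolution of $\Psi$, arising from the Codazzi identity \eqref{s4:Codaz} and the Simons-type identity \eqref{s4:simon}. These anisotropy corrections are structurally proportional to either $\bar\nabla s$ or to bounded contractions of $\dot\Psi^{ij}$, and absorbing them is exactly what requires the $C^1$ smallness $|\bar\nabla s|^2\leq C\varepsilon$ from the Hausdorff convergence step, achieved by taking $T_0$ large. A further delicate point is the degeneracy $\dot\Psi^{ij}\bar g_{ij}=\alpha F_*^{-\alpha-1}\dot F_*^{ij}\bar g_{ij}\sim E_k^{(\alpha+1)/k}$, which vanishes precisely where one would most want a restoring force; the correct choice of $h(s)$, together with the uniformly positive $\phi(t)\geq C_1>0$, is what compensates for this degeneracy and closes the maximum-principle estimate.
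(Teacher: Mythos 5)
Your plan diverges from the paper's at the choice of auxiliary function, and the divergence is not harmless. The paper's proof (in \S\ref{sec:LB}) uses the \emph{time-weighted additive} test function
\begin{equation*}
  f(z,t)=-(1+\alpha)(t-t_1)\Psi(z,t)-(1+\alpha)\int_{t_1}^t\phi\,ds+s(z,t),
\end{equation*}
where the origin is recentered at each $t_1$ so that $s(z_1,t_1)=r(t_1)$. The whole point of the time-dependent weight $(t-t_1)$ is that in $(\partial_t-L)f$ the unbounded reaction term $\dot\Psi^{k\ell}\bar g_{k\ell}$ appears with the coefficient $s-(1+\alpha)(t-t_1)(\Psi+\phi)$, which can be kept \emph{nonnegative} on a short time interval $[t_1,t_2]$ of fixed length (using $|\Psi|\leq c_+$ from Proposition~\ref{s5:thm-Ek-ub} and $\phi\leq\phi_+$ from Corollary~\ref{cor-phi-bd}); one therefore never needs an upper bound on $\dot\Psi^{k\ell}\bar g_{k\ell}$ at all.

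Your \emph{multiplicative} test function $W=-\Psi\cdot h(s)$ does not have this structure, and the argument as proposed has a gap. Computing $(\partial_t-L)\log W$ at a spatial minimum via \eqref{s4:flow-Psi} and \eqref{s4:s-1}, the first-order $Q$-term in $L$ vanishes there and one finds
\begin{equation*}
  (\partial_t-L)\log W\Big|_{\min}\;=\;\Bigl(1+\tfrac{\phi(t)}{\Psi}+s\tfrac{h'(s)}{h(s)}\Bigr)\dot\Psi^{k\ell}\bar g_{k\ell}
  +\tfrac{h'(s)}{h(s)}\bigl(\phi(t)+(1+\alpha)\Psi\bigr)+\text{(controlled gradient terms).}
\end{equation*}
The dangerous piece is $\frac{\phi(t)}{\Psi}\dot\Psi^{k\ell}\bar g_{k\ell}=-\alpha\phi(t)\,\frac{\sum_i\dot f_*^i(\tau)}{f_*(\tau)}$. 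Your estimate $\dot\Psi^{ij}\bar g_{ij}\sim E_k^{(\alpha+1)/k}$ overlooks the factor $\dot F_*^{ij}\bar g_{ij}=\sum_i\dot f_*^i$, which is only bounded below by $1$ (concavity), not above, so the quantity $\frac{\sum_i\dot f_*^i}{f_*}=\frac{1}{kE_k}\sum_i\dot E_k^i\kappa_i^2$ can be large even when $E_k$ is small. For $k=n$, for instance, this quantity is exactly $E_1(\kappa)$, which can diverge if $E_n\to 0$ via a very anisotropic limit ($\kappa_1\to\infty$, $\kappa_n\to 0$); no uniform upper bound on $\max_i\kappa_i$ is available at this stage (Proposition~\ref{s5:prop-conv} only gives a bound depending on $T$). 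The compensating terms from $h$ — namely $s\frac{h'}{h}\dot\Psi^{k\ell}\bar g_{k\ell}$ and the gradient terms — scale like $(-\Psi)\frac{\sum_i\dot f_*^i}{f_*}$ and vanish relative to the bad term as $-\Psi\to 0$, while $\frac{h'}{h}\phi(t)$ is merely bounded. So no choice of $h(s)$ closes the estimate for $k\geq 2$. (It does close for $k=1$, because $E_1$ small forces every $\kappa_i$ small, and then $\frac{\sum_i\dot f_*^i}{f_*}=\frac{\sum\kappa_i^2}{\sum\kappa_i}$ is small — which is essentially why one can get away with multiplicative test functions in the anisotropic $H$-flow; but the proposition must hold for all $k$.)

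A secondary but worth-noting slip: you attribute the anisotropy corrections to $\bar\nabla Q$ terms arising from the Codazzi identity \eqref{s4:Codaz} and the Simons identity \eqref{s4:simon} in the evolution of $\Psi$. The evolution \eqref{s4:flow-Psi} is derived directly by differentiating $\Psi(\tau_{ij})$ and using \eqref{flow-gauss}; neither Codazzi nor Simons enters, and the only $Q$-contribution sits inside the first-order part of $L$, which drops out at critical points. The Simons identity and $\bar\nabla Q$ are only needed later, for the evolution of $\tau_{ij}$ in the $C^2$ estimate (Proposition~\ref{s5:prop-conv} and Lemma~\ref{s12.lem3}), not here.
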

\proof
It suffices to control ${E}_k$ from below for sufficiently large times. By Proposition \ref{s10.lem1}, $K_t$ converges to a scaled Wulff shape $\bar{r}W$ as $t\to\infty$ in the Hausdorff sense. Then for any small $\epsilon>0$, there exists a sufficiently large time $N_0>0$ such that for all $t\geq N_0$, the anisotropic outer radius $R(t)$ and anisotropic inner radius $r(t)$ of $K_t$ relative to $W$ satisfy
\begin{equation}\label{s11.pf1}
  (1-\epsilon)\bar{r}~\leq~ r(t)\leq~R(t)\leq~(1+\epsilon)\bar{r}.
\end{equation}
In the following, we only consider times $t\geq N_0$.

Let $t_1>N_0$ and set
\begin{equation*}
  \varphi_{t_1}(t)=\int_{t_1}^t\phi(s)ds.
\end{equation*}
We use the anisotropic Gauss map parametrization described in \S \ref{sec:gauss map}. Choose the origin to be the center of the inner Wulff shape such that $r(t_1)W\subset K_{t_1}$.
Then
\begin{equation*}
s(z,t_1)\geq~s(z_1,t_1) =r(t_1)\geq~(1-\epsilon)\bar{r},
\end{equation*}
where $z_1$ is the minimum point of $s(z,t_1)$. Define a function $f(z,t)$ on $\Sigma=\partial W$ by
\begin{equation*}
  f(z,t)~=~-(1+\alpha)(t-t_1)\Psi(z,t)-(1+\alpha)\varphi_{t_1}(t)+s(z,t)
\end{equation*}
for $t\geq t_1$. Combining \eqref{flow-gauss} and \eqref{s4:flow-Psi} gives the evolution equation of $f(z,t)$
\begin{align}\label{s11.pf2}
   \frac{\partial}{\partial t}f (z,t)= & ~\dot{\Psi}^{k\ell}(z,t)\bar{\nabla}_k\bar{\nabla}_\ell f(z,t)-\frac 12\dot{\Psi}^{k\ell}(z,t)Q_{k\ell p}\bar{\nabla}_pf-\alpha\phi(t) \nonumber\\ &\quad+\biggl(s(z,t)-(1+\alpha)(t-t_1)(\Psi+\phi(t))\biggr)\dot{\Psi}^{k\ell}\bar{g}_{k\ell}.
\end{align}
We now show that the last term on the right-hand side of \eqref{s11.pf2} is nonnegative for a short time after time $t_1$.  By Theorem \ref{s5:thm-Ek-ub}, $\Psi=-{E}_k^{\alpha/k}$ is bounded below by a uniform negative constant $-c_+$. On the other hand, by Corollary \ref{cor-phi-bd} the function $\phi(t)$ is bounded from above and below by positive constants:
\begin{equation*}
  0<\phi_-\leq \phi(t)\leq~\phi_+,
\end{equation*}
where $\phi_-,\phi_+$ depend only on  $n,k,{\gamma},\alpha$ and $K_0$.  Then integrating the evolution equation \eqref{flow-gauss} of $s(z,t)$ gives that
\begin{align*}
s(z,t)\geq & ~s(z,t_1)+(\phi_--c_+)(t-t_1)
   \geq  (1-\epsilon)\bar{r}+(\phi_--c_+)(t-t_1).
\end{align*}
Let $t_2>t_1$ be the time such that
\begin{equation}\label{s11.pf3}
  t_2-t_1=~\frac{(1-\epsilon)\bar{r}}{c_+-\phi_-+(1+\alpha)\phi_+},
\end{equation}
which is positive and independent of the time $t_1$. Then for any $t\in [t_1,t_2]$, we have
\begin{align}\label{s11.pf4}
&s(z,t)-(1+\alpha)(t-t_1)(\Psi(z,t)+\phi(t)) \nonumber\\
 & \geq  (1-\epsilon)\bar{r}+(\phi_--c_+)(t-t_1)-(1+\alpha)(t-t_1)\phi_+\geq~0.
\end{align}

Since $\dot{\Psi}^{k\ell}\bar{g}_{k\ell}\geq 0$, combining \eqref{s11.pf2} with \eqref{s11.pf4} gives that
\begin{align*}
   \frac{\partial}{\partial t}f (z,t)
   \geq&~\dot{\Psi}^{k\ell}(z,t)\bar{\nabla}_k\bar{\nabla}_\ell f(z,t)-\frac 12\dot{\Psi}^{k\ell}(z,t)Q_{k\ell p}\bar{\nabla}_pf-\alpha\phi(t)
\end{align*}
for all time $t\in [t_1,t_2]$. The maximum principle implies that
\begin{align*}
  f(z,t) \geq & \min_{M_{t_1}}f(z,t_1)-\alpha\varphi_{t_1}(t)   =~s(z_1,t_1)-\alpha\varphi_{t_1}(t)
\end{align*}
for all time $t\in [t_1,t_2]$, which implies that
 \begin{equation}\label{s11.Psi-lbd1}
   -(1+\alpha)\Psi(z,t)\geq~\frac{\varphi_{t_1}(t)-s(z,t)+s(z_1,t_1)}{t-t_1}
 \end{equation}
for all $t\in (t_1,t_2]$ with $t_2$ defined in \eqref{s11.pf3}.

To estimate the lower bound of $-\Psi$, it remains to estimate the right-hand side of \eqref{s11.Psi-lbd1}. Recall that the origin is chosen such that $r(t_1)W\subset K_{t_1}$. Then by \cite[Lem.~7.2]{And01}, we have
\begin{equation}\label{s11.pf5}
 s(z, t_1)\leq R(t_1)+C(R(t_1)-r(t_1))\leq (1+(1+2C)\epsilon)\bar{r},\quad \forall~z\in\Sigma,
\end{equation}
where $C$ is a constant only depending on $\gamma$.

From the flow equation \eqref{flow-gauss}, we deduce that the spatial maximum $\bar{s}(t)=\max_{\Sigma}s(\cdot,t)$ of the anisotropic support function $s$ satisfies
\begin{equation}\label{s11.pf6}
   \frac d{dt}\bar{s}(t) =~-F_*({\tau}_{ij})^{-\alpha}+\phi(t)\leq ~\phi_+,
 \end{equation}
 which implies that
 \begin{align*}
 \bar{s}(t)\leq&~\bar{s}(t_1)+\phi_+(t-t_1)~\leq~ 2\bar{s}(t_1), \quad \forall~t\in [t_1,t_3],
 \end{align*}
where $t_3$ is given by $t_3-t_1= \phi_+^{-1}{\bar s}(t_1)$. By the definition of $\tau_{ij}$, at the maximum point of $s(\cdot,t)$, we have ${\tau}_{ij}\leq \bar{g}_{ij}s$. Therefore,
\begin{align*}
  \frac d{dt}\big(\varphi_{t_1}(t)- \bar{s}(t)+s(z_1,t_1)\big)=& F_*({\tau}_{ij})^{-\alpha}\\
  \geq &~ \bar{s}(t)^{-\alpha} \\
  \geq &~2^{-\alpha} \bar{s}(t_1)^{-\alpha}, \quad \forall~t\in [t_1,t_3],
\end{align*}
which implies that
\begin{align}\label{s11.pf7}
\varphi_{t_1}(t)-\bar{s}(t)+s(z_1,t_1)\geq &~-\bar{s}(t_1)+s(z_1,t_1) +2^{-\alpha}\bar{s}(t_1)^{-\alpha}(t-t_1)\nonumber\\
 \geq &~ 2^{-1-\alpha}\bar{s}(t_1)^{-\alpha}(t-t_1),
\end{align}
provided that
\begin{equation*}
  t\geq~t_1+2^{1+\alpha}r_+^{\alpha}(\bar{s}(t_1)-s(z_1,t_1))~=:t_4.
\end{equation*}
By \eqref{s11.pf1} and \eqref{s11.pf5}, we know that
\begin{equation*}
\bar{s}(t_1)-s(z_1,t_1)\leq ~2(1+C)\epsilon \bar{r}
\end{equation*}
can be arbitrarily small by assuming $N_0$ is sufficiently large. Then the waiting time $t_4-t_1$ can be made small such that $t_4<\min\{t_2,t_3\}$ by choosing $N_0$ sufficiently large. Combining \eqref{s11.Psi-lbd1} and \eqref{s11.pf7} yields that
 \begin{align*}
   -(1+\alpha)\Psi(z,t)\geq&~\frac{\varphi_{t_1}(t)-s(z,t)+s(z_1,t_1)}{t-t_1}\nonumber\\
   \geq&~2^{-1-\alpha}\bar{s}(t_1)^{-\alpha}\nonumber\\
   \geq&~2^{-1-\alpha}(1+(1+2C)\epsilon)^{-\alpha}\bar{r}^{-\alpha}
 \end{align*}
 for all time $t\in[t_4,\min\{t_2,t_3\}]$. This gives the uniform positive lower bound on ${E}_k$.
\endproof

\section{Smooth convergence}\label{sec:SC}
In this section, we prove the smooth convergence of the flow \eqref{flow-VMCF} and complete the proof of Theorem \ref{thm-2}. We will consider the three cases (i) $k=1$, $\alpha>0$; (ii) $k=n$, $\alpha>0$ and (iii) $\alpha\geq k=1,\dots,n$  separately.

\subsection{Smooth convergence for $k=1$, $\alpha>0$}$\ $

For $k=1$, from Proposition \ref{s5:thm-Ek-ub} we see that $nE_1=\sum_{i=1}^n\kappa_i\leq C$ for some constant $C>0$. Since $M_t$ is convex for each $t>0$ by Proposition \ref{s5:prop-conv}, we get that the anisotropic principal curvatures are bounded
\begin{equation*}
  0<\kappa_i\leq C,\quad i=1,\cdots,n
\end{equation*}
along the flow \eqref{flow-VMCF}. This is equivalent to the $C^2$ estimate of $M_t$. In the following lemma, we derive the higher regularity of $M_t$.
\begin{lem}
Let $M_t$ be a smooth solution to the flow \eqref{flow-VMCF} with $k=1$, $\alpha>0$. Then for any integer $\ell\geq 2$, there exists a constant $C_\ell$ depending only on $n,\alpha,\ell,\gamma$ such that
\begin{equation*}
  |\hat{\nabla}^\ell \mathcal{W}_{\gamma}|\leq C_\ell
\end{equation*}
for all $t>0$, where $\mathcal{W}_{\gamma}$ is the anisotropic Weingarten map of $M_t$.
\end{lem}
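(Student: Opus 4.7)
The plan is to upgrade the two-sided bounds on $E_1$ into uniform-in-time two-sided positive bounds on all anisotropic principal curvatures, and then invoke the standard regularity theory for concave uniformly parabolic PDEs in the anisotropic Gauss map parametrization.

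For the $C^2$ estimate, the upper bound $\kappa_i\leq nE_1\leq C$ follows immediately from Proposition~\ref{s5:thm-Ek-ub} together with the convexity of $M_t$ from Proposition~\ref{s5:prop-conv}. The difficult half is a \emph{uniform} positive lower bound on each $\kappa_i$, equivalently a uniform upper bound on the largest anisotropic principal radius $\tau_1=\max_i\tau_i$. I would revisit the auxiliary function $\omega=\zeta/(s-R_1/4)^a$ used in the proof of Proposition~\ref{s5:prop-conv}, but now exploit the two global-in-time facts that were unavailable there: the uniform positive lower bound $\phi(t)\geq\phi_->0$ from Corollary~\ref{cor-phi-bd} and the uniform positive lower bound $E_1\geq c>0$ from Proposition~\ref{s11.prop1}. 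The point is that with $-\Psi=E_1^\alpha$ bounded below by a positive constant, the term $-a\omega(\phi(t)+(1+\alpha)\Psi)/(s-R_1/4)$ in the evolution inequality \eqref{s9.omeg-3} no longer needs to be thrown away, and together with the favourable sign of $-a s\sum_k\dot\psi^k/(s-R_1/4)$ it dominates the bad term $C_2\sum_k\dot\psi^k$ at large $\omega$. One obtains at any spatial maximum an inequality of the Tso form $\partial_t\omega\leq C_5-C_6\omega^{1+1/\alpha}$, yielding a time-independent upper bound on $\omega$ and hence on $\tau_1$.

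Once the full two-sided bound $c\leq \kappa_i\leq C$ is in hand, the flow equation \eqref{flow-gauss} becomes uniformly parabolic. For $k=1$ the dual function $F_*(\tau)=n/\sum_i\tau_i^{-1}$ is the harmonic mean, hence concave, and composing with the concave increasing map $x\mapsto -x^{-\alpha}$ on $(0,\infty)$ shows that $\Psi=-F_*^{-\alpha}$ is a concave function of $\tau_{ij}$, and therefore concave in $D^2 s$ (since $\tau_{ij}[s]$ is an affine function of $D^2s$). Applying Theorem~1.1 of \cite{TW13} yields a uniform $C^{2,\beta}$ estimate on $s$ for some $\beta\in(0,1)$ independent of $t$. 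Linearizing about $s$ and bootstrapping via the parabolic Schauder theory in \cite{Lieb96} gives uniform bounds on $\|s\|_{C^{\ell+2,\beta}(\Sigma)}$ for every $\ell\geq 0$, which translate through the anisotropic Gauss parametrization to the desired uniform bounds $|\hat\nabla^\ell\mathcal{W}_\gamma|\leq C_\ell$.

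The main obstacle is the uniform-in-time $C^2$ estimate. The anisotropic commutator terms involving $Q$ and $\bar\nabla Q$ that appear in the evolution equation of $\tau_{ij}$ preclude a direct tensorial maximum principle, and the argument in Proposition~\ref{s5:prop-conv} as written produces only a locally bounded estimate that grows with the number of iterations. The correction is to replace the short-time balance used there with the global balance enabled by the newly available uniform two-sided bounds on $s$, $\phi(t)$ and $\Psi$, and to tune the exponent $a$ in $\omega$ accordingly; once this step is carried out the remainder of the argument is standard.
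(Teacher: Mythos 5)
Your plan to first establish a uniform two-sided bound on all anisotropic principal curvatures and then invoke Tian--Wang via the Gauss map parametrization is a genuinely different route from the paper, and the key intermediate step is where it breaks down.

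The main gap is your proposed uniform-in-time upper bound on $\tau_1$. You claim that by retaining the term $-\frac{a\omega}{s-R_1/4}\bigl(\phi(t)+(1+\alpha)\Psi\bigr)$ together with the term $-\frac{a s\omega}{s-R_1/4}\sum_k\dot\psi^k$ one gets a Tso-type inequality $\partial_t\omega\leq C_5 - C_6\omega^{1+1/\alpha}$. This cannot hold: for $k=1$, $f_*(\tau)=n/\sum_i\tau_i^{-1}$ has $\dot f_*^k = F_*^2/(n\tau_k^2)$, so $\sum_k\dot\psi^k = \alpha F_*^{-\alpha-1}\sum_k\dot f_*^k$ stays \emph{bounded} (above and below) once $E_1$ and the $\tau_i$ are bounded below, and in particular it does not grow as $\tau_1\to\infty$. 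Thus the dissipation coming from the first term is at most linear in $\omega$, not of order $\omega^{1+1/\alpha}$. Whether the linear dissipation even dominates the remaining $O(\omega)$ terms depends on the relative sizes of the constants: the coefficient of $\omega$ is $\alpha E_1^{\alpha+1}\bigl(\sum_k\dot f_*^k\bigr)\bigl(C_1+C_2-\frac{as}{s-R_1/4}\bigr) - \frac{a}{s-R_1/4}\bigl(\phi(t)+(1+\alpha)\Psi\bigr)$, and since both pieces scale linearly in $a$, increasing $a$ does not help if the second piece has the wrong sign and is larger; this is not guaranteed by the two-sided bounds on $E_1$ and $\phi(t)$. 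The paper itself only proves a uniform upper bound on $\tau_1$ via the Gauss map parametrization in Lemma~\ref{s12.lem3}, where the hypothesis $\alpha\geq k$ and the much finer $C^0$/$C^1$ estimates of Lemma~\ref{s12.lem1} plus the Guan--Shi--Sui property (Proposition~\ref{s12.prop-Guan}) are essential.

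The paper's actual proof for $k=1$ sidesteps the uniform lower bound on the $\kappa_i$ entirely. It writes $M_t$ as a local graph $x=y^ie_i + u(y,t)e_0$ and shows the flow is equivalent to $\partial_t u = \hat\gamma(Du)\bigl(\tfrac{1}{n}D^iD^j\hat\gamma|_{Du}\,u_{ij}\bigr)^\alpha - \hat\gamma(Du)\phi(t)$. Because $E_1$ is \emph{linear} in $D^2u$, the linearized coefficient is $\hat\gamma(Du)\,\alpha E_1^{\alpha-1}\tfrac{1}{n}D^iD^j\hat\gamma|_{Du}$, and uniform parabolicity follows from the two-sided bound $1/C\leq E_1\leq C$ (Propositions~\ref{s5:thm-Ek-ub} and \ref{s11.prop1}), the $C^1$ bound, and positive-definiteness of $D^2\hat\gamma$; one never needs a lower bound on the smallest $\kappa_i$. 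The $C^2$ bound $|D^2u|\leq C$ comes from the upper bound on the $\kappa_i$ (which for $k=1$ is immediate from $\kappa_i\leq nE_1\leq C$ and convexity). For the Hölder estimate one then cites Theorem~6 of \cite{And04} rather than Tian--Wang, which is the right tool here precisely because the equation is a positive power of a linear operator, and hence is concave in $D^2u$ only when $\alpha\leq1$ and convex when $\alpha\geq1$; your proposed reduction to Tian--Wang would in any case require the uniform $C^2$ bound you haven't established.
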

\proof
As in \cite[\S 8]{And01}, we write the evolving hypersurface locally as a graph $x(y^1,\dots,y^n,t)=y^ie_i+u(y^1,\dots,y^n,t)e_0$, where $\{e_i\}_{i=0}^n$ is the standard basis for $\R^{n+1}$. The tangent space of the graph is spanned by $\partial_i=e_i+u_ie_0$, where $u_i=\partial u/\partial y^i$, $i=1,\dots,n$. The isotropic normal vector field is
\begin{equation*}
  \nu=(1+|Du|^2)^{-1/2}(Du,-1).
\end{equation*}
Let $\gamma$ be the $1$-homogenous extension of the support function of the Wulff shape $\Sigma$. Then $\gamma\in C^{\infty}(\mathbb{R}^{n+1}\setminus \{0\})$. Define a function $\hat{\gamma}$ on $\mathbb{R}^n$ by
\begin{equation}\label{s10:F-hat}
  \hat{\gamma}(p_1,\dots,p_n)={\gamma}(p_1,\dots,p_n,-1).
\end{equation}
Then $\hat{\gamma}(Du)=\sqrt{1+|Du|^2}{\gamma}(\nu)$. Let $\omega=(Du,-1)\in \mathbb{R}^{n+1}$. By the $1$-homogeneity of $\gamma$,
\begin{align*}
  \hat{\gamma}(Du)=&\sum_{i=1}^nD^i\gamma\big|_\omega D_iu-D^0\gamma\big|_\omega
  =\sum_{i=1}^nD^i\hat{\gamma}\big|_{Du} D_iu-D^0\gamma\big|_\omega.
\end{align*}
Then the anisotropic unit normal $\nu_{\gamma}=D\gamma(\nu)$ can be expressed as
\begin{align}\label{s10:nu-F}
  \nu_\gamma=&\sum_{i=1}^nD^i\gamma\big|_\omega e_i+D^0\gamma\big|_\omega e_0\nonumber\\
  =&\sum_{i=1}^nD^i\hat{\gamma}|_{Du}e_i+\left(\sum_{i=1}^nD_iuD^i\hat{\gamma}|_{Du}-\hat{\gamma}(Du)\right)e_0.
\end{align}
Differentiate \eqref{s10:nu-F} with respect to $y^\ell$:
\begin{align*}
  \partial_{\ell}\nu_{\gamma}=&  u_{\ell i}D^iD^k\hat{\gamma}\big|_{Du}e_k+\left(u_{\ell i}D^i\hat{\gamma}|_{Du}+u_ku_{\ell i}D^i D^k\hat{\gamma}|_{Du}-u_{\ell i}D^i\hat{\gamma}\big|_{Du}\right)e_0\\
  =& u_{\ell i}D^iD^k\hat{\gamma}\big|_{Du}(e_k+u_ke_0) \\
   =& u_{\ell i}D^iD^k\hat{\gamma}\big|_{Du}\partial_kx.
\end{align*}
By the anisotropic Weingarten formula \eqref{s2:AWeingart}, we obtain
\begin{equation}\label{s10:AW}
  \hat{h}_{\ell}^k=u_{\ell i}D^iD^k\hat{\gamma}\big|_{Du}.
\end{equation}
Moreover, by \eqref{s2:sff} and \eqref{s10:nu-F}, the anisotropic second fundamental form $\hat{h}$ is given by
\begin{align*}
  \hat{h}_{ij} =& -G(\nu_{\gamma})(\nu_{\gamma},\partial_i\partial_jx)\\
  = & -G(\nu_{\gamma})(\nu_{\gamma},u_{ij}e_0)\\
  =&-u_{ij}G(\nu_{\gamma})\left(\nu_{\gamma},-\frac{\nu_{\gamma}}{\hat{\gamma}(Du)}+\frac{D^i\hat{\gamma}\big|_{Du}}{\hat{\gamma}(Du)}\partial_ix\right)\\
  =&\frac{u_{ij}}{\hat{\gamma}(Du)}.
\end{align*}
Then the metric $\hat{g}$ satisfies
\begin{align*}
\hat{g}^{ij}=\hat{\gamma}(Du)D^iD^j\hat{\gamma}|_{Du}.
\end{align*}
The equation \eqref{s10:AW} also implies that the normalized anisotropic mean curvature is given by
\begin{equation*}
E_1(\kappa)=\frac 1n\hat{g}^{ij}\hat{h}_{ij}=\frac 1nu_{ij}D^iD^j\hat{\gamma}|_{Du}.
\end{equation*}

Now we derive the evolution of the function $u$ along the flow \eqref{flow-VMCF}. Write the position vector field of the evolving hypersurface near a point $(x_0,t_0)$ as
\begin{equation*}
  X(p,t)~=~y^i(p,t)e_i+u(y(p,t),t)e_0,
\end{equation*}
where $y(p,t)$ gives a time-dependent diffeomorphism of $\mathbb{R}^n$ such that the graphical representation is preserved along the flow. Then
\begin{equation*}
  \frac{\partial}{\partial t}X(p,t)~=~\frac{\partial}{\partial t}y^i(p,t)e_i+\left(\frac{\partial}{\partial t}u(y(p,t),t)+D_iu \frac{\partial}{\partial t}y^i(p,t)\right)e_0.
\end{equation*}
Taking the expression \eqref{s10:nu-F} of $\nu_\gamma$ into account, from the flow equation \eqref{flow-VMCF} we have
\begin{align*}
  \frac{\partial}{\partial t}y^i(p,t)&=(\phi(t)-E_1^{\alpha})D^i\hat{\gamma}|_{Du}, \\
  \frac{\partial}{\partial t}u(y(p,t),t)+D_iu \frac{\partial}{\partial t}y^i(p,t)&=(\phi(t)-E_1^{\alpha})\left(D_iuD^i\hat{\gamma}|_{Du}-\hat{\gamma}(Du)\right).
\end{align*}
Combining the above two equations, we see that the flow \eqref{flow-VMCF} is equivalent to the following scalar parabolic equation
\begin{align}
\frac{\partial}{\partial t}u=&~\hat{\gamma}(Du)(E_1^{\alpha}-\phi(t))\nonumber\\
=&~\hat{\gamma}(Du)\left(\frac 1nD^iD^j\hat{\gamma}|_{Du}u_{ij}\right)^{\alpha}-\hat{\gamma}(Du)\phi(t)\label{s10:u-evl}
\end{align}
of the function $u$ locally on $\mathbb{R}^n$.

The bound on the anisotropic principal curvatures and the convexity of $M_t$ implies that $|Du|+|D^2u|\leq C$ for some uniform constant on a uniform space-time neighbourhood of any $(y_0,t_0)$ chosen such that $u(y_0,t_0)=0$ and $Du|_{(y_0,t_0)}=0$ (see, e.g., the argument in \cite[\S 9]{And01}). Since $E_1$ is linear in $D^2u$, by the uniform bound
\begin{equation*}
  0<1/C\leq {E}_1\leq C
\end{equation*}
on $E_1$ and the bound on $|Du|$, the equation \eqref{s10:u-evl} is uniformly parabolic. Since $\alpha>0$ and $E_1$ is linear with respect to $D^2u$, we can apply Theorem 6 of \cite{And04} to derive the uniform $C^{2,\beta}$ estimate of $u$ for some $\beta\in (0,1)$ in a uniform space-time neighbourhood of $(y_0,t_0)$.  The parabolic Schauder theory \cite{Lieb96} then implies the $C^{\ell,\beta}$ estimate of $u$ for all $\ell\geq 2$. This completes the proof of the lemma.
\endproof
By the Hausdorff convergence of $M_t$ to a scaled Wulff shape $W$, we conclude that $M_t$ converges smoothly to a scaled Wulff shape $M_{\infty}=\bar{r}\Sigma$  up to a translation.

\subsection{Smooth convergence for $k=n$, $\alpha>0$}$\ $

As mentioned in \S \ref{sec:2-2}, in the case $k=n$, we have $ E_n(\kappa)=\det(A_{\gamma}(\nu))E_n(\lambda)$, where $\lambda=(\lambda_1,\dots,\lambda_n)$ denote the isotropic principal curvatures of $M_t$. Since the anisotropic normal vector field is given by $\nu_{\gamma}=\gamma(\nu)\nu+\nabla^{\mathbb{S}}\nu$, the flow \eqref{flow-VMCF} is equivalent to
\begin{equation}\label{s10:flow-kn}
  \frac{\partial }{\partial t}X=\left(\phi(t)-\det(A_{\gamma}(\nu))^{\alpha/n}E_n(\lambda)^{\alpha/n}\right)\gamma(\nu)\nu
\end{equation}
up to a tangential diffeomorphism. If we define the isotropic support function $h(z,t):\mathbb{S}^n\to \mathbb{R}$ of $M_t=\partial K_t$ by
\begin{equation*}
  h(z,t):=\sup\{\langle x,z\rangle: x\in K_t\},
\end{equation*}
we can recover $M_t$ via an embedding $\bar{X}(z,t)$: $\mathbb{S}^n\to \mathbb{R}^{n+1}$:
\begin{equation*}
  \bar{X}(z,t)= h(z,t)z+\nabla^{\mathbb{S}}_ih(z,t)\sigma^{ij}\nabla^{\mathbb{S}}_jz,
\end{equation*}
where we set $g_{\mathbb{S}^n}=(\sigma_{ij})$.  Denote by $\mathfrak{r}=(\mathfrak{r}_{ij})$ the matrix
\begin{equation*}
\mathfrak{r}_{ij}=\nabla^{\mathbb{S}}_i\nabla^{\mathbb{S}}_jh+h\sigma_{ij},
\end{equation*}
whose eigenvalues are the principal radii of curvature $\mathfrak{r}_i=1/{\lambda_i}$, $i=1,\dots,n$ of $M_t$. Then it is well known that we can rewrite \eqref{s10:flow-kn} as a scalar parabolic PDE of the isotropic support function $h(z,t)$ on the unit sphere:
\begin{align}\label{s10:flow-kn2}
  \frac{\partial }{\partial t}h(z,t)=&\left(\phi(t)-\det(A_{\gamma}(z))^{\alpha/n}E_n(\mathfrak{r})^{-\alpha/n}\right)\gamma(z)\nonumber\\
  =&\phi(t)\gamma(z)-\rho(z)E_n(\mathfrak{r})^{-\alpha/n},
\end{align}
where we set $\rho(z)=\det(A_{\gamma}(z))^{\alpha/n}\gamma(z)$ for simplicity of the notation. Now the equation \eqref{s10:flow-kn2} differs from \cite[Equ. (13)]{And00} only by the term $\phi(t)\gamma(z)$.

Since the matrix $A_{\gamma}(z)$ is uniformly positive definite, the two-sided positive bound on $E_n(\kappa)$ is equivalent to
\begin{equation}\label{s10:En-bd}
  0< C_1\leq E_n(\mathfrak{r})\leq C_2
\end{equation}
for some positive constants $C_1$ and  $C_2$. Then we can apply the argument in Theorem 10 of \cite{And00} to conclude that the principal radii $\mathfrak{r}_i$ are uniformly bounded from above. In fact, noting that the term $\phi(t)\gamma(z)$ in \eqref{s10:flow-kn2} only contributes some lower order terms in the evolution of $\mathfrak{r}_{ij}$, which are uniformly controlled,  we can apply the calculation in Theorem 10 of \cite{And00} to obtain that
\begin{align*}
  \frac{\partial }{\partial t}\mathfrak{r}_{ij} \leq & \frac{\alpha}n\rho(z)E_n(\mathfrak{r})^{-(1+\frac{\alpha}n)}\dot{E}_n^{k\ell}\nabla^{\mathbb{S}}_k\nabla^{\mathbb{S}}_\ell\mathfrak{r}_{ij} - \frac{\alpha}n\rho(z)E_n(\mathfrak{r})^{-(1+\frac{\alpha}n)}\dot{E}_n^{k\ell}(\mathfrak{r})\sigma_{k\ell}\mathfrak{r}_{ij}\\
   & +CE_n(\mathfrak{r})^{-\alpha/n}\sigma_{ij}+\phi(t)A_{\gamma}(z)_{ij},
\end{align*}
where $C$ depends on $\gamma$ and $\alpha$. The second term here is crucial. By the generalized Newton's inequality, we have
\begin{align*}
  \dot{E}_n^{k\ell}(\mathfrak{r})\sigma_{k\ell}= & nE_{n-1}(\mathfrak{r})\geq nE_{n}(\mathfrak{r})^{\frac{n-2}{n-1}}E_1(\mathfrak{r})^{\frac 1{n-1}} \geq C E_{n}(\mathfrak{r})^{\frac{n-2}{n-1}} \mathfrak{r}_{\max}^{\frac 1{n-1}}.
\end{align*}
Now work at a space-time point where a maximum eigenvalue of $\mathfrak{r}_{ij}$ is attained. We have
\begin{align*}
  \frac{\partial }{\partial t}\mathfrak{r}_{\max} \leq & - \frac{\alpha}n\rho(z)E_n(\mathfrak{r})^{-(\frac 1{n-1}+\frac{\alpha}n)}\mathfrak{r}_{\max}^{\frac n{n-1}}+C\left(E_n(\mathfrak{r})^{-\alpha/n}+\phi(t)\right)
\end{align*}
for a constant $C$ depending only on $\alpha$ and $\gamma$. Given the bound \eqref{s10:En-bd} on $E_n(\mathfrak{r})$ and Corollary~\ref{cor-phi-bd} on $\phi(t)$, the parabolic maximum principle implies that
\begin{equation*}
  \mathfrak{r}_{\max} \leq C\left(1+\frac 1{t^{n-1}}\right)
\end{equation*}
for some positive constant $C$ depending only on $C_1,C_2$ and $n,\alpha,k, \gamma, \mathrm{Vol}(K_0)$ and $V_{n+1-k}(K_0)$.

The upper bound on $\mathfrak{r}_i$ together with $E_n(\mathfrak{r})\geq C_1$ also implies the uniform positive lower bound of $\mathfrak{r}_i$.  Therefore the equation \eqref{s10:flow-kn2} is uniformly parabolic. The estimate on $\mathfrak{r}_i$ also implies the $C^2$ estimate of the isotropic support function $h(z,t)$. Since \eqref{s10:flow-kn2} is concave with respect to the second spatial derivatives of $h$, we apply Theorem 1.1 of \cite{TW13} to derive the $C^{2,\beta}$ estimate of $h(z,t)$. By the parabolic Schauder theory \cite{Lieb96}, we conclude that the $C^{\ell,\beta}$ norm of $h$ is uniformly bounded for any $\ell\geq 2$. This is equivalent to the uniform $C^{\ell,\beta}$ regularity estimate of the evolving hypersurfaces $M_t$.  This together with the Hausdorff convergence proved in Proposition~\ref{s10.lem1} implies that $M_t$ converges to a translated scaled Wulff shape in the $C^{\infty}$ topology as $t\to\infty$.

\subsection{Smooth convergence for $\alpha\geq k=1,\dots,n$}$\ $

We first prove the improved $C^0$ and $C^1$ estimates.
\begin{lem}\label{s12.lem1}
Let $M_t$, $t\in [0,\infty)$ be a smooth solution to the flow \eqref{flow-VMCF} with $k=1,\dots,n$ and $\alpha>0$. For any small $\varepsilon>0$, there exists a large time $N_0$ and a fixed constant $d>0$ such that for all $t_0>N_0$, if the origin $o$ is chosen with $|s(z,t_0)-\bar{r}|\leq \varepsilon \bar{r}$, then
\begin{equation}\label{s12.1}
  |s(z,t)-\bar{r}|\leq C\varepsilon \bar{r},\quad \forall~z\in \Sigma
\end{equation}
for all $t\in [t_0, t_0+d]$, where $s(z,t)$ is the anisotropic support function and $C$ depends on $d, \bar{r}$ but not on $t_0$. This combined with the $C^1$ estimate \eqref{s9:ds-1} implies that
\begin{equation}\label{s12.1-ds}
  |\bar{\nabla} s(z,t)|^2\leq C\varepsilon \bar{r}
\end{equation}
for all $t\in [t_0, t_0+d]$.
\end{lem}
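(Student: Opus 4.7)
The plan is to exploit the uniform pointwise bound on $|\partial_t s|$ arising from the previously established two-sided estimates on $E_k$ and $\phi(t)$, and to propagate the assumed closeness at time $t_0$ over a short time interval whose length is allowed to depend on $\varepsilon$.

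First I would set up the initial $C^0$ bound via the Hausdorff convergence of Proposition~\ref{s10.lem1}. For any $\varepsilon>0$ there exists $N_0$ so large that, for every $t_0>N_0$, the convex body $K_{t_0}$ can be sandwiched between two scaled Wulff shapes $(1-\varepsilon)\bar r W+o \subset K_{t_0}\subset (1+\varepsilon)\bar r W+o$ for some center $o=o(t_0)$. Adopting this $o$ as the origin, the estimate $s\leq R+C(R-r)$ that was used in Proposition~\ref{s11.prop1} (cf.~\cite[Lem.~7.2]{And01}) together with $s\geq r$ will then yield $|s(z,t_0)-\bar r|\leq \varepsilon\bar r$ for all $z\in\Sigma$, which is the working hypothesis of the lemma.

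Next I would observe that along the flow \eqref{flow-gauss} one has $\partial_t s=\Psi+\phi(t)=-E_k^{\alpha/k}+\phi(t)$. Proposition~\ref{s5:thm-Ek-ub} gives an upper bound on $E_k^{\alpha/k}$ for $t\geq 1$, Proposition~\ref{s11.prop1} provides a uniform positive lower bound on $E_k$, and Corollary~\ref{cor-phi-bd} gives two-sided bounds on $\phi(t)$. Together these yield a uniform constant $C_\ast=C_\ast(n,k,\alpha,\gamma,K_0)$ with
$$|\partial_t s(z,t)|\leq C_\ast\qquad\text{for all }z\in\Sigma\text{ and all sufficiently large }t.$$
Integrating in time gives $|s(z,t)-s(z,t_0)|\leq C_\ast(t-t_0)$, so the fixed choice $d:=\varepsilon\bar r/C_\ast$ will suffice: for every $t\in[t_0,t_0+d]$ the triangle inequality yields $|s(z,t)-\bar r|\leq 2\varepsilon\bar r$, which is \eqref{s12.1}. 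A small but necessary observation here is that the flow equation \eqref{flow-gauss} is invariant under translation of the origin, since a translation by $o$ shifts $s$ by the function $z\mapsto G(z)(z,o)$ whose $\tau_{ij}[\cdot]$ vanishes (it is the anisotropic support function of a single point); consequently keeping $o=o(t_0)$ fixed throughout $[t_0,t_0+d]$ is legitimate and the evolution of $s$ remains governed by \eqref{flow-gauss}.

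Finally, the improved $C^1$ bound \eqref{s12.1-ds} will follow by applying Lemma~\ref{s9.lemC1} (which is purely geometric and holds for every strictly convex hypersurface) at each slice $t\in[t_0,t_0+d]$ with $R_1=(1-2\varepsilon)\bar r$ and $R_2=(1+2\varepsilon)\bar r$, giving $|\bar\nabla s|^2\leq C(R_2-R_1)=4C\varepsilon\bar r$. I do not anticipate substantial difficulties with this argument: the statement is essentially a quantitative expression of the continuity of the flow in the initial data, and once the uniform bound on $|\partial_t s|$ and the translation invariance of \eqref{flow-gauss} are in hand the rest is a triangle inequality. The only mildly delicate point, which I would want to write out carefully, is the translation-invariance verification of $\tau_{ij}[G(z)(z,o)]=0$, ensuring the flow equation transports correctly when the origin is re-centered at each $t_0$.
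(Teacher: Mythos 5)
Your proof is correct and establishes the lemma as stated, but it takes a noticeably more elementary route than the paper's. You prove \eqref{s12.1} by the crude speed bound: Proposition~\ref{s5:thm-Ek-ub} and Corollary~\ref{cor-phi-bd} give $|\partial_t s|=|{-}E_k^{\alpha/k}+\phi(t)|\le C_\ast$ uniformly for large $t$ (you invoke Proposition~\ref{s11.prop1} as well, but the lower bound on $E_k$ is not actually needed here), and then integration over a window of length $d=\varepsilon\bar r/C_\ast$ gives $|s-\bar r|\le 2\varepsilon\bar r$ by the triangle inequality. The paper instead first uses the speed bound only to obtain the \emph{rough} estimate $|s-\bar r|\le\frac12\bar r$ on an interval of length $d$ chosen \emph{independently} of $\varepsilon$, and then upgrades this to the refined $|s-\bar r|\le C\varepsilon\bar r$ via a comparison/Gronwall argument for $\underline s(t)=\min_\Sigma s$ and $\bar s(t)=\max_\Sigma s$ that feeds in the integral estimate $\int_t^{t+d}|\phi-\bar r^{-\alpha}|\,ds\le\varepsilon\bar r^{-\alpha}$ from Corollary~\ref{s10.cor}. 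The trade-off is thus: your version gives a clean constant $C=2$ with no dependence on $d$, at the cost of a window length $d$ that shrinks with $\varepsilon$; the paper's version gives a window length $d$ independent of $\varepsilon$, at the cost of a constant $C$ depending on $d$ (hence on $\bar r,\alpha,\gamma,K_0$). Since the lemma's quantifier order ($\forall\varepsilon\,\exists N_0\,\exists d$) permits $d=d(\varepsilon)$, and the downstream use in Lemma~\ref{s12.lem3} only needs $d>0$ independent of $t_0$, your version is adequate. Two minor points: (i) the initial step ``the sandwich $(1-\varepsilon)\bar rW+o\subset K_{t_0}\subset(1+\varepsilon)\bar rW+o$ yields $|s(z,t_0)-\bar r|\le\varepsilon\bar r$'' actually gives $|s(z,t_0)-\bar r|\le C'\varepsilon\bar r$ with $C'$ from \cite[Lem.~7.2]{And01}, not exactly $\varepsilon\bar r$ --- harmless since $\varepsilon$ is arbitrary, but worth stating; (ii) your remark that $\tau_{ij}[G(z)(z,o)]=0$ so that the flow equation \eqref{flow-gauss} is invariant under a change of origin is correct and is indeed the (unstated) fact that legitimizes re-centering at each $t_0$; making it explicit is a nice touch.
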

\proof
By Proposition \ref{s10.lem1} and Corollary \ref{s10.cor}, for any given $0<\varepsilon<1/4$ there exists  $N_0$ such that for all time $t>N_0$, the anisotropic Hausdorff distance of $M_t$ to a scaled Wulff shape $\bar{r}\Sigma$ is less than $\varepsilon$, and
\begin{equation*}
\int_t^{t+d}\bigg|\phi(x)-\frac 1{\bar{r}^\alpha}\bigg| dx\leq \frac {\varepsilon}{\bar{r}^\alpha}
\end{equation*}
for any fixed constant $d>0$. For each $t_0>N_0$, we choose the origin $o$ such that the anisotropic support function of $M_{t_0}$ with respect to $o$ satisfies
\begin{equation*}
  |s(z,t_0)-\bar{r}|\leq \varepsilon \bar{r}.
\end{equation*}
Since $s(z,t)$ satisfies the parabolic equation
 \begin{equation*}
   \frac{\partial }{\partial t}s=-F_*(\tau_{ij})^{-\alpha}+\phi(t),
 \end{equation*}
by adjusting $d$ if necessary, the uniform bounds on $F_*$ and $\phi(t)$ imply that
\begin{equation}\label{s12.2}
  |s(z,t)-\bar{r}|\leq \frac 12\bar{r}
\end{equation}
for all time $t\in [t_0, t_0+d]$,  where $d$ is independent of $t_0$.
Let $\underline{s}(t)=\min_{z\in\Sigma}s(z,t)$.  We have
\begin{align}\label{s12.3}
  \frac d{dt}\underline{s}(t)\geq & -\underline{s}(t)^{-\alpha} +\phi(t).
\end{align}
We claim that $\underline{s}(t)-\bar{r}\geq -C\varepsilon\bar{r}$ on $[t_0,t_0+d]$ for a suitable $d$. Without loss of generality, we assume $\underline{s}(t)<\bar{r}$. Integrating \eqref{s12.3} implies that
\begin{align}\label{s12.4}
  \underline{s}(t)-\underline{s}(t_0)\geq & \int_{t_0}^t(\phi(x)-\underline{s}(x)^{-\alpha})dx\nonumber\\
 \geq &\int_{t_0}^t\left((1-\varepsilon) \bar{r}^{-\alpha}-\underline{s}(x)^{-\alpha}\right)dx\nonumber\\
 =& \int_{t_0}^t\frac 1{\bar{r}^\alpha\underline{s}(x)^\alpha }\left( \underline{s}(x)^\alpha-\bar{r}^{\alpha}\right)dx-\varepsilon \bar{r}^{-\alpha} (t-t_0).
\end{align}
Note that
\begin{align*}
\underline{s}(t)^\alpha-\bar{r}^\alpha =&\alpha\int_0^1\left((1-x)\bar{r}+x\underline{s}(t)\right)^{\alpha-1}dx~(\underline{s}(t)-\bar{r})\\
\geq & C\bar{r}^{\alpha-1}(\underline{s}(t)-\bar{r}),
\end{align*}
where we used \eqref{s12.2}. We have
\begin{align}\label{s12.5}
 \underline{s}(t)-\bar{r}= & ~ \underline{s}(t)-\underline{s}(t_0)+\underline{s}(t_0)-\bar{r}\nonumber\\
 \geq &~ C\bar{r}^{-\alpha-1}\int_{t_0}^t\left( \underline{s}(x)-\bar{r}\right)dx-\varepsilon \bar{r}^{-\alpha} (t-t_0)+\underline{s}(t_0)-\bar{r}\nonumber\\
 \geq &~C\bar{r}^{-\alpha-1}\int_{t_0}^t\left( \underline{s}(x)-\bar{r}\right)dx-\varepsilon \bar{r}^{-\alpha} (t-t_0)-\varepsilon \bar{r},
\end{align}
where we used the fact that the origin $o$ is chosen such that $\underline{s}(t_0)-\bar{r}\geq -\varepsilon \bar{r}$. Then the integral form of Gronwall's inequality implies that
\begin{align*}
 \underline{s}(t)-\bar{r}\geq & ~ -\varepsilon \left(\bar{r}^{-\alpha} (t-t_0)+\bar{r}\right) \left(1+C\bar{r}^{-\alpha-1}(t-t_0)e^{C\bar{r}^{-\alpha-1}(t-t_0)}\right)\nonumber\\
 \geq & -C\varepsilon \bar{r},
\end{align*}
where $C$ depends only on $\bar{r}$ and $t-t_0$. So we obtain the claim.

A similar argument gives the upper bound on $\bar{s}(t)=\max_{\Sigma}s(\cdot,t)$:
\begin{align*}
 \bar{s}(t)-\bar{r}\leq & ~ C\varepsilon \bar{r}.
\end{align*}
So we have the improved $C^0$ estimate \eqref{s12.1}. The improved $C^1$ estimate \eqref{s12.1-ds} follows from \eqref{s12.1} and \eqref{s9:ds-1}.
\endproof

In the following, we focus on the case $\alpha\geq k=1,\dots,n$ and show that the anisotropic principal curvatures $\kappa_i$ of $M_t$ have a uniform positive lower bound.
\begin{lem}\label{s12.lem3}
Along the flow \eqref{flow-VMCF} with $\alpha\geq k=1,\dots,n$, the anisotropic principal curvatures $\kappa_i$ of the solution $M_t$ have a uniform positive lower bound for $t>0$.
\end{lem}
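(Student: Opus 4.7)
The goal is to establish a uniform upper bound on the anisotropic principal radii $\tau_1,\ldots,\tau_n$ of $M_t$, which by $\kappa_i=\tau_i^{-1}$ is equivalent to the required lower bound on the anisotropic principal curvatures. I will work in the anisotropic Gauss parametrization on $\Sigma$, where the flow becomes the scalar parabolic equation $\partial_t s = -F_*(\tau_{ij})^{-\alpha}+\phi(t) = \Psi+\phi(t)$, and I have at my disposal (i) the improved $C^0$ and $C^1$ estimates of Lemma~10.1, asserting that on each time window $[t_0,t_0+d]$ with $t_0$ sufficiently large one has $|s-\bar r|+|\bar\nabla s|^2\leq C\varepsilon\bar r$ with $\varepsilon$ as small as desired, and (ii) the asymptotic bound $\liminf_{t\to\infty}\phi(t)\geq \bar r^{-\alpha}$ of Corollary~\ref{s10.cor}, which holds precisely because $\alpha\geq k$.

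The plan is to apply the parabolic maximum principle to an auxiliary function of the form
\begin{equation*}
W(z,t) = \log\zeta(z,t) + \tfrac{A}{2}|\bar\nabla s(z,t)|^2 - B\bigl(s(z,t)-\bar r\bigr),
\end{equation*}
where $\zeta=\sup\{\tau_{ij}\xi^i\xi^j:|\xi|=1\}$ and $A, B$ are positive constants depending only on $\bar r$ and $\gamma$, chosen at the end. At a spatial maximum I rotate so that $(\tau_{ij})$ is diagonal and $\zeta=\tau_{11}$; then $\bar\nabla W=0$ expresses $\bar\nabla_k\tau_{11}/\tau_{11}$ in terms of $\bar\nabla s$ and $\bar\nabla|\bar\nabla s|^2$, and $\bar\nabla^2 W\leq 0$ converts the Hessian of $\log\tau_{11}$ into a controlled expression. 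Computing $\partial_t W-\dot\Psi^{k\ell}\bigl(\bar\nabla_k\bar\nabla_\ell W-\tfrac{1}{2}Q_{k\ell p}\bar\nabla_p W\bigr)$ from \eqref{s4:s-1}, \eqref{s8:ds-0}, and \eqref{s4:flow-tau2} and reorganizing yields three contributions: a destructive piece $-(1+Bs)\dot\Psi^{k\ell}\bar g_{k\ell}$ proportional to $\alpha F_*^{-\alpha-1}\sum_i\dot f_*^i\geq \alpha F_*^{-\alpha-1}$; a zero-order piece $\tau_{11}^{-1}\bigl((1-\alpha)\Psi+\phi\bigr)-B(\phi+(1+\alpha)\Psi)$; and a gradient-quadratic piece in which the concavity of $\Psi$ (inherited from the inverse-concavity of $F$) gives $\ddot\Psi^{k\ell,pq}\bar\nabla_1\tau_{k\ell}\bar\nabla_1\tau_{pq}\leq 0$.

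To convert the concavity of $\ddot\Psi$ into an explicit negative bound, I invoke the Guan--Shi--Sui observation on smooth symmetric functions to rewrite the off-diagonal sum $\sum_{k>1}(\dot\psi^1-\dot\psi^k)(\bar\nabla_1\tau_{k1})^2/(\tau_k-\tau_1)$ in a form that, after the Codazzi substitution $\bar\nabla_1\tau_{k1}=\bar\nabla_k\tau_{11}+O(\tau)$ from \eqref{s4:Codaz}, absorbs the positive Hamilton-type contribution $\dot\Psi^{k\ell}\bar\nabla_k\tau_{11}\bar\nabla_\ell\tau_{11}/\tau_{11}^2$ modulo quantities controlled by $|\bar\nabla s|^2$ and $\sum_k\dot\psi^k$. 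The improved $C^1$ estimate then renders these remainders arbitrarily small by taking $t_0$ large, while $\phi(t)\to\bar r^{-\alpha}$ together with $\Psi\to-\bar r^{-\alpha}$ drives the zero-order piece to $-B\alpha\bar r^{-\alpha}<0$. Choosing $B$ large (then $A$ to dominate the anisotropic $Q$-remainders from $\bar\nabla Q$) produces $\partial_t W\leq 0$ at any maximum once $\tau_{11}$ exceeds a threshold depending only on $\bar r$ and $\gamma$, which combined with the short-time estimate of Proposition~\ref{s5:prop-conv} yields a uniform bound on $\tau_{\max}$ for all $t\geq 0$ by iterating across time windows.

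The main obstacle is that, unlike the isotropic case treated in \cite{AW17}, the destructive coefficient $\dot\Psi^{k\ell}\bar g_{k\ell}=\alpha F_*^{-\alpha-1}\sum\dot f_*^i$ does \emph{not} automatically grow with $\tau_{\max}$: the Hausdorff-in-time constraint $\sum\tau_i\approx n\bar r$ imposed by the improved $C^0$ bound forces the remaining $\tau_i$ to be small when $\tau_1$ is large, so $F_*^{-\alpha-1}$ can stay bounded. This is the structural reason $\alpha\geq k$ is needed: only then does Corollary~\ref{s10.cor} provide the asymptotic sign $\phi+(1+\alpha)\Psi\to -\alpha\bar r^{-\alpha}<0$ that compensates for this deficiency through the zero-order piece. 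The delicate algebra reconciling the Guan--Shi--Sui inequality with the anisotropic lower-order $Q$-terms, and the choice of $A$ and $B$ that makes all three groups cooperate, is the most technical part of the argument.
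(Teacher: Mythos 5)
Your high-level setup agrees with the paper's: the same auxiliary function (your $W$ is, up to a constant and relabelling $A=2\mu_2$, $B=\mu_1$, the paper's $\omega=\log\zeta-\mu_1 s+\mu_2|\bar\nabla s|^2$), the same maximum-principle attack on the time windows $[t_0,t_0+d]$ where the improved $C^0$, $C^1$ estimates and $\phi(t)\geq(1-\varepsilon)\bar r^{-\alpha}$ hold, and the correct identification that $\alpha\geq k$ enters only through the lower bound on $\phi(t)$. However, two genuine gaps remain.

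First, you misattribute the role of the Guan--Shi--Sui observation. The gradient terms --- absorbing $\tau_{11}^{-2}\dot\Psi^{k\ell}\bar\nabla_k\tau_{11}\bar\nabla_\ell\tau_{11}$ into the concavity term $\tau_{11}^{-1}\ddot\Psi^{k\ell,pq}\bar\nabla_1\tau_{k\ell}\bar\nabla_1\tau_{pq}$ via Codazzi, leaving $\dot\psi^1\tau_{11}^{-2}\sum_k(\bar\nabla_k\tau_{11})^2+C\sum_k\dot\psi^k$ --- are handled by the elementary combination of \eqref{s2:F-ddt} and \eqref{s4:Codaz}, exactly as in Proposition~\ref{s5:prop-conv}, and do not need Guan--Shi--Sui. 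The Guan--Shi--Sui lemma (Proposition~\ref{s12.prop-Guan}) is a \emph{zeroth-order} inequality: taking $\mu=\tfrac{\bar r}{2}(1,\dots,1)$, it asserts $\sum_k\dot f_*^k(\tau)-1\geq\tfrac{2\theta}{\bar r}\bigl(\sum_k\dot f_*^k(\tau)+1\bigr)$ whenever $|\nu_\tau-\nu_\mu|\geq\varrho$. Without something of this sort your argument fails precisely when $\tau$ at the maximum point is nearly a multiple of $(1,\dots,1)$, because then $\sum_k\dot f_*^k(\tau)\approx 1$ and the term $\mu_1\alpha s(\sum_k\dot f_*^k-1)$, which is supposed to dominate the bad $\sum_k\dot f_*^k$ remainders, vanishes.

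Second, and relatedly, you never describe the dichotomy that makes the argument close. After rewriting \eqref{s12.lem3-pf3} in terms of $F_*$ and using $\phi(t)\geq(1-\varepsilon)\bar r^{-\alpha}$ together with $(F_*/\bar r)^{\alpha+1}-(1+\alpha)(F_*/\bar r)+\alpha\geq 0$ to bound the $\mu_1$-combination from below (not by appealing to $\phi+(1+\alpha)\Psi\to-\alpha\bar r^{-\alpha}$, which you cannot use directly since $\Psi$ at the maximum of $\omega$ need not be close to $-\bar r^{-\alpha}$), the paper must split into Case~1 ($|\nu_\tau-\nu_\mu|\geq\varrho$, where Guan--Shi--Sui gives the $\mu_1$-term a definite positive sign comparable to $\sum_k\dot f_*^k$) and Case~2 ($|\nu_\tau-\nu_\mu|<\varrho$, where instead $\dot f_*^1\geq C\sum_k\dot f_*^k$ and the $\mu_2\alpha\dot f_*^1\tau_1^2$ term dominates via $F_*\leq C\dot f_*^1\tau_1$). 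You do not identify either case, nor the reason the threshold at which a maximum cannot persist is uniform; your remark that the ``Hausdorff-in-time constraint $\sum\tau_i\approx n\bar r$'' forces the remaining $\tau_i$ to be small is not a pointwise estimate and is not used in the paper. These are not cosmetic omissions: they are the parts of the proof that make the choice of $\mu_1,\mu_2$ (your $A,B$) possible, and as written your proposal asserts the conclusion of that choice without providing the mechanism.
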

\proof
We have proved in Proposition \ref{s5:prop-conv} that $\kappa_i$ are positive on any finite time interval.  However, the lower bound on $\kappa_i$ may degenerate as the time approaches the infinity. To prove that $\kappa_i$ has a uniform positive lower bound for all time, it suffices to show this for sufficiently large time.

Recall that in the case $\alpha\geq k=1,\dots,n$, the global term $\phi(t)$ satisfies \eqref{s10.phi1}.  We employ the anisotropic Gauss map parametrization in \S \ref{sec:gauss map}.  We assume that $N_0>0$ is sufficiently large such that the anisotropic Hausdorff distance of $M_t$ to $\bar{r}\Sigma$ is less than $\varepsilon$, and
\begin{equation}\label{s12.phi1}
\phi(t)\geq (1-\varepsilon)\bar{r}^{-\alpha}
\end{equation}
for all time $t\in [N_0,\infty)$. By choosing $N_0$ sufficiently large, the constant $\varepsilon$ in \eqref{s12.phi1} can be any small positive constant.  For each fixed $t_0>N_0$, we choose the origin $o$ such that the anisotropic support function of $M_{t_0}$ with respect to $o$ satisfies
\begin{equation*}
  |s(z,t_0)-\bar{r}|\leq \bar{r}\varepsilon,\qquad \forall~z\in \Sigma.
\end{equation*}
By Lemma \ref{s12.lem1}, there exists a constant $d>0$ and $C=C(d,\bar{r})$ independent of $t_0$ such that \eqref{s12.1} and \eqref{s12.1-ds} hold for all  $t\in [t_0,t_0+d]$.

In the following, we only focus on the time $t\geq N_0$. To estimate the lower bound on the anisotropic principal curvatures $\kappa_i$, we can equivalently estimate the upper bound on the anisotropic principal radii of curvature $\tau_i$, which are eigenvalues of $(\tau_{ij})$ defined in \eqref{s4:tau-def}. We define
\begin{equation*}
  \zeta= \sup\{\tau_{ij}\xi^i\xi^j:~|\xi|=1\}
\end{equation*}
and for any $t_0>N_0$ we consider the function
\begin{equation}\label{s12.omeg-def}
  \omega(z,t) =\log\zeta(z,t)-\mu_1 s(z,t)+\mu_2 |\bar{\nabla}s|^2(z,t)
\end{equation}
on the interval $[t_0, t_0+d]$, where $\mu_1, \mu_2>0$ are positive constants to be determined later. We consider a point $(z_1,t_1)$ where a new maximum of the function $\omega$ is achieved. By rotation of the local orthonormal frame, we assume that $\xi=e_1$ and $(\tau_{ij})=\mathrm{diag}(\tau_1,\dots,\tau_n)$ is diagonal at $(z_1,t_1)$. First, at $(z_1,t_1)$ we have $\zeta=\tau_{11}$ and
\begin{align*}
  \frac{\partial }{\partial t}\omega =& \frac 1{\tau_{11}}\frac{\partial }{\partial t}\tau_{11} -\mu_1 \frac{\partial }{\partial t}s+\mu_2\frac{\partial }{\partial t}|\bar{\nabla}s|^2,\\
  \bar{\nabla}_\ell\omega =& \frac 1{\tau_{11}}\bar{\nabla}_\ell\tau_{11}-\mu_1 \bar{\nabla}_\ell s+\mu_2\bar{\nabla}_\ell|\bar{\nabla}s|^2,\\
  \bar{\nabla}_k\bar{\nabla}_\ell\omega =&\frac 1{\tau_{11}}\bar{\nabla}_k\bar{\nabla}_\ell\tau_{11}-\mu_1 \bar{\nabla}_k\bar{\nabla}_\ell s+\mu_2\bar{\nabla}_k\bar{\nabla}_\ell|\bar{\nabla}s|^2-\frac 1{(\tau_{11})^2}\bar{\nabla}_k\tau_{11}\bar{\nabla}_\ell\tau_{11}.
\end{align*}
Combining these equations with \eqref{s4:s-1}, \eqref{s8:ds-0} and \eqref{s4:flow-tau2} gives
\begin{align}\label{s12.evl-S}
   &\frac{\partial }{\partial t}\omega -\dot{\Psi}^{k\ell}\left(\bar{\nabla}_k\bar{\nabla}_\ell\omega-\frac 12Q_{k\ell p}\bar{\nabla}_p\omega\right)\nonumber\\
   =&\frac 1{\tau_{11}}\left(\frac{\partial }{\partial t}\tau_{11} -\dot{\Psi}^{k\ell}\left(\bar{\nabla}_k\bar{\nabla}_\ell\tau_{11}-\frac 12Q_{k\ell p}\bar{\nabla}_p\tau_{11}\right)\right)\nonumber\\
   & -\mu_1 \left(\frac{\partial }{\partial t}s -\dot{\Psi}^{k\ell}\left(\bar{\nabla}_k\bar{\nabla}_\ell s-\frac 12Q_{k\ell p}\bar{\nabla}_ps\right)\right)\nonumber\\
   &+\mu_2 \left(\frac{\partial }{\partial t}|\bar{\nabla}s|^2 -\dot{\Psi}^{k\ell}\left(\bar{\nabla}_k\bar{\nabla}_\ell|\bar{\nabla}s|^2-\frac 12Q_{k\ell p}\bar{\nabla}_p|\bar{\nabla}s|^2\right)\right)\nonumber\\
   &+\frac 1{(\tau_{11})^2}\dot{\Psi}^{k\ell}\bar{\nabla}_k\tau_{11}\bar{\nabla}_\ell\tau_{11}\nonumber\\
\leq & \frac 1{\tau_{11}}\left(\ddot{\Psi}^{k\ell,pq}\bar{\nabla}_1\tau_{k\ell}\bar{\nabla}_1\tau_{pq}+(1-\alpha)\Psi+\phi(t)+C\dot{\Psi}^{k\ell}\bar{g}_{k\ell}\tau_{11}\right)\nonumber\\
  &-\mu_1\left(\phi(t)+(1+\alpha)\Psi+s\dot{\Psi}^{k\ell}\bar{g}_{k\ell}\right) +\frac 1{(\tau_{11})^2}\dot{\Psi}^{k\ell}\bar{\nabla}_k\tau_{11}\bar{\nabla}_\ell\tau_{11}\nonumber\\
  &+\mu_2\dot{\Psi}^{k\ell}\biggl(-2\tau_{ik}\tau_{i\ell}-2s^2\bar{g}_{k\ell}+4s\tau_{k\ell}-2\tau_{ik}Q_{i\ell p}s_p+2sQ_{k\ell p}s_p\biggr)\nonumber\\
 &+\mu_2\dot{\Psi}^{k\ell}\biggl(2s_ks_\ell -\frac 12(2Q_{i\ell m}Q_{mkp}-Q_{ipm}Q_{mk\ell})s_is_p-\bar{\nabla}_iQ_{k\ell p}s_ps_i\biggr)\nonumber\\
 \leq &\frac 1{\tau_{11}}\ddot{\Psi}^{k\ell,pq}\bar{\nabla}_1\tau_{k\ell}\bar{\nabla}_1\tau_{pq} +\frac 1{(\tau_{11})^2}\dot{\Psi}^{k\ell}\bar{\nabla}_k\tau_{11}\bar{\nabla}_\ell\tau_{11}\nonumber\\
 &-\mu_1\left(\phi(t)+(1+\alpha)\Psi+s\sum_k\dot{\psi}^k\right)-2\mu_2\sum_k\dot{\psi}^k\tau_k^2\nonumber\\
 &+(1-\alpha)\frac 1{\tau_{11}}\Psi+\frac 1{\tau_{11}}\phi(t)-\alpha\mu_2C\Psi +C(1+\mu_2\varepsilon)\sum_k\dot{\psi}^k,
\end{align}
where in the last inequality we used the $C^0$, $C^1$ estimates of $s$ and the homogeneity of $\Psi$
\begin{align*}
  \dot{\Psi}^{k\ell}\tau_{k\ell} =& -\alpha \Psi \\
  -2\dot{\Psi}^{k\ell}\tau_{ik}Q_{i\ell p}s_p\leq &C\sum_k\dot{\psi}^k\tau_k=-C\alpha \Psi.
\end{align*}
The constants $C$ in the last line of \eqref{s12.evl-S} depend on $\gamma$ and the initial hypersurface $M_0$. We next apply the maximum principle to \eqref{s12.evl-S} to obtain a uniform upper bound on $\tau_{11}$. Note that $\tau_{11}=\tau_1$ is the largest eigenvalue of $(\tau_{ij})$ at the point $(z_1,t_1)$.

We first estimate the gradient terms in \eqref{s12.evl-S}. By \eqref{s2:F-ddt} and the fact that $\Psi=-F_*^{-\alpha}$ is concave, we have
\begin{align}\label{s12.lem3-pf1}
  \ddot{\Psi}^{k\ell,pq}\bar{\nabla}_1\tau_{k\ell}\bar{\nabla}_1\tau_{pq}= &~\ddot{\psi}^{k\ell}\bar{\nabla}_1\tau_{kk}\bar{\nabla}_1\tau_{\ell\ell}+2\sum_{k>\ell}\frac{\dot{\psi}^k-\dot{\psi}^\ell}{\tau_k-\tau_\ell}(\bar{\nabla}_1\tau_{k\ell})^2\nonumber\\
   \leq &~ -2\sum_{k>1}\frac 1{\tau_1}(\dot{\psi}^k-\dot{\psi}^1)(\bar{\nabla}_1\tau_{k1})^2.
\end{align}
Codazzi equation \eqref{s4:Codaz} implies that
\begin{align}\label{s12.lem3-pf2}
  (\bar{\nabla}_1\tau_{k1})^2= & \left( \bar{\nabla}_k\tau_{11}+\frac 12Q_{11p}\tau_{kp}-\frac 12Q_{k1p}\tau_{1p}\right)^2
  \geq   \frac 12(\bar{\nabla}_k\tau_{11})^2-C(\tau_{11})^2.
\end{align}
Substituting \eqref{s12.lem3-pf2} into \eqref{s12.lem3-pf1}, and noting that
\begin{equation*}
  \frac 1{\tau_{11}}\bar{\nabla}_k\tau_{11}=\mu_1\bar{\nabla}_ks-\mu_2 \bar{\nabla}_k|\bar{\nabla}s|^2
\end{equation*}
holds at $(z_1,t_1)$,   we have
\begin{align*}
   & \frac 1{\tau_{11}}\ddot{\Psi}^{k\ell,pq}\bar{\nabla}_1\tau_{k\ell}\bar{\nabla}_1\tau_{pq}+\frac 1{(\tau_{11})^2}\sum_k\dot{\psi}^k(\bar{\nabla}_k\tau_{11})^2\nonumber\\
   \leq &~-2\sum_{k>1}\frac 1{\tau_{11}^{2}}(\dot{\psi}^k-\dot{\psi}^1)\left(\frac 12(\bar{\nabla}_k\tau_{11})^2-C(\tau_{11})^2\right)+\frac 1{(\tau_{11})^2}\sum_k\dot{\psi}^k(\bar{\nabla}_k\tau_{11})^2\nonumber\\
= &~\dot{\psi}^1\tau_{11}^{-2}\sum_k(\bar{\nabla}_k\tau_{11})^2+2C\sum_{k>1}(\dot{\psi}^k-\dot{\psi}^1)\nonumber\\
   \leq &~\dot{\psi}^1\sum_k\left(\mu_1\bar{\nabla}_ks-\mu_2 \bar{\nabla}_k|\bar{\nabla}s|^2\right)^2+2C\sum_{k>1}(\dot{\psi}^k-\dot{\psi}^1)
\end{align*}
at $(z_1,t_1)$. By the definition \eqref{s4:tau-def} of $\tau_{ij}$ and the improved $C^0$, $C^1$ estimates of $s$ in Lemma \ref{s12.lem1}, we have
\begin{align*}
  \left(\mu_1\bar{\nabla}_ks-\mu_2 \bar{\nabla}_k|\bar{\nabla}s|^2\right)^2=& \left(\mu_1\bar{\nabla}_ks-2\mu_2 \bar{\nabla}_k\bar{\nabla}_\ell s\bar{\nabla}_\ell s\right)^2\\
  =& \left(\mu_1\bar{\nabla}_ks-2\mu_2 \bar{\nabla}_\ell s(\tau_{k\ell}-s\bar{g}_{k\ell}+\frac 12Q_{k\ell p}\bar{\nabla}_ps)\right)^2\\
  \leq & C\varepsilon \left(\mu_1^2+\mu_2^2(\tau_1^2+1)\right),\quad k=1,\dots,n
\end{align*}
for some constant $C$ depending only on $M_0$ and $\gamma$. In summary, the gradient terms on the right-hand side of \eqref{s12.evl-S} satisfies
\begin{align}\label{s12.gradt}
& \frac 1{\tau_{11}}\ddot{\Psi}^{k\ell,pq}\bar{\nabla}_1\tau_{k\ell}\bar{\nabla}_1\tau_{pq}+\frac 1{(\tau_{11})^2}\dot{\Psi}^{k\ell}\bar{\nabla}_k\tau_{11}\bar{\nabla}_\ell\tau_{11}\nonumber\\
 \leq  & C\varepsilon\dot{\psi}^1\left(\mu_1^2+\mu_2^2(\tau_1^2+1)\right)+C\sum_k\dot{\psi}^k
\end{align}
at $(z_1,t_1)$.

Since the left hand side of \eqref{s12.evl-S} is nonnegative at $(z_1,t_1)$, rearranging the terms in \eqref{s12.evl-S} and applying \eqref{s12.gradt}, we have
\begin{align}\label{s12.lem3-pf3}
&\mu_1\left(\phi(t)+(1+\alpha)\Psi+s\sum_k\dot{\psi}^k\right)+2\mu_2\sum_k\dot{\psi}^k\tau_k^2\nonumber\\
 \leq &C\varepsilon \dot{\psi}^1\left(\mu_1^2+\mu_2^2(\tau_1^2+1)\right)+C(\mu_2\varepsilon+1)\sum_k\dot{\psi}^k\nonumber\\
 &+(1-\alpha)\frac 1{\tau_{11}}\Psi+\frac 1{\tau_{11}}\phi(t)-\alpha\mu_2C\Psi
\end{align}
 at $(z_1,t_1)$. Note that
\begin{equation*}
  \Psi=-F_*^{-\alpha},\quad \psi^k=\alpha F_*^{-\alpha-1}\dot{f}^k_*,
\end{equation*}
where $F_*(\tau_{ij})=f_*(\tau)$. Rewriting the terms in \eqref{s12.lem3-pf3} and multiplying by $F_*^{\alpha+1}$ both sides give that
\begin{align}\label{s12.lem3-pf4}
&\mu_1\left(\phi(t)F_*^{\alpha+1}-(1+\alpha)F_*+\alpha s\right)+\mu_1\alpha s\left(\sum_k\dot{f}^k_*-1\right)+2\mu_2\alpha \sum_k\dot{f}_*^k\tau_k^2\nonumber\\
 \leq &C\varepsilon\alpha \dot{f}_*^1\left(\mu_1^2+\mu_2^2(\tau_1^2+1)\right)+C(\mu_2\varepsilon+1)\alpha \sum_k\dot{f}^k_*\nonumber\\
 &+(\alpha-1)\frac 1{\tau_{11}}F_*+\frac 1{\tau_{11}}\phi(t)F_*^{\alpha+1}+\alpha\mu_2CF_*
\end{align}
at $(z_1,t_1)$. Using the estimate \eqref{s12.phi1} and Lemma \ref{s12.lem1}, the first term on the left side of \eqref{s12.lem3-pf4} satisfies
\begin{align}\label{s12.lem3-pf5}
   & \mu_1\left(\phi(t)F_*^{\alpha+1}-(1+\alpha)F_*+\alpha s\right) \nonumber\\
   \geq & \mu_1\bar{r}\left((1-\varepsilon)\left(\frac{F_*}{\bar{r}}\right)^{\alpha+1}-(1+\alpha)\frac {F_*}{\bar{r}}+\alpha \right)+\alpha \mu_1 (s-\bar{r})\nonumber\\
  \geq & \mu_1\bar{r}\left(\left(\frac{F_*}{\bar{r}}\right)^{\alpha+1}-(1+\alpha)\frac {F_*}{\bar{r}}+\alpha \right)-\mu_1\varepsilon \bar{r}^{-\alpha}F_*^{\alpha+1}-C\varepsilon\alpha \mu_1\bar{r}\nonumber\\
  \geq & -\mu_1\varepsilon \bar{r}^{-\alpha}F_*^{\alpha+1}-C\varepsilon\alpha \mu_1\bar{r},
\end{align}
where $C$ is the constant in Lemma \ref{s12.lem1} depending only on $d$ and $\bar{r}$. Substituting \eqref{s12.lem3-pf5} into \eqref{s12.lem3-pf4}, we have the following estimate
\begin{align}\label{s12.lem3-pf6}
&2\mu_2\alpha \sum_k\dot{f}_*^k\tau_k^2+\mu_1\alpha s\left(\sum_k\dot{f}^k_*-1\right)\nonumber\\
 \leq &C\varepsilon\alpha \dot{f}_*^1\mu_2^2\tau_1^2+C\varepsilon\alpha \dot{f}_*^1\left(\mu_1^2+\mu_2^2\right)+C(\mu_2\varepsilon+1)\alpha \sum_k\dot{f}^k_*\nonumber\\
 &+C(\mu_2+1)+C\mu_1\varepsilon+C\varepsilon\alpha \mu_1\bar{r}
\end{align}
holding at $(z_1,t_1)$, where the constants $C$ are uniform and independent of the time $t_0$ and the length $d$ of the time interval.

Now, we apply an observation in \cite{Guanbo15} and \cite{xia-2} which relies on the property that $f_*$ is increasing and concave in $\Gamma_+$ and vanishes on $\partial\Gamma_+$. Precisely, for $f_*=\left(\frac {E_{n}}{E_{n-k}}\right)^{1/k}$, we have the following property.
\begin{prop}[Lemma 2.2 in \cite{Guanbo15}]\label{s12.prop-Guan}
Let $K\subset \Gamma_+$ be a compact set and $\varrho>0$ a constant. Then there exists a constant $\theta>0$ depending only on $K$ and $\varrho$ such that for $\tau\in \Gamma_+$ and $\mu\in K$ satisfying $|\nu_{\tau}-\nu_\mu|\geq \varrho$, we have
\begin{equation*}
  \sum_{k}\dot{f}_*^k(\tau)\mu_k-f_*(\mu)\geq \theta \left(\sum_k\dot{f}_*^k(\tau)+1\right),
\end{equation*}
where $\nu_\tau$ denotes the unit normal vector of the level set $\{x\in \Gamma_+:~f_*(x)=f_*(\tau)\}$ at the point $\tau$, i.e., $\nu_\tau=Df_*(\tau)/|Df_*(\tau)|$.
\end{prop}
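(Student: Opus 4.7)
The plan is to combine the concavity and $1$-homogeneity of $f_*$ with a compactness/contradiction argument, with special care near $\partial\Gamma_+$.

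First I would exploit the structural properties of $f_*=(E_n/E_{n-k})^{1/k}$: it is positively $1$-homogeneous and concave on $\Gamma_+$, vanishes on $\partial\Gamma_+$, and has strictly convex level sets in the sense that any supporting hyperplane to $\{f_*=c\}$ meets the level set only along the ray through the tangency point. Euler's identity $f_*(\tau)=\sum_k\dot f_*^k(\tau)\tau_k$ together with the tangent-plane inequality for concave functions gives
\[
f_*(\mu)\leq f_*(\tau)+\sum_k\dot f_*^k(\tau)(\mu_k-\tau_k)=\sum_k\dot f_*^k(\tau)\mu_k,
\]
so $G(\tau,\mu):=\sum_k\dot f_*^k(\tau)\mu_k-f_*(\mu)\geq 0$. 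If $G(\tau,\mu)=0$, then $\mu$ lies on the tangent hyperplane to $\{f_*=f_*(\tau)\}$ at $\tau$; by strict convexity of the level sets and $1$-homogeneity this forces $\mu$ to be a positive multiple of $\tau$, and the $0$-homogeneity of $Df_*$ yields $\nu_\mu=\nu_\tau$, contradicting $|\nu_\tau-\nu_\mu|\geq\varrho>0$. Hence under the hypothesis one has strict positivity $G(\tau,\mu)>0$.

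Next, define $\Theta(\tau,\mu):=G(\tau,\mu)/(\sum_k\dot f_*^k(\tau)+1)$. Because $G$ and $\sum_k\dot f_*^k(\tau)$ are both $0$-homogeneous in $\tau$, $\Theta$ is scale-invariant in $\tau$, so we may normalize $|\tau|=1$. Suppose for contradiction that no $\theta>0$ works: there exist sequences $\tau_j\in\Gamma_+\cap S^{n-1}$ and $\mu_j\in K$ with $|\nu_{\tau_j}-\nu_{\mu_j}|\geq\varrho$ and $\Theta(\tau_j,\mu_j)\to 0$. Passing to subsequences, $\mu_j\to\mu_\infty\in K$ and $\tau_j\to\tau_\infty\in\overline{\Gamma_+}\cap S^{n-1}$.

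The main obstacle is that $\tau_\infty$ may lie on $\partial\Gamma_+$, where $\dot f_*^k(\tau_j)$ may blow up and the qualitative step from Step~1 does not survive in the limit. I would split into two cases. If $\tau_\infty\in\Gamma_+$, then $\dot f_*^k(\tau_j)\to\dot f_*^k(\tau_\infty)$, the limit gives $G(\tau_\infty,\mu_\infty)=0$ and $|\nu_{\tau_\infty}-\nu_{\mu_\infty}|\geq\varrho$, contradicting Step~1. If $\tau_\infty\in\partial\Gamma_+$, use that $K$ is a compact subset of the open cone $\Gamma_+$, so there is $\delta>0$ with $\mu_k\geq\delta$ for every $\mu\in K$ and every $k$; this yields
\[
G(\tau,\mu)\geq \delta\sum_k\dot f_*^k(\tau)-\sup_{\mu\in K}f_*(\mu),
\]
whence
\[
\Theta(\tau_j,\mu_j)\geq \frac{\delta\sum_k\dot f_*^k(\tau_j)-\sup_K f_*}{\sum_k\dot f_*^k(\tau_j)+1}.
\]
A direct computation with $f_*=(E_n/E_{n-k})^{1/k}$ shows that $\sum_k\dot f_*^k(\tau_j)\to\infty$ once $\tau_j\to\partial\Gamma_+$: the factor $f_*^{\,1-k}$ produced by differentiating the $1/k$-th power diverges as $f_*(\tau_j)\to 0$. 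Therefore the right-hand side tends to $\delta$, contradicting $\Theta(\tau_j,\mu_j)\to 0$. Taking $\theta$ smaller than both the infimum obtained from the interior case (by continuity) and $\delta/2$ produces the required uniform positive lower bound.
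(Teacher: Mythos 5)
The paper does not supply a proof of this proposition; it is quoted directly from Guan--Shi--Sui, so there is no internal argument to compare against. Evaluating your proof on its own merits: Step~1 (pointwise positivity of $G(\tau,\mu)$ from concavity plus Euler's identity, with the equality case forcing $\mu\parallel\tau$ by transversal strict concavity) and the normalization $|\tau|=1$ and extraction of subsequences are all fine, and the interior case ($\tau_\infty\in\Gamma_+$) is handled correctly.

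The gap is in your boundary case. You assert that $\sum_k\dot f_*^k(\tau_j)\to\infty$ whenever $\tau_j\to\partial\Gamma_+$, citing the factor $f_*^{1-k}$. This is false for $k=1$: there $f_*=E_n/E_{n-1}$ is (up to constant) the harmonic mean, and $f_*^{1-k}=1$ produces no blow-up. Concretely for $n=2$, $\tau=(t,1)$, $t\to0^+$ one has $\dot f_*^1\to 2$, $\dot f_*^2\to 0$, so $\sum_k\dot f_*^k\to2$, bounded. In that regime the numerator $\delta\sum_k\dot f_*^k - \sup_K f_*$ in your bound need not be positive, so you cannot conclude $\Theta(\tau_j,\mu_j)$ stays bounded away from zero. (Even for $k\ge2$ the claim fails when more than $k$ coordinates of $\tau_j$ degenerate, e.g.\ $n=4$, $k=2$, $\tau=(t,t,t,1)$.) Since $k=1$ is precisely the anisotropic volume-preserving mean curvature flow, this is not a corner case.

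The fix is to split on the size of $\sigma_j:=\sum_k\dot f_*^k(\tau_j)$ rather than on where $\tau_\infty$ lands. If $\sigma_j\to\infty$, your estimate works as you wrote it. If $\sigma_j$ stays bounded (which can occur even when $\tau_\infty\in\partial\Gamma_+$), pass to a subsequence so that $Df_*(\tau_j)\to L$ with $\sum_k L_k\ge1$, hence $L\neq0$. Concavity gives $\sum_k\dot f_*^k(\tau_j)\mu_k\ge f_*(\mu)$ for every $\mu\in\Gamma_+$, so in the limit $\mu\mapsto\sum_k L_k\mu_k$ is a supporting linear functional of $f_*$ touching at $\mu_\infty$ (by $\Theta_j\to0$). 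Since $f_*$ is differentiable at the \emph{interior} point $\mu_\infty$, the supporting functional is unique and $L=Df_*(\mu_\infty)$; consequently $\nu_{\tau_j}\to\nu_{\mu_\infty}$ and $|\nu_{\tau_j}-\nu_{\mu_j}|\to0$, contradicting the hypothesis. This argument makes no use of where $\tau_\infty$ lies, only of $\mu_\infty\in\Gamma_+$, and closes the gap cleanly.
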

Let $\mu=2^{-1}\bar{r}(1,\dots,1)\in \Gamma_+$ and $K=\{\mu\}$ in Proposition \ref{s12.prop-Guan}. Then there exists a constant $\varrho=\bar{r}/8>0$ such that $\nu_\mu-2\varrho (1,\dots,1)\in \Gamma_+$. We have two cases. First, if the anisotropic principal radii of curvature $\tau$ at the point  $(z_1,t_1)$  satisfy $|\nu_{\tau}-\nu_\mu|\geq \varrho$, Proposition \ref{s12.prop-Guan} implies that
\begin{equation}\label{s12.guan-pf1}
 \sum_k\dot{f}_*^k(\tau)-1\geq \frac {2\theta} {\bar{r}} \left(\sum_k\dot{f}_*^k(\tau)+1\right).
\end{equation}
Substituting \eqref{s12.guan-pf1} into \eqref{s12.lem3-pf6}, we have
\begin{align}\label{s12.guan-pf2}
&2\mu_2\alpha \sum_k\dot{f}_*^k\tau_k^2+\mu_1\alpha s\frac {2\theta} {\bar{r}} \left(\sum_k\dot{f}^k_*+1\right)\nonumber\\
 \leq &C\varepsilon\alpha \dot{f}_*^1\mu_2^2\tau_1^2+C\varepsilon\alpha \dot{f}_*^1\left(\mu_1^2+\mu_2^2\right)+C(\mu_2\varepsilon+1)\alpha \sum_k\dot{f}^k_*\nonumber\\
 &+C(\mu_2+1)+C\mu_1\varepsilon+C\varepsilon\alpha \mu_1\bar{r}
\end{align}
at $(z_1,t_1)$. Based on the above inequality, we can assume that $\varepsilon$ is chosen with
\begin{equation*}
  C\varepsilon +C\varepsilon\alpha \bar{r}\leq \frac 12\alpha \theta.
\end{equation*}
Note that $\varepsilon$ can be chosen to depend only on $M_0, \gamma$ and $\theta$. Now we first choose $\mu_2=\mu_2(\varepsilon, \alpha, \gamma, M_0)$ small enough such that
\begin{equation*}
C\varepsilon\alpha \dot{f}_*^1\mu_2^2\tau_1^2\leq \mu_2\alpha \sum_k\dot{f}_*^k\tau_k^2
\end{equation*}
and then choose $\mu_1=\mu_1(\alpha, \gamma, M_0)$ large enough such that
\begin{equation*}
  C(\mu_2\varepsilon+1)\alpha \sum_k\dot{f}^k_*+C(\mu_2+1)\leq \frac 12 \mu_1\alpha s\frac {2\theta} {\bar{r}} \left(\sum_k\dot{f}^k_*+1\right).
\end{equation*}
Then \eqref{s12.guan-pf2} becomes
\begin{align*}
\mu_2\alpha \dot{f}_*^1\tau_1^2 \leq & C\varepsilon\alpha \dot{f}_*^1\left(\mu_1^2+\mu_2^2\right),
\end{align*}
which gives a uniform upper bound on $\tau_1$ at the point $(z_1,t_1)$:
\begin{equation*}
  \tau_1\leq \left(\frac 1{\mu_2}C\left(\mu_1^2+\mu_2^2\right)\right)^{1/2}.
\end{equation*}

Second, if the anisotropic principal radii of curvature $\tau$ at the point  $(z_1,t_1)$  satisfy $|\nu_{\tau}-\nu_\mu|\leq \varrho$, it follows from $\nu_\mu-2\varrho (1,\dots,1)\in \Gamma_+$ that $\nu_\tau-\varrho (1,\dots,1)\in \Gamma_+$ and $\dot{f}_*^i\geq C\sum_k\dot{f}_*^k$ for some constant $C$ and all $i=1,\dots,n$. Then
\begin{equation}\label{s12.guan-pf3}
 C\leq  F_*=\sum_k\dot{f}_*^k\tau_k\leq (\sum_k\dot{f}_*^k)\tau_1\leq C\dot{f}_*^1\tau_1.
\end{equation}
We can still choose $\mu_2=\mu_2(\varepsilon,\alpha,\gamma,M_0)$ small enough such that
\begin{equation}\label{s12.guan-pf31}
C\varepsilon\alpha \dot{f}_*^1\mu_2^2\tau_1^2\leq \mu_2\alpha \sum_k\dot{f}_*^k\tau_k^2.
\end{equation}
The function $f_*$ is concave, we always have $\sum_k\dot{f}^k_*\geq 1$. Substituting \eqref{s12.guan-pf3} and \eqref{s12.guan-pf31} into \eqref{s12.lem3-pf6} and discarding the second term on the left-hand side of \eqref{s12.lem3-pf6}, we have
\begin{align}\label{s12.guan-pf4}
\mu_2\alpha \dot{f}_*^1\tau_1^2\leq &C\varepsilon\alpha \dot{f}_*^1\left(\mu_1^2+\mu_2^2\right)+C(\mu_2\varepsilon+1)\alpha \dot{f}^1_*\nonumber\\
 &+C(\mu_2+1)+C\mu_1\varepsilon+C\varepsilon\alpha \mu_1\bar{r}\nonumber\\
 \leq &C\varepsilon\alpha \dot{f}_*^1\left(\mu_1^2+\mu_2^2\right)+C(\mu_2\varepsilon+1)\alpha \dot{f}^1_*\nonumber\\
 &+C\biggl((\mu_2+1)+\mu_1\varepsilon+C\varepsilon\alpha \mu_1\bar{r}\biggr)\dot{f}_*^1\tau_1
\end{align}
at $(z_1,t_1)$. Multiplying by $1/{\dot{f}_*^1}$ both sides of \eqref{s12.guan-pf4} we have
\begin{align*}
\mu_2\alpha \tau_1^2\leq &C\varepsilon\alpha \left(\mu_1^2+\mu_2^2\right)+C(\mu_2\varepsilon+1)\alpha \nonumber\\
 &+C\biggl((\mu_2+1)+\mu_1\varepsilon+C\varepsilon\alpha \mu_1\bar{r}\biggr)\tau_1,
\end{align*}
which gives a uniform upper bound on $\tau_1$ at $(z_1,t_1)$.

Therefore, we can choose a small constant $\mu_2$ and a large constant $\mu_1$ in the definition \eqref{s12.omeg-def} of $\omega(z,t)$ such that in both the above two cases, the largest anisotropic principal radius of curvature $\tau_1$ has a uniform upper bound at $(z_1,t_1)$. This in turn implies that $\omega(z_1,t_1)\leq C$ for some uniform positive constant $C$ depending only on the initial data $M_0$ and $\gamma,\alpha, n$. By the definition of $\omega(z,t)$ and the $C^0$, $C^1$ estimates, we conclude that $\tau_1$ is bounded from above uniformly on the time interval $[t_0, t_0+d]$. Note that the constants $\mu_1$, $\mu_2$ are chosen uniformly and depend only on $M_0$, $\alpha$, $\gamma$. Since $t_0>N_0$ is arbitrary and $d>0$ is a fixed constant,  repeating the above argument completes the proof of Proposition \ref{s12.lem3}.
\endproof

Combining Proposition \ref{s5:thm-Ek-ub} and Proposition \ref{s12.lem3}, we obtain the uniform curvature estimate of the solution $M_t$ to the flow \eqref{flow-VMCF}. In fact, since
\begin{align*}
 E_k\geq &{\binom nk}^{-1}\kappa_n\cdots \kappa_{n-k+1}  \geq  {\binom nk}^{-1}\kappa_n \kappa_1^{k-1},
\end{align*}
the uniform upper bound on $E_k(\kappa)$ and the uniform positive lower bound on the anisotropic principal curvature $\kappa_i$  imply that
\begin{equation*}
  0<\frac 1C\leq \kappa_i\leq C,\qquad i=1,\dots,n
\end{equation*}
for some constant $C$ depending only on $\gamma$, $\alpha$ and $M_0$. Since the dual function $F_*$ is concave, we apply Theorem 1 of \cite{TW13} to the equation \eqref{flow-gauss} of $s$ and derive the $C^{2,\beta
}$ estimate of $s(z,t)$ for some $\beta\in (0,1)$. Then the parabolic Schauder theory \cite{Lieb96} implies that the $C^{\ell,\beta}$ norm of $s(z,t)$ is bounded for any $\ell\geq 2$. This means that the solution $M_t$ of the flow \eqref{flow-VMCF} has uniform regularity estimates for all time $t\in [0,\infty)$. Taking into account the Hausdorff convergence of $M_t$, we conclude that $M_t$ converges smoothly to a scaled Wulff shape as $t\to\infty$.

\subsection{Exponential convergence}$\ $

The stronger convergence results can be obtained by studying the linearization of the flow around the Wulff shape (cf. \cite[\S 12]{And01}). The goal is to prove that the linearization of the flow about the Wulff shape $\Sigma$ is strictly stable. Without loss of generality, we assume that the hypersurfaces near $\Sigma$ can be written as graphs of smooth functions over $\Sigma$
\begin{equation*}
M_t=\{r(z,t)z:z\in \Sigma\},
\end{equation*}
where $r(z,t)=1+\varepsilon \eta(z,t)$. By adding a smooth reparametrization to preserve this graphical representation, the flow \eqref{flow-VMCF} is equivalent to the following parabolic equation for the function $r$:
 \begin{equation}\label{s10:r-dt}
   \frac{\partial r}{\partial t}=(\phi(t)-E_k^{\alpha/k}(\kappa))\left(G(\nu_\gamma,z)-\frac{1}{r}G^{ij}G(\nu_\gamma,\partial_i{z})\frac{\partial r}{\partial z^j}\right),
 \end{equation}
where $G$ is the metric at $z$ on $\mathbb{R}^{n+1}$ described in \eqref{s2:G} and $G^{ij}$ stands for the inverse of the matrix $(G_{ij})_{i,j=1}^n=(G(\partial_iz,\partial_jz))_{i,j=1}^n$.  Note that on $\Sigma$ we have $G(\nu_\gamma,z)=G(z,z)=1$, $G(\nu_\gamma,\partial_i{z})=G(z,\partial_iz)=0$ and $\phi(t)-E_k^{\alpha/k}(\kappa)=0$. Differentiating \eqref{s10:r-dt} with respect to $\epsilon$ and letting $\epsilon=0$, we have
 \begin{equation}\label{s10:r-dt-1}
   \frac{\partial }{\partial t}\eta=\frac{\pt }{\pt \varepsilon}\bigg|_{\varepsilon=0}(\phi(t)-E_k^{\alpha/k}(\kappa)).
 \end{equation}
On $\Sigma$ we have $\hat{h}_i^j=\delta_i^j$. The variation equations \eqref{s2:dmu} and \eqref{eq2.h} imply that
\begin{align}
\frac{\pt }{\pt \varepsilon}\bigg|_{\varepsilon=0}E_k^{\alpha/k}(\kappa )&=\frac{\alpha}kE_k^{\alpha/k-1}(I)E_k^{ij}(I)\left( -\bar{\nabla}^i\bar{\nabla}_j \eta-{A}_j{}^{pi}\pt_p \eta-\eta \hat{h}^p_j\hat{h}_p^i\right)\nonumber\\
=&\frac{\alpha}n\left(-\bar{\Delta}\eta-{A}_{pi}{}^i\bar{\nabla}^p\eta-n\eta\right),\label{s10:r-dt2}
\end{align}
and
\begin{align}
  \frac{d }{d \varepsilon}\bigg|_{\varepsilon=0}\phi(t) =& \frac 1{|\Sigma|_{\gamma}}\int_{\Sigma} \left(\frac{\pt }{\pt \varepsilon}\bigg|_{\varepsilon=0}E_k^{\alpha/k}(\kappa) d\mu_\gamma+E_k^{\alpha/k}(I)\frac{\pt }{\pt \varepsilon}\bigg|_{\varepsilon=0}d\mu_\gamma\right)\nonumber\\
   &\quad -\frac 1{|\Sigma|^2_{\gamma}}\int_{\Sigma} E_k^{\alpha/k}(I)d\mu_\gamma\frac{\pt }{\pt \varepsilon}\bigg|_{\varepsilon=0}|M_t|_{\gamma}\nonumber\\
   =&-\frac{\alpha}n\frac 1{|\Sigma|_\gamma}\int_{\Sigma}n\eta d\mu_{\gamma},\label{s10:r-dt3}
\end{align}
where $I=(1,\dots,1)$ and we used $\int_\Sigma (\bar{\Delta}\eta+{A}_{pi}{}^i\bar{\nabla}^p\eta)d\mu_\gamma=0$ by virtue of Lemma~2.8 in \cite{Xia13}.  Substituting \eqref{s10:r-dt2} and \eqref{s10:r-dt3} into \eqref{s10:r-dt-1} we get
\begin{equation*}
\frac{\pt }{\pt t}\eta~=~\frac{\alpha}n\left(\bar{\Delta}\eta+{A}_{pi}{}^i\bar{\nabla}^p\eta+n\eta-\frac 1{|\Sigma|_\gamma}\int_{\Sigma}n\eta d\mu_{\gamma}\right)~=:\mathcal{L}\eta,
\end{equation*}
which is the linearized equation of the flow \eqref{flow-VMCF} about $\Sigma$. As in Proposition 12.1 of \cite{And01}, the operator $\mathcal{L}$ maps the space $W^{2,2}({\Sigma})\cap \{f:\int_{\Sigma} fd\mu_\gamma=0\}$ to the space $L^2({\Sigma})\cap \{f:\int_{\Sigma} fd\mu_\gamma=0\}$. Moreover, $\mathcal{L}$ is non-positive and self-adjoint with respect to the inner product $\langle f_1,f_2\rangle=\int_{\Sigma} f_1f_2 d\mu_\gamma$, and has kernel consisting precisely of the generators of translations. In other words, the linearization of the flow \eqref{flow-VMCF} about the Wulff shape is strictly stable modulo the action of the translation group. Then Theorem 9.1.2 of \cite{Lun95} applies to conclude that initial hypersurfaces sufficiently close to the Wulff shape converge modulo translations to the Wulff shape, exponentially fast in the $C^\infty$ topology. Since the smooth convergence of $M_t$ is already established, we conclude that the whole family of the solution $M_t$ of the flow \eqref{flow-VMCF}  converge modulo translations to a scaled Wulff shape $M_\infty=\bar{r}\Sigma$ exponentially fast in $C^\infty$ topology. In particular, the speed of the flow satisfies
\begin{equation*}
  |\phi(t)-E_k^{\alpha/k}(\kappa)|\leq C e^{-\delta t}
\end{equation*}
for some constants $C$ and $\delta$, and for all $t>0$. Then we have for any $t_2>t_1>0$,
\begin{align*}
  |X(\cdot,t_2)-X(\cdot,t_1)| \leq & \int_{t_1}^{t_2}\max_{M}\biggl|\frac{\partial X}{\partial t}\biggr|dt \\
   \leq & \int_{t_1}^{t_2}\max_{M}|\phi(t)-E_k^{\alpha/k}(\kappa)|dt\\
   \leq &\frac C{\delta}(e^{-\delta t_1}-e^{-\delta t_2}).
\end{align*}
By the Cauchy criterion, $X(\cdot,t)$ converges to a limit embedding $X_{\infty}(\cdot)$ of $M_{\infty}$ as $t\to\infty$. Hence, $M_t$ converges exponentially and smoothly to a scaled Wulff shape $M_\infty=\bar{r}\Sigma$ without any correction by translations. This completes the proof of the convergence stated in Theorem \ref{thm-2}.


\bibliographystyle{amsplain}

\begin{thebibliography}{99}
%



\bibitem{And00} Ben Andrews, \emph{Motion of hypersurfaces by Gauss curvature}, Pacific. J. Math. \textbf{195} (2000), no.~1, 1--34.

\bibitem{And01} Ben Andrews, \emph{Volume-preserving anisotropic mean curvature flow}, Indiana Univ. Math. J. \textbf{50} (2001), no.~2, 783--827.

\bibitem{And04}{Ben Andrews}, {\em Fully nonliear parabolic equations in two space variables}, preprint 2004, arXiv:0402235.

\bibitem{And07} Ben Andrews, \emph{Pinching estimates and motion of hypersurfaces by curvature
  functions}, J. Reine Angew. Math. \textbf{608} (2007), 17--33.

\bibitem{And-C09} Ben Andrews and Julie Clutterbuck,  \emph{Time-interior gradient estimates for quasilinear parabolic equations}, Indiana Univ. Math. J. \textbf{58} (2009), no.~1, 351--380.


\bibitem{AW17} Ben Andrews and Yong Wei, \emph{Volume preserving flow by powers of k-th mean curvature}, J. Differential Geom., \textbf{ 117}(2021), no.2, 193--222.

\bibitem{AG89}S. Angenent  and M. Gurtin, \emph{Multiphase thermodynamics with interfacial structure. II. Evolution of an isothermal surface}, Arch. Rational Mech. Anal. \textbf{108} (1989), 323--391.

\bibitem{AG94}S. Angenent  and M. Gurtin,  \emph{Anisotropic motion of a phase interface. Well-posedness of the initial value problem and qualitative properties of the interface}, J. Reine Angew. Math. \textbf{446} (1994), 1--47.


\bibitem{BF87} T.~Bonnesen and W.~Fenchel, \emph{Theory of convex bodies}, Translated from the German and edited by L. Boron, C. Christenson and B. Smith. BCS Associates, Moscow, ID, 1987.

\bibitem{BM94} John~E.~Brothers and Frank~Morgan, \emph{The isoperimetric theorem for general integrands}, Michigan Math. J. \textbf{41} (1994), no.~3, 419--431.


\bibitem{Clu07}Julie Clutterbuck, \emph{Interior gradient estimates for anisotropic mean-curvature flow},  Pacific J. Math. \textbf{229} (2007), no.~1, 119--136.

\bibitem{Del18}M.G. Delgadino, F. Maggi, C. Mihaila and R. Neumayer, \textit{Bubbling with $L^2$-almost
constant mean curvature and an Alexandrov-type theorem for crystals}, Arch. Rational Mech.
Anal. \textbf{230}(3), 1131--1177, 2018.

\bibitem{Fed59} Herbert~Federer, \emph{Curvature measures}, Trans. Amer. Math. Soc. \textbf{93} (1959), 418--491.

\bibitem{Fed69} Herbert~Federer, \emph{Geometric measure theory}, Die Grundlehren der mathematischen Wissenschaften, Band 153 Springer-Verlag New York Inc., New York 1969 xiv+676 pp.

\bibitem{FMP2010} A. Figalli, F. Maggi, and A. Pratelli, \emph{A mass transportation approach to quantitative isoperimetric inequalities}, Invent. Math. \textbf{182} (2010), no.~1, 167--211.

\bibitem{FM91} Irene~Fonseca and Stefan~M\"{u}ller, \emph{A uniqueness proof for the Wulff theorem}, Proc. Roy. Soc. Edinburgh Sect. A \textbf{119} (1991), no.~1-2, 125--136.

\bibitem{Guanbo15} Bo Guan, Shujun Shi, and Zhenan Sui, \emph{On estimates for fully nonlinear parabolic equations on Riemannian manifolds}, Anal. PDE \textbf{8} (2015), no. 5, 1145--1164.

\bibitem{Hu84} Gerhard Huisken, \emph{Flow by mean curvature of convex surfaces into spheres}, J. Differential Geom. \textbf{20} (1984), no. 1, 237--266.

\bibitem{Hu87} Gerhard Huisken,\emph{ The volume preserving mean curvature flow}, J. Reine Angew. Math. \textbf{382} (1987), 35--48.

\bibitem{Ha86} R.~Hamilton, \emph{Four-manifolds with positive curvature operator}, J. Differential Geom. \textbf{24} (1986), 153--179.

\bibitem{HL08} Yijun~He and Haizhong~Li, \emph{Integral formula of Minkowski type and new characterization of the Wulff shape}, Acta Math. Sin. (Engl. Ser.) \textbf{24} (2008), no.~4, 697--704.


\bibitem{HLMG09} Yijun~He, Haizhong~Li, Hui~Ma, and Jianquan~Ge, \emph{Compact embedded hypersurfaces with constant higher order anisotropic mean curvatures}, Indiana Univ. Math. J. \textbf{58} (2009), no.~2, 853--868.

\bibitem{Koh91} Peter~Kohlmann, \emph{Curvature measures and Steiner formulae in space forms}, Geom. Dedicata \textbf{40} (1991), no.~2, 191--211.

\bibitem{Koh94} Peter~Kohlmann, \emph{Minkowski integral formulas for compact convex bodies in standard space forms}, Math. Nachr. \textbf{166} (1994), 217--228.

\bibitem{Koh98} Peter~Kohlmann, \emph{Characterizations via linear combinations of curvature measures}, Arch. Math. (Basel) \textbf{70} (1998), no. 3, 250--256.


\bibitem{Lieb96}Gary M Lieberman, \emph{Second order parabolic differential equations}, World scientific, 1996.

\bibitem{Lun95} Alessandra~Lunardi, \emph{Analytic semigroups and optimal regularity in parabolic problems}, Progress in Nonlinear Differential Equations and their Applications, 16. Birkh\"{a}user Verlag, Basel, 1995.


\bibitem{MaX11} H. Ma and C. Xiong, \textit{Hypersurfaces with constant anisotropic mean
curvatures}, J. Math. Sci. Univ. Tokyo, \textbf{20} (2011), no. 3, 335--347.

\bibitem{Rei76}Robert C. Reilly, \emph{The relative differential geometry of nonparametric hypersurfaces},   Duke Math. J. \textbf{43} (1976), no.~4, 705--721.

\bibitem{San94} J.~R.~Sangwine-Yager, \emph{A representation for volume involving interior reach}, Math. Ann. \textbf{298} (1994), no.~1, 1--5.



\bibitem{Sch78} Rolf~Schneider, \emph{Curvature measures of convex bodies}, Ann. Mat. Pura Appl. (4) \textbf{116} (1978), 101--134.


\bibitem{Sch79a} Rolf Schneider, \emph{Bestimmung konvexer K\"{o}rper durch Kr\"{u}mmungsmasse}, Comment. Math. Helv. \textbf{54}
(1979), no.~1, 42--60 (German).

\bibitem{Schn} Rolf Schneider,  \emph{Convex bodies: the Brunn--Minkowski theory}, Second expanded edition, Encyclopedia of Mathematics and its Applications, vol. 151, Cambridge University Press, Cambridge, 2014.

\bibitem{Smo98} Knut Smoczyk, \emph{Starshaped hypersurfaces and the mean curvature flow}, Manuscripta Math. \textbf{95} (1998), no. 2, 225--236.




\bibitem{Tay78} J. E. Taylor, \emph{Crystalline variational problems}, Bull. Amer. Math. Soc. \textbf{84} (1978), no.~4, 568--588.

\bibitem{TCH92} J.~E.~Taylor, J.~W.~Cahn, and C.~A.~Handwerker, \emph{Geometric models of crystal growth}, Acta Metallurgica et Materialia \textbf{40} (1992), 1443--1474.

\bibitem{TW13} Guji Tian and Xu-Jia Wang, \emph{A priori estimates for fully nonlinear parabolic equations}, Int. Math. Res. Not. IMRN 2013, no.~17, 3857--3877.

\bibitem{Tso85}Kaising Tso, \emph{Deforming a hypersurface by its Gauss--Kronecker curvature}, Comm. Pure Appl.
Math. \textbf{38} (1985), 867--882.

\bibitem{Urb91}John~IE Urbas, \emph{An expansion of convex hypersurfaces},  J. Differential Geom. \textbf{33} (1991), no.~1, 91--125.


\bibitem{Wal76} Rolf~Walter, \emph{Some analytical properties of geodesically convex sets}, Abh. Math. Sem. Univ. Hamburg \textbf{45} (1976), 263--282.

\bibitem{WX21} Yong Wei and Changwei Xiong,  \textit{A fully nonlinear locally constrained anisotropic curvature flow},  	arXiv:2103.16088

\bibitem{Wulff} G. Wulff, \emph{Zur Frage der Geschwindigkeit des Wachstums und der Aufl\"{o}sung der Kristallfl\"{a}chen}, Z.
F. Kristallog. \textbf{34} (1901), 449--530.








\bibitem{Xia13} Chao Xia, \emph{On an anisotropic Minkowski problem}, Indiana Univ. Math. J. \textbf{62} (2013), no.~5, 1399--1430.


\bibitem{xia-2} Chao Xia, \emph{Inverse anisotropic curvature flow from convex hypersurfaces}, J. Geom. Anal. \textbf{27} (2017), no. 3, 2131--2154.

\bibitem{XiaZ17}Chao Xia and Xiangwen Zhang, \textit{ABP estimate and geometric inequalities}, Commun.
Anal. Geom. \textbf{25}(3), 685--708, 2017.

\bibitem{Zah86} M.~Z\"{a}hle, \emph{Integral and current representation of Federer's curvature measures}, Arch. Math. (Basel) \textbf{46} (1986), no.~6, 557--567.



\end{thebibliography}

\end{document}